\definecolor{caaorange}{RGB}{255, 102, 0}
\definecolor{caagreen}{RGB}{38, 159, 120}
\definecolor{caablue}{RGB}{11, 62, 117}
\definecolor{caagrey}{RGB}{102, 102, 102}
\definecolor{faublue}{RGB}{0,56,101}
\definecolor{faulblue}{RGB}{114,159,207}
\definecolor{faullblue}{RGB}{221,229,240}
\definecolor{faunat}{RGB}{0,155,119}
\definecolor{faulnat}{RGB}{170,207,189}
\definecolor{faullnat}{RGB}{229,239,234}
\newcommand\colResultsOut{faublue}
\newcommand\colResultsIn{faullblue!25}
\DeclareRobustCommand{\meddiamond}{%
  \mathbin{\text{\scalebox{.75}{\rotatebox[origin=c]{45}{$\Box$}}}}%
}
\newmdtheoremenv[
  backgroundcolor=\colResultsIn,
  linecolor=\colResultsOut,
  leftmargin=0pt,
  innertopmargin=-3pt,
  innerleftmargin=5pt,
  innerrightmargin=5pt,
  nobreak=true,
  ]{theorem}{Theorem}[section]
\newtheorem{definition}[theorem]{Definition}
\newtheorem{remark}[theorem]{Remark}
\newmdtheoremenv[
  backgroundcolor=\colResultsIn,
  linecolor=\colResultsOut,
  leftmargin=0pt,
  innertopmargin=-3pt,
  innerleftmargin=5pt,
  innerrightmargin=5pt,
  nobreak=true,
  ]{lemma}[theorem]{Lemma}
\newmdtheoremenv[
  backgroundcolor=\colResultsIn,
  linecolor=\colResultsOut,
  leftmargin=0pt,
  innertopmargin=-3pt,
  innerleftmargin=5pt,
  innerrightmargin=5pt,
  nobreak=true,
  ]{proposition}[theorem]{Proposition}
\newtheorem{assumption}{Assumption}
\newmdtheoremenv[
  backgroundcolor=\colResultsIn,
  linecolor=\colResultsOut,
  leftmargin=0pt,
  innertopmargin=-3pt,
  innerleftmargin=5pt,
  innerrightmargin=5pt,
  nobreak=true,
  ]{corollary}[theorem]{Corollary}
\newcommand\Ni{\mathtt{N_i}}
\newcommand\Nx{\mathtt{N_x}}
\newcommand\Ntot{\mathtt{N_{tot}}}
\newcommand\Nf{\mathtt{N_{f}}}
\newcommand\Ne{\mathtt{N_{e}}}
\newcommand\he{\mathtt{h_{e}}}
\def\bpsi{{\boldsymbol \psi}}
\newcommand\Nt{\mathtt{N_{t}}}
\newcommand\htt{\mathtt{h_{t}}}
\begin{document}

\begin{titlepage}
\centering
\vspace*{1in}
\begin{Large}\bfseries
Control and stabilization of\par
geometrically exact beams\par
\end{Large}
\vspace{0.25in}
\begin{Large}
Steuerung und Stabilisierung von\par
geometrisch exakten Balken\par
\end{Large}
\vspace{4.5cm}
\begin{large}
\textit{Der Naturwissenschaftlichen Fakult\"at}\par
\end{large}
\vspace{0.2in}
\begin{large}
\textit{der}\par
\textit{Friedrich-Alexander-Universit\"at}\par
\textit{Erlangen-N\"urnberg}\par
\end{large}
\vspace{0.2in}
\begin{large}
zur\par
Erlangung des Doktorgrades Dr. rer. nat.\par
\end{large}
\vfill
%

\begin{large}
vorgelegt von\par
Charlotte Margot Rodriguez\par 
\end{large}
\end{titlepage}


\frontmatter

\vspace*{10cm}
\begin{center}
Als Dissertation genehmigt von der Naturwissenschaftlichen Fakult\"at\par
der Friedrich-Alexander-Universit\"at Erlangen-N\"urnberg
\end{center}
\vfill
Tag der m\"undlichen Pr\"ufung: 9 Dezember 2021 \par
\noindent Vorsitzende/r des Promotionsorgans: Prof. Dr. Wolfgang Achtziger \par
\noindent Gutachter/in: Prof. Dr. Günter Leugering \par
\hspace{1.925cm}Prof. Dr. Marius Tucsnak \par
\hspace{1.925cm}Dr. Andrew Wynn

\tableofcontents

\chapter{Acknowledgements}

First, I would like to thank my supervisor G\"unter Leugering for his constant optimism and belief that the math will work, but also for his kindness, generosity and patience. I esteem very much his love for mathematics, and appreciated the liberty and encouragements I received from him to carry out my research. Also thank you to Barbara for her kind and beautiful words.

I am grateful to Marius Tucsnak for all his help during the Master degree and after, for his guidance and generosity.
Thank you also to Enrique Zuazua, for always asking questions during my presentations, that permitted to deepen my understanding of the basis of my work, and for insisting that the numerical aspect is also important.

I would like to thank all the jury members and reviewers for accepting to take part and interest in my work.

\medskip

\noindent Thank you to the people here at FAU, colleagues and friends: Martin Gugat, Wigand Rathman, Doris Ederer, Wolfgang Achtziger, Michi (the first person I met in FAU when I was looking for Chair), Michele (my first office mate, for welcoming me and helping me so much when I arrived at the Department), Anja (I enjoyed very much our trips around Erlangen and am very grateful for  your help with settling in Erlangen and of course for your Käzespätzle recipe!), Daniel, Thomas, Falk, Christian, Yan (we explored the many Christmas markets of Erlangen as well as New Delhi together), Francisco, Ngoc (always here to help or listen to us), Marius, Christophe (thank you for creating the DNN group and its various other names, I think you helped weaving a strong social link at the Department), Alexei (very interesting introduction to Mexican food!), Arefeh (always with a smile for everyone), Nico, Tobias, Hannes, Lukas and many other nice encounters. I also enjoyed our exploration of the world's cooking styles and hiking trips around Erlangen.

Vielen Dank an Alex für deine Hilfe bei der deutschen Übersetzung. 
Thank you also Leon for your help with the Doctorate procedure.

Yue, Dani\"el, Adeel, I don't know how I would have managed the last week without you! Adeel, I admire very much your patience, open-mindedness and kindness. Thank you Dani\"el, for your friendship but also your very helpful comments and great course in numerics. And thank you Yue for your cooking lessons, kindness, curiosity, generosity, and great teamwork!

\medskip

\noindent Thank you also to Rafael Palacios, Andrew Wynn, Marc Artola, Arturo Mu\~noz-Sim\'on and their team at the Department of Aeronautics of Imperial College London. Our discussions were always enjoyable and eye opening. 
I also think that they influenced greatly my thesis. 
Marc, thanks to you I also got almost the last push needed to understand how to invert the transformation (the very last one being Professor Leugering's strong belief that is indeed invertible).

We were welcomed so warmly at the Mathematics Department of IIT Delhi, and it will remain a strong memory. Thank you very much to Mani Mehra and her family, as well as her students Vaibhav, Nitin, Abhishek. 
I also appreciated my stay at Deusto and  am grateful for meeting there Domenec, Abdennebi, Dario and many others.

\medskip

\noindent
Thank you to the ConFlex consortium, George Weiss, Xiaowei Zhao, Hans Zwart, Birgit Jacob and all other members of ConFlex.
Thank you to the ESRs, Marc, Arturo, Andrea, Borjan, Fatemeh, Nathanael, Juan, Jincheng, Pei, Pietro, Mir, Shantanu, Vikram and Gast\'on, we could not see each other very often, but our meeting were always full of good spirit and fun.
And thank you Andrea for explaining to me the engineering vocabulary and concepts.

Also thank you to the community of Math Stack Exchange and Math Overflow for the selfless help they gave me just for the sake of solving mathematical problems.

\medskip

\noindent Je voudrais remercier mes parents J\'er\^ome and Esp\'erance, mon frère Alexandre, ainsi que ma famille, Ma\"it\'e Mamie, Lucien Papy, \'Eric, Val\'erie, Laure, Lucas, Christophe, Angelina, Thomas, Louise, d'avoir toujours été là et être rest\'es unis aussi loin que je me souvienne. Merci aussi à mes amies Alexandra, Sandrine et Dilan. Tous ensembles vous étiez une bouteille de force toujours présente à mes côtés. I would like to thank my partner Borjan, for his unwavering support despite the difficulties, without whom I would not be writing these words.


\begin{flushright} Charlotte Rodriguez\\
Erlangen, the 31st of August 2021
\end{flushright}

\vfill

%
%
%

\begin{center} 
\begin{tikzpicture}
\node at (0, 0) {\includegraphics[scale=0.18]{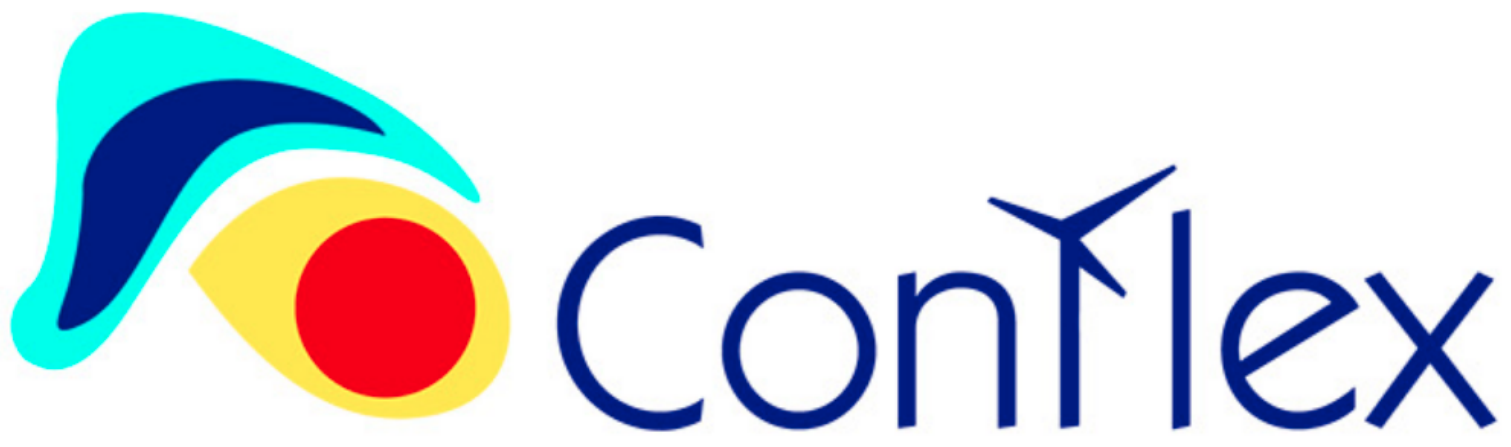}};

\node at (2.5, 0) {\includegraphics[scale=0.18]{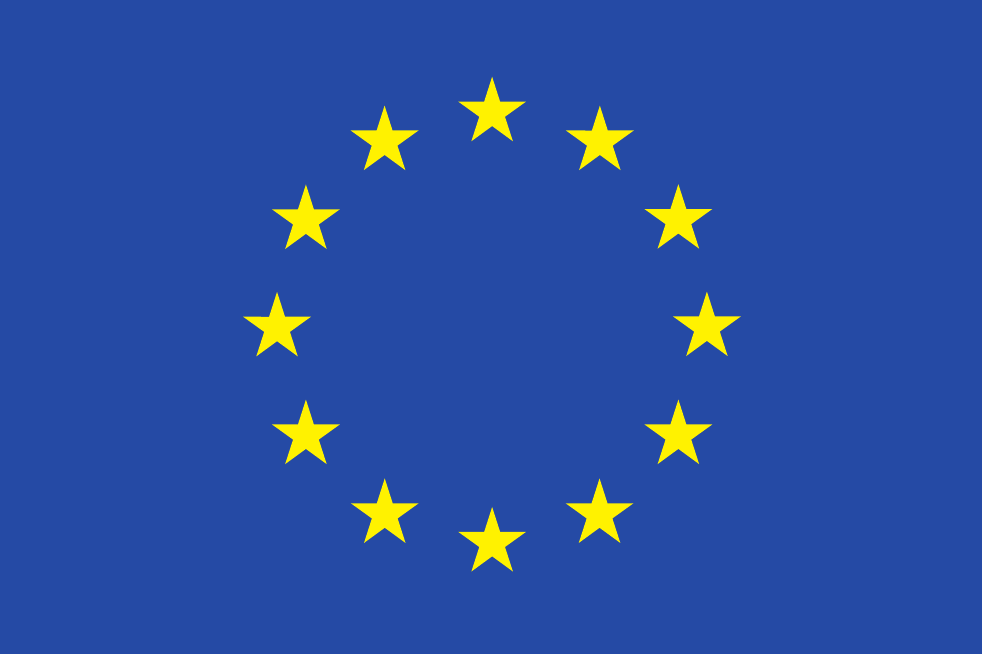}};

\node at (8.7, -0.025) {
\begin{minipage}{10cm}
This project has received funding from the European Union’s Horizon 2020 research and innovation programme under the Marie Sklodowska-Curie grant agreement No.765579-ConFlex.
\end{minipage}
};
\end{tikzpicture}
\end{center}

\chapter{Abstract}

In this thesis, we study well-posedness, stabilization and control problems involving freely vibrating beams that may undergo motions of large magnitude -- i.e. large displacements of the reference line and large rotations of the cross sections. Such beams, shearable and very flexible, are often called \emph{geometrically exact beams} and are especially needed in modern highly flexible light-weight structures, where one cannot neglect these large motions. The mathematical model is represented by a nonlinear governing system to account for such motions, and one thus also speaks of \emph{geometric nonlinearities}. The constitutive law however is linear, and allows for possibly composite, anisotropic materials.

\medskip

\noindent We view these beams from two perspectives. The first perspective is one in which the beam is described in terms of the position of its reference line and the orientation of its cross sections (expressed in some fixed coordinate system). This is the generally encountered model, due to Eric Reissner and Juan C. Simo. Of second order in time and space, it is a quasilinear system of six equations. 
The second perspective is one in which the beam is rather described by intrinsic variables -- velocities and strains or internal forces and moments -- which are moreover expressed in a \emph{moving} coordinate system attached to the beam. This system, proposed in its most general form by Dewey H. Hodges, consists of twice as many equations, but is of first order in time and space, hyperbolic and only semilinear (quadratic).
While the first model has a \emph{wave-like} form, the intrinsic beam model may be seen as part of the \emph{Hamiltonian} framework in continuum mechanics.

\medskip

\noindent 
Looking at the definition of the state of the latter model, we can see that both perspectives are linked by a nonlinear transformation.
The questions of well-posedness, stabilization and control are addressed for geometrically exact beams (and networks of such beams) governed by the intrinsic model, while by using the transformation we also prove that the existence and uniqueness of a classical solution to the intrinsic model implies that of a classical solution to the model written in terms of positions and rotations. In particular, this enables us to deduce corresponding well-posedness, stabilization and control results for the latter model. We also also address these questions for networks of beams attached to each other by means of rigid joints.

\medskip

\noindent The stabilization is realised by means of velocity feedback controls applied at the boundary. The first step consists in proving local in time existence and uniqueness of solutions in $C_t^0H_x^1$ (and $C_t^0H_x^2$ for more regular initial data) for general networks. Then, by means of \emph{quadratic Lyapunov functionals} we show that, first for a single beam controlled at one end, and then for a star-shaped network controlled at all simple nodes, one can achieve local exponential stability of the zero steady state for the $H^1$ and $H^2$ norms. Other than the quadratic nonlinearity, the main difficulty in finding such a functional lies in the fact that the linearized system is not homogeneous and thus one has to take into account not only the boundary conditions but also the governing system in order to find the functional.

The control problem we address herein is often called nodal profile control. It consists in steering the network states to given profiles \emph{at a prescribed node}, over a prescribed time interval, by means of sufficiently many controls actuating along the remainder of the nodes.
This notion is not hindered by the presence of cycles in the network. We first prove the semi-global in time existence and uniqueness of $C_{x,t}^1$ solutions -- i.e., for arbitrarily large time intervals, provided
that the initial and boundary data are small enough -- and then use the so-called \emph{constructive method} developed by Tatsien Li and collaborators to prove local exact controllability of nodal profiles for an A-shaped network.

We conclude with perspectives of future research and interesting open problems.


\chapter{Zusammenfassung}
In dieser Arbeit untersuchen wir Existenz und Eindeutigkeit von einer speziellen Klasse von partiellen Differentialgleichungen (PDG), die unter Anderem freischwingende Balken (die gro\ss en Auslenkungen unterworfen sind) modellieren. Weiter betrachten wir zugeh\"orige Steuerungs- und Stabiliserungsprobleme, wobei die Steuerung am Rand des Balkens lokalisiert ist (boundary control).
Die spezielle Modellklasse, die auch manchmal als ``geometric exact beam models'' bezeichnet wird, erm\"oglicht es, Balken zu modellieren, die scherbar und sehr flexibel sind, und insbesondere in modernen hochflexiblen Leichtbaustrukturen eingesetzt werden, bei denen gro\ss e Bewegungen nicht vernachl\"assigt werden k\"onnen.
Das resultierende mathematische Modell ist ein System von nichtlinearen, partiellen Differentialgleichung, das konstitutive ``Gesetz''  jedoch linear, und erlaubt zusammengesetzte, anisotrope Materialien zu ber\"ucksichtigen.

\medskip

\noindent Wir betrachten das genannte Balkenmodel aus zwei Perspektiven: Einmal wird der Balken durch die Position seiner Bezugslinie und die Ausrichtung seiner Querschnitte (ausgedr\"uckt in einem festen Koordinatensystem) beschrieben. Diese Beschreibung wird in der Literatur oft verwendet und geht auf Eric Reissner und Juan C. Simo zur\"uckgeht.
Die zugrundeliegende Dynamik ist dann durch ein System von quasilinearen partiellen Differentialgleichungen zweiter Ordnung in Raum (eindimensional) und Zeit gegeben (insgesamt $6$ Gleichungen).
In der zweiten Perspektive wird der Balken durch intrinsische Variablen -- das sind Geschwindigkeiten und Dehnungen oder innere Kr\"afte und Momente -- beschrieben, die in einem beweglichen, mit dem Balken verbundenen, Koordinatensystem ausgedr\"uckt werden. Diese von Dewey H. Hodges vorgeschlagene Beschreibung besteht dann aus doppelt so vielen Gleichungen ($12$ an der Zahl), ist daf\"ur aber in Zeit und Raum von erster Ordnung, hyperbolisch und nur semilinear (quadratisch).
Die erste Beschreibung (das System von 6 PDGs von zweiter Ordnung in Raum und Zeit) l\"asst sich als ein System von nichtlinearen Wellengleichungen interpretieren, w\"ahrend die zweite Beschreibung, die intrinsische mit 12 PDG, eher an eine Hamiltonische Formulierung erinnert.

\medskip

\noindent Die beiden oben eingef\"uhrten Beschreibungen oder Perspektiven k\"onnen durch eine nichtlineare Transformation ineinander \"uberf\"uhrt worden. Existenz und Eindeutigkeit, Stabilisierung und Steue-rung werden dann f\"ur die geometrisch exakten Balkenmodelle (und Netzwerke solcher Balken) studiert, wobei wir die einfachere intrinsische Formulierung, die ``nur'' semilinear ist, nutzen. Die Resultate lassen sich dann mit der entsprechenden inversen Transformation (deren Wohldefiniertheit zun\"achst gezeicht werden muss) auf das quasilineare System zur\"uck\"ubersetzen, was eine Interpretation in den entsprechenden Koordinaten zul\"asst. Wir erweitern diese Fragen danach auch auf Netzwerke von Balken, die durch starre Verbindungen miteinander gekoppelt sind.

\medskip

\noindent Die Stabilisierung wird mit Hilfe von Geschwindigkeitsr\"uckkopplungskontrollen am Balkenanfang oder -ende realisiert. Dazu beweisen wir in einem ersten Schritt die zeitlich lokale Existenz und Eindeutigkeit von L\"osungen im Raum $C_t^0H_x^1$ (und $C_t^0H_x^2$ f\"ur glattere Anfangsdaten) auf allgemeinen Netzwerken. Dann zeigen wir mit Hilfe der Lyapunov Theorie, dass man zun\"achst f\"ur einen einzelnen Balken, der an einem Ende gesteuert wird, und dann f\"ur ein sternf\"ormiges Netz, das an allen einfachen Knoten gesteuert wird, lokale exponentielle Stabilit\"at des station\"aren Nullzustands f\"ur die $H^1$- und $H^2$-Normen erreichen kann. Abgesehen von der quadratischen Nichtlinearit\"at der Dynamiken liegt die Hauptschwierigkeit bei der Suche nach einer geeigneten Lyapunov Funktion darin, dass das linearisierte System nicht homogen ist, und man daher nicht nur die Randbedingungen, sondern das gesamte beschreibende System bei der Wahl der Lyapunov Funktion ber\"ucksichtigen muss.

Unser Steuerungsproblem, das man auf dem allgemeinen Netzwerk (einschlie\ss lich ``Kreisen'') als Knotenprofilsteuerung ansehen kann, besteht darin, das System so zu steuern, dass es an einem Knoten des Netzes mit Hilfe von ausreichend vielen Steuerungen, die an anderen Knoten angewandt werden, bestimmte Profile \"uber ein bestimmtes Zeitintervall erreicht.  Dazu beweisen wir zun\"achst die semi-globale zeitliche Existenz und Eindeutigkeit von $C^1_{x,t}$-L\"osungen -- d.h.\ die Existenz und Eindeutigkeit f\"ur beliebig gro\ss e Zeitintervalle, sofern die Anfangs- und Randdaten klein genug sind -- und verwenden dann die von Tatsien Li, et al.\  entwickelte so genannte ``konstruktive Methode'', um die lokale exakte Steuerbarkeit von Knotenprofilen f\"ur ein A-f\"ormiges Netzwerk zu beweisen.

Abschlie\ss end werden Perspektiven f\"ur k\"unftige Forschung aufgezeigt und interessante offene Probleme angesprochen.

\chapter{Preface}
This thesis consists of three published/accepted peer-reviewed papers, and an exposition of these papers (given in Part \ref{part:exposition}). Reprints of the published articles are included in Part \ref{part:reprints}.

\begin{enumerate}[label={\normalfont \textbf{[A\arabic*]}}]
\item \label{A:SICON} 
Charlotte Rodriguez, and G\"unter Leugering. ``Boundary feedback stabilization for the intrinsic geometrically exact beam model''. In: \textit{SIAM Journal on Control and Optimization} 58 (6), pp. 3533--3558 (2020). \texttt{DOI}: \href{https://doi.org/10.1137/20M1340010}{\texttt{10.1137/20M1340010}}.

\item \label{A:MCRF}
Charlotte Rodriguez. ``Networks of geometrically exact beams: well-posedness and stabilization''. In: \textit{Mathematical Control and Related Fields} (2021). \texttt{DOI}: \href{https://www.aimsciences.org/article/doi/10.3934/mcrf.2021002}{\texttt{10.3934/mcrf.2021002}}. Advance online publication.

\item \label{A:JMPA}
G\"unter Leugering, Charlotte Rodriguez, and Yue Wang. ``Nodal profile control for networks of geometrically exact beams''. In: \textit{Journal de Mathématiques Pures et Appliquées} 155, pp. 111--139 (2021). \texttt{DOI}: \href{https://doi.org/10.1016/j.matpur.2021.07.007}{\texttt{10.1016/j.matpur.2021.07.007}}.
\end{enumerate}
The exposition is divided in six chapters providing a comprehensive introduction to the mathematical models of interest here, a summary of papers' results with ideas of the proofs of the main results, and a final summary and outlook.
The summary of the papers, from Chapter \ref{ch:wellposedness} to Chapter \ref{ch:control},
is not a sequential presentation of each paper. It is organized around three topics -- modelling and wellposedness (Chapter \ref{ch:wellposedness}), stabilization (Chapter \ref{ch:stab}) and control (Chapter \ref{ch:control}) -- in order to give a general overview to the reader.
Chapter \ref{ch:wellposedness} is built upon all three papers \ref{A:SICON}-\ref{A:JMPA}, Chapter \ref{ch:stab} upon \ref{A:SICON} and \ref{A:MCRF}, and Chapter \ref{ch:control} upon \ref{A:JMPA}.
Each chapter is accompanied with an Appendix consisting of preliminaries or proofs that could not be included in the aforementioned papers and are thus given here for the sake of completeness. 

\medskip

\noindent Charlotte Rodriguez’s contributions to the co-authored publications are summarized as follows:

\begin{itemize}
\item[\ref{A:SICON}] GL proposed the idea of studying the intrinsic formulation of the mathematical model of the geometrically exact beams (i.e., the IGEB model), before studying the transformation between the GEB and IGEB models. GL proposed to stabilize a beam by means of velocity feedback controls applied at one end. CR chose the Lyapunov theory for the stabilization analysis and developed a method to build an appropriate quadratic Lyapunov functionals and conceived independently a strategy to invert the transformation. Hence, CR was responsible for working out the proofs of this paper and wrote the article with guidance on its organization from GL. CR was corresponding author for this publication.

\item[\ref{A:JMPA}] Based on CR's expertise on models for geometrically exact beams, GL proposed the idea of studying nodal profile control for networks of geometrically exact beams containing loops, and suggested the theory of Tatsien Li and his collaborators as a basis. CR was responsible for the proofs of well-posedness and invertibility of the transformation between the GEB and IGEB formulation, for general networks with loops.
CR and YW worked out together the proof of controllability for the A-shaped network, as well as the Algorithm proposed in view of extending the controllability proof to other networks and settings. CR was responsible for writing most of the article, and was corresponding author for this publication.

\end{itemize}

\noindent Further preprints or accepted peer-reviewed conference papers -- written in the same period of time -- by the author, are not included in the thesis but listed below. They complement the topics covered in this work.

\begin{enumerate}[label={\normalfont \textbf{[P\arabic*]}}]

\item \label{P:CDC}
Marc Artola, Charlotte Rodriguez, Andrew Wynn, Rafael Palacios and G\"unter Leugering. ``Optimisation of Region of Attraction Estimates for the Exponential Stabilisation of the Intrinsic Geometrically Exact Beam Model''. In: \textit{IEEE Conference on Decision and Control} (2021).
Accepted. \texttt{arXiv}: \href{https://arxiv.org/abs/2110.06002}{\texttt{2110.06002}}

\item \label{P:ssm}
G\"unter Leugering, Charlotte Rodriguez, Yue Wang. ``Exact controllability of networks of elastic strings springs and masses''. 2021. In preparation.

\end{enumerate}

\mainmatter

\part{Exposition}
\label{part:exposition}

\chapter{Introduction}
\label{ch:intro}

\section{Motivation}
\label{sec:motivation}

%



Beam models describing the three-dimen\-sional motions of thin elastic bodies (much bigger in one dimension than the other two) have found many applications in civil, mechanical and aerospace engineering. This is also true for multi-link flexible structures such as large spacecraft structures, trusses, robot arms, solar panels, antennae \cite{chen_serial_EBbeams, LLS, flotow_spacecraft}, generally modeled by networks of interconnected beams.
Depending on the assumptions made on the beam, there are various partial differential equation (PDE) models for flexible beams, such as the well-known Euler-Bernoulli and Timoshenko models where for the former the \emph{cross sections} remain perpendicular to the \emph{reference line} (or \emph{centerline}), while they may also rotate for the latter -- one then speaks of \emph{shearing}.

\medskip

\noindent 
However, nowadays there is a growing interest in modern highly flexible light-weight structures -- for instance robotic arms \cite{grazioso2018robot}, flexible aircraft wings \cite{Palacios2010aero} or wind turbine blades \cite{Munoz2020, wang2014windturbine} -- which exhibit \emph{motions of large magnitude}, not negligible in comparison to the overall dimensions of the object.
To capture such a behavior, one needs a beam model which is \emph{geometrically exact} (sometimes also called \emph{geometrically nonlinear}), in the sense that the governing system presents nonlinearities in order to also represent large motions -- i.e., large displacements of the centerline and large rotations of the cross sections. 

\begin{figure}[h!] \centering
\includegraphics[scale=0.75]{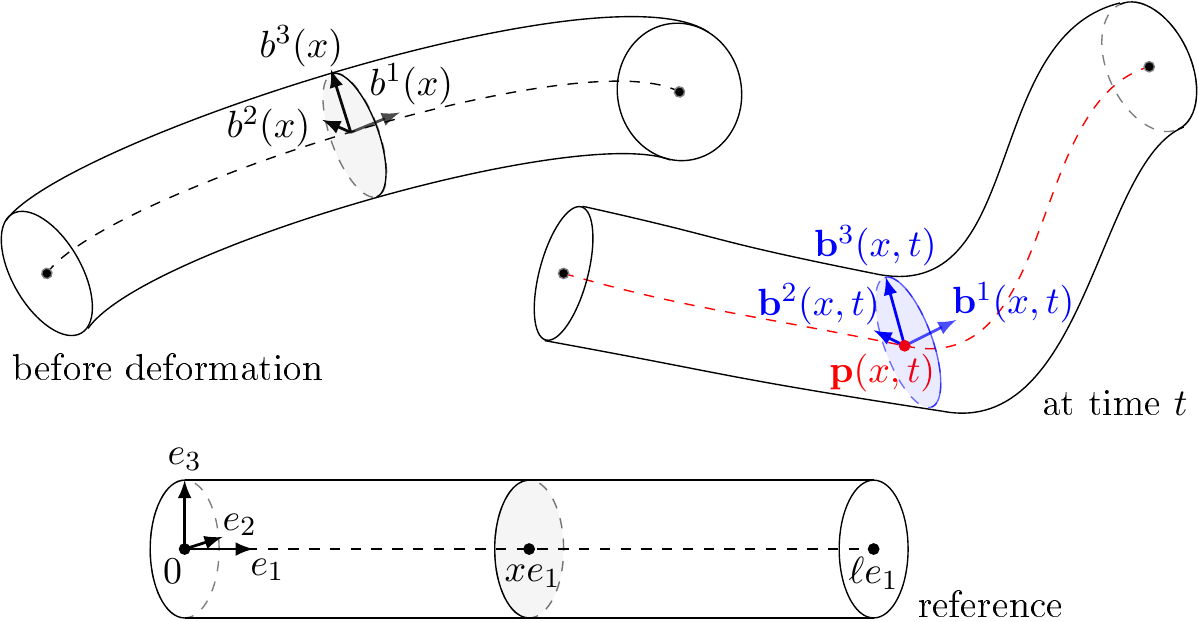}
\caption{The \emph{straight reference} beam (bottom), the beam \emph{before deformation} characterized by the curvature $\Upsilon_c = \mathrm{vec}(R^\intercal \frac{\mathrm{d}}{\mathrm{d}x}R)$ where $R = \begin{bmatrix}b^1 & b^2 & b^3 \end{bmatrix}$ (upper left), and the beam \emph{at time $t$} described by the state variables $\mathbf{p}$ and $\mathbf{R} = \begin{bmatrix}
\mathbf{b}^1 & \mathbf{b}^2 & \mathbf{b}^3
\end{bmatrix}$ (upper right).}
\label{fig:beam_unknown}
\end{figure}

This beam model, similarly to the Euler-Bernoulli and Timoshenko systems, is one-dimensional with respect to the spatial variable $x$ and accounts for linear-elastic material laws, meaning that the \emph{strains} (which are the local changes in the shape of the material) are assumed to be small.
As for the Timoshenko system, models for geometrically exact beams account for shear deformation as well.
Moreover, the geometrical and material properties of the beam may vary along the beam (indeed, we will see that the coefficients of the system depend on $x$) and the material may be anisotropic.
As a matter of fact, the Euler-Bernoulli and Timoshenko equations can be derived from geometrically exact beam models under appropriate simplifying assumptions \cite[Section IV]{Artola2021damping}.

\medskip

\noindent The mathematical model for geometrically exact beams may be written in terms of the position of the centerline of the beam and the orientation of its cross sections, with respect to a \emph{fixed} coordinate system $\{e_j\}_{j=1}^3$ (the standard basis of $\mathbb{R}^3$ here). This is the commonly known \emph{Geometrically Exact Beam model}, or GEB, which originates from the work of Reissner \cite{reissner1981finite} and Simo \cite{simo1985finite}. The governing system is quasilinear, consisting of six equations. One may draw a parallel with the wave equation as the GEB model is of second order both in space and time.

The state is then $(\mathbf{p}, \mathbf{R})$, expressed in $\{e_j\}_{j=1}^3$ and composed of the centerline's position $\mathbf{p}(x,t) \in \mathbb{R}^3$ and cross sections' orientation\footnote{The special orthogonal group $\mathrm{SO}(3)$ is the set of unitary real matrices of size $3$ and with a determinant equal to $1$, also called \emph{rotation} matrices.} given by the columns $\{\mathbf{b}^j\}_{j=1}^3$ of $\mathbf{R}(x,t) \in \mathrm{SO}(3)$. 
We refer to Fig. \ref{fig:beam_unknown} for visualisation.
For a beam of length $\ell>0$, set in $(0, \ell)\times(0, T)$, the governing system reads\footnote{
For any $u \in \mathbb{R}^3$, $\widehat{u}$ denotes the skew-symmetric matrix equivalent to the vector cross multiplication by $u$, that arises when treating the cross product as a linear map in the second argument, while $\mathrm{vec}(\cdot)$ permits to recover $u = \mathrm{vec}(\widehat{u})$ (see Section \ref{sec:notation} for more detail).}
\begin{align}
\label{eq:GEB_pres}
\begin{bmatrix}
\partial_t & \mathbf{0}\\
(\partial_t \widehat{\mathbf{p}}) & \partial_t
\end{bmatrix} \left[ \begin{bmatrix}
\mathbf{R} & \mathbf{0}\\ \mathbf{0} & \mathbf{R}
\end{bmatrix}
\mathbf{M} v \right] = \begin{bmatrix}
\partial_x & \mathbf{0} \\ (\partial_x \widehat{\mathbf{p}}) & \partial_x
\end{bmatrix} \left[\begin{bmatrix}
\mathbf{R} & \mathbf{0}\\ \mathbf{0} & \mathbf{R}
\end{bmatrix} z\right] + \begin{bmatrix}
\overline{\phi} \\ \overline{\psi}
\end{bmatrix},
\end{align}
given\footnote{The set of positive definite symmetric matrices of size $n$ is denoted $\mathbb{S}_{++}^n$.} external forces and moments $\overline{\phi}(x,t), \overline{\psi}(x,t) \in \mathbb{R}^3$, the mass matrix $\mathbf{M}(x)\in \mathbb{S}_{++}^{6}$, the flexibility (or compliance) matrix $\mathbf{C}(x) \in \mathbb{S}_{++}^{6}$ and the curvature before deformation $\Upsilon_c(x) \in \mathbb{R}^3$, and where $v, z$ depend on $(\mathbf{p}, \mathbf{R})$:
\begin{equation} \label{eq:def_v_z}
v
= \begin{bmatrix}
\mathbf{R}^\intercal \partial_t \mathbf{p}\\ \mathrm{vec}\left( \mathbf{R}^\intercal \partial_t \mathbf{R} \right)
\end{bmatrix}, \quad 
s= \begin{bmatrix}
\mathbf{R}^\intercal \partial_x \mathbf{p}  - e_1 \\ 
\mathrm{vec}\left(\mathbf{R}^\intercal \partial_x \mathbf{R} \right) - \Upsilon_c
\end{bmatrix}, \quad z = \mathbf{C}^{-1} s.
\end{equation}

On the other hand, the mathematical model can also be written in terms of so-called \emph{intrinsic} variables -- namely, velocities and internal forces/moments, or equivalently velocities and strains -- expressed in a \emph{moving} coordinate system attached to the beam (the moving basis $\{\mathbf{b}^j\}_{j=1}^3$).
This yields the \emph{Intrinsic Geometrically Exact Beam model}, or IGEB, which is introduced by Hodges \cite{hodges1990, hodges2003geometrically}. The governing system then counts twelve equations.
The state is 
\begin{align*}
y = \begin{bmatrix}
v\\z
\end{bmatrix},
\end{align*}
expressed in the moving basis and composed of the linear and angular velocities $v(x, t) \in \mathbb{R}^6$ and internal forces and moments $z(x, t) \in \mathbb{R}^6$. Set in $(0, \ell)\times(0, T)$, the governing system reads
\begin{align}
\label{eq:IGEB_pres} 
\begingroup 
\setlength\arraycolsep{2.5pt}
\renewcommand*{\arraystretch}{1}   
\begin{bmatrix}
\mathbf{M} & \mathbf{0}\\
\mathbf{0} & \mathbf{C}
\end{bmatrix}
\endgroup
\partial_t y
-
\begingroup 
\setlength\arraycolsep{2.5pt}
\renewcommand*{\arraystretch}{1}
\begin{bmatrix}
\mathbf{0} & \, \mathbf{I}\,\\
\, \mathbf{I}\, & \mathbf{0}
\end{bmatrix}
\endgroup
\partial_x y
- 
\begingroup 
\setlength\arraycolsep{2pt}
\renewcommand*{\arraystretch}{0.9}
\begin{bmatrix}
\mathbf{0} & 
\begin{matrix}
\widehat{\Upsilon}_c & \mathbf{0}\\
\widehat{e}_1 & \widehat{\Upsilon}_c
\end{matrix}
\\
\begin{matrix}
\widehat{\Upsilon}_c & \widehat{e}_1 \\
\mathbf{0}& \widehat{\Upsilon}_c
\end{matrix} & \mathbf{0}
\end{bmatrix}
\endgroup
y 
=
-
\begingroup 
\setlength\arraycolsep{2.5pt}
\renewcommand*{\arraystretch}{1}
\begin{bmatrix}
\begin{matrix}
\widehat{v}_2 & \mathbf{0}\\
\widehat{v}_1 & \widehat{v}_2
\end{matrix} & 
\begin{matrix}
 \mathbf{0} & \widehat{z}_1 \\
\widehat{z}_1 & \widehat{z}_2
\end{matrix}\\
\mathbf{0} & 
\begin{matrix}
\widehat{v}_2 & \widehat{v}_1\\
\mathbf{0} & \widehat{v}_2
\end{matrix}
\end{bmatrix}
\endgroup
\begin{bmatrix}
\mathbf{M} v\\ \mathbf{C}z
\end{bmatrix}
+ \begin{bmatrix}
\overline{\Phi} \\ \overline{\Psi} \\ \mathbf{0} \\ \mathbf{0}
\end{bmatrix}
\end{align}
denoting by $v_1,z_1$ and $v_2,z_2$ the first and last three components of $v,z$, and where $\overline{\Phi}(x,t), \overline{\Psi}(x,t) \in \mathbb{R}^3$ are the external forces and moments expressed in the moving basis.

An interesting feature of the IGEB model is that it falls into the class of one-dimensional first-order hyperbolic systems and is moreover only semilinear. Therefore, from a mathematical perspective, one gains access to the broad literature which has been developed on such system -- see notably Li and Yu \cite{Li_Duke85}, Bastin and Coron \cite{BC2016} -- beyond the context of beam models.
Due to its less compound nature, the IGEB formulation is used in aeroelastic modelling and engineering, notably in the context of very light-weight and slender aircraft aiming to remain airborne over long time horizons, and that consequently exhibit great flexibility \cite{Palacios2017modes, Palacios2011intrinsic, Palacios2010aero}; see also \cite{Artola2020aero, Artola2019mpc, Artola2021damping} where the authors additionally take into account structural damping.

\medskip

\noindent On another hand, as pointed out in \cite[Sec. 2.3.2]{weiss99}, one may see the GEB model and IGEB model as being related by a \emph{nonlinear transformation} (which we define in \eqref{eq:transfo}). In this work, we will keep track of this link between both models, studying mathematically the latter, and then deducing corresponding results for the GEB model.

As commonly done in solid mechanics, both the GEB and IGEB models are \emph{Lagrangian descriptions} of the beam (as opposed to the \emph{Eulerian description}), in the sense that the independent variable $x$ is attached to matter ($x$ is a label sticking to the particles of the beam's centerline throughout the deformation history) rather than being attached to an inertial frame of reference.

The IGEB model can also be seen as the beam dynamics being formulated in the \emph{Hamiltonian} framework in continuum mechanics (see notably \cite[Sections 5, 6]{Simo1988}), while the GEB model corresponds to the \emph{Lagrangian} framework.
Then, taking into account the interactions of the beam with its environment, one may study the IGEB model from the perspective of \emph{Port-Hamiltonian Systems} (see \cite{Maschke1992} for the finite dimension setting and \cite{Schaft2002},\cite[Chap. 7]{Zwart2012bluebook} for the infinite dimensions setting), as in  \cite{Macchelli2007, Macchelli2009} and \cite[Section 4.3.2]{Macchelli2009book}. See also the case of the Timoshenko model in \cite{Macchelli2004Timo, Mattioni2020Timo}.

\medskip

\noindent Though we presented the models in their most general form, with the presence of external forces and moments $\overline{\phi}, \overline{\psi}$ or $\overline{\Phi}, \overline{\Psi}$, we will assume that the beams are \emph{freely vibrating}. This means that the external forces and moments -- which could represent gravity or aerodynamic forces for instance -- are set to zero.


\section{Structure of this exposition}

In Chapter \ref{chap:mathmodel}, we explain how the position of the beam as well as its material and geometrical properties are described, we present the GEB  and IGEB models in detail, and the nonlinear transformation by which they are linked, and finally we introduce the mathematical model for networks of beams linked by rigid joints when the beams dynamics are given in terms of positions and rotation matrices.

Chapter \ref{ch:wellposedness} is concerned with providing the model for networks of beams described by the IGEB model, giving well-posedness results for the latter, and inverting the transformation between the two descriptions (for a single beam and networks).

Building upon the this, Chapter \ref{ch:stab} and Chapter \ref{ch:control} present the stabilization and controllability results. Local exponential stabilization by means of feedback controls is proved for a single beam controlled at one node and a star-shaped network controlled at all simple nodes, and the removal of one of the controls in the latter case is then discussed. Local exact controllability of nodal profiles is proved for an A-shaped network where one controls the internal forces and moments at both simple nodes to achieve a given profile at one of the multiple nodes, and the extension to more general networks and scenarios is then discussed.

Part \ref{part:exposition} is concluded in Chapter \ref{ch:conclusion} that summarizes the main
results and gives an outlook on future research questions.
Finally, in Part \ref{part:reprints} are found reprints of the author’s published/accepted peer-reviewed papers, which constitute this thesis.

\section{Notation}
\label{sec:notation}

Let $m, n \in \{1, 2, \ldots\}$ and $M \in \mathbb{R}^{n\times n}$. 
We use the notation $\|M\| = \sup_{|\xi| = 1} |M \xi|$, where $|\, . \,|$ is the Euclidean norm. The inner product in $\mathbb{R}^n$ is denoted $\langle \cdot \,, \cdot \rangle$.
%
%
We denote by $\{e_j\}_{j=1}^3 = \{(1, 0, 0)^\intercal, (0, 1, 0)^\intercal, (0, 0, 1)^\intercal\}$ the standard basis of $\mathbb{R}^3$.

\medskip

\noindent \textbf{Matrices.}
Here, the identity and null matrices are denoted by $\mathbf{I}_n \in \mathbb{R}^{n \times n}$ and $\mathbf{0}_{n, m} \in \mathbb{R}^{n \times m}$, and we use the abbreviation $\mathbf{0}_{n} = \mathbf{0}_{n, n}$. If there is no confusion, we omit the subscript and write $\mathbf{I}$ and $\mathbf{0}$ instead. 
%
The transpose and determinant $M$ are denoted by $M^\intercal$ and $\mathrm{det}(M)$, respectively. 
%
The symbol $\mathrm{diag}(\, \cdot \, , \ldots, \, \cdot \, )$ denotes a (block-)diagonal matrix composed of the arguments.
We denote by $\mathbb{S}^n$ (and $\mathbb{D}^n$), by $\mathbb{S}_+^n$ (and $\mathbb{D}_+^n$) and by $\mathbb{S}^n_{++}$ (and $\mathbb{D}^n_{++}$) the sets of symmetric (resp. diagonal), positive semi-definite symmetric (resp. positive semi-definite diagonal) and positive definite symmetric (resp. positive definite diagonal) real matrices of size $n$, respectively.
Then $\mathbb{S}_-^n$ (and $\mathbb{D}_-^n$) and $\mathbb{S}_{--}^n$ (and $\mathbb{D}_-^n$) denote the analogous spaces for negative definite and negative semi-definite matrices.

\medskip

\noindent \textbf{Functional spaces.}
In the norms' subscripts, when there is no ambiguity, we use the abbreviations $C_x^m = C^m([0, \ell_i]; \mathbb{R}^d)$ and $H_x^m = H^m(0, \ell_i; \mathbb{R}^{d})$, as well as $C_t^m = C^m(I; \mathbb{R}^d)$ and $C_{x,t}^m = C^m([0, \ell_i]\times I; \mathbb{R}^d)$ for the appropriate $\ell_i>0$, time interval $I$ and dimension $d \in \{1, 2, \ldots\}$.
Furthermore, we denote $\mathbf{H}^m_x = \prod_{i=1}^N H^m(0, \ell_i; \mathbb{R}^{12})$, $\mathbf{C}^m_{x} = \prod_{i=1}^{N} C^m([0, \ell_i]; \mathbb{R}^{12})$ and $\mathbf{L}^2_{x} = \prod_{i=1}^{N} L^2(0, \ell_i; \mathbb{R}^{12})$, these spaces being endowed with the associated product norms.

\medskip 

\noindent \textbf{Cross product.}
The cross product between any $u, \zeta \in \mathbb{R}^3$ is denoted $u \times \zeta$, and we shall also write $\widehat{u} \,\zeta = u \times \zeta$, meaning that $\widehat{u}$ is the skew-symmetric matrix 
\begin{linenomath}
\begin{equation*}
\widehat{u} = \begin{bmatrix}
0 & -u_3 & u_2 \\
u_3 & 0 & -u_1 \\
-u_2 & u_1 & 0
\end{bmatrix}, 
\end{equation*}
\end{linenomath}
and for any skew-symmetric $\mathbf{u} \in \mathbb{R}^{3 \times 3}$, the vector $\mathrm{vec}(\mathbf{u}) \in \mathbb{R}^3$ is such that $\mathbf{u} = \widehat{\mathrm{vec}(\mathbf{u})}$. 
%

%
%
%
%
\chapter{The mathematical models}
\label{chap:mathmodel}

After having given details about the variables and parameters that permit to describe the position and properties (geometry, material) of the beam in Section \ref{sec:mechanical_setting}, we give a more complete description of the GEB and IGEB models in Sections \ref{sec:pres_GEB} and \ref{sec:pres_IGEB}. As already mentionned in the introduction, the latter model originates from \cite{hodges2003geometrically} and since the notation used here differs significantly from this reference, we provide the correspondence with that of \cite{hodges2003geometrically} in Appendix \ref{ap:hodges}. 
After this, in Section \ref{sec:pres_transfo}, we present in more detail the transformation linking these two models, and finally introduce the model considered here for networks of beams in Section \ref{sec:networks}.
Let $T>0$.

\section{Mechanical setting}
\label{sec:mechanical_setting}

\subsection{Description of the beam}
\label{sec:decription_beam}

\begin{figure}\centering
\includegraphics[scale=0.75]{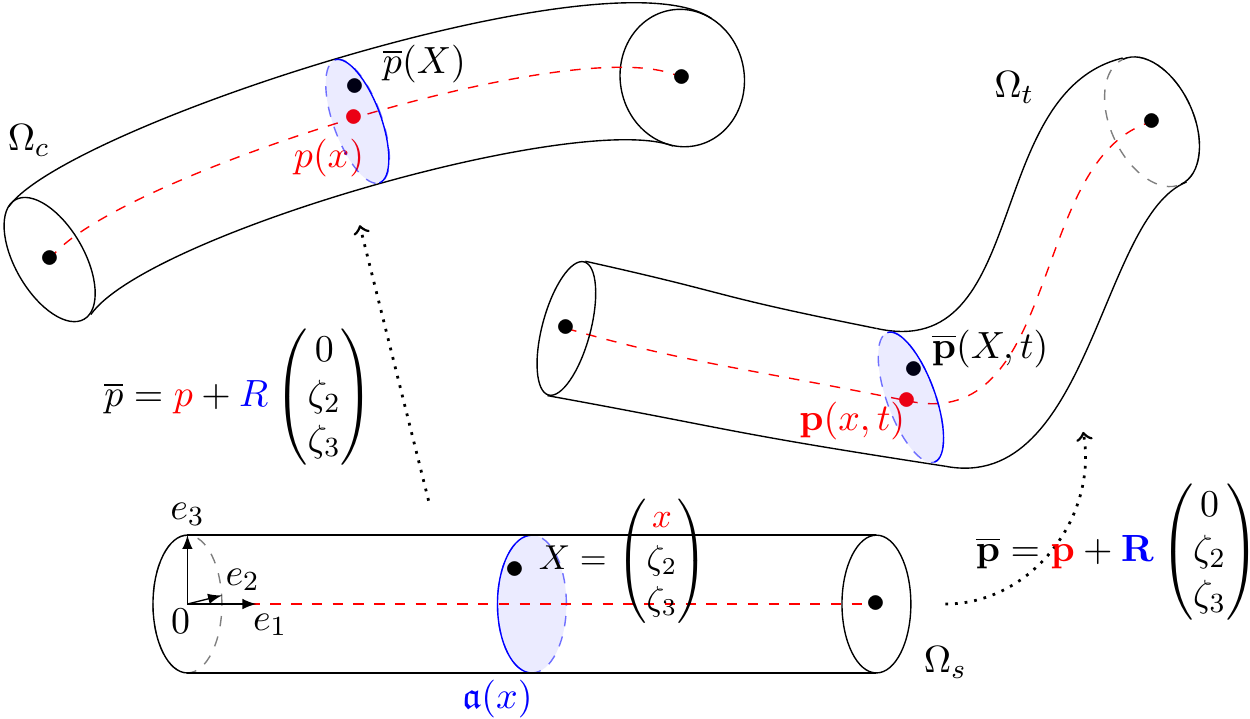}
\caption{Description of the beam in its different configurations $\Omega_s $, $\Omega_c $ and $\Omega_t $..}
\label{fig:beam_position}
\end{figure}

The beam is idealized as a \emph{reference line} -- that we also call \emph{centerline} -- and a family of \emph{cross sections}.
At rest (before deformation) the position of the centerline $p \colon [0, \ell] \rightarrow \mathbb{R}^3$ and the orientation of the cross sections, are both known. The latter is given by the columns $\{b^j\}_{j=1}^3$ of a rotation matrix $R \colon [0, \ell] \rightarrow \mathrm{SO}(3)$. We assume that $b^1 = \frac{\mathrm{d}p_i}{\mathrm{d}x}$, meaning that $p$ is parametrized by its arclength.

At any time $t \in [0, T]$, the position $\mathbf{p} \colon [0, \ell]\times [0, T] \rightarrow \mathbb{R}^3$ of the centerline and the orientation of the cross sections, given by the columns $\{\mathbf{b}^j\}_{j=1}^3$ of a rotation matrix $\mathbf{R} \colon [0, \ell]\times[0, T] \rightarrow \mathrm{SO}(3)$, are both unknown.
As shear deformation is allowed, $\mathbf{b}^1$ is not necessarily tangent to the centerline.

\medskip

\noindent Let $\Omega_{s} \subset \mathbb{R}^3$ be the straight, untwisted beam whose centerline is located at $x e_1$ for $x \in [0, \ell]$; it may be written as $\Omega_{s}  = \bigcup_{x \in [0,\ell]}\mathfrak{a} (x)$ where $\mathfrak{a} (x)$ is the cross section intersecting the centerline at $x e_1$.
Then, the beam before deformation takes the form $\Omega_c  = \{\overline{p} (X) \colon X \in \Omega_s \}$ while the beam at time $t>0$ takes the form $\Omega_t  = \{\overline{\mathbf{p}} (X, t) \colon X \in \Omega_s \}$, where $\bar{p} $ and $\bar{\mathbf{p}} $ are defined by $\overline{p} (X) = p (x) + R (x)(\zeta_2 e_2 + \zeta_3 e_3)$ and 
\begin{align*}
\overline{\mathbf{p}} (X,t) = \mathbf{p} (x,t) + \mathbf{R} (x,t)(\zeta_2 e_2 + \zeta_3 e_3),
\end{align*}
using the notation $X = (x,\zeta_2,\zeta_3)^\intercal$ for any $X \in \Omega_{s}$.
We call $\Omega_s $, $\Omega_c $ and $\Omega_t $ the  \emph{straight-reference} configuration, \emph{curved-reference} configuration and \emph{current} configuration of the beam (see Fig. \ref{fig:beam_position}), respectively. One may also call $\Omega_c$ and $\Omega_t$ the undeformed and the deformed beam, respectively.

\begin{remark}
The curvature before deformation $\Upsilon_c$ is expressed in terms of $R$ as $\Upsilon_c = \mathrm{vec}\left(R^\intercal \frac{\mathrm{d}}{\mathrm{d}x}R\right)$. If the beam is straight and untwisted with centerline $p(x) = xe_1$ before deformation, then $R$ is the identity matrix and $\Upsilon_c = 0$.
\end{remark}

\subsection{Mass matrix, flexibility matrix and constitutive law}
\label{sec:mass_flex_materialLaw}

While the mass matrix $\mathbf{M}$ relates the linear momentum $P$ and angular momentum $H$, to the velocities by 
\begin{align} \label{eq:def_momenta}
\begin{bmatrix}
P\\H
\end{bmatrix} = \mathbf{M}v,
\end{align}
the flexibility matrix $\mathbf{C}$ relates the stresses $z$ (i.e. vector of \emph{internal forces and moments} expressed in the moving basis) to the strains $s$. As we consider linear-elastic materials, the latter relationship, already introduced in \eqref{eq:def_v_z}, reads 
\begin{align} \label{eq:stress_strain_rel}
z = \mathbf{C}^{-1} s.
\end{align}

\begin{remark}[Other unknown for the IGEB model]
In view of this, for the IGEB model, one also has the possibility to choose the unknown state $y$ consisting of velocities $v$ and strains $s$ -- as in \ref{A:SICON} --, or velocities $v$ and stresses $z$ -- as in \ref{A:MCRF} and \ref{A:JMPA}. Both obtained first-order systems will have similar properties. In this exposition we focus on the case where $y$ consists of velocities and stresses to keep the presentation uniform.
\end{remark}

In general, for beams made of linear-elastic material, but without additional assumptions on the material and geometry, the mass and flexibility matrices are positive definite (possibly positive semi-definite for the former), symmetric and dependent on $x$.
In the specific case of prismatic isotropic beams with sectional principal axis aligned with the body-attached basis, both the mass and flexibility matrices are, in addition, constant and diagonal. More precisely, they have the form
\begin{align}\label{eq:def_M_C_iso}
\mathbf{C} = \mathrm{diag}(S_1, S_2)^{-1}, \qquad \mathbf{M} = \rho \, \mathrm{diag}\big( a \mathbf{I}_3, \ J \big),
\end{align} 
where $J \in \mathbb{R}^{3\times 3}$, called the \textit{inertia matrix}, and $S_1, S_2 \in \mathbb{R}^{3 \times 3}$, are positive definite diagonal matrices defined by
\begin{align} \label{eq:def_J_S1_S2}
J &= \mathrm{diag}\big((I_2+I_3)k_1, \ I_2, \ I_3 \big)\qquad \text{and} \qquad
\left\{\begin{aligned}
S_1 &=  a \, \mathrm{diag}(E, k_2 G, k_3 G) \\
S_2 &= J \, \mathrm{diag}(G, E, E).
\end{aligned}\right.
\end{align}
These matrices are expressed in terms of the following geometrical and material parameters: density $\rho>0$, cross section area $a>0$, shear modulus $G>0$, Young modulus $E>0$, area moments of inertia $I_2, I_3>0$, shear correction factors $k_2>0, \ k_3 >0$, and the factor $k_1>0$ that corrects the polar moment of area.

Our stance here is that both $\mathbf{M}$ and $\mathbf{C}$ have values in $\mathbb{S}_{++}^6$, and are at least $C^1$ with respect to $x$ if they are space-dependent. In \ref{A:SICON} we assume that the mass and flexibility matrices are defined by \eqref{eq:def_M_C_iso}-\eqref{eq:def_J_S1_S2}, while in the \ref{A:MCRF} and \ref{A:JMPA} we remain with the general assumptions.

\medskip

\noindent We work here with the constitutive law \eqref{eq:stress_strain_rel} in which the stresses are a linear function of the strains. However, this could be changed for instance to include viscous damping. In this case, rather than defining $z$ as just $\mathbf{C}^{-1}s$, one can choose 
\begin{align} \label{eq:constLaw_fullKV}
z = z_\dagger + C_\tau \partial_t z_\dagger, \quad z_\dagger:=\mathbf{C}^{-1}s,
\end{align}
where $C_\tau \in \mathbb{R}^{6 \times 6}$ characterizes the damping of the viscoelastic material (see \cite{Artola2021damping}),
and thus inject this new expression of $z$ in \eqref{eq:GEB_pres}. To change the constitutive law in IGEB model, one should only replace $z$ by its new expression \eqref{eq:constLaw_fullKV} in the first six equations of \eqref{eq:IGEB_pres} and replace $z$ by $z_\dagger$ in the last six ones. Indeed, the last six governing equations are in fact derived directly from the definition of the strains $s$ (see Proposition \ref{prop:transfoGov} for more detail) and consequently the term $z$ there only represents there the quantity $\mathbf{C}^{-1}s$ independently of the chosen consititutive law. The state would become
\begin{align*}
y = \begin{bmatrix}
v \\ z_\dagger
\end{bmatrix}.
\end{align*}
This is notably considered in \cite{Artola2020aero, Artola2019mpc, Artola2021damping} in contexts of model reduction, optimal control and aeronautics, where the authors moreover use the last six equations of \eqref{eq:IGEB_pres} to approximate the term $C_\tau \partial_t z_\dagger$ in \eqref{eq:constLaw_fullKV} by $C_\tau \mathbf{C}^{-1}(\partial_x v - \mathbf{E}^\intercal v)$, neglecting any nonlinear term that appears with the new constitutive law.

\subsection{Body-attached variables}
The set $\{\mathbf{b} ^j\}_{j=1}^3$ can be seen as a body-attached (moving with time) basis, with origin $\mathbf{p}$. Hence, here, we consider two kinds of coordinate systems: $\{e_j\}_{j=1}^3$ which is fixed in space and time, and the body-attached basis $\{\mathbf{b} ^j\}_{j=1}^3$.

We then make the difference between two kinds of vectors in $\mathbb{R}^3$: global and body-attached.
Consider two vectors $u := \sum_{j=1}^3 u_j e_j$ and $U:=\sum_{j=1}^3 U_j e_j$ of $\mathbb{R}^3$, 
the former being a \emph{global} vector and the latter being the \emph{body-attached} representation of $u$. By this, we mean that the components of $u$ are its coordinates with respect to the global basis $\{e_j\}_{j=1}^3$, while the components of $U$ are coordinates of the vector $u$ with respect to the body-attached basis $\{\mathbf{b} ^j\}_{j=1}^3$. In other words $u = \sum_{j=1}^3 U_j \mathbf{b} ^j $. Both vectors are then related by the identity $u = \mathbf{R}  U$ since $\mathbf{b} ^j = \mathbf{R}  e_j$, and we may also call $u$ the \emph{global} representation of $U$. In fact, in Section \ref{sec:motivation}, we already encountered variables related in such a way: the external forces and moments
\begin{align} \label{eq:relation_phiPhi_psiPsi}
\overline{\phi} = \mathbf{R}\overline{\Phi}, \quad \overline{\psi} = \mathbf{R}\overline{\Psi}.
\end{align}

Similar considerations also hold for the set $\{b^j\}_{j=1}^3$ of columns of $R$, that we may then call a body-attached basis for the undeformed beam.

\medskip

\noindent Let $(V, W)$, $(\Phi, \Psi)$ and $(\Gamma, \Upsilon)$ denote the first and last three components of $v$, $z$ and $s$ (defined in \eqref{eq:def_v_z}), respectively. In other words, $V$ is the linear velocity vector, $W$ the angular velocity vector, $\Psi$ contains the internal forces, $\Psi$ the internal moments, $\Gamma$ the linear strains, $\Upsilon$ the curvature (or angular strains), and one has
\begin{align} \label{eq:def_VWPhiPsi}
v = \begin{bmatrix}
V \\ W
\end{bmatrix}, \quad z = \begin{bmatrix}
\Phi \\ \Psi
\end{bmatrix}, \quad s = \begin{bmatrix}
\Gamma \\ \Upsilon
\end{bmatrix}.
\end{align}
All six variables, which have values in $\mathbb{R}^3$, are body-attached variables in the above sense. Thus, one of the specificities of the state of the IGEB model is that it is made of body-attached -- rather than global -- variables.
For the global representations (i.e., expressed in the fixed basis) of $\Phi$ and $\Psi$, we employ the following notation
\begin{align} \label{eq:def_phi_psi}
\phi = \mathbf{R} \Phi, \quad \psi = \mathbf{R} \Psi.
\end{align}

\begin{remark}
Note that $\Upsilon$, called ``curvature'' here, is in more precise terms the curvature change relative to the curved reference configuration $\Omega_c$. The curvature change relative to $\Omega_s$ then takes the form $\Upsilon + \Upsilon_c$. 
\end{remark}

\section[The Geometrically Exact Beam model (GEB)]{The Geometrically Exact Beam model}
\label{sec:pres_GEB}

Let us now describe the GEB model which, as mentioned above, is written in terms of the position $\mathbf{p}$ of the centerline and of the rotation matrix $\mathbf{R}$ whose columns $\{\mathbf{b}^j\}_{j=1}^3$ give the orientation of the cross sections. The governing system \eqref{eq:GEB_pres} consists of six equations, and is of order two and quasilinear. 
For instance, a freely vibrating beam which is clamped at $x=0$ and free at the other end, is described by the system 
\begin{align}\label{eq:GEBtyp}
\begin{dcases}
\begingroup 
\setlength\arraycolsep{3pt}
\renewcommand*{\arraystretch}{0.9}
\begin{bmatrix}
\partial_t & \mathbf{0}\\
(\partial_t \widehat{\mathbf{p}}) & \partial_t
\end{bmatrix} \left[ \begin{bmatrix}
\mathbf{R} & \mathbf{0}\\ \mathbf{0} & \mathbf{R}
\end{bmatrix}
\mathbf{M} v \right] = \begin{bmatrix}
\partial_x & \mathbf{0} \\ (\partial_x \widehat{\mathbf{p}}) & \partial_x
\end{bmatrix} \begin{bmatrix}
\phi \\ \psi
\end{bmatrix}
\endgroup
&\text{in }(0, \ell)\times(0, T)\\
(\mathbf{p}, \mathbf{R})(0, t) = (f^\mathbf{p}, f^\mathbf{R}) &t \in (0, T)\\
\phi(\ell, t) = \mathbf{0}, \quad \psi(\ell, t) = \mathbf{0} &t \in (0, T)\\
(\mathbf{p}, \mathbf{R}, \partial_t \mathbf{p}, \mathbf{R}W)(x,0) = (\mathbf{p}^0, \mathbf{R}^0, \mathbf{p}^1, w^0)(x) &x \in (0, \ell),
\end{dcases}
\end{align}
where we recall that $v, z, \phi, \psi$ are functions of the unknowns defined in \eqref{eq:def_v_z} and \eqref{eq:def_VWPhiPsi}-\eqref{eq:def_phi_psi}.
Here, $(f^\mathbf{p}, f^\mathbf{R}) \in \mathbb{R}^3 \times \mathrm{SO}(3)$ are some given (constant) boundary data, and the initial data are $\mathbf{p}^0(x)$, $\mathbf{p}^1(x)$, $w^0(x) \in \mathbb{R}^3$ and $\mathbf{R}^0(x) \in \mathrm{SO}(3)$.

\medskip

\noindent \textbf{More general boundary conditions.}
More generally, in this work we consider two kinds of boundary conditions (at $x = 0$ or $x=\ell$): Neumann-type conditions
\begin{align}
\label{eq:GEB_genBC_Neu}
\nu(x) \left[ \begin{smallmatrix} 
\phi(x, t) \\ \psi(x, t)
\end{smallmatrix} \right] = f(t), \qquad t \in (0, T)
\end{align}
with ``external load'' $f(t) \in \mathbb{R}^6$, and Dirichlet-type conditions
\begin{align}
\label{eq:GEB_genBC_Diri}
(\mathbf{p}, \mathbf{R})(x, t) = (f^\mathbf{p}, f^\mathbf{R})(t), \qquad t \in (0, T).
\end{align}
with boundary data $(f^\mathbf{p}, f^\mathbf{R})(t) \in \mathbb{R}^3 \times  \mathrm{SO}(3)$.
Here, $\nu$ denotes the external unit normal vector -- i.e. $\nu(0) = -1$ and $\nu(\ell) = 1$.


\section[The Intrinsic Geometrically Exact Beam model (IGEB)]{The Intrinsic Geometrically Exact Beam model}
\label{sec:pres_IGEB}

We turn to the description of the IGEB model written in terms of the linear and angular velocities $v$ and internal forces and moments $z$ (expressed in the body-attached basis).
We have seen that the governing system is of the form \eqref{eq:IGEB_pres}. Thus, introducing the matrix $\mathbf{E}(x) \in \mathbb{R}^{6 \times 6}$ containing the information on curvature and twist at rest, and the matrix $Q^\mathcal{P}(x) \in \mathbb{S}_{++}^{12}$, defined by
\begin{align} \label{eq:def_boldE_calP}
\mathbf{E} = 
\begingroup 
\setlength\arraycolsep{3pt}
\renewcommand*{\arraystretch}{0.95}
\begin{bmatrix}
\widehat{\Upsilon}_c & \mathbf{0}\\
\widehat{e}_1 & \widehat{\Upsilon}_c
\end{bmatrix}
\endgroup
, \quad Q^\mathcal{P} = \mathrm{diag}(\mathbf{M}, \mathbf{C}),
\end{align}
the IGEB system with boundary conditions similar to \eqref{eq:GEBtyp} reads (freely vibrating beam)
\begin{align}
\label{eq:IGEBtyp}
\begin{dcases}
\partial_t y + A(x) \partial_x y + \overline{B}(x) y = \overline{g}(x, y) &\text{in }(0, \ell)\times(0, T)\\
v(0, t) = \mathbf{0} &\text{for }t \in (0, T)\\
z(\ell, t) = \mathbf{0} &\text{for }t \in (0, T)\\
y(x, 0) = y^0(x) &\text{for }x \in (0, \ell),
\end{dcases}
\end{align}
where the coefficients $A,\overline{B}$ and the source $\overline{g}$ depend on $\mathbf{M}, \mathbf{C}$ and $R$.
To obtain the governing system here, we have just left-multiplied \eqref{eq:IGEB_pres} by the inverse of $Q^\mathcal{P}$.
More precisely, $A, \overline{B} \colon [0, \ell] \rightarrow \mathbb{R}^{12 \times 12})$ are defined by
\begin{linenomath}
\begin{align} \label{eq:def_A_barB}
A = - (Q^\mathcal{P})^{-1} \begin{bmatrix}
\mathbf{0} & \mathbf{I}_6\\
\mathbf{I}_6 & \mathbf{0}
\end{bmatrix}, \quad \overline{B} = (Q^\mathcal{P})^{-1} \begin{bmatrix}
\mathbf{0} & - \mathbf{E}\\
\mathbf{E}^\intercal & \mathbf{0}
\end{bmatrix}
\end{align}
\end{linenomath}
The function $\overline{g} \colon [0, \ell]\times \mathbb{R}^{12} \rightarrow \mathbb{R}^{12}$ is defined by 
\begin{align} \label{eq:def_barg}
\overline{g}(x, u) = Q^\mathcal{P}(x)^{-1} \mathcal{G}(u) Q^\mathcal{P}(x) u
\end{align}
for all $x \in [0, \ell]$ and $u=(u_1^\intercal, u_2^\intercal, u_3^\intercal, u_4^\intercal)^\intercal \in \mathbb{R}^{12}$ with each $u_j \in \mathbb{R}^3$, where the map  $\overline{\mathcal{G}}$ is defined by
\begin{linenomath}
\begin{align} \label{eq:def_calG}
\mathcal{G}(u) = - 
\begin{bmatrix}
\begin{matrix}
\widehat{u}_2 & \mathbf{0}\\
\widehat{u}_1 & \widehat{u}_2
\end{matrix} & 
\begin{matrix}
 \mathbf{0} & \widehat{u}_3 \\
\widehat{u}_3 & \widehat{u}_4
\end{matrix}\\
\mathbf{0} & 
\begin{matrix}
\widehat{u}_2 & \widehat{u}_1\\
\mathbf{0} & \widehat{u}_2
\end{matrix}
\end{bmatrix}.
\end{align}
\end{linenomath}

Let us now give a more complete description of the coefficients in the governing system.

\medskip

\noindent \textbf{The matrix $A(x)$.}  
For all $x \in [0, \ell]$ the matrix $A(x)$ is hyperbolic, in the sense that it has real eigenvalues only, with twelve associated independent eigenvectors. This property relies on the fact that the mass and flexibility matrices have values in the set of positive definite symmetric matrices.
Indeed, let $\Theta \colon [0, \ell] \rightarrow \mathbb{S}_{++}^6$ be defined by
\begin{align} \label{eq:def_Theta}
\Theta = (\mathbf{C}^{\sfrac{1}{2}} \mathbf{M}\mathbf{C}^{\sfrac{1}{2}})^{-1}.
\end{align}
Since $\Theta(x)$ is symmetric as well as positive definite one may write it as 
\begin{align} \label{eq:Theta_diagonalization}
\Theta = U^\intercal D^2 U
\end{align}
for $U, D \colon [0, \ell] \rightarrow \mathbb{R}^{6 \times 6}$ such that $D(x)$ is a positive definite diagonal matrix containing the square roots of the eigenvalues of $\Theta(x)$ as diagonal entries, while $U(x)$ is unitary.

Relying on this observation, one may also see that the  $A(x)$, defined in \eqref{eq:def_A_barB}, has solely real eigenvalues $\{\lambda^k (x)\}_{k=1}^{12}$: six positive ones which are the square roots of the eigenvalues of $\Theta(x)$ (i.e. the diagonal entries of $D$), and six negative ones which are equal to the former but with a minus sign:
\begin{align} \label{eq:sign_eigval}
\lambda^k(x) <0 \ \text{ if } \ k\leq 6, \qquad \lambda^k(x) >0 \ \text{ if } \ k\geq 7.
\end{align}
More precisely, one has the following proposition.

\begin{proposition} \label{prop:A_hyperbolic}
For all $x \in [0, \ell]$, the matrix $A(x)$ may be diagonalized as follows. One has $A = L^{-1} \mathbf{D} L$ in $[0, \ell]$, where $\mathbf{D}, L \colon [0, \ell] \rightarrow \mathbb{R}^{12 \times 12}$ are defined by
\begin{linenomath}
\begin{equation} \label{eq:def_bfD_L}
\mathbf{D} = \mathrm{diag}(-D, D), \qquad L = \begin{bmatrix}
U \mathbf{C}^{-\sfrac{1}{2}} & DU \mathbf{C}^{\sfrac{1}{2}} \\
U \mathbf{C}^{-\sfrac{1}{2}} & - DU \mathbf{C}^{\sfrac{1}{2}}
\end{bmatrix},
\end{equation}
\end{linenomath}
where $U(x)\in\mathbb{R}^{6 \times 6}$ is unitary, $D(x) \in \mathbb{D}_{++}^6$ and both fulfill \eqref{eq:Theta_diagonalization}.
Moreover, the inverse $L^{-1} \colon [0, \ell] \rightarrow \mathbb{R}^{12 \times 12}$ is given by
\begin{linenomath}
\begin{align} \label{eq:inverseL}
L^{-1} = \frac{1}{2} \begin{bmatrix}
\mathbf{C}^{\sfrac{1}{2}} U^\intercal & \mathbf{C}^{\sfrac{1}{2}} U^\intercal \\
\mathbf{C}^{-\sfrac{1}{2}} U^\intercal D^{-1} & - \mathbf{C}^{-\sfrac{1}{2}} U^\intercal D^{-1}
\end{bmatrix}.
\end{align}
\end{linenomath}
\end{proposition}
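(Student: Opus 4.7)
The plan is to bypass a full eigenvalue derivation and instead verify the two identities $LL^{-1} = \mathbf{I}_{12}$ and $AL^{-1} = L^{-1}\mathbf{D}$ directly, which together give $A = L^{-1}\mathbf{D}L$. The algebraic backbone is the pair of identities
\begin{equation*}
UU^\intercal = \mathbf{I}_6, \qquad \mathbf{C}^{\sfrac{1}{2}} \mathbf{M} \mathbf{C}^{\sfrac{1}{2}} = U^\intercal D^{-2} U,
\end{equation*}
the second being an immediate rewriting of the diagonalization $\Theta = U^\intercal D^2 U$ combined with the definition $\Theta = (\mathbf{C}^{\sfrac{1}{2}}\mathbf{M}\mathbf{C}^{\sfrac{1}{2}})^{-1}$. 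Before doing this, I would first rewrite $A$ in the cleaner form
\begin{equation*}
A = -\begin{bmatrix} \mathbf{M}^{-1} & \mathbf{0} \\ \mathbf{0} & \mathbf{C}^{-1} \end{bmatrix} \begin{bmatrix} \mathbf{0} & \mathbf{I}_6 \\ \mathbf{I}_6 & \mathbf{0} \end{bmatrix} = \begin{bmatrix} \mathbf{0} & -\mathbf{M}^{-1} \\ -\mathbf{C}^{-1} & \mathbf{0} \end{bmatrix},
\end{equation*}
which makes the subsequent block multiplications transparent.

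For $LL^{-1} = \mathbf{I}_{12}$, a direct $2 \times 2$ block computation of the product uses only $UU^\intercal = \mathbf{I}_6$ together with the obvious cancellations $\mathbf{C}^{\sfrac{1}{2}} \mathbf{C}^{-\sfrac{1}{2}} = \mathbf{I}_6$ and $DD^{-1} = \mathbf{I}_6$; the diagonal blocks collapse to $\tfrac{1}{2}(\mathbf{I}_6 + \mathbf{I}_6) = \mathbf{I}_6$ and the off-diagonal blocks to $\tfrac{1}{2}(\mathbf{I}_6 - \mathbf{I}_6) = \mathbf{0}$.

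For $AL^{-1} = L^{-1}\mathbf{D}$, I would compute the two products block-by-block. The bottom row of both sides involves only $\mathbf{C}^{-1} \mathbf{C}^{\sfrac{1}{2}} = \mathbf{C}^{-\sfrac{1}{2}}$ and matches immediately. The top row reduces to checking
\begin{equation*}
\mathbf{M}^{-1} \mathbf{C}^{-\sfrac{1}{2}} U^\intercal D^{-1} = \mathbf{C}^{\sfrac{1}{2}} U^\intercal D,
\end{equation*}
which, after rearranging, is exactly equivalent to $\mathbf{C}^{\sfrac{1}{2}} \mathbf{M} \mathbf{C}^{\sfrac{1}{2}} U^\intercal D^2 U = \mathbf{I}_6$, i.e.\ to the second identity above. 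The conclusion $A = L^{-1}\mathbf{D}L$ follows by right-multiplying by $L$.

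I do not expect a serious obstacle here — the proposition is essentially a bookkeeping statement once one has the symmetric factorization $\Theta = U^\intercal D^2 U$ in hand. The only care needed is in the block arithmetic and in ensuring the signs in $\mathbf{D} = \mathrm{diag}(-D, D)$ line up with the off-diagonal sign choices in $L$ (the minus sign in the $(1,2)$-block of $\mathbf{D}L$ comes from $-D$ hitting $U\mathbf{C}^{-\sfrac{1}{2}}$, whereas the minus sign in $AL^{-1}$'s top-left block comes from $-\mathbf{M}^{-1}$). If one preferred a more conceptual route, one could instead exhibit the $2k$-th and $(2k{-}1)$-th columns of $L^{-1}$ as eigenvectors of $A$ associated to $\pm D_{kk}$, but the direct verification is shorter and yields the explicit inverse \eqref{eq:inverseL} as a byproduct.
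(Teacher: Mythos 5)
Your proof is correct, and it takes a somewhat different route from the paper's. The paper \emph{derives} $L$ constructively: it first conjugates $A$ by $L_2 = \mathrm{diag}(\mathbf{C}^{-1},\mathbf{C})^{\sfrac{1}{2}}$ to expose the block $\left[\begin{smallmatrix}\mathbf{0} & \Theta\\ \mathbf{I}_6 & \mathbf{0}\end{smallmatrix}\right]$, then by $L_1 = \mathrm{diag}(U,U)$ to replace $\Theta$ by $D^2$, and finally by $L_0 = \left[\begin{smallmatrix}\mathbf{I}_6 & D\\ \mathbf{I}_6 & -D\end{smallmatrix}\right]$ to reach $\mathbf{D}$, so that $L = L_0L_1L_2$ and the formula for $L^{-1}$ falls out of the three easy inverses. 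You instead take the stated formulas for $L$ and $L^{-1}$ as given and \emph{verify} $LL^{-1}=\mathbf{I}_{12}$ and $AL^{-1}=L^{-1}\mathbf{D}$ by block multiplication; both arguments ultimately rest on the same two facts, $UU^\intercal = \mathbf{I}_6$ and $\mathbf{C}^{\sfrac{1}{2}}\mathbf{M}\mathbf{C}^{\sfrac{1}{2}} = U^\intercal D^{-2}U$ (the rewriting of \eqref{eq:Theta_diagonalization}), and your reduction of the top-row identity to $\mathbf{C}^{\sfrac{1}{2}}\mathbf{M}\mathbf{C}^{\sfrac{1}{2}}U^\intercal D^2 U = \mathbf{I}_6$ checks out. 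The paper's factorization has the advantage of explaining where $L$ comes from (and the preliminary determinant computation $\mathrm{det}(\lambda \mathbf{I}_{12}-A)=\mathrm{det}(\lambda^2\mathbf{I}_6-(\mathbf{C}\mathbf{M})^{-1})$ motivates introducing $\Theta$ in the first place), whereas your verification is shorter and more self-contained given that the statement already supplies both $L$ and $L^{-1}$. One cosmetic remark: in your closing aside you refer to the $(1,2)$-block of $\mathbf{D}L$, whereas the identity you actually verify is $AL^{-1}=L^{-1}\mathbf{D}$; this does not affect the argument.
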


A more detailed proof than in \ref{A:SICON}-\ref{A:MCRF}-\ref{A:JMPA} is provided in Appendix \ref{ap:proof_A_hyperbolic}.

\begin{remark} \label{rem:diag}
Let us make some remarks about this diagonalization.
\begin{enumerate}
\item In Proposition \ref{prop:A_hyperbolic}, we diagonalize $A$ without concern for the regularity of its eigenvalues and eigenvectors. 
However, latter on for the well-posedness study we will make additional assumptions on $\Theta$ in order to ensure that such $D$, $U$ regular enough with respect to $x$ exist (see also \ref{A:MCRF}-\ref{A:JMPA}).
\item \label{remItem:diag_moreExplicit}
If the mass and flexibility matrices are diagonal, one may always take $U$ as the identity matrix, and the formula for $L$ and its inverse are then more explicit.
\end{enumerate}
\end{remark}

\noindent \textbf{The linear lower-order term.}
The matrix $\overline{B}(x)$ is indefinite and, up to the best of our knowledge, may not be assumed arbitrarily small.
This implies not only that the linearized system \eqref{eq:IGEB_pres} is not homogeneous, but also that \eqref{eq:IGEB_pres} cannot be seen as the perturbation of a system of conservation laws.
Indeed, in some specific cases, such as a beam which is straight before deformation (hence, $\Upsilon_c = \mathbf{0}$) and with constant diagonal mass and flexibility matrices (as in \eqref{eq:def_M_C_iso}) we can easily compute the norm of $\overline{B}$. It suffices to look for the square root of the largest eigenvalue of $\overline{B}\,\overline{B}^\intercal$ (i.e., the largest singular value of $\overline{B}$) 
\begin{align*}
\overline{B}\,\overline{B}^\intercal
 = 
 \begin{bmatrix}
 \mathbf{M}^{-1}\mathbf{E}\mathbf{E}^\intercal \mathbf{M}^{-1} & \mathbf{0} \\
 \mathbf{0} & \mathbf{C}^{-1} \mathbf{E}^\intercal \mathbf{E} \mathbf{C}^{-1} 
 \end{bmatrix}.
\end{align*}
Since in that case
\begin{align*}
\mathbf{E}\mathbf{E}^\intercal = \begin{bmatrix}
\mathbf{0} & \mathbf{0}\\
\mathbf{0} & I_\dagger
\end{bmatrix}, \quad 
\mathbf{E}^\intercal \mathbf{E} = 
\begin{bmatrix}
I_\dagger & \mathbf{0}\\
\mathbf{0} & \mathbf{0}
\end{bmatrix}, \quad \text{with } I_\dagger = 
\begin{bmatrix}
0 & 0 & 0\\
0 & 1 & 0\\
0 & 0 & 1
\end{bmatrix},
\end{align*}
one has $\overline{B}\,\overline{B}^\intercal = \mathrm{diag}\left( \mathbf{0}_3, (\rho J)^{-2} I_\dagger, S_1^2 I_\dagger, \mathbf{0}_3 \right)$ with $J, S_1$ defined in \eqref{eq:def_J_S1_S2}. Thus, the eigenvalues of $\overline{B}\,\overline{B}^\intercal$ are: $0$ with multiplicity $8$, as well as $(\rho I_2)^{-2}$, $(\rho I_3)^{-2}$, $(ak_2G)^2$ and $(ak_3G)^2$, and therefore
\begin{align*}
\|\overline{B}\| = \max \left\{ \frac{1}{\rho I_2}, \frac{1}{\rho I_3}, ak_2G, ak_3G \right\}.
\end{align*}

\medskip

\noindent \textbf{The nonlinear term.}
%
%
One sees that $\overline{g}$ is a quadratic nonlinearity (in the sense that its components are quadratic forms on $\mathbb{R}^{12}$ with respect to the second argument). It has the same regularity as the mass and flexibility matrices $\mathbf{M}, \mathbf{C}$ with respect to its first argument, and is $C^\infty$ with respect to its second argument. Moreover, $\overline{g}(x, \cdot)$ is \emph{locally} Lipschitz in $\mathbb{R}^{12}$ for any $x \in [0, \ell]$, and $\overline{g}$ is \emph{locally} Lipschitz in $H^1(0, \ell; \mathbb{R}^{12})$, but no global Lipschitz property is available.

Finally, $\overline{g}$ is quasi-dissipative, in the sense of the following proposition. Let $\mathcal{X} := L^2(0, \ell; \mathbb{R}^{12})$ be endowed with the inner product
\begin{align*}
\langle \varphi , \psi \rangle_\mathcal{X} = \int_0^\ell \langle \varphi(x) \,, Q^\mathcal{P}(x) \psi(x) \rangle dx,
\end{align*}
and the induced norm $\|\cdot\|_\mathcal{X}$, with $Q^\mathcal{P}$ defined by \eqref{eq:def_boldE_calP}.

\begin{proposition} \label{prop:dissip_barg}
Let $\mathcal{C} = H^1(0, \ell; \mathbb{R}^{12})$, and $\mathcal{C}_\alpha = \{\varphi \in \mathcal{C} \colon \|\varphi\|_\mathcal{C} \leq \alpha \}$, for any $\alpha > 0$. Then, for any $\alpha > 0$ there exists $\omega_\alpha \in \mathbb{R}$ such that $\overline{g} - \omega_\alpha \mathbf{I}_{12}$ is dissipative on $\mathcal{C}_\alpha$ in the sense that 
\begin{align*} 
\left \langle \overline{g}(\cdot, \varphi) - \overline{g}(\cdot, \psi), \varphi - \psi \right \rangle_\mathcal{X} \leq \omega_\alpha \|\varphi - \psi\|_\mathcal{X}^2, \qquad  \text{for all }\varphi, \psi \in \mathcal{C}_\alpha.
\end{align*}
\end{proposition}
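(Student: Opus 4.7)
The plan is to exploit the fact that $\overline{g}$ is fundamentally a quadratic nonlinearity and that, in one space dimension, $H^1(0,\ell)$ embeds continuously into $L^\infty(0,\ell)$, so that functions in $\mathcal{C}_\alpha$ are uniformly bounded in $L^\infty$ by some constant multiple of $\alpha$. First I would rewrite the weighted inner product in a more convenient form: using $\overline{g}(x,u)=Q^\mathcal{P}(x)^{-1}\mathcal{G}(u)Q^\mathcal{P}(x)u$ together with the symmetry of $Q^\mathcal{P}$, the identity
\begin{align*}
\langle \overline{g}(\cdot,\varphi)-\overline{g}(\cdot,\psi),\, \varphi-\psi \rangle_\mathcal{X}
= \int_0^\ell \big\langle \mathcal{G}(\varphi)Q^\mathcal{P}\varphi - \mathcal{G}(\psi)Q^\mathcal{P}\psi,\, \varphi-\psi \big\rangle\, dx
\end{align*}
follows, which removes the inverse of $Q^\mathcal{P}$ from the picture and brings the quadratic structure to the surface.

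Next, observing from \eqref{eq:def_calG} that $u\mapsto \mathcal{G}(u)$ is linear (each block is either zero or one of the skew-symmetric matrices $\widehat{u}_j$), I would split by a standard telescoping identity
\begin{align*}
\mathcal{G}(\varphi)Q^\mathcal{P}\varphi - \mathcal{G}(\psi)Q^\mathcal{P}\psi
= \mathcal{G}(\varphi-\psi)Q^\mathcal{P}\varphi + \mathcal{G}(\psi)Q^\mathcal{P}(\varphi-\psi).
\end{align*}
The second step is then a pointwise estimate: from the explicit block structure of $\mathcal{G}$ there exists a constant $K>0$ (independent of $x$) such that $\|\mathcal{G}(w)\|\leq K|w|$ for every $w\in\mathbb{R}^{12}$. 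Combined with the uniform bound $\|Q^\mathcal{P}\|_{L^\infty(0,\ell)}<\infty$ (which follows from the $C^1$ regularity of $\mathbf{M}$ and $\mathbf{C}$), I obtain pointwise in $x$,
\begin{align*}
\big|\langle \mathcal{G}(\varphi-\psi)Q^\mathcal{P}\varphi,\, \varphi-\psi\rangle\big|
&\leq K \|Q^\mathcal{P}\|_\infty\, |\varphi|\, |\varphi-\psi|^2,\\
\big|\langle \mathcal{G}(\psi)Q^\mathcal{P}(\varphi-\psi),\, \varphi-\psi\rangle\big|
&\leq K \|Q^\mathcal{P}\|_\infty\, |\psi|\, |\varphi-\psi|^2.
\end{align*}

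Integrating over $(0,\ell)$ and pulling the $L^\infty$ norms of $\varphi$ and $\psi$ out of the integrals, one arrives at
\begin{align*}
\langle \overline{g}(\cdot,\varphi)-\overline{g}(\cdot,\psi),\, \varphi-\psi \rangle_\mathcal{X}
\leq K \|Q^\mathcal{P}\|_\infty \big(\|\varphi\|_{L^\infty}+\|\psi\|_{L^\infty}\big) \|\varphi-\psi\|_{L^2}^2.
\end{align*}
The final ingredient is the one-dimensional Sobolev embedding $H^1(0,\ell)\hookrightarrow L^\infty(0,\ell)$, which gives $\|\varphi\|_{L^\infty}+\|\psi\|_{L^\infty}\leq C_{\mathrm{emb}}(\|\varphi\|_{H^1}+\|\psi\|_{H^1})\leq 2 C_{\mathrm{emb}}\alpha$ on $\mathcal{C}_\alpha$. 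Combining this with the equivalence of the norms $\|\cdot\|_{L^2}$ and $\|\cdot\|_\mathcal{X}$ (which holds because $Q^\mathcal{P}$ is continuous with values in $\mathbb{S}_{++}^{12}$, hence uniformly bounded above and below), one obtains the desired inequality with an explicit constant of the form $\omega_\alpha=C\alpha$ for some $C>0$ depending only on $\mathbf{M}$, $\mathbf{C}$ and $\ell$. The only subtle point, really, is ensuring that the constant $C$ is independent of $\varphi,\psi\in\mathcal{C}_\alpha$; this is automatic once the Sobolev embedding has been applied, so no essential obstacle arises.
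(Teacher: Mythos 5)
Your argument is correct, and it reaches the conclusion by a slightly different route than the paper. The paper first proves the exact algebraic identity \eqref{eq:dissip_barg_idInnerProd} of Lemma \ref{lem:dissip_barg}, which rests on the skew-symmetry relation $a^\intercal \mathcal{G}(b) = -b^\intercal \mathcal{G}(a)$ (hence $a^\intercal\mathcal{G}(a)=0$): this collapses the whole quantity to the single term $-\langle \varphi-\psi,\, \mathcal{G}(\psi)Q^\mathcal{P}(\varphi-\psi)\rangle$, so that the constant is simply $\sup_x\|\mathcal{G}(\psi(x))Q^\mathcal{P}(x)\|$ and only then is the embedding $H^1\hookrightarrow L^\infty$ invoked. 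You instead use only the linearity of $u\mapsto\mathcal{G}(u)$, telescope, and bound both resulting terms in absolute value; this is more elementary and avoids the cross-product identities entirely, at the cost of a cruder constant involving both $\|\varphi\|_{L^\infty}$ and $\|\psi\|_{L^\infty}$. Note in passing that your first telescoped term $\langle \mathcal{G}(\varphi-\psi)Q^\mathcal{P}\varphi,\,\varphi-\psi\rangle$ is in fact identically zero by that same skew-symmetry (it has the form $w^\intercal\mathcal{G}(w)\,\cdot$), so your estimate of it, while valid, bounds something that vanishes — this is precisely the structural cancellation (conservation of energy by the quadratic term, $\langle y, \mathcal{G}(y)y\rangle=0$) that the paper's lemma is designed to exhibit and that is reused elsewhere in the analysis. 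All remaining steps — the removal of $(Q^\mathcal{P})^{-1}$ using the symmetry of $Q^\mathcal{P}$, the pointwise bound $\|\mathcal{G}(w)\|\le K|w|$, the uniform bounds above and below on $Q^\mathcal{P}$ giving the equivalence of $\|\cdot\|_{L^2}$ and $\|\cdot\|_{\mathcal{X}}$, and the Sobolev embedding yielding $\omega_\alpha = C\alpha$ — are sound.
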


Indeed, due to \eqref{eq:dissip_barg_idInnerProd} in Lemma \ref{lem:dissip_barg} below, one obtains that
\begin{align*}
\left \langle \overline{g}(\cdot, \varphi) - \overline{g}(\cdot, \psi), \varphi - \psi \right \rangle_\mathcal{X} \leq \left(\sup_{x \in [0, \ell]}\|\mathcal{G}(\psi(x))Q^\mathcal{P}(x)\| \right) \|\varphi - \psi\|_\mathcal{X}^2
\end{align*}
holds for any $\varphi, \psi \in H^1(0, \ell; \mathbb{R}^{12})$, thereby yielding Proposition \ref{prop:dissip_barg}.
The proof of Lemma \ref{lem:dissip_barg} can be found in Appendix \ref{ap:proof_lem_dissip_barg}.

\begin{lemma} \label{lem:dissip_barg}
For any $x \in [0, \ell]$ and all $a, b \in \mathbb{R}^{12}$, the following two identities hold:
\begin{align}
&\label{eq:id_yr}
a^\intercal \mathcal{G}(b) = -b^\intercal \mathcal{G}(a),\\
&\label{eq:dissip_barg_idInnerProd}
\left \langle \overline{g}(x,a) - \overline{g}(x,b), Q^\mathcal{P}(x) (a-b) \right \rangle = - \left \langle a-b, \mathcal{G}(b)Q^\mathcal{P}(x) (a-b) \right \rangle.
\end{align}
\end{lemma}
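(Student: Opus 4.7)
The plan is to prove the row-vector identity \eqref{eq:id_yr} first by exploiting the block structure of $\mathcal{G}(u)$, and then deduce the inner-product identity \eqref{eq:dissip_barg_idInnerProd} by a short bilinear expansion that uses \eqref{eq:id_yr} (and its self-substituted corollary) to kill several terms.

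For \eqref{eq:id_yr}, I would start from the elementary skew identity
$$u^\intercal \widehat{v} \;=\; (u \times v)^\intercal \;=\; -(v \times u)^\intercal \;=\; -v^\intercal \widehat{u} \qquad \text{for all } u,v \in \mathbb{R}^3,$$
which follows from the antisymmetry of the cross product together with $\widehat{v}^\intercal = -\widehat{v}$. Writing $a = (a_1^\intercal,\ldots,a_4^\intercal)^\intercal$ and $b = (b_1^\intercal,\ldots,b_4^\intercal)^\intercal$ with each $a_i,b_i\in\mathbb{R}^3$, I would compute the four $\mathbb{R}^3$ blocks of the row $a^\intercal \mathcal{G}(b)$ directly from the block definition \eqref{eq:def_calG}. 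Each such block is a sum of terms $a_i^\intercal \widehat{b}_k$ with the indices $i,k\in\{1,2,3,4\}$ dictated by the block layout of $\mathcal{G}$. Applying the elementary identity above to each summand converts every $a_i^\intercal \widehat{b}_k$ into $-b_k^\intercal \widehat{a}_i$, and a block-by-block comparison with $-b^\intercal \mathcal{G}(a)$ (again read off from \eqref{eq:def_calG}) yields the claim. The particular pattern of $\mathcal{G}$ — $\widehat{u}_2$ placed on the block diagonal of the two nontrivial $6 \times 6$ blocks and $\widehat{u}_1,\widehat{u}_3,\widehat{u}_4$ placed symmetrically off it — is exactly what makes each swap land in the correct block position.

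For \eqref{eq:dissip_barg_idInnerProd}, I would first specialize \eqref{eq:id_yr} to $a=b$ to obtain the useful corollary $a^\intercal \mathcal{G}(a) = 0$ in $\mathbb{R}^{12}$ for every $a$. Next, using the symmetry of $Q^\mathcal{P}(x)$, rewrite
$$\left\langle \overline{g}(x,a) - \overline{g}(x,b),\, Q^\mathcal{P}(a-b) \right\rangle = (a-b)^\intercal \bigl[\mathcal{G}(a) Q^\mathcal{P} a - \mathcal{G}(b) Q^\mathcal{P} b \bigr],$$
expand the bilinear form on the right, drop the two diagonal scalars $a^\intercal \mathcal{G}(a) Q^\mathcal{P} a$ and $b^\intercal \mathcal{G}(b) Q^\mathcal{P} b$ via the corollary, and apply \eqref{eq:id_yr} once more to the surviving cross terms (together with $b^\intercal \mathcal{G}(b) = 0$ applied to the target expression) in order to regroup them as $(a-b)^\intercal \mathcal{G}(b) Q^\mathcal{P} (a-b)$ with the asserted sign.

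The main obstacle lies in Step 1: the computation itself is elementary, but one must carefully track which of the $\widehat{b}_k$ appears in which block of $a^\intercal\mathcal{G}(b)$ so that, after the swap $a_i^\intercal\widehat{b}_k\mapsto -b_k^\intercal\widehat{a}_i$, the pieces reassemble into the corresponding blocks of $-b^\intercal \mathcal{G}(a)$. Step 2 is then essentially a three-line consequence of \eqref{eq:id_yr} and its corollary.
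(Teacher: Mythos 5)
Your proposal is correct and follows essentially the same route as the paper's proof: a block-by-block computation of $a^\intercal\mathcal{G}(b)$ using $u^\intercal\widehat{v}=-v^\intercal\widehat{u}$ to obtain \eqref{eq:id_yr}, then the corollary $a^\intercal\mathcal{G}(a)=0$ and a bilinear expansion to obtain \eqref{eq:dissip_barg_idInnerProd}. One caveat: carried out honestly, your expansion yields $+\left\langle a-b,\,\mathcal{G}(b)Q^\mathcal{P}(x)(a-b)\right\rangle$ rather than the minus sign asserted in \eqref{eq:dissip_barg_idInnerProd} — a sign slip already present in the paper's own proof (which inserts an unexplained minus when first unfolding $\overline{g}$) and harmless for Proposition \ref{prop:dissip_barg}, where only a bound on the absolute value of this quantity is used.
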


Let us stress that $Q^\mathcal{P}(x)$ is nothing but the matrix characterizing the energy of the beam $\mathcal{E}^\mathcal{P} = \int_0^\ell \left\langle y \,, Q^\mathcal{P} y \right \rangle dx$ (also discussed in Section \ref{sec:stab_energy}; see \eqref{eq:1b_def_calEP}), which should be conserved when given appropriate boundary conditions. It is thus not unexpected that the term $\left\langle y \,, Q^\mathcal{P} \overline{g}(y) \right \rangle$, which also writes as $\left\langle y \,, \mathcal{G}(y)y \right \rangle$, is equal to zero; see for instance \cite[Sec. II, III]{Artola2021damping}.


\medskip

\noindent \textbf{More General boundary conditions.}
We will also consider boundary conditions of the form
\begin{align}\label{eq:IGEB_genBC}
v(x,t) = q^D(t), \quad \text{or} \quad z(x,t) = q^N(t), \qquad t \in (0, T),
\end{align}
with ``Dirichlet data'' $q^D(t)\in \mathbb{R}^6$ and ``Neumann data'' $q^N(t) \in \mathbb{R}^6$.

\section{Transformations between the GEB and IGEB models}
\label{sec:pres_transfo}

Finally, as mentionned in Chapter \ref{ch:intro}, one can see the GEB model \eqref{eq:GEB_pres} and IGEB model \eqref{eq:IGEB_pres} as being related by the nonlinear transformation $\mathcal{T}$ defined by
\begin{equation} \label{eq:transfo}
\mathcal{T} (\mathbf{p}, \mathbf{R}) = 
\begin{bmatrix} v \\ z \end{bmatrix}
\end{equation}
where $v, z$ are the functions of $\mathbf{p}$ and $\mathbf{R}$ defined in \eqref{eq:def_v_z}.
More precisely, in the following proposition, we see that there is an equivalence between the first six governing equations of the IGEB model and the governing system of the GEB model, while the last six equations of the former model are directly derived from the definition of the strains, sometimes referred to as ``compatibility conditions''.

\begin{proposition} \label{prop:transfoGov}
If $y = \mathcal{T}(\mathbf{p}, \mathbf{R})$ for some $(\mathbf{p}, \mathbf{R}) \in C^2([0, \ell]\times[0, T];\mathbb{R}^3 \times \mathrm{SO}(3))$, where $\mathcal{T}$ is defined by \eqref{eq:transfo}, then $y$ fulfills the last six equations of \eqref{eq:IGEB_pres}. Moreover, $y$ satisfies the first six equations of \eqref{eq:IGEB_pres} if and only if $(\mathbf{p}, \mathbf{R})$ satisfies \eqref{eq:GEB_pres}.
\end{proposition}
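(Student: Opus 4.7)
The statement splits naturally into two independent claims, and I would handle them in sequence, with the common toolbox being the product rule and the rotation identities $\mathbf{R}^\intercal \partial_t \mathbf{R} = \widehat{W}$, $\mathbf{R}^\intercal \partial_x \mathbf{R} = \widehat{\Upsilon + \Upsilon_c}$ (obtained directly from the definition of $\mathcal{T}$ and the skew-symmetry of $\mathbf{R}^\intercal \mathrm{d}\mathbf{R}$).

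\textbf{Compatibility (last six equations).} These rows are purely geometric consequences of the definition of $\mathcal{T}$ and of Schwarz's lemma, and have nothing to do with the balance laws. From the definitions I have $\mathbf{R} V = \partial_t \mathbf{p}$ and $\mathbf{R}(\Gamma + e_1) = \partial_x \mathbf{p}$. Differentiating the first in $x$, the second in $t$, equating by commutativity of partials, applying the product rule and substituting the rotation identities, then left-multiplying by $\mathbf{R}^\intercal$, I cancel the outer $\mathbf{R}$ factor and obtain the first three compatibility equations $\partial_t \Gamma - \partial_x V - \widehat{e}_1 W - \widehat{\Upsilon}_c V = (\text{quadratic in }V,W,\Gamma,\Upsilon)$. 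For the three curvature-compatibility rows, I apply the same trick to the skew matrices: compute $\partial_x \widehat{W} - \partial_t \widehat{\Upsilon+\Upsilon_c}$ via $\partial_t(\mathbf{R}^\intercal \partial_x \mathbf{R}) - \partial_x(\mathbf{R}^\intercal \partial_t \mathbf{R})$; the second-order $\mathbf{R}$-derivatives cancel by Schwarz, leaving a commutator $[\widehat{\Upsilon+\Upsilon_c}, \widehat{W}]$, which I rewrite using $[\widehat{a},\widehat{b}] = \widehat{a \times b}$ and read off with $\mathrm{vec}$. Finally, using $s = \mathbf{C} z$ to replace $s$ by $z$ produces exactly the last six rows of \eqref{eq:IGEB_pres}.

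\textbf{Equivalence of the first six equations with \eqref{eq:GEB_pres}.} I would view the GEB system as a force-balance top block and a moment-balance bottom block, and apply $\mathrm{diag}(\mathbf{R}^\intercal,\mathbf{R}^\intercal)$ from the left. For the top block, expanding $\partial_t(\mathbf{R}\, [\mathbf{M}v]_{\text{top}})$ and $\partial_x(\mathbf{R}\, [z]_{\text{top}})$ by the product rule and substituting $\mathbf{R}^\intercal \partial_t \mathbf{R} = \widehat{W}$, $\mathbf{R}^\intercal \partial_x \mathbf{R} = \widehat{\Upsilon + \Upsilon_c}$ already yields the first three IGEB rows; the linear part reassembles as $\mathbf{M}\partial_t V - \partial_x \Phi - \widehat{\Upsilon}_c \Phi$ and the bilinear corrections ($\widehat{W}\cdot$, $\widehat{\Upsilon}\cdot$) reproduce the top strip of $-\mathcal{G}(y)Q^\mathcal{P}y$ after applying $\widehat{a}b = -\widehat{b}a$. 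For the moment block the extra terms involve $\partial_t\widehat{\mathbf{p}}$ and $\partial_x\widehat{\mathbf{p}}$; using $\partial_t \mathbf{p} = \mathbf{R} V$, $\partial_x \mathbf{p} = \mathbf{R}(\Gamma+e_1)$ together with the identity $\widehat{\mathbf{R} u} = \mathbf{R}\widehat{u}\mathbf{R}^\intercal$ for $\mathbf{R}\in\mathrm{SO}(3)$, a factor of $\mathbf{R}$ can be pulled out of each of these terms. After premultiplying by $\mathbf{R}^\intercal$, the surviving contributions are $\widehat{V}P$ and $\widehat{\Gamma+e_1}\Phi$, which combine with the product-rule terms into the bottom three IGEB rows, the $\widehat{e}_1$ being precisely the one appearing in the linear coefficient matrix of \eqref{eq:IGEB_pres}. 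Each manipulation is invertible — $\mathbf{R}$ is unitary, the block splitting is unique, and the identities above are pointwise bijections — which gives the ``if and only if''.

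\textbf{Where the work lies.} There is no conceptual obstacle: the proposition essentially expresses that $\mathcal{T}$ is the pull-back to the body-attached frame and that the balance laws transform covariantly. The real hurdle is bookkeeping — matching the block structure of $\mathcal{G}$ in \eqref{eq:def_calG} and of the linear term with $\widehat{\Upsilon}_c$, $\widehat{e}_1$ in \eqref{eq:IGEB_pres} entry-by-entry. The three algebraic tools to keep handy are $\widehat{a}b=-\widehat{b}a$, $\widehat{\mathbf{R}u}=\mathbf{R}\widehat{u}\mathbf{R}^\intercal$, and $[\widehat{a},\widehat{b}]=\widehat{a\times b}$; together with the natural splittings $\widehat{\Upsilon+\Upsilon_c}=\widehat{\Upsilon}+\widehat{\Upsilon}_c$ and $\widehat{\Gamma+e_1}=\widehat{\Gamma}+\widehat{e}_1$, they separate the $x$-dependent linear contributions (involving $\Upsilon_c,e_1$) from the quadratic ones (involving $v,z$), which is exactly the decomposition \eqref{eq:IGEB_pres} already exhibits.
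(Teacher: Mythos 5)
Your proposal is correct and follows essentially the same route as the paper's proof: both rest on the product rule combined with $\mathbf{R}^\intercal\partial_t\mathbf{R}=\widehat{W}$, $\mathbf{R}^\intercal\partial_x\mathbf{R}=\widehat{\Upsilon+\Upsilon_c}$, the identities $\widehat{a}b=-\widehat{b}a$, $\widehat{(\widehat{a}b)}=\widehat{a}\widehat{b}-\widehat{b}\widehat{a}$ and $\widehat{\mathbf{R}u}=\mathbf{R}\widehat{u}\mathbf{R}^\intercal$ (the paper packages the first two as the identities \eqref{eq:identity_dx_F_f}--\eqref{eq:identity_dt_F_f} and uses $\mathbf{R}\widehat{v}_1=(\partial_t\widehat{\mathbf{p}})\mathbf{R}$, $(\widehat{\Gamma}+\widehat{e}_1)z_1=\mathbf{R}^\intercal(\partial_x\widehat{\mathbf{p}})\phi$ in exactly the way you describe), and both obtain the last six equations as Schwarz-lemma compatibility conditions on $\partial_x\partial_t\mathbf{p}$ and $\partial_x\partial_t\mathbf{R}$. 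The only cosmetic difference is the direction of the chain of equivalences (the paper starts from the first six IGEB equations and reassembles the GEB system, while you start from GEB and premultiply by $\mathrm{diag}(\mathbf{R}^\intercal,\mathbf{R}^\intercal)$), which is immaterial since every step is reversible.
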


The proof is provided in Appendix \ref{ap:proof_prop_tranfoGov}. 

%

\medskip

\noindent \textbf{Initial and boundary data.} The initial conditions of both systems can also be linked by this transformation. One has
\begin{align} \label{eq:rel_inidata}
y^0 = \begin{bmatrix}
v^0 \\ z^0
\end{bmatrix}, \quad
v^0 = \begin{bmatrix}
(\mathbf{R}^0)^{\intercal} \mathbf{p}^1 \\
(\mathbf{R}^0 )^{\intercal} w^0
\end{bmatrix}
, \quad
z^0 = \mathbf{C}^{-1} 
\begin{bmatrix}
(\mathbf{R}^0)^{\intercal} \frac{\mathrm{d}}{\mathrm{d}x} \mathbf{p}^0 - e_1\\
\mathrm{vec}\left( (\mathbf{R}^0)^{\intercal} \frac{\mathrm{d}}{\mathrm{d}x}\mathbf{R}^0 \right) - \Upsilon_c
\end{bmatrix}.
\end{align}
As for the boundary conditions (see \eqref{eq:GEB_genBC_Neu} and \eqref{eq:GEB_genBC_Diri}), one has
\begin{align} \label{eq:def_qD}
q^D = \begin{bmatrix}
(f^\mathbf{R})^\intercal \frac{\mathrm{d}}{\mathrm{d}t}f^\mathbf{p}\\
(f^\mathbf{R})^\intercal \frac{\mathrm{d}}{\mathrm{d}t}f^\mathbf{R}
\end{bmatrix},
\end{align}
and 
\begin{align} \label{eq:def_qN}
q^N = \begin{bmatrix}
\mathbf{R}(x, \cdot)^\intercal &  \mathbf{0}\\
\mathbf{0} & \mathbf{R}(x,\cdot)^\intercal
\end{bmatrix} f.
\end{align}
Observe in \eqref{eq:def_qN} that if one wants boundary data $q^N$ for the IGEB model related to the data $f$ for the GEB model, but not dependent on the state $\mathbf{R}$ of the latter model, then the expression of $f$ in the body-attached basis should also be available.

\begin{remark}[``Equivalence'' between the GEB and IGEB systems]
\label{rem:equiv_geb_igeb}
In view of this, the systems \eqref{eq:GEBtyp} and \eqref{eq:IGEBtyp} are equivalent if their initial data additionally fulfill \eqref{eq:rel_inidata}. However, this is facilitated by our choice of clamped and free boundary conditions, and also by the fact that we consider freely vibrating beams. 
Concerning the latter point, if one wants to take into account the presence of external forces and moments applied to the beam -- e.g., gravity \cite[eq. (4)]{Artola2019mpc} or aerodynamic forces \cite[eq. (12)]{Palacios2011intrinsic} -- then it should be taken into account that these forces may be functions of $x$, but might also only be available in terms of $\mathbf{p}$ and $\mathbf{R}$.
\end{remark}

\begin{figure} \centering
  \begin{subfigure}{0.2\textwidth}
  \centering
    \includegraphics[height=3.25cm]{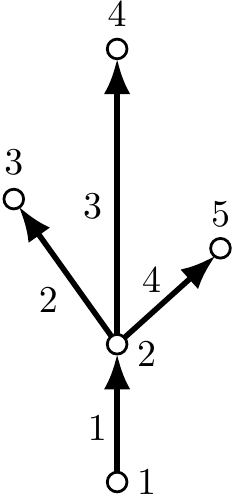}
    \caption{Star-shaped}
  \end{subfigure}%
  \begin{subfigure}{0.24\textwidth}
    \centering
    \includegraphics[height=3.25cm]{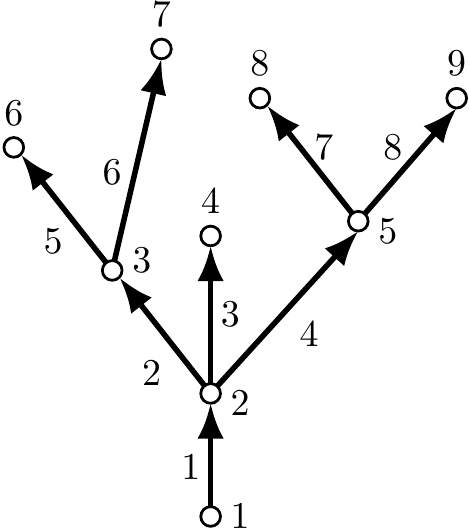}
    \caption{Tree-shaped} 
  \end{subfigure}%
  \begin{subfigure}{0.28\textwidth}
    \centering
    \includegraphics[height=3.25cm]{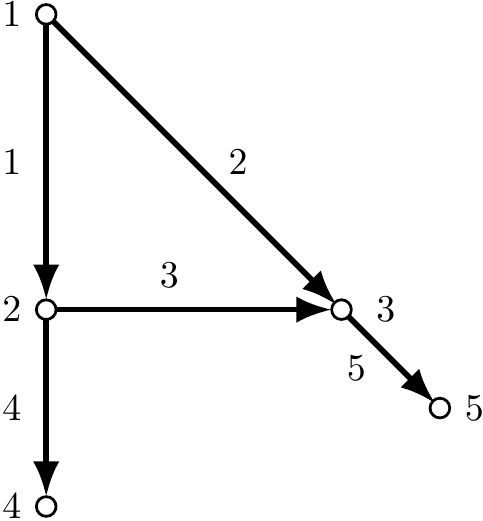}
    \caption{A-shaped} 
    \label{subfig:AshapedNetwork}
  \end{subfigure}%
\caption{Some oriented graphs representing beam networks.}
\label{fig:exple_networks}
\end{figure}

\section{Networks}
\label{sec:networks}

To represent a collection of $N$ beams, indexed by $i \in \mathcal{I} = \{1, \ldots, N\}$, attached in a certain manner to each other at their tips, we use an oriented graph containing $N$ edges, such as in Fig \ref{fig:exple_networks}. 

\medskip

\noindent \textbf{The edges.} Any edge $i$ is identified with the interval $[0, \ell_i]$, which is the spatial domain for the beam model in question (GEB or IGEB). 
The endpoints $x=0$ and $x=\ell_i$ are called \textit{initial point} and \textit{ending point} of this edge.
Therefore, just as for the beams, the \emph{edges} are indexed by $i \in \mathcal{I}$.
Then, the states $(\mathbf{p}_i, \mathbf{R}_i)$ and $y_i$, the given data characterizing the beams $\mathbf{M}_i$, $\mathbf{C}_i$ and $R_i$ (hence also $\Upsilon_{ci}$ and $\mathbf{E}_i$), and thus $A_i$ (and its eigenvalues $\{\lambda_i^k\}_{k=1}^{12}$, $D_i$ and eigenvectors $L_i$), $\overline{B}_i$ and $\overline{g}_i$, the initial data $\mathbf{p}_i^0$, $\mathbf{R}_i^0$, $\mathbf{p}_i^1$ and $w_i^0$, all have subindex $i$. 

\medskip

\noindent \textbf{The nodes.} The \textit{nodes} are indexed by\footnote{$\#$ denotes the set cardinality. We will also use the indexes $\mathcal{N} = \{0, 1, \ldots, \# \mathcal{N}\}$ in some cases.} $n \in \mathcal{N} = \{1, \ldots, \#\mathcal{N}\}$, which is then partitioned as $\mathcal{N} = \mathcal{N}_S \cup \mathcal{N}_M$, where $\mathcal{N}_S$ is the set of indexes of \emph{simple nodes}, while $\mathcal{N}_M$ is the set of indexes \emph{multiple nodes} -- where several edges meet. 
Then, the boundary data -- which are introduced latter on -- have subindex $n$.
The set of simple nodes is in addition partitioned as $\mathcal{N}_S = \mathcal{N}_S^D \cup \mathcal{N}_S^N$, where $\mathcal{N}_S^D$ contains the simple nodes with prescribed \emph{Dirichlet} boundary conditions (i.e., the centerline's position and the cross section's orientation in the case of the GEB model, or the velocities in the case of the IGEB model, are prescribed), while $\mathcal{N}_S^N$ contains the simple nodes at which \emph{Neumann} boundary conditions are enforced (i.e., the internal forces and moments are prescribed).

We interchangeably use the expressions ``node of index $n$'' (and ``edge of index $i$''), and ``node $n$'' (resp. ``edge $i$'') for short.

\medskip

\noindent \textbf{Connections and orientation of the edges.} For any $n\in\mathcal{N}$, we denote by $\mathcal{I}^n$ the set of indexes of edges incident with the node $n$, by $k_n = \# \mathcal{I}^n $ the \emph{degree} of the node $n$, and by $i^n$ the index\footnote{Defining $i^n$ as the \emph{smallest} element of $\mathcal{I}^n$, and not the \emph{largest} for example, is an arbitrary choice and is of no influence here.}
\begin{linenomath}
\begin{align} \label{eq:def_in}
i^n = \min_{i\in \mathcal{I}^n} i.
\end{align}
\end{linenomath}
Note that in the case of a simple node, $\mathcal{I}^n = \{i^n\}$.
The orientation of each beam is given by the variables $\mathbf{x}_i^n$ and $\tau_i^n$ defined as follows.
For any $i \in \mathcal{I}^n$, we denote by $\mathbf{x}_i^n$ the end of the interval $[0, \ell_i]$ which corresponds to the node $n$, while $\tau_i^n$ is the outward pointing normal at $\mathbf{x}_i^n$: 
\begin{linenomath}
\begin{align*}
\tau_i^n = 
\left\{ 
\begin{array}{l}
-1 \qquad \text{if } \mathbf{x}_i^n = 0,\\
+1 \qquad \text{if } \mathbf{x}_i^n = \ell_i.
\end{array}
\right.
\end{align*}
\end{linenomath}
As described in Fig. \ref{fig:exple_networks}, each edge $i$ is represented by an arrow and each node $n$ by a circle. The arrowhead is at the ending point $x=\ell_i$; see Fig. \ref{fig:xin}.

We also denote by $s_n \in \{0, \ldots, k_n\}$  (and by $k_n-s_n$) the number of beams ending (resp. starting) at the node $n$; see Fig. \ref{fig:NM_notation}. More precisely, we suppose that 
\begin{align} \label{eq:notation_I^n}
\mathcal{I}^n = \{i_1, \ldots, i_{k_n}\} \quad \text{with} \quad 
\left\{\begin{array}{l}
i_1 < i_2 < \ldots < i_{s_n} \\ 
i_{s_n+1} < i_{s_n+2} < \ldots < i_{k_n}
\end{array}\right.
\end{align}
and that $\tau_{i_\alpha}^n = -1$ for all $\alpha \in \{1, \ldots, s_n\}$, while $\tau_{i_\alpha}^n = +1$ for all $\alpha \in \{s_n+1, \ldots, k_n\}$. This is not to be confused with the notation $i^n$ introduced in \eqref{eq:def_in}.

\begin{figure}\centering
    
\begin{subfigure}{0.4\textwidth} \centering
    \vspace{0.4cm}
    \includegraphics[width=3cm]{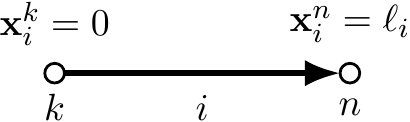}
    \vspace{0.3cm}
    \caption{Orientation of an edge $i$ starting and ending at the nodes $k$ and $n$, respectively.}
    \label{fig:xin}
\end{subfigure}%
\qquad
\begin{subfigure}{0.4\textwidth} \centering
    \includegraphics[scale=0.6]{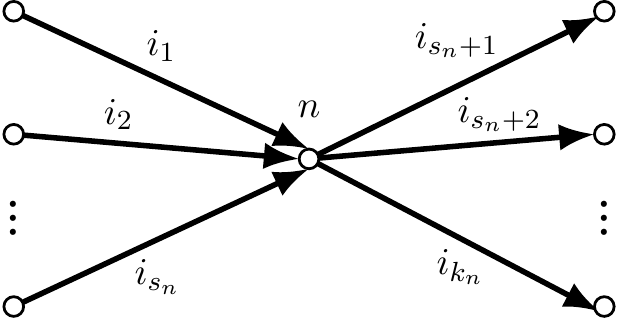}
    \caption{Notation for the elements of $\mathcal{I}^n$ at a multiple node $n$.}
    \label{fig:NM_notation}
\end{subfigure}%

\caption{Orientation of edges and notation at multiple nodes.}
\end{figure}

\medskip

\noindent \textbf{The transmission conditions.} 
For PDEs on networks, one must also specify the behaviour of the system at the intersection of several edges by imposing so-called \emph{transmission} of \emph{interface} conditions. 

Our stance is that at any multiple node $n \in \mathcal{N}_M$, the beams incident with this node remain attached to each other -- this concerns the position of their centerlines -- without changing the respective angles between them -- this concerns the orientation of their cross sections -- at all times. One then speak of a \emph{rigid joint}. In other words, the position of their centerlines must coincide, as specified by \eqref{eq:nGEBgen_cont_p} below.
As the orientation of the cross sections before deformation is specified by the (given) function $R_i$, the absence of relative motion between the incident beams is enforced by
\eqref{eq:nGEBgen_rigid} below, which states that the change of orientation $\mathbf{R}_iR_i^\intercal$ (from the configuration $\Omega_c$ to $\Omega_t$) is the same for all incident beams. See also \cite[Subsection 2.4]{strohm_dissert}.
Furthermore, at this multiple node $n$, we require the internal forces $\phi_i$ and moments $\psi_i$ exerted by incident beams $i\in\mathcal{I}_n$ to be balanced with the external load $f_n$ applied at this node, which reads as
\eqref{eq:nGEBgen_Kir} below 
and is also called the \emph{Kirchhoff} condition. 

Thus, in general we obtain a system with unknown state $(\mathbf{p}_i, \mathbf{R}_i)_{i \in \mathcal{I}}$ of the form
\begin{linenomath}
\begin{subnumcases}{\label{eq:nGEBgen}}
\nonumber
\begingroup 
\setlength\arraycolsep{2pt}
\renewcommand*{\arraystretch}{0.9}
\begin{bmatrix}
\partial_t & \mathbf{0}\\
(\partial_t \widehat{\mathbf{p}}_i) & \partial_t
\end{bmatrix} \left[ \begin{bmatrix}
\mathbf{R}_i & \mathbf{0}\\ \mathbf{0} & \mathbf{R}_i
\end{bmatrix}
\mathbf{M}_i v_i \right] 
\endgroup
\\
\label{eq:nGEBgen_gov}
\begingroup 
\setlength\arraycolsep{2pt}
\renewcommand*{\arraystretch}{0.9}
\hspace{2.25cm} = \begin{bmatrix}
\partial_x & \mathbf{0} \\ (\partial_x \widehat{\mathbf{p}}_i) & \partial_x
\end{bmatrix} \begin{bmatrix}
\phi_i \\ \psi_i
\end{bmatrix}
\endgroup
&$\text{in } (0, \ell_i)\times(0, T), \, i \in \mathcal{I}$\\
\label{eq:nGEBgen_cont_p}
\mathbf{p}_i(\mathbf{x}_i^n, t) = \mathbf{p}_{i^n}(\mathbf{x}_{i^n}^n, t) &$t \in (0, T), \, i \in \mathcal{I}^n, \, n \in \mathcal{N}_M$\\
\label{eq:nGEBgen_rigid}
(\mathbf{R}_i R_{i}^\intercal)(\mathbf{x}_i^n, t) = (\mathbf{R}_{i^n} R_{i^n}^\intercal)(\mathbf{x}_{i^n}^n, t) &$t \in (0, T), \, i \in \mathcal{I}^n, \, n \in \mathcal{N}_M$\\
\label{eq:nGEBgen_Kir}
{\textstyle \sum_{i\in\mathcal{I}^n}} \tau_i^n \left[ \begin{smallmatrix} 
\phi_i \\ \psi_i
\end{smallmatrix} \right] (\mathbf{x}_i^n, t) = f_n (t)
&$t \in (0, T), \, n \in \mathcal{N}_M$\\
\label{eq:nGEBgen_nSN}
\tau_{i^n}^n \left[ \begin{smallmatrix} 
\phi_{i^n} \\ \psi_{i^n}
\end{smallmatrix} \right] (\mathbf{x}_{i^n}^n, t) = f_n (t) &$t \in (0, T), \, n \in \mathcal{N}_S^N$\\
\label{eq:nGEBgen_nSD}
(\mathbf{p}_{i^n}, \mathbf{R}_{i^n})(\mathbf{x}_{i^n}^n, t) = (f_n^\mathbf{p}, f_n^\mathbf{R})(t) &$t \in (0, T), \, n \in \mathcal{N}_S^D$\\
\label{eq:nGEBgen_IC0}
(\mathbf{p}_i, \mathbf{R}_i)(x, 0) = (\mathbf{p}_i^0, \mathbf{R}_i^0)(x) &$x \in (0, \ell_i), \, i \in \mathcal{I}$\\
\label{eq:nGEBgen_IC1}
(\partial_t \mathbf{p}_i, \mathbf{R}_i W_i)(x, 0) = (\mathbf{p}_i^1, w_i^0)(x) &$x \in (0, \ell_i), \, i \in \mathcal{I}$,
\end{subnumcases}
\end{linenomath}
with external load $f_n(t) \in \mathbb{R}^6$, Dirichlet data $(f_n^\mathbf{p}$, $f_n^\mathbf{R})(t) \in \mathbb{R}^3 \times \mathrm{SO}(3)$ and initial data $\mathbf{p}_i^0(x)$, $\mathbf{p}_i^1(x)$, $w_i^0(x) \in \mathbb{R}^3$ and $\mathbf{R}_i^0(x) \in \mathrm{SO}(3)$.
We will see in Chapter \ref{ch:wellposedness} that corresponding transmission conditions for the IGEB model may be derived from \eqref{eq:nGEBgen_cont_p}-\eqref{eq:nGEBgen_rigid}-\eqref{eq:nGEBgen_Kir}. They will take the form of a continuity of velocities and Kirchhoff conditions, involving $v_i, z_i$ and $R_i$. This will permit us to study the beam network from the point of view of the IGEB model.

\begin{subappendices}

\section{Correspondence with the notation of \cite{hodges2003geometrically}}
\label{ap:hodges}

As mentioned in Section \ref{sec:motivation} the intrinsic beam model at the center of our work (IGEB) originates from \cite{hodges2003geometrically}. As our notation differs substantially from this reference, we provide here a short section to clarify the correspondences between both notations.

Both in \cite{hodges2003geometrically} and in our work, the reference line of the undeformed beam is parametrized by its arclength, though the spatial variable is denoted $x_1$ in the former (rather than $x$). On the description of the beams' position before and after deformation, we have the following correspondences.
\begin{center}
\begin{tabular}{lcccc}\toprule
 & \multicolumn{2}{c}{in $\Omega_c$} & \multicolumn{2}{c}{in $\Omega_t$}
\\\cmidrule(lr){2-3}\cmidrule(lr){4-5}
       & in \cite{hodges2003geometrically} & here & in \cite{hodges2003geometrically} & here  \\\midrule
reference line position 
& $\mathbf{r}$ 
& $p$
& $\mathbf{R}$
& $\mathbf{p}$\\
reference line displacement 
& 
& 
& $\mathbf{u}$
& $\mathbf{p}-p$\\
body-attached basis 
& $\{b_j\}_{j=1}^3$
& $\{b^j\}_{j=1}^3$
& $\{B_j\}_{j=1}^3$
& $\{\mathbf{b}^j\}_{j=1}^3$\\
\bottomrule
\end{tabular}
\end{center}
Note that variables describing $\Omega_c$ depend on the spatial variable only, while variables describing $\Omega_t$ also depend on time.
We recall that $p, \mathbf{p}, \mathbf{R}, \{b^j\}_{j=1}^3$ and $\{\mathbf{b}^j\}_{j=1}^3$ are defined in Subsection \ref{sec:decription_beam}. 
To continue, for the undeformed beam, in \cite{hodges2003geometrically} the variable $k$ denotes the curvature expressed in the body-attached basis of $\Omega_c$, and it corresponds to $\Upsilon_c$ in our notation. 
For the deformed beam, we have the following correspondences.
\begin{center}
\begin{tabular}{lcccc}\toprule
in $\Omega_t$ & \multicolumn{2}{c}{body-attached} & \multicolumn{2}{c}{global}
\\\cmidrule(lr){2-3}\cmidrule(lr){4-5}
       & in \cite{hodges2003geometrically} & here & in \cite{hodges2003geometrically} & here  \\\midrule
linear velocity 
& $V$ & $V$ & $\mathbf{V}$ & $\partial_t \mathbf{p}$\\
angular velocity 
& $\Omega$ & $W$ & $\mathbf{\Omega}$ & $\mathrm{vec}\left(\left(\partial_t \mathbf{R}\right) \mathbf{R} \right)$\\
linear momentum 
& $P$ & $P$ & $\mathbf{P}$ & \\
angular momentum 
& $H$ & $H$ & $\mathbf{H}$ & \\
linear strains 
& $\gamma$ & $\Gamma$ & & \\
curvature (relative to $\Omega_s$)
& $K$ & $\Upsilon + \Upsilon_c$ & $\mathbf{K}$ & $\mathrm{vec}\left(\left(\partial_x \mathbf{R}\right) \mathbf{R} \right)$\\
curvature (relative to $\Omega_c$)
& $\kappa$ & $\Upsilon$ & & \\
internal forces 
& $F$ & $\Phi$ & $\mathbf{F}$ & $\phi$ \\
internal moments 
& $M$ & $\Psi$ & $\mathbf{M}$ & $\psi$ \\
external forces 
& $f$ & $\overline{\Phi}$ & $\mathbf{f}$ & $\overline{\phi}$ \\
external moments 
& $m$ & $\overline{\Psi}$ & $\mathbf{m}$ & $\overline{\psi}$ \\\bottomrule
\end{tabular}
\end{center} 
Let us recall that $P, H$ and $\Gamma, \Upsilon, \Phi, \Psi, V, W$ are defined by \eqref{eq:def_momenta} and \eqref{eq:def_VWPhiPsi} respectively together with \eqref{eq:def_v_z} here, while $\phi, \psi$ are defined by \eqref{eq:def_phi_psi}, and $\overline{\Phi}, \overline{\Psi}, \overline{\phi}, \overline{\psi}$ are introduced in Section \ref{sec:motivation}.

A final remark is that the author in \cite{hodges2003geometrically} makes use of the so-called ``direction cosine matrix'' $C$ rather than the matrices $R$ and $\mathbf{R}$. The former is also a rotation matrix, and corresponds in our case to the composition $\mathbf{R}^\intercal R$.

\section{Proof of Proposition \ref{prop:A_hyperbolic}}
\label{ap:proof_A_hyperbolic}

\begin{proof}[Proof of Proposition \ref{prop:A_hyperbolic}]
We drop the argument $x$ for clarity. Let us first explain why we introduced the matrix $\Theta$ defined in \eqref{eq:def_Theta}.
To study the eigenvalues of $A$, one may study the zeros of $\mathrm{det}(\lambda \mathbf{I}_{12} - A)$. Some computations yield that it is equal to $\mathrm{det}(\lambda^2 \mathbf{I}_6 - (\mathbf{C} \mathbf{M})^{-1})$, which reduces the problem to showing that $(\mathbf{C} \mathbf{M})^{-1}$ has are real, positive eigenvalues only. The latter matrix also writes as $(\mathbf{C} \mathbf{M})^{-1} =  \mathbf{C}^{-\sfrac{1}{2}} \Theta \mathbf{C}^{\sfrac{1}{2}}$, with $\Theta$ defined by \eqref{eq:def_Theta}, implying that it has the same eigenvalues as $\Theta$ since $\mathbf{C}$ is invertible -- moreover all are real and positive as $\Theta$ is symmetric positive definite. 
Hence, $(\mathbf{C} \mathbf{M})^{-1}$ is possibly not positive definite, but has necessarily real, positive eigenvalues.

Let us now go through the decomposition of $A$.
First, we define $L_1 = \mathrm{diag}(U, U)$ and $L_2 = \mathrm{diag}(\mathbf{C}^{-1}, \mathbf{C})^{\sfrac{1}{2}}$, that both have values in $\mathbb{R}^{12 \times 12}$. 
By the definition of $A$ and $\Theta$, in \eqref{eq:def_A_barB}-\eqref{eq:def_Theta}, one has
\begin{align*}
A = -L_2^{-1} \begin{bmatrix}
\mathbf{0} & \Theta\\
\mathbf{I}_6 & \mathbf{0}
\end{bmatrix}L_2.
\end{align*}
Then, relying on the decomposition \eqref{eq:Theta_diagonalization} of $\Theta$, we deduce that
\begin{align*}
A &= - \big(L_1 L_2 \big)^{-1} \begin{bmatrix}
\mathbf{0} & D^2\\
\mathbf{I}_6 & \mathbf{0}
\end{bmatrix} \big(L_1 L_2 \big)
\end{align*}
Introducing the matrix $L_{0}(x)$, and its inverse, defined by
\begin{equation*}
L_{0} = \begin{bmatrix}
\mathbf{I}_6 & D \\
\mathbf{I}_6 & - D
\end{bmatrix}, \qquad 
L_{0}^{-1} = \frac{1}{2} \begin{bmatrix}
\mathbf{I}_6 & \mathbf{I}_6\\
D^{-1} & - D^{-1}
\end{bmatrix},
\end{equation*}
we obtain the expression $A =  \big(L_{0}  L_1 L_2 \big)^{-1} \mathbf{D} \big(L_{0}  L_1 L_2 \big)$ for $A$.
The matrix $L$ defined in \eqref{eq:def_bfD_L} corresponds to $L = L_{0} L_1 L_2$. Its inverse $L^{-1} = L_2^{-1} L_1^{-1} L_{0}^{-1}$ takes the form \eqref{eq:inverseL}.
\end{proof}

\section{Proof of Lemma \ref{lem:dissip_barg}}
\label{ap:proof_lem_dissip_barg}


\begin{proof}[Proof of Lemma \ref{lem:dissip_barg}]
Let $x \in [0, \ell]$, and $a, b \in \mathbb{R}^{12}$ be fixed.
Below, we make use multiple times of the following identities involving the cross product
\begin{align}
\widehat{\zeta }^\intercal = - \widehat{\zeta} \qquad &\text{for any }\zeta \in \mathbb{R}^3 \label{eq:crossProd_skew} \\
\widehat{\zeta} \, z = - \widehat{z} \, \zeta \qquad &\text{for any }\zeta, z \in \mathbb{R}^3. \label{eq:crossProd_interchange}
\end{align}
Let us first consider the quantity $r^\intercal \mathcal{G}(y)$ for some $r, y \in \mathbb{R}^{12}$. Using the definition of $\mathcal{G}$, and the property  \eqref{eq:crossProd_skew} of the cross product, one obtains
\begin{align*}
(r^\intercal \mathcal{G}(y))^\intercal
&= \mathcal{G}(y)^\intercal r 
= - \begin{bmatrix}
\begin{matrix}
\widehat{y}_2 & \widehat{y}_1\\
\mathbf{0} & \widehat{y}_2
\end{matrix} &
\mathbf{0}\\
\begin{matrix}
\mathbf{0} & \widehat{y}_3 \\
\widehat{y}_3 & \widehat{y}_4
\end{matrix}
&
\begin{matrix}
\widehat{y}_2 & \mathbf{0} \\
\widehat{y}_1 & \widehat{y}_2
\end{matrix}
\end{bmatrix} \begin{bmatrix}
r_1 \\ r_2 \\ r_3 \\ r_4
\end{bmatrix}.
\end{align*}
Then, \eqref{eq:crossProd_interchange}, followed by \eqref{eq:crossProd_skew}, yield that 
\begin{align*}
(r^\intercal \mathcal{G}(y))^\intercal
= \begin{bmatrix}
\begin{matrix}
\widehat{r}_2 & \widehat{r}_1\\
\mathbf{0} & \widehat{r}_2
\end{matrix}
&
\mathbf{0}\\
\begin{matrix}
\mathbf{0} & \widehat{r}_3\\
\widehat{r}_3 & \widehat{r}_4 
\end{matrix}
&
\begin{matrix}
\widehat{r}_2 & \mathbf{0} \\
\widehat{r}_1 & \widehat{r}_2
\end{matrix}
\end{bmatrix} \begin{bmatrix}
y_1 \\ y_2 \\ y_3 \\ y_4
\end{bmatrix} = -(y^\intercal \mathcal{G}(r))^\intercal.
\end{align*}
Hence, we have deduced that \eqref{eq:id_yr} holds, and in particular, 
\begin{align}
y^\intercal \mathcal{G}(y) = \mathbf{0}_{12, 1} \qquad &\text{for any }y \in \mathbb{R}^{12}. \label{eq:id_y}
\end{align}

We turn to the second identity. By the definition of $\overline{g}$, one may rewrite
\begin{align*}
\left \langle \overline{g}(x,a) - \overline{g}(x,b), Q^\mathcal{P}(x) (a-b) \right \rangle 
&= - \left \langle a-b, \mathcal{G}(a)Q^\mathcal{P}(x)a - \mathcal{G}(b)Q^\mathcal{P}(x)b \right \rangle\\
&= - \left \langle a, \mathcal{G}(a)Q^\mathcal{P}(x)a \right \rangle + \left \langle a, \mathcal{G}(b)Q^\mathcal{P}(x)b \right \rangle \\
&\quad \, + \left \langle b, \mathcal{G}(a)Q^\mathcal{P}(x)a \right \rangle - \left \langle b, \mathcal{G}(b)Q^\mathcal{P}(x)b \right \rangle,
\end{align*}
By \eqref{eq:id_y}, $a^\intercal \mathcal{G}(a)$ and $b^\intercal \mathcal{G}(b)$ are both zero, and consequently,
\begin{align*}
\left \langle \overline{g}(x,a) - \overline{g}(x,b), Q^\mathcal{P}(x) (a-b) \right \rangle 
&= \left \langle a, \mathcal{G}(b)Q^\mathcal{P}(x)b \right \rangle + \left \langle b, \mathcal{G}(a)Q^\mathcal{P}(x)a \right \rangle.
\end{align*}
By \eqref{eq:id_yr}, the above equation also writes as
\begin{align*}
\left \langle \overline{g}(x,a) - \overline{g}(x,b), Q^\mathcal{P}(x) (a-b) \right \rangle 
&= \left \langle a, \mathcal{G}(b)Q^\mathcal{P}(x)b \right \rangle - \left \langle a, \mathcal{G}(b)Q^\mathcal{P}(x)a \right \rangle\\
&= - \left \langle a, \mathcal{G}(b)Q^\mathcal{P}(x)(a-b) \right \rangle.
\end{align*}
Finally, since $b^\intercal \mathcal{G}(b)$ is zero by \eqref{eq:id_y}, we obtain \eqref{eq:crossProd_interchange}, concluding the proof.
\end{proof}

\section{Proof of Proposition \ref{prop:transfoGov}}
\label{ap:proof_prop_tranfoGov}

\begin{proof}[Proof of Proposition \ref{prop:transfoGov}]
Here, we use the notation $y = (v^\intercal, z^\intercal)^\intercal$ as well as $v = (v_1^\intercal, v_2^\intercal)^\intercal$ and $z = (z_1^\intercal, z_2^\intercal)^\intercal$ for $v_1, v_2, z_1, z_2$ having values in $\mathbb{R}^3$.
We start from the first six equations of the IGEB governing system, which read
\begin{align*}
\mathbf{M} \partial_t v - \partial_x z - \mathbf{E} z + \begin{bmatrix}
\widehat{v}_2 & \mathbf{0}\\
\widehat{v}_1 & \widehat{v}_2
\end{bmatrix} \mathbf{M}v  + \begin{bmatrix}
 \mathbf{0} & \widehat{z}_1 \\
\widehat{z}_1 & \widehat{z}_2
\end{bmatrix} \mathbf{C}z = \begin{bmatrix}
\overline{\Phi} \\ \overline{\Psi}
\end{bmatrix}
\end{align*}
We will make use of the fact that for any $f,F \colon [0, \ell]\times[0, T] \rightarrow\mathbb{R}^3$ related by $f = \mathbf{R} F$, one has the following identities
\begin{align}
\label{eq:identity_dx_F_f}
\partial_x F &= \mathbf{R}^\intercal \partial_t f - (\widehat{\Upsilon} + \widehat{\Upsilon}_c) F, \\
\label{eq:identity_dt_F_f}
\partial_t F &= \mathbf{R}^\intercal \partial_t f - \widehat{v}_2 F,
\end{align}
that arise from the definitions of $\Upsilon, v_2$ in \eqref{eq:def_v_z},\eqref{eq:def_VWPhiPsi} and properties of the cross product.

The first identity \eqref{eq:identity_dx_F_f} applied to $\phi = \mathbf{R} z_1$ and $\psi = \mathbf{R} z_2$ (note that $z_1, z_2$ correspond to $\Phi,\Psi$), together with the definition of $\mathbf{E}$ in \eqref{eq:def_boldE_calP} and $\Gamma, \Upsilon$ in \eqref{eq:def_v_z},\eqref{eq:def_VWPhiPsi}, yields the equivalent system
\begin{align*}
\mathbf{M}\partial_t v - \begin{bmatrix}
\mathbf{R}^\intercal & \mathbf{0}\\ \mathbf{0} & \mathbf{R}^\intercal
\end{bmatrix} \partial_x \begin{bmatrix}
\phi \\ \psi
\end{bmatrix} + 
\begingroup 
\setlength\arraycolsep{3pt}
\renewcommand*{\arraystretch}{0.95}
\begin{bmatrix}
(\widehat{\Upsilon} + \widehat{\Upsilon}_c) z_1 \\
(\widehat{\Upsilon} + \widehat{\Upsilon}_c) z_2
\end{bmatrix}
\endgroup
 -  
\begingroup 
\setlength\arraycolsep{3pt}
\renewcommand*{\arraystretch}{0.95} 
 \begin{bmatrix}
\widehat{\Upsilon}_c & 0 \\ \widehat{e}_1 & \widehat{\Upsilon}_c
\end{bmatrix}
\endgroup
\begin{bmatrix}
z_1 \\ z_2
\end{bmatrix}& \\
+ \begin{bmatrix}
\widehat{v}_2 & \mathbf{0}\\
\widehat{v}_1 & \widehat{v}_2
\end{bmatrix} \mathbf{M}v  + \begin{bmatrix}
 \mathbf{0} & \widehat{z}_1 \\
\widehat{z}_1 & \widehat{z}_2
\end{bmatrix} \begin{bmatrix}
\Gamma \\ \Upsilon
\end{bmatrix}& = \begin{bmatrix}
\overline{\Phi} \\ \overline{\Psi}
\end{bmatrix} 
\end{align*}
Note that the terms involving $\Upsilon$ and $\Upsilon_c$ cancel each other.
Moreover, since $\Gamma + e_1 = \mathbf{R}^\intercal \partial_x \mathbf{p}$, one has that $(\widehat{\Gamma} + \widehat{e}_1)z_1 = \mathbf{R}^\intercal (\partial_x \widehat{\mathbf{p}}) \phi$. Hence, we obtain the equivalent system
\begin{align*}
\mathbf{M}\partial_t v - \begin{bmatrix}
\mathbf{R}^\intercal & \mathbf{0}\\ \mathbf{0} & \mathbf{R}^\intercal
\end{bmatrix} \partial_x \begin{bmatrix}
\phi \\ \psi
\end{bmatrix} + \begin{bmatrix}
\widehat{v}_2 & \mathbf{0}\\
\widehat{v}_1 & \widehat{v}_2
\end{bmatrix} \mathbf{M}v  - \begin{bmatrix}
\mathbf{0} \\ \mathbf{R}^\intercal \partial_x \widehat{\mathbf{p}} \phi
\end{bmatrix} = \begin{bmatrix}
\overline{\Phi} \\ \overline{\Psi}
\end{bmatrix} 
\end{align*}
Since the second identity \eqref{eq:identity_dt_F_f} applied to 
\begin{align*}
\begin{bmatrix}
p \\ m
\end{bmatrix}
:= \begin{bmatrix}
\mathbf{R} & \mathbf{0}\\ \mathbf{0} & \mathbf{R}
\end{bmatrix}
\mathbf{M} v
\end{align*}
yields that
\begin{align*}
\partial_t \left(\mathbf{M}v\right) = \begin{bmatrix}
\mathbf{R}^\intercal & \mathbf{0}\\ \mathbf{0} & \mathbf{R}^\intercal
\end{bmatrix} \partial_t \begin{bmatrix}
p \\ m
\end{bmatrix} - \begin{bmatrix}
\widehat{v}_2 & \mathbf{0}\\
\mathbf{0} & \widehat{v}_2
\end{bmatrix} \mathbf{M}v
\end{align*}
Injecting this expression in the system, and left-multiplying by $\mathbf{R}$, we obtain
\begin{align*}
\begingroup 
\setlength\arraycolsep{3pt}
\renewcommand*{\arraystretch}{0.9}
\partial_t \left[ \begin{bmatrix}
\mathbf{R} & \mathbf{0}\\ \mathbf{0} & \mathbf{R}
\end{bmatrix}
\mathbf{M} v \right] + \begin{bmatrix}
\mathbf{0} & \mathbf{0}\\
(\partial_t \widehat{\mathbf{p}}) \mathbf{R} & \mathbf{0}
\end{bmatrix} \mathbf{M}v =  \partial_x \begin{bmatrix}
\phi \\ \psi
\end{bmatrix} + \begin{bmatrix}
\mathbf{0} & \mathbf{0} \\ \partial_x \widehat{\mathbf{p}} & \mathbf{0}
\end{bmatrix}
\begin{bmatrix}
\phi \\ \psi
\end{bmatrix} + \begin{bmatrix}
\overline{\phi} \\ \overline{\psi}
\end{bmatrix}, 
\endgroup
\end{align*}
where we used the definition of $v_1$ to get that $\mathbf{R}\widehat{v}_1 = (\partial_t \widehat{\mathbf{p}}) \mathbf{R}$.
 
To finish, we will see that the definition of $z$ in terms of $\mathbf{p}$ and $\mathbf{R}$ (see \eqref{eq:def_v_z}) yields the last six governing equation of the IGEB system.
Indeed,
\begin{align*}
\partial_t \Gamma &= (\partial_t \mathbf{R})^{\intercal} \partial_x \mathbf{p} + \mathbf{R}^{\intercal} \partial_{x} \partial_t \mathbf{p}\\
&= -\widehat{v}_2(\Gamma  + e_1) + \partial_x(\mathbf{R}^{\intercal} \partial_t \mathbf{p}) - (\partial_x \mathbf{R})^{\intercal}\partial_t \mathbf{p}\\
&= -\widehat{v}_2(\Gamma  + e_1) + \partial_x v_1 + (\widehat{\Upsilon} + \widehat{\Upsilon}_c)v_1.
\end{align*}
and the latter satisfies
\begin{align*}
\widehat{\partial_t \Upsilon} &= (\partial_t \mathbf{R})^{\intercal} \partial_x \mathbf{R} + \mathbf{R}^{\intercal} \partial_{x}\partial_t \mathbf{R}\\
&= -\widehat{v}_2 (\widehat{\Upsilon} + \widehat{\Upsilon}_c) + \partial_x (\mathbf{R}^{\intercal} \partial_t \mathbf{R}) - (\partial_x \mathbf{R})^{\intercal} \partial_t \mathbf{R}\\
&=- \widehat{v}_2(\widehat{\Upsilon} + \widehat{\Upsilon}_c) + \widehat{\partial_x v_2} + (\widehat{\Upsilon} + \widehat{\Upsilon}_c)v_2.
\end{align*}
The identity $\widehat{(\widehat{a}b)} = \widehat{a}\widehat{b} - \widehat{b} \widehat{a}$ for the cross product implies that $\partial_t \Upsilon = \partial_x v_2 + \widehat{\Upsilon+\Upsilon_c}v_2$. Thus, we have obtained
\begin{align*}
\partial_t s = \partial_x v + 
\begingroup 
\setlength\arraycolsep{3pt}
\renewcommand*{\arraystretch}{0.9}
\begin{bmatrix}
\widehat{\Upsilon}_c & \widehat{e}_1\\
\mathbf{0} & \widehat{\Upsilon}_c
\end{bmatrix}
\endgroup
v - \begin{bmatrix}
\widehat{v}_2 & \widehat{v}_1\\
\mathbf{0} & \widehat{v}_2
\end{bmatrix}
s,
\end{align*}
and the definition of the strains \eqref{eq:def_v_z} indeed yields the last six equations of the IGEB system.
\end{proof}

\end{subappendices}

\chapter{Modelling and well-posedness}
\label{ch:wellposedness}

As alluded to in Chapter \ref{ch:intro}, our study of well-posedness will be at first focused on the IGEB model, in Section \ref{sec:wellp_igeb}. Then, we will see in Section \ref{sec:invert_transfo} how the transformation $\mathcal{T}$ introduced in Section \ref{sec:pres_transfo} may be used to deduce well-posedness for the GEB model.

\section{Main contributions and related works}

This chapter is essential for the following Chapters \ref{ch:stab} and \ref{ch:control}, not only because it treats the well-posedness problem for the systems of interest here, but also because it exposes a link between the GEB and IGEB models that will permit us to derive results for the former model from the study of the latter.
For this reason, the pace of the remainder of this exposition is set as follows.
In all subsequent studies -- well-posedness, but also stabilization and control of nodal profiles --, we first work with the intrinsic model, which has the advantage of being a first-order hyperbolic system as observed in Section \ref{sec:pres_IGEB}, and then deduce corresponding results for the matching GEB model as corollaries.

The well-posedness of the IGEB model for a single beam comes directly from known results in the literature. Of particular interest for us are local in time solutions in $C^0_t H^1_x$ for $H^1$ initial data (resp. $C^0_t H^2_x $ for $H^2$ initial data and more regular coefficients) -- such solutions are consequently $C^0_{x,t}$ (resp. $C^1_{x,t}$) --, as well as semi-global in time solutions in $C_{x,t}^1$. Semi-global in time means that for any time $T>0$, and for small enough initial and boundary data, a unique solution exists at least until time $T>0$. This solution concept originates from \cite{LiJin2001_semiglob}. The former will be of use for the stabilization study while the latter is needed for boundary control of nodal profiles, as we will see in the next two chapters. 

For the networks, well-posedness may also be derived from results on a single hyperbolic system, provided that the transmission conditions have certain properties. Therefore, the first part of our work is to derive transmission conditions for the IGEB model, compatible with the physical ones introduced in Section \ref{sec:networks}: the continuity of the centerlines \eqref{eq:nGEBgen_cont_p}, rigidity of the joints \eqref{eq:nGEBgen_rigid} and balance of forces and moments \eqref{eq:nGEBgen_Kir}.
Then, we will prove that these transmission conditions indeed are such that well-posedness is guaranteed.
The form of transmission conditions is also an essential aspect in the proof of nodal profile controllability of hyperbolic systems on networks, the object of Chapter \ref{ch:control}.

To summarise, we present the following results in this chapter:
\begin{itemize}
\item We derive transmission conditions for the IGEB network in Section \ref{sec:modelling_transmi}.
\item In Theorem \ref{th:semi_global_existence}, for general networks with nonhomogeneous Dirichlet and Neumann conditions at the nodes, we prove the existence and uniqueness of semi-global in times solutions in $C_{x, t}^1$.
\item In Theorem \ref{th:local_existence}, for (tree-shaped) networks with velocity feedback (or free, clamped) conditions at the nodes, we show the existence and uniqueness of local in time solutions in $C_t^0H_x^k$ for $k \in \{1, 2\}$.
\item For a single beam clamped at one end and free or with velocity feedback control at the other end, in Theorem \ref{th:invert_transfo_fb} we prove that the transformation $\mathcal{T}$ is bijective on some specific spaces. Then, in this setting, in Theorem \ref{thm:inv1b_igeb2geb} we prove that the existence and uniqueness of a classical solution to the IGEB model implies that of a classical solution to the GEB model.
\item For a general network with nonhomogeneous Dirichlet and Neumann conditions at the nodes, we extend both results of the previous item in Lemma \ref{lem:invert_transfo} and Theorem \ref{thm:solGEB}, respectively.
\item Then, in Corollary \ref{coro:wellposedGEB}, we give the existence and uniqueness result for the GEB model corresponding to Theorem \ref{th:semi_global_existence}.
\end{itemize}

\subsection*{Well-posedness for first-order hyperbolic systems}
%
Let us give a brief overview of well-posedness results for systems close to \eqref{eq:IGEBtyp}.
Below we refer to initial-boundary value problems (IBVP) unless stated otherwise.
%
%
Some systems may be written as abstract evolution problems that fit in the theory of semigroups. For \emph{linear} problems, this is the case for instance in \cite[Ap. A]{BC2016}, \cite{Russel_1978} and \cite{hu2019minimal} for solutions in $C_t^0 L_x^2$ (the latter uses the method of characteristics), as well as in \cite{Prieur_Winkin_2018} with some additional conditions on the coefficients (they follow \cite[Chap. 10]{tucsnak_weiss}). Using the method of characteristics and an iteration procedure to take care of the coupling induced by the linear lower order terms (such as $\overline{B}y$ here \eqref{eq:def_A_barB}), the author in \cite{friedrichs1948nonlinear} studies linear nonhomogeneous initial value problems (IVP). Furthermore, we refer to \cite[Section 3]{higdon1986initial} for a study of boundary conditions and their suitability for a given equation.

\medskip

%
\noindent For \emph{semilinear or quasilinear} systems, an iteration procedure with a contraction principle is often used to prove the existence of \emph{local in time} solutions. A semilinear system with a (locally) Lipschitz continuous semilinearity, which can be rewritten as an abstract evolution problem, may be treated as the perturbation of a linear evolution problem; see \cite[Chap. 6, Th. 1.4, 1.5, 1.7]{pazy}, and for an application see \cite{Egger_2017_semilin}. Let us note, however, that $\overline{g}(y)$ (see \eqref{eq:def_barg}) does not fit in this framework in our work. 
When such abstract results are not applicable, one may still use semigroup theory for linear systems as part of an iteration procedure, as in \cite[Ap. B]{BC2016} and \cite{bastin2017exponential, pavel_2013} in quasilinear and semilinear contexts respectively and with nonlinear boundary conditions.
For quasilinear and semilinear IVPs, local existence and uniqueness of classical solutions is proved in \cite{courant1949nonlinear} by means of the method of characteristics, and in \cite{friedrichs1948nonlinear, racke1992lectures} by iterations and using knowledge of the linearized problem. Finally, as mentionned above, results on local in time and semi-global in time existence and uniqueness of classical solutions for general \textit{quasilinear} hyperbolic systems can be found in
\cite{Li_Duke85} and \cite[Th. 3.1]{LiJin2001_semiglob}.

\medskip

%
\noindent \emph{Global in time} well-posedness for the IGEB model however -- for classical solutions or possibly other types of solutions -- is not straightforward, even when supposing small initial and boundary data. Due to the nonlinear term $\overline{g}(y)$ the solution may \textit{blow-up} in finite time (see \cite{alinhac2013blowup} for a discussion on blow-up for IVPs).
The properties of the coefficients and boundary conditions of a given system have an impact on its well-posedness. Global existence results for quasilinear systems are given in \cite[Chap. 4, Chap. 5]{Li_1994_global}, on the one hand for IVPs with dissipative properties that would translate to conditions on $\overline{B}y$ and $\overline{g}(y)$ here (see also \cite{beauchard2011large} for a relaxation of this assumption), and on the other hand for IBVPs with dissipative boundary conditions and no linear lower order term such as $\overline{B}y$ here.
In \cite{tartar1981some}, the author provides global $C_t^0 L_x^1$ solutions for semilinear IVPs with a quadratic nonlinearity fulfilling a series constraints (indeed satisfied by $\overline{g}(y)$) and no linear lower order term; see also \cite{bony1987solutions},\cite[Sec.3]{bony1994existence}. 
For global existence results assuming a monotonic behavior of certain terms or a growth restriction for the right-hand side, see \cite{myshkis2008global, turo1997mixed} for the quasilinear case (where $x \geq 0$ instead of a bounded interval), and \cite{Kmit_classical_nonlin} for classical solutions in the semilinear setting.

\section{The IGEB model}
\label{sec:wellp_igeb}

As well-posedness for a single beam is almost immediately given by the literature, and can regardless be seen as a special case of a network consisting of one edge and two simple nodes, we focus on networks in this section. Let us recall that the notation for networks has been introduced in Section \ref{sec:networks}.

The content of this sections is based on \ref{A:SICON}-\ref{A:MCRF}-\ref{A:JMPA}. We start by introducing the system which gives the dynamics of a network of beams described by the IGEB model. Then, we address questions of well-posedness: local in time $C_t^0H_x^k$ solutions in the context of boundary feedback stabilization, and semi-global in time $C_{x,t}^1$ solutions to later undertake boundary control of nodal profiles. In both cases, we will make use of the following assumption on the mass and flexibility matrices.

\begin{assumption} \label{as:mass_flex}
Let $m \in \{1, 2, \ldots\}$ be given. For all $i \in \mathcal{I}$, we suppose that
\begin{enumerate}
\item \label{eq:assump1_1} $\mathbf{C}_i, \mathbf{M}_i \in C^m([0, \ell_i]; \mathbb{S}_{++}^6)$;
\item \label{eq:assump1_2} the function $\Theta_i \in C^m([0, \ell_i]; \mathbb{S}_{++}^6)$ defined by $\Theta_i = (\mathbf{C}_i^{\sfrac{1}{2}} \mathbf{M}_i\mathbf{C}_i^{\sfrac{1}{2}})^{-1}$, is such that there exists $U_i, D_i \in C^m([0, \ell_i]; \mathbb{R}^{6 \times 6})$ for which $\Theta_i = U_i^\intercal D_i^2 U_i$ in $[0, \ell_i]$, where $D_i(x)$ is a positive definite diagonal matrix containing the square roots of the eigenvalues of $\Theta_i(x)$ as diagonal entries, while $U_i(x)$ is unitary.
\end{enumerate}
\end{assumption}

In view of the above observations on the matrix $A_i(x)$ in Section \ref{sec:pres_IGEB}, one can see that this assumption is here to ensure a certain regularity of the eigenvalues and eigenvectors of this matrix.
One may note that, in Assumption \ref{as:mass_flex}, if \ref{eq:assump1_1} holds, then \ref{eq:assump1_2} is readily verified if $\mathbf{M}_i, \mathbf{C}_i$ have values in the set of diagonal matrices, or if the eigenvalues of $\Theta_i(x)$ are distinct for all $x \in [0, \ell]$ (one may adapt \cite[Th. 2, Sec. 11.1]{evans2}). Clearly, \ref{eq:assump1_2} is also satisfied if $\mathbf{M}_i, \mathbf{C}_i$ are constant, entailing that the material and geometrical properties of the beam do not vary along its centerline.

\medskip

\noindent Finally, let us just introduce some more notation concerning the initial data $(y_i^0)_{i \in \mathcal{I}}$. For all $i \in \mathcal{I}$, if $y_i^0 \in C^1([0, \ell_i]; \mathbb{R}^{12})$, we define $y_i^1 \in C^0([0, \ell_i]; \mathbb{R}^{12})$ by 
\begin{equation} \label{eq:def_y1}
y_i^1 = -  A_i \frac{\mathrm{d}y_i^0}{\mathrm{d}x} - \overline{B}_i y_i^0 + \overline{g}_i(\cdot, y_i^0).
\end{equation}
Moreover, as for the state of the IGEB model, we denote
\begin{equation} \label{eq:notation_yvz01}
y_i^0 = \begin{bmatrix}
v_i^0 \\ z_i^0
\end{bmatrix}, \quad y_i^1 = \begin{bmatrix}
v_i^1 \\z_i^1
\end{bmatrix},
\end{equation}
where $v_i^0, z_i^0, v_i^1, z_i^1 \colon [0, \ell_i] \rightarrow \mathbb{R}^6$.
This notation will notably be of use when discussing the compatibility of $(y_i^0)_{i \in \mathcal{I}}$ with the nodal conditions.

\subsection{Modelling of transmission conditions}
\label{sec:modelling_transmi}

As in Section \ref{sec:networks}, at the multiple nodes, we assume continuity of the centerline, rigidity of the joint and balance of forces and moments.
For any $i \in \mathcal{I}$, we introduce the function $\overline{R}_{i} \colon [0, \ell_i]\rightarrow \mathbb{R}^{6 \times 6}$ defined by
\begin{equation} \label{eq:def_barRi}
\overline{R}_{i} = \mathrm{diag}(R_{i}, R_{i}),
\end{equation}
where we recall that $R_i \colon [0, \ell_i]\rightarrow \mathrm{SO}(3)$ characterizes the undeformed beam.
Then, if the beams are described by the IGEB model, for the overall network, the dynamics of the state $(y_i)_{i \in \mathcal{I}}$ are given by
\begin{subnumcases}{\label{eq:nIGEBgen}}
\label{eq:nIGEBgen_gov}
\partial_t y_i + A_i \partial_x y_i + \overline{B}_i y_i = \overline{g}_i(\cdot,y_i) &$\text{in } (0, \ell_i)\times(0, T), \, i \in \mathcal{I}$\\
\label{eq:nIGEBgen_cont}
(\overline{R}_i v_i)(\mathbf{x}_i^n, t) = (\overline{R}_{i^n} v_{i^n})(\mathbf{x}_{i^n}^n, t) &$t \in (0, T), \, i \in \mathcal{I}^n, \, n \in \mathcal{N}_M$\\
\label{eq:nIGEBgen_Kir}
\sum_{i\in\mathcal{I}^n} \tau_i^n (\overline{R}_i z_i)(\mathbf{x}_i^n, t) = q_n(t) &$t \in (0, T), \, n \in \mathcal{N}_M$\\
\label{eq:nIGEBgen_nSN}
\tau_{i^n}^n z_{i^n} (\mathbf{x}_{i^n}^n, t) = q_n(t) &$t \in (0, T), \, n \in \mathcal{N}_S^N$\\
\label{eq:nIGEBgen_nSD}
v_{i^n}(\mathbf{x}_{i^n}^n, t) = q_n(t) &$t \in (0, T), \, n \in \mathcal{N}_S^D$\\
\label{eq:nIGEBgen_IC}
y_i(x, 0) = y_i^0(x) &$x \in (0, \ell_i), \, i \in \mathcal{I}$,
\end{subnumcases}
with boundary data $q_n(t) \in \mathbb{R}^6$ and initial data $y_i^0(x) \in \mathbb{R}^{12}$. Concerning simple nodes $n \in \mathcal{N}_S$, the data $q_n$ is defined, as in \eqref{eq:def_qD}, by
\begin{align}\label{eq:def_q_nSD}
q_n &= \begin{bmatrix}
(f_n^\mathbf{R})^\intercal \frac{\mathrm{d}}{\mathrm{d}t}f_n^\mathbf{p}\\
(f_n^\mathbf{R})^\intercal \frac{\mathrm{d}}{\mathrm{d}t}f_n^\mathbf{R}
\end{bmatrix}, \quad \text{if }n \in \mathcal{N}_S^D\\
\label{eq:def_q_nSN}
q_n &= \begin{bmatrix}
\mathbf{R}_{i^n}(\mathbf{x}_{i^n}^n, \cdot)^\intercal &  \mathbf{0}\\
\mathbf{0} & \mathbf{R}_{i^n}(\mathbf{x}_{i^n}^n,\cdot)^\intercal
\end{bmatrix} f_n, \quad \text{if }n \in \mathcal{N}_S^N.
\end{align}
Note that we dropped the upperscript $D, N$ for $q_n$ for clarity.


\medskip

\noindent Let $n$ be the index of some multiple node. In \ref{A:MCRF}-\ref{A:JMPA}, to obtain the transmission conditions \eqref{eq:nIGEBgen_cont}-\eqref{eq:nIGEBgen_Kir} for the IGEB model, corresponding to that of \eqref{eq:nGEBgen}, we proceed as follows. We first differentiate the conditions \eqref{eq:nGEBgen_cont_p} on the continuity of the centerline and \eqref{eq:nGEBgen_rigid} on the rigidity of the joint with respect to time. Then, we left-multiply each of the obtained equations by $(R_j\mathbf{R}_j^\intercal)(\mathbf{x}_j^n, t)$ for the corresponding beam index $j$ (thereby also using the rigid joint assumption). Recognising the definition of the linear and angular velocities, the condition \eqref{eq:nIGEBgen_cont} on the ``continuity of the velocities'' is obtained.

The Kirchhoff condition for the IGEB model is obtained by left-multiplying each term in the right-hand side of the Kirchhoff condition \eqref{eq:nGEBgen_Kir} for the GEB model by $(R_i \mathbf{R}_i^\intercal)(\mathbf{x}_i^n, t)$ for the corresponding index $i$ (once again using the rigid joint assumption), left-multiplying $f_n$ by $(R_i \mathbf{R}_i^\intercal)(\mathbf{x}_i^n, t)$ for some $i \in \mathcal{I}_n$ (for instance as $i^n$). Then, recalling that  $\phi_i, \psi_i$ and $\Phi_i, \Psi_i$ are related by \eqref{eq:relation_phiPhi_psiPsi} one obtains the condition \eqref{eq:nIGEBgen_Kir} now in terms of the internal forces and moments $z_i$. Thus, one has the following relationship between $q_n$ and $f_n$ for $n\in \mathcal{N}_M$:
\begin{align} \label{eq:def_q_nM}
q_n &= \begin{bmatrix}
(R_{i^n}\mathbf{R}_{i^n}^\intercal)(\mathbf{x}_{i^n}^n, \cdot) &  \mathbf{0}\\
\mathbf{0} & (R_{i^n}\mathbf{R}_{i^n}^\intercal)(\mathbf{x}_{i^n}^n,\cdot)
\end{bmatrix} f_n.
\end{align}
The detailed computations are provided in \ref{A:MCRF} (Section 3.2) and \ref{A:JMPA} (Section 2.2.3).

\medskip

\noindent \textbf{Feedback at the boundary.} We are also interested in the case where a force depending on the velocities is applied at the Neumann simple nodes $n \in \mathcal{N}_S^N$ and/or at the multiple nodes $n \in \mathcal{N}_M$. More precisely, we want the applied load expressed in the body-attached basis $\mathbf{R}_{i^n}(\mathbf{x}_i^n, t)^\intercal f_n(t)$ to be proportional to the velocities $v_{i^n}(\mathbf{x}_i^n, t)$ as follows
\begin{align*}
\mathbf{R}_{i^n}(\mathbf{x}_{i^n}^n, t)^\intercal f_n(t) = - K_n v_{i^n}(\mathbf{x}_{i^n}^n, t), \quad t \in (0, T).
\end{align*}
In this case \eqref{eq:nIGEBgen_Kir}-\eqref{eq:nIGEBgen_nSN} take the form
\begin{subequations}
\begin{align*} 
\sum_{i\in\mathcal{I}^n} \tau_i^n (\overline{R}_i z_i)(\mathbf{x}_i^n, t) &= -K_n v_{i^n}(\mathbf{x}_{i^n}^n, t), \quad t \in (0, T), \, n \in \mathcal{N}_M\\
\tau_{i^n}^n z_{i^n} (\mathbf{x}_{i^n}^n, t) &= -K_n v_{i^n}(\mathbf{x}_{i^n}^n, t), \quad t \in (0, T), \, n \in \mathcal{N}_S^N,
\end{align*}
\end{subequations}
respectively.

\subsection{Semi-global in time solution in $C_{x,t}^1$}
\label{sec:sol_semi_glob}

In Section \ref{sec:sol_semi_glob}, for the initial and boundary data, we assume the following regularity
\begin{align}
\label{eq:reg_Idata_IGEB}
y_i^0 \in C^1([0, \ell_i]; \mathbb{R}^{12}), \quad i \in \mathcal{I},\\
\label{eq:reg_Ndata_IGEB}
q_n \in C^1([0, T]; \mathbb{R}^6), \quad n \in \mathcal{N},
\end{align}
while for the given data characterizing the geometry and material of the beams we assume that
\begin{equation} \label{eq:reg_beampara}
\mathbf{M}_i, \mathbf{C}_i \in C^1([0, \ell_i]; \mathbb{S}_{++}^6), \quad R_i \in C^2([0, \ell_i]; \mathrm{SO}(3)).
\end{equation}
We will need to define compatibility conditions for System \eqref{eq:nIGEBgen}. 

\begin{definition}
We say that the initial data $y_i^0 \in C^1([0, \ell_i]; \mathbb{R}^{12})$, for all $i\in \mathcal{I}$, and boundary data $q_n\in C^0([0, T]; \mathbb{R}^6)$, for all $n \in \mathcal{N}$, fulfill the first-order compatibility conditions of \eqref{eq:nIGEBgen} if
\begin{linenomath}
\begin{equation} \label{eq:compat_BC_gen} 
\begin{aligned}
&(\overline{R}_i v_i^0)(\mathbf{x}_i^n) = (\overline{R}_j v_j^0)(\mathbf{x}_j^n) \qquad && i,j \in \mathcal{I}^n, \, n \in \mathcal{N}_M\\
&{\textstyle \sum_{i\in\mathcal{I}^n}} \tau_i^n (\overline{R}_i z_i^0)(\mathbf{x}_i^n) = q_n(0) && n \in \mathcal{N}_M\\
& \tau_{i^n}^n z_{i^n}^0(\mathbf{x}_{i^n}^n) = q_n(0) && n \in \mathcal{N}_S^N\\
& v_{i^n}^0 (\mathbf{x}_{i^n}^n) = q_n(0) && n \in \mathcal{N}_S^D,
\end{aligned}
\end{equation} 
\end{linenomath}
holds (see \eqref{eq:notation_yvz01}) and  $(y_i^1)_{i\in\mathcal{I}} \subset C^0([0, \ell_i]; \mathbb{R}^{12})$ defined by \eqref{eq:def_y1} also fulfills \eqref{eq:compat_BC_gen}, where $v_i^0, z_i^0$ are replaced by $v_i^1, z_i^1$ respectively.
\end{definition}

\begin{theorem}[Th. 2.4 \ref{A:JMPA}] \label{th:semi_global_existence}
Consider a general network, suppose that $R_i$ has the regularity \eqref{eq:reg_beampara} and that Assumption \ref{as:mass_flex} is fulfilled with $m=2$.
Then, for any $T>0$, there exists $\varepsilon_0>0$ such that for all $\varepsilon \in (0, \varepsilon_0)$ and for some $\delta>0$, and all initial and boundary data $y_i^0, q_n$ of regularity \eqref{eq:reg_Idata_IGEB}-\eqref{eq:reg_Ndata_IGEB}, and satisfying $\|y_i^0\|_{C_x^1} +\|q_n\|_{C_t^1} \leq \delta$ and the first-order compatibility conditions of \eqref{eq:nIGEBgen}, there exists a unique solution $(y_i)_{i \in \mathcal{I}} \in \prod_{i=1}^N C^1([0, \ell_i] \times [0, T]; \mathbb{R}^{12})$ to \eqref{eq:nIGEBgen} and $\|y_i\|_{C_{x,t}^1}\leq \varepsilon$.
\end{theorem}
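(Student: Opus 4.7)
The plan is to reduce \eqref{eq:nIGEBgen} to a hyperbolic system in normal form on each edge, with boundary and transmission conditions also put in normal form, and then invoke the semi-global classical existence theorem of \cite{LiJin2001_semiglob} in its version adapted to networks.

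\textbf{Step 1: Diagonalization on each edge.} By Proposition~\ref{prop:A_hyperbolic} combined with Assumption~\ref{as:mass_flex} (with $m=2$) and the regularity \eqref{eq:reg_beampara}, for each $i \in \mathcal{I}$ one has $A_i = L_i^{-1} \mathbf{D}_i L_i$ with $L_i \in C^2([0,\ell_i]; \mathbb{R}^{12\times 12})$ invertible and $\mathbf{D}_i = \mathrm{diag}(-D_i, D_i)$ having six strictly negative and six strictly positive eigenvalues by \eqref{eq:sign_eigval}. Setting $r_i := L_i y_i$ and splitting $r_i = ((r_i^-)^\intercal, (r_i^+)^\intercal)^\intercal$ with $r_i^\pm \in \mathbb{R}^6$, the governing equation \eqref{eq:nIGEBgen_gov} becomes a semilinear strictly hyperbolic system
\begin{align*}
\partial_t r_i + \mathbf{D}_i \partial_x r_i = \mathcal{F}_i(x, r_i),
\end{align*}
with $\mathcal{F}_i$ of class $C^1$ in $x$, smooth in $r_i$, and at most quadratic in $r_i$ locally, by the form of $\overline{B}_i$ and $\overline{g}_i$ in \eqref{eq:def_A_barB}--\eqref{eq:def_barg}.

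\textbf{Step 2: Nodal conditions in normal form.} At $x = \mathbf{x}_i^n$, the components of $r_i$ entering the edge are those associated with eigenvalues of sign opposite to $\tau_i^n$, so exactly six per incidence. At simple nodes in $\mathcal{N}_S^D \cup \mathcal{N}_S^N$, the six scalar constraints \eqref{eq:nIGEBgen_nSD} or \eqref{eq:nIGEBgen_nSN} combined with the explicit block structure of $L_{i^n}^{-1}$ in \eqref{eq:inverseL} allow one to solve for the six incoming components in terms of the six outgoing ones and $q_n$; the resolving matrix is invertible thanks to positive definiteness of $\mathbf{M}_{i^n}(\mathbf{x}_{i^n}^n)$ and $\mathbf{C}_{i^n}(\mathbf{x}_{i^n}^n)$. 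At a multiple node $n \in \mathcal{N}_M$ of degree $k_n$, the incoming characteristics total $6 k_n$ scalars; the continuity of velocities \eqref{eq:nIGEBgen_cont} supplies $6(k_n-1)$ scalar relations and the Kirchhoff condition \eqref{eq:nIGEBgen_Kir} another $6$, matching the count. Using the block form \eqref{eq:def_bfD_L}--\eqref{eq:inverseL} of $L_i, L_i^{-1}$, the orthogonality of $R_{i^n}(\mathbf{x}_{i^n}^n)$ embedded in $\overline{R}_{i^n}$, and a Schur-complement computation that separates the ``velocity block'' from the ``stress block'', one shows that the resulting $6 k_n \times 6 k_n$ linear system has a nonsingular, $C^1$-smooth coefficient matrix, yielding the $6k_n$ incoming Riemann invariants as a $C^1$ function of the outgoing ones and $q_n(t)$.

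\textbf{Step 3: Semi-global existence via Li--Jin and wrap-up.} Once the coupled problem is recast as a strictly hyperbolic semilinear system on each edge together with nodal conditions in normal form, and the first-order compatibility conditions \eqref{eq:compat_BC_gen} are transported through $y_i \mapsto L_i y_i$, we are exactly in the setting of \cite[Th.~3.1]{LiJin2001_semiglob} (or its network adaptation used, e.g., in \ref{A:JMPA}). For any prescribed $T>0$ this yields $\varepsilon_0>0$ such that for every $\varepsilon \in (0,\varepsilon_0)$ there exists $\delta>0$ for which smallness $\|r_i^0\|_{C^1_x} + \|q_n\|_{C^1_t} \leq \delta$ (which is equivalent, up to constants depending only on $L_i$, to the corresponding bound on $y_i^0$) guarantees a unique $C^1$ solution $(r_i)_{i \in \mathcal{I}}$ on $[0,\ell_i]\times[0,T]$ with $\|r_i\|_{C^1_{x,t}} \leq \varepsilon$. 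Setting $y_i = L_i^{-1} r_i$ and absorbing $\|L_i^{\pm 1}\|_{C^1}$ into a redefinition of $\varepsilon_0, \delta$ produces the claimed solution to \eqref{eq:nIGEBgen}.

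\textbf{Main obstacle.} The crux is Step~2: showing that at arbitrary multiple nodes the combined continuity-plus-Kirchhoff conditions, once rewritten in the Riemann-invariant variables $r_i^\pm$, admit a well-posed ``incoming = smooth function of outgoing + data'' normal form. This is not a pointwise scalar computation but an algebraic invertibility statement of size $6k_n$ whose proof must exploit the coupling between the two blocks of \eqref{eq:inverseL} together with the orthogonality present in $\overline{R}_i$; the dissipative-energy identity underlying Proposition~\ref{prop:dissip_barg} provides a useful cross-check, since non-invertibility would contradict the fact that the nodal coupling preserves the Hamiltonian structure of the model.
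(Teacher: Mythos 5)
Your proposal follows essentially the same route as the paper: diagonalize each edge via $r_i = L_i y_i$, rewrite the nodal conditions so that the characteristics entering each domain are expressed as a $C^1$ function of those leaving it plus $q_n$ (with the invertibility of the resulting $6k_n\times 6k_n$ matrix $\mathbf{A}_n\mathbf{G}_n$ at multiple nodes resting, as you say, on the positive definiteness of the $\sigma_i^n$ inherited from $\mathbf{M}_i,\mathbf{C}_i$ and the orthogonality in $\overline{R}_i$), and then apply the semi-global theory of Li et al.\ to the stacked system. Note only that your labels ``incoming''/``outgoing'' are swapped relative to the paper's convention in Appendix \ref{ap:out_in_info}, but the mathematical content is identical.
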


\noindent \textbf{Idea of the proof.} 
The proof of Theorem \ref{th:semi_global_existence} is based on the results on abstract one-dimensional first-order quasilinear hyperbolic systems developed by \cite{LiJin2001_semiglob, Li_Duke85}.
Such results require a certain regularity of the data and coefficients as well as a specific form of the boundary conditions for the system written in diagonal form (also called \emph{characteristic form} or \emph{Riemann invariants}).
Namely, once \eqref{eq:nIGEBgen} has been written in Riemann invariants, the new unknown state being denoted $(r_i)_{i \in \mathcal{I}}$, one should verify that the nodal conditions fulfill the following rule: at any node, the components of $r_i$ corresponding to characteristics \emph{entering} the domain $[0, \ell_i]\times [0, +\infty)$ at this node (i.e., the ``outgoing information'') is expressed explicitly as a function of the components of $r_i$ corresponding to characteristics \emph{leaving} the domain $[0, \ell_i]\times [0, +\infty)$ at this node (i.e., the ``incoming information''). More detail is given in Appendix \ref{ap:out_in_info}.
We summarise the principal steps of this proof below.
\begin{itemize}
\item \textbf{Step 1: Riemann invariants.}
We write \eqref{eq:nIGEBgen} in diagonal form by applying the change of variable 
\begin{linenomath}
\begin{align} \label{eq:change_var_Li}
r_i(x,t) = L_i(x) y_i(x,t),
\end{align}
\end{linenomath}
for all $x \in [0, \ell_i], \ t \in [0, T], \ i \in \mathcal{I}$. Recall that $L_i$ is defined by \eqref{eq:def_bfD_L} for the corresponding edge index $i$.
The first (resp. last) six components of $r_i$ correspond to the negative (resp. positive) eigenvalues of $A_i$, and therefore we denote
\begin{linenomath}
\begin{align*}
r_i = \begin{bmatrix}
r_i^-\\
r_i^+
\end{bmatrix}, \qquad r_i^-,\, r_i^+ \colon [0, \ell_i]\times [0, T] \rightarrow \mathbb{R}^6
\end{align*}
\end{linenomath}
for all $i \in \mathcal{I}$.
Then, we define the new coefficient $B_i \in C^1([0, \ell_i]; \mathbb{R}^{12\times 12})$ by $B_i(x) = L_i(x) \overline{B}_i(x) L_i(x)^{-1} + L_i(x) A_i(x) \frac{\mathrm{d}}{\mathrm{d}x}L_i^{-1}(x)$, the nonlinearity by $g_i(x,u) = L_i(x) \overline{g}_i(x,L_i(x)^{-1} u)$ for all $i \in \mathcal{I}$, $x \in [0, \ell_i]$ and $u \in \mathbb{R}^{12}$, and the initial data by $r_i^0 = L_i y_i^0$.


\item \textbf{Step 2: Outgoing/incoming information.}
Let us introduce the invertible matrix $\gamma_i^n$ and positive definite symmetric matrix $\sigma_i^n$, defined by
\begin{align} \label{eq:def_gamma_sigma_in}
\gamma_i^n &= (\overline{R}_i  \mathbf{C}_i^{\sfrac{1}{2}} U_i^\intercal)(\mathbf{x}_i^n), \quad
\sigma_i^n = (\overline{R}_i  \mathbf{C}_i^{-\sfrac{1}{2}} U_i^\intercal D_i^{-1} U_i \mathbf{C}_i^{-\sfrac{1}{2}} \overline{R}_i^\intercal)(\mathbf{x}_i^n)
\end{align}
for all $n \in \mathcal{N}$ and $i \in \mathcal{I}^n$, where we recall that $\overline{R}_i$ is defined by \eqref{eq:def_barRi} and $U_i$ is the matrix introduced in Assumption \ref{as:mass_flex}.
Having applied \eqref{eq:change_var_Li}, we obtain the following equivalent system after some computations
\begin{subnumcases}{\label{eq:nIGEBgenR}}
\partial_t r_i + \mathbf{D}_i \partial_x r_i + B_i r_i = g_i(\cdot, r_i) &$\text{in } (0, \ell_i)\times(0, T), \, i \in \mathcal{I}$\\
\label{eq:nIGEBgenR_transmi}
\frac{\mathbf{A}_n \mathbf{G}_n}{2} r^\mathrm{out}_n(t) = \frac{\mathbf{B}_n \mathbf{G}_n}{2} r^\mathrm{in}_n(t) +
\begin{bmatrix}
q_n(t)\\ \mathbf{0}
\end{bmatrix} &$t \in (0, T), \, n \in \mathcal{N}_M$\\
\label{eq:nIGEBgenR_nSN}
\frac{\overline{R}_{i^n}(\mathbf{x}_{i^n}^n)^\intercal \sigma_{i^n}^n \gamma_{i^n}^n}{2}  (r_n^\mathrm{out} - r_n^\mathrm{in})(t) = q_n(t) &$t \in (0, T), \, n \in \mathcal{N}_S^N$\\
\label{eq:nIGEBgenR_nSD}
\frac{(\mathbf{C}_{i^n}^{\sfrac{1}{2}}U_{i^n}^\intercal)(\mathbf{x}_{i^n}^n)}{2} ( r_n^\mathrm{out} + r_n^\mathrm{in})(t) = q_n(t) &$t \in (0, T), \, n \in \mathcal{N}_S^D$\\
\label{eq:nIGEBgenR_IC}
r_i(x, 0) = r_i^0(x) &$x \in (0, \ell_i), \, i \in \mathcal{I}$,
\end{subnumcases}
with $\mathbf{A}_n, \mathbf{B}_n, \mathbf{G}_n \in \mathbb{R}^{6k_n \times 6k_n}$ defined by $\mathbf{G}_n = \mathrm{diag}(\gamma_{i_1}^n, \ldots, \gamma_{i_{k_n}}^n)$ and
\begin{align*}
\mathbf{A}_n = \begin{bmatrix}
\sigma_{i_1}^n & \sigma^n_{i_2} & \ldots & \sigma^n_{i_{k_n}}\\
-\mathbf{I}_6 & \mathbf{I}_6 & & \\
\vdots & & \ddots & \\
-\mathbf{I}_6 & & & \mathbf{I}_6
\end{bmatrix}, \quad
\mathbf{B}_n =  \begin{bmatrix}
\sigma_{i_1}^n & \sigma^n_{i_2} & \ldots & \sigma^n_{i_{k_n}}\\
\mathbf{I}_6 & -\mathbf{I}_6 & & \\
\vdots & & \ddots & \\
\mathbf{I}_6 & & & -\mathbf{I}_6
\end{bmatrix},
\end{align*}
where we used the notation \eqref{eq:notation_I^n} for the set $\mathcal{I}^n$.
The outgoing and incoming information $r_n^\mathrm{out}, r_n^\mathrm{in}$ are more precisely defined in Appendix \ref{ap:out_in_info} (see \eqref{eq:r_out_in_gene}).
The key point here is that, in the diagonalised system, the transmission conditions \eqref{eq:nIGEBgen_cont}-\eqref{eq:nIGEBgen_Kir} can be rewritten in the form \eqref{eq:nIGEBgenR_transmi} where $\mathbf{A}_n \mathbf{G}_n$ is invertible.

\item \textbf{Step 3: Applying abstract results for a single system.}
Applying the change of variable $\widetilde{r}_i(\xi, t) = r_i(\ell_i \ell^{-1} \xi, t)$ for all $i \in \mathcal{I}$, $\xi \in [0, \ell]$ and $t \in [0, T]$ for some $\ell>0$, in order to make the spatial domain identical for all beams, and considering the larger $\mathbb{R}^{12N}$-valued unknown $\widetilde{r} = (\widetilde{r}_1^{\,\intercal}, \ldots, \widetilde{r}_N^{\, \intercal})^\intercal$, we obtain a single larger hyperbolic system. Due to the previous step, the boundary conditions of the obtained larger hyperbolic system are already written in such a way that the outgoing information for this system is a function of the incoming information.

The local and semi-global existence and uniqueness of $C_{x,t}^1$ solutions to general one-dimensional quasilinear hyperbolic systems have been addressed in \cite[Lem. 2.3, Th. 2.1]{wang2006exact}, which is an extension of \cite[Lem. 2.3, Th. 2.5]{li2010controllability} to nonautonomous systems. See also \cite{Zhuang2021, Zhuang2018} for a similar problem with the Saint-Venant equations on general networks. Since we are now in position to apply such results, this concludes the proof. \hfill $\meddiamond$
\end{itemize}

\subsection{Local in time solution in $C_t^0H_x^k$}

\label{sec:local_existence_igeb}

Here, we focus on \emph{tree-shaped} networks where either a velocity feedback control is applied at the nodes, or the beams are free or clamped. 

\medskip

\noindent \textbf{Tree-shaped networks.} Since we only consider the tree-shaped case, we may simplify the network notation as follows (inspired by \cite{AlabauPerrollazRosier2015}). We suppose that the set of nodes also includes the index $n=0$. Then, without loosing generality, we assume that the node $n=0$ is a simple node and is the initial point of the edge $i=1$, and we assume that for any $i \in \mathcal{I}$ the edge $i$ has for ending point the node with the same index $n=i$ (see Fig. \ref{fig:treeNet_numbering}). In other words, for all $n \in \mathcal{N}$ and all $i \in \mathcal{I}^n$, we assume that $n \in \mathcal{I}^n$ and
\begin{align*}
\mathbf{x}_i^n =
\left\{\begin{array}{ll}
\ell_n &\text{if }i=n\\
0 &\text{else.}
\end{array}\right.
\end{align*}
This allows us to let go of the notation $\mathbf{x}_i^n$.
In addition, for any multiple node $n \in \mathcal{N}_M$, we denote by $\mathcal{I}_n$ the set of indices of all edges starting at this node; we also denote the elements of $\mathcal{I}_n$ by (see Fig. \ref{fig:treeNet_multNode})
\begin{equation} \label{eq:nota_In_indices}
\mathcal{I}_n = \{i_2, i_3, \ldots, i_{k_n}\}, \qquad \text{with} \quad i_2 < i_3 < \ldots < i_{k_n}.
\end{equation}
In this case, the set $\mathcal{I}^n$ defined in Section \ref{sec:networks} is given by $\mathcal{I}^n = \{n\} \cup \mathcal{I}_n$.

\medskip

\noindent \textbf{The system.} In light of this notation, the network system, with unknown state $y = (y_i)_{i\in\mathcal{I}}$, reads
\begin{subnumcases}{\label{eq:nIGEBfb}}
\label{eq:nIGEBfb_gov}
\partial_t y_i + A_i \partial_x y_i + \overline{B}_i y_i = \overline{g}_i(\cdot,y_i) &$\text{in } (0, \ell_i)\times(0, T), \, i \in \mathcal{I}$\\
\label{eq:nIGEBfb_cont}
\overline{R}_{i}(0) v_i(0, t) = \overline{R}_{n}(\ell_n) v_n(\ell_n, t) &$t \in (0, T), \, i \in \mathcal{I}^n, \, n \in \mathcal{N}_M$\\
\nonumber
\overline{R}_{n}(\ell_n) z_n(\ell_n, t) - {\textstyle \sum_{i \in \mathcal{I}_n}} \overline{R}_{i}(0) z_i(0, t) \\
\label{eq:nIGEBfb_Kir}
\hspace{2.175cm} = - \overline{R}_n(\ell_n) K_n v_n(\ell_n, t) &$t \in (0, T), \, n \in \mathcal{N}_M$\\
\label{eq:nIGEBfb_nSell}
z_n(\ell_n, t) = - K_n v_n(\ell_n, t) &$t \in (0, T), \, n \in \mathcal{N}_S \setminus \{0\}$\\
\label{eq:nIGEBfb_nS0}
z_1(0, t) = K_0 v_1(0, t) &$t \in (0, T)$\\
\label{eq:nIGEBfb_IC}
y_i(x, 0) = y_i^0(x) &$x \in (0, \ell_i), \, i \in \mathcal{I}$.
\end{subnumcases}
In \eqref{eq:nIGEBfb_Kir}, two situations may be accounted for: either a feedback is applied at this node, in which case $K_n \in \mathbb{S}_{++}^{6}$, or $K_n = \mathbf{0}_6$ and no external load is applied at this node; the latter corresponds to the classical Kirchhoff condition. 
At simple nodes $n\in\mathcal{N}_S$, in \eqref{eq:nIGEBfb_nSell},\eqref{eq:nIGEBfb_nS0}, either the velocity feedback control is applied with $K_n \in \mathbb{S}_{++}^{6}$ or $K_n = \mathbf{0}_6$ and the beam is \emph{free}.
Instead of \eqref{eq:nIGEBfb_nSell} or \eqref{eq:nIGEBfb_nS0}, one may want to assume that the beam is clamped at a given simple node, which would amount to considering the respective homogeneous Dirichlet conditions
\begin{equation}\label{eq:diri_cond_0n}
v_n(\ell_n, t) = 0, \quad \text{or} \quad v_1(0, t) = 0, \qquad t \in (0, T).
\end{equation}

\medskip

\begin{figure}
  \begin{subfigure}{0.7\textwidth}
  \centering
    \includegraphics[scale=0.8]{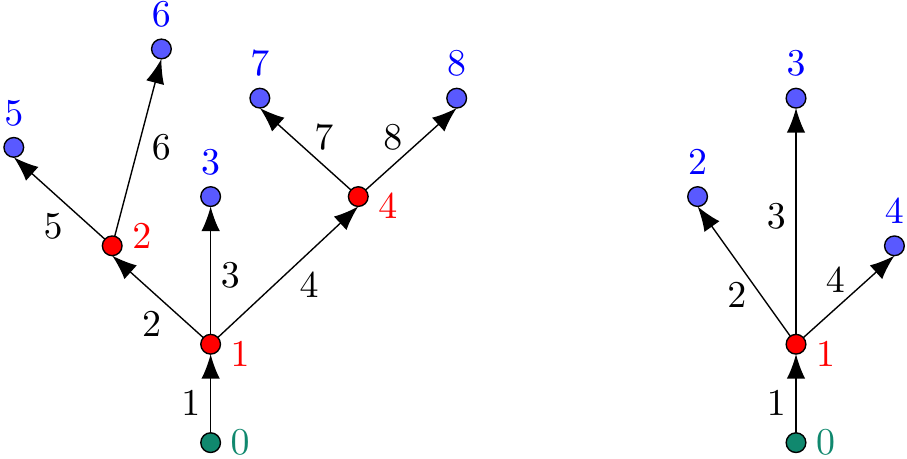}
    \caption{Left: A tree-shaped network with $N=8$ edges, $\mathcal{N}_S = \{0, 3, 5, 6, 7, 8\}$ and $\mathcal{N}_M = \{ 1, 2, 4 \}$. Right: A star-shaped network with $N=5$ edges, $\mathcal{N}_S = \{0, 2, 3, 4\}$ and $\mathcal{N}_M = \{ 1 \}$.}
    \label{fig:treeNet_numbering}
  \end{subfigure}%
    \hspace*{\fill}   
  \begin{subfigure}{0.26\textwidth}
    \centering
    \includegraphics[scale=0.85]{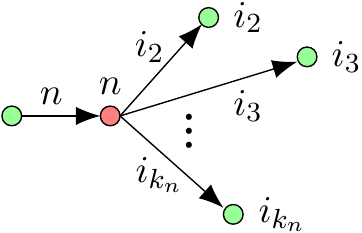}
\caption{A multiple $n$ and the incident edges and nodes.}
\label{fig:treeNet_multNode}
  \end{subfigure}

\caption{Additional notation for tree-shaped networks.}
\end{figure}

\noindent \textbf{Local in time well-posedness.} Let us now define compatibility conditions for System \eqref{eq:nIGEBfb}. 
Recall that we denote $\mathbf{H}^k_x = \prod_{i=1}^N H^k(0, \ell_i; \mathbb{R}^{12})$ for any $k \geq 1$ (see Section \ref{sec:notation}), as well as $y^0 = (y_i^0)_{i\in \mathcal{I}}$ and $y = (y_i)_{i\in \mathcal{I}}$.

\begin{definition}
\label{def:comp_cond}
We say that the initial datum $y^0 \in \mathbf{H}^1_x$ fulfills the zero-order compatibility conditions of System \eqref{eq:nIGEBfb} if
\begin{equation} \label{eq:compat_BC_fb} 
\begin{aligned}
& (\overline{R}_{i} v_i^0)(0) = (\overline{R}_{n} v_n^0)(\ell_n), &&i \in \mathcal{I}_n, \, n \in \mathcal{N}_M,\\
&(\overline{R}_{n} z_n^0)(\ell_n) - \sum_{i \in \mathcal{I}_n} (\overline{R}_{i} z_i^0)(0) = - (\overline{R}_n K_n v_n^0)(\ell_n), \quad &&n \in \mathcal{N}_M,\\
&z_n^0(\ell_n) = - K_n v_n^0(\ell_n),  &&n \in \mathcal{N}_S \setminus \{0\},\\
&z_1^0(0) = K_0 v_1^0(0), &&n=0,
\end{aligned}
\end{equation} 
holds. We say that $y^0\in \mathbf{H}^2_x$ fulfills the first-order compatibility conditions of \eqref{eq:nIGEBfb} if \eqref{eq:compat_BC_fb} holds, and if $(y_i^1)_{i \in \mathcal{I}} \in \mathbf{H}_x^1$, defined by \eqref{eq:def_y1}, also satisfies \eqref{eq:compat_BC_fb}, where $v_i^0, z_i^0$ are replaced by $v_i^1, z_i^1$ respectively. 
\end{definition}


Then, one has the following local in time well-posedness result. 
One may observe in the proof of Theorem \ref{th:local_existence} that we do not use the fact that the network is tree-shaped, and could easily adapt this to networks with loops, by using the same notation as in Section \ref{sec:sol_semi_glob}.

\begin{theorem}[Th. 2.2 \ref{A:MCRF}] \label{th:local_existence}
Let $k \in \{1, 2\}$, suppose that Assumption \ref{as:mass_flex} is fulfilled for $m=k+1$, and assume that $\mathbf{E}_i \in C^k([0, \ell_i]; \mathbb{R}^{6 \times 6})$ for all $i\in\mathcal{I}$. Then, there exists $\delta_0>0$ such that for any $y^0 \in \mathbf{H}^k_x$ satisfying $\|y^0\|_{\mathbf{H}^k_x} \leq \delta_0$ and the $(k-1)$-order compatibility conditions, there exists a unique solution $y \in C^0([0, T), \mathbf{H}^k_x)$ to \eqref{eq:nIGEBfb}, with $T\in (0, +\infty]$. Moreover, if $\|y(\cdot, t)\|_{\mathbf{H}^k_x} \leq \delta_0$ for all $t \in [0, T)$ then $T = + \infty$.
\end{theorem}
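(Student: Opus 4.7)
My plan is to recast \eqref{eq:nIGEBfb} as an abstract semilinear Cauchy problem on the Hilbert space $\mathcal{X} = \mathbf{L}^2_x$ endowed with the inner product $\langle \varphi,\psi\rangle_\mathcal{X} = \sum_{i\in\mathcal{I}}\int_0^{\ell_i}\langle \varphi_i, Q_i^\mathcal{P}\psi_i\rangle\,dx$ (this is the natural energy inner product introduced before Proposition \ref{prop:dissip_barg}). Define the linear unbounded operator $\mathcal{A}\colon D(\mathcal{A})\subset \mathcal{X}\to \mathcal{X}$ by $(\mathcal{A}y)_i = A_i \partial_x y_i + \overline{B}_i y_i$ on the domain
\begin{equation*}
D(\mathcal{A}) = \bigl\{\varphi\in \mathbf{H}^1_x : \varphi \text{ satisfies the homogeneous nodal conditions } \eqref{eq:nIGEBfb_cont}\text{–}\eqref{eq:nIGEBfb_nS0}\bigr\},
\end{equation*}
and the nonlinear map $F\colon \varphi \mapsto (\overline{g}_i(\cdot,\varphi_i))_{i\in\mathcal{I}}$, so that \eqref{eq:nIGEBfb} reads $y'(t) + \mathcal{A}y(t) = F(y(t))$, $y(0)=y^0$. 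The theorem will then follow from a standard perturbation argument for quasi-$m$-dissipative operators (e.g.\ Pazy, Ch.~6) once three points are verified.

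\emph{Generation of a semigroup.} The first step is to show that $-\mathcal{A}+\omega I$ is $m$-dissipative on $\mathcal{X}$ for some $\omega\in\mathbb{R}$. Integration by parts in the weighted inner product, using $Q_i^\mathcal{P}A_i = -\bigl[\begin{smallmatrix}\mathbf{0} & \mathbf{I}\\ \mathbf{I} & \mathbf{0}\end{smallmatrix}\bigr]$ from \eqref{eq:def_A_barB}, produces the boundary form $\sum_{n\in\mathcal{N}}\tau_{i}^n\langle v_i,z_i\rangle(\mathbf{x}_i^n,\cdot)$; using \eqref{eq:nIGEBfb_cont}–\eqref{eq:nIGEBfb_nS0} with $K_n\succeq 0$ this collapses to $-\sum_n \langle K_n v_n, v_n\rangle \leq 0$, so the boundary contribution is non-positive. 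The interior term $-\langle \overline{B}\varphi,\varphi\rangle_\mathcal{X}$ is controlled by $\omega\|\varphi\|_\mathcal{X}^2$ for some $\omega$. For the range condition, given $f\in\mathcal{X}$ one must solve the linear ODE system $A_i\varphi_i' + (\overline{B}_i+\lambda Q_i^\mathcal{P})\varphi_i = f_i$ on each edge coupled by the nodal conditions: $A_i$ is invertible by Proposition~\ref{prop:A_hyperbolic}, and using the diagonalisation $\mathbf{D}_i = \mathrm{diag}(-D_i,D_i)$ together with the shooting method across the tree (the number of incoming/outgoing Riemann coordinates at each node exactly matches the number of boundary relations, as in Step~2 of Theorem~\ref{th:semi_global_existence}), one constructs a unique $\varphi\in D(\mathcal{A})$. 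Hence $-\mathcal{A}$ generates a $C_0$-semigroup on $\mathcal{X}$.

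\emph{Lipschitz properties of $F$.} Because $d=1$ the Sobolev space $H^1(0,\ell_i;\mathbb{R}^{12})$ is a Banach algebra, and since each component of $\overline{g}_i(x,\cdot)$ is a quadratic polynomial with $C^1$ (resp.\ $C^2$) coefficients in $x$ under Assumption~\ref{as:mass_flex} with $m=k+1$, the map $F$ is locally Lipschitz from $\mathbf{H}^1_x$ to $\mathbf{H}^1_x$; the same reasoning applies to $\mathbf{H}^2_x$. Combined with the previous step, this yields by a contraction mapping on $C^0([0,T_*],\mathbf{H}^1_x)$ the local existence and uniqueness of a mild solution which, since $y^0\in D(\mathcal{A})$ (the zero-order compatibility conditions \eqref{eq:compat_BC_fb} are precisely the statement $y^0\in D(\mathcal{A})$), is in fact a classical $C^0([0,T_*];D(\mathcal{A}))\cap C^1([0,T_*];\mathcal{X})$ solution. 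This handles $k=1$.

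\emph{The case $k=2$ and the continuation criterion.} For $k=2$, the first-order compatibility conditions of Definition~\ref{def:comp_cond} are equivalent to $y^0\in D(\mathcal{A})$ together with $y^1 = -\mathcal{A}y^0 + F(y^0)\in D(\mathcal{A})$, i.e.\ $y^0\in D(\mathcal{A}^2)$ after absorbing $F$. Differentiating the equation in time and applying the $k=1$ result to the pair $(y,\partial_t y)$ in $\mathbf{H}^1_x\times\mathbf{L}^2_x$, one upgrades regularity to $y\in C^0([0,T_*];\mathbf{H}^2_x)$ — here one exploits once again that $\mathbf{H}^2_x$ is a Banach algebra so $F$ remains locally Lipschitz on that scale. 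Finally, the continuation alternative ``$T<+\infty \Rightarrow \|y(\cdot,t)\|_{\mathbf{H}^k_x}\to\infty$'' is the standard blow-up criterion for semilinear problems and together with the stated a priori smallness bound $\|y(\cdot,t)\|_{\mathbf{H}^k_x}\leq\delta_0$ forces $T=+\infty$. The main obstacle I anticipate is the verification of the range condition in the presence of loops or complex joints, but under the present tree-shaped assumption the recursive resolution along the tree (using \eqref{eq:def_gamma_sigma_in} and the invertibility of $\mathbf{A}_n\mathbf{G}_n$ from Step~2 of the previous proof) is straightforward.
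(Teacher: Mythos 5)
Your strategy (energy space $\mathcal{X}=\mathbf{L}^2_x$, Lumer--Phillips for $-\mathcal{A}$, then semilinear perturbation \`a la Pazy) is genuinely different from the paper's route, which diagonalises each edge into Riemann invariants, verifies that the nodal conditions express the outgoing information as an invertible function of the incoming information (invertibility of $\mathbf{A}_n\mathbf{G}_n$), and then invokes the general well-posedness theorems of Bastin and Coron for one-dimensional semilinear/quasilinear hyperbolic systems. Several of your ingredients are sound: since $Q_i^\mathcal{P}A_i$ is constant and $Q_i^\mathcal{P}\overline{B}_i$ is skew-symmetric, $-\mathcal{A}$ is indeed dissipative on $\mathcal{X}$ and the boundary form collapses to $-\sum_n\langle K_nv,v\rangle\leq 0$; the identification of the $(k-1)$-order compatibility conditions with $y^0\in D(\mathcal{A})$ (resp.\ $y^0,y^1\in D(\mathcal{A})$) is correct; and the shooting argument for the range condition on a tree is plausible.

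The gap is in the nonlinear step, and it is precisely the obstruction the exposition flags when it states that $\overline{g}$ ``does not fit'' the framework of Pazy, Ch.~6. The contraction you propose on $C^0([0,T_*];\mathbf{H}^1_x)$ does not close: the semigroup $e^{-t\mathcal{A}}$ maps $D(\mathcal{A})$ into itself but does \emph{not} map $\mathbf{H}^1_x$ into $\mathbf{H}^1_x$ (an $H^1$ datum violating the nodal relations produces a singularity propagating from the node), and the quadratic map $F$ does not preserve $D(\mathcal{A})$, because $\overline{g}_i(\cdot,\varphi_i)$ evaluated at the nodes has no reason to satisfy the continuity/Kirchhoff relations. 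Consequently the Duhamel term $\int_0^te^{-(t-s)\mathcal{A}}F(y(s))\,ds$ need not lie in $\mathbf{H}^1_x$: near a node its spatial derivative involves the \emph{time} derivative of $F(y(\cdot))$, which is not controlled by $y\in C^0_t\mathbf{H}^1_x$ alone. One sees this already on the scalar model $u_t+u_x=h$, $u(0,t)=0$, $u(\cdot,0)=0$, where for $t>x$ one computes $\partial_xu(x,t)=h(x,t)-\int_0^x\partial_th(\xi,t-x+\xi)\,d\xi$. The standard repair --- and what the results cited in the paper actually implement --- is to run the fixed point for the pair $(y,\partial_ty)$ in $C^0_t(\mathbf{L}^2_x\times\mathbf{L}^2_x)$ using the time-differentiated system (this is exactly where the compatibility conditions enter, guaranteeing $y^1\in\mathbf{L}^2_x$), and then to recover $\partial_xy=A^{-1}(-\partial_ty-\overline{B}y+\overline{g}(\cdot,y))$ from the equation; you invoke this device only for $k=2$, but it is already indispensable for $k=1$. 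With that modification your semigroup route can be completed, but as written the central existence step is not justified.
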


\medskip

\noindent \textbf{Idea of the proof.}
The proof of Theorem \ref{th:local_existence} is based on existing results on first-order hyperbolic systems -- the local existence and uniqueness of $C_t^0 H_x^k$ solutions to general one-dimensional semilinear ($k=1$) and quasilinear ($k=2$) hyperbolic systems, which have been addressed by Bastin and Coron in \cite[Ap. B, Rem. 6.9]{BC2016} and \cite[Th. 10.1]{bastin2017exponential}, respectively. 
Similarly to the proof of Theorem \ref{th:semi_global_existence}, these results require sufficient regularity from the data and coefficients, and for the boundary conditions to be written in such a way that the outgoing information is a function of the incoming information at each node.

For all $n \in \mathcal{N}\setminus \{0\}$ and $i \in \mathcal{I}^n$, the matrices $\gamma_i^n$ and $\sigma_i^n$ are defined as before by \eqref{eq:def_gamma_sigma_in}. For the node $n=0$, we also define the invertible matrix $\gamma_0^0 \in \mathbb{R}^{6\times 6}$ and $\sigma_0^0 \in \mathbb{S}_{++}^6$ by
\begin{align} \label{eq:def_gamma0_sigma0}
\gamma_0^0 = (\overline{R}_1\mathbf{C}_1^{\sfrac{1}{2}}U_1^\intercal)(0), \quad \sigma_0^0 = (\overline{R}_1\mathbf{C}_1^{-\sfrac{1}{2}}U_1^\intercal D_1^{-1} U_1 \mathbf{C}_1^{- \sfrac{1}{2}} \overline{R}_1^\intercal)(0).
\end{align}
Moreover, we define the symmetric matrices
\begin{equation}\label{eq:def_barKn}
\overline{K}_0 = \overline{R}_1(0) K_0 \overline{R}_1(0)^\intercal, \quad \overline{K}_n = \overline{R}_n(\ell_n) K_n \overline{R}_n(\ell_n)^\intercal
\end{equation}
for all $n \in \mathcal{N}\setminus\{0\}$, which are positive definite (resp. null) if and only if $K_n$ is positive definite (resp. $K_n = \mathbf{0}_6$).
We thus apply the change of variable \eqref{eq:change_var_Li}, just as in the first step of the proof of Theorem \ref{th:semi_global_existence}.
After some computations, one can deduce that \eqref{eq:nIGEBgen} also takes the form
\begin{subnumcases}{\label{eq:nIGEBfbR}}
\partial_t r_i + \mathbf{D}_i \partial_x r_i + B_i r_i = g_i(\cdot, r_i) &$\text{in } (0, \ell_i)\times(0, T), \, i \in \mathcal{I}$\\
\label{eq:nIGEBfbR_transmi}
\mathbf{A}_n \mathbf{G}_n r^\mathrm{out}_n(t) = \mathbf{B}_n \mathbf{G}_n r^\mathrm{in}_n(t) &$t \in (0, T), \, n \in \mathcal{N}_M$\\
\label{eq:nIGEBfbR_simple}
(\sigma_n^n + \overline{K}_n)\gamma_n^n r^\mathrm{out}_n(t) = (\sigma_n^n - \overline{K}_n)\gamma_n^n r^\mathrm{in}_n(t) &$t \in (0, T), \, n \in \mathcal{N}_S$\\
\label{eq:nIGEBfbR_IC}
r_i(x, 0) = r_i^0(x) &$x \in (0, \ell_i), \, i \in \mathcal{I}$,
\end{subnumcases}
in Riemann invariants, where $\mathbf{A}_n, \mathbf{B}_n, \mathbf{G}_n \in \mathbb{R}^{6k_n \times 6k_n}$ are defined by
\begin{align*}
\mathbf{A}_n = \begin{bmatrix}
\sigma_n^n + \overline{K}_n & \sigma^n_{i_2} & \ldots & \sigma^n_{i_{k_n}}\\
-\mathbf{I}_6 & \mathbf{I}_6 & & \\
\vdots & & \ddots & \\
-\mathbf{I}_6 & & & \mathbf{I}_6
\end{bmatrix}, \quad
\mathbf{B}_n =  \begin{bmatrix}
\sigma_n^n - \overline{K}_n & \sigma^n_{i_2} & \ldots & \sigma^n_{i_{k_n}}\\
\mathbf{I}_6 & -\mathbf{I}_6 & & \\
\vdots & & \ddots & \\
\mathbf{I}_6 & & & -\mathbf{I}_6
\end{bmatrix},
\end{align*}
and $\mathbf{G}_n = \mathrm{diag}(\gamma_n^n, \gamma_{i_2}^n, \ldots, \gamma_{i_{k_n}}^n)$.
This time we used the notation \eqref{eq:nota_In_indices} for the set $\mathcal{I}_n$, and $r_n^\mathrm{out}, r_n^\mathrm{in}$ are defined in \eqref{eq:r_out_in_mult_simp}. One can show that the matrix $\mathbf{A}_n\mathbf{G}_n$ is also invertible.
The proof is then concluded by applying \cite{BC2016, bastin2017exponential} to the larger hyperbolic system obtained analogously to the third step of the proof of Theorem \ref{th:semi_global_existence}. \hfill $\meddiamond$

\begin{remark} \label{rem:wellp_fb}
Let us make some remarks on this result. 
\begin{enumerate}
\item \label{remItem:transparent}
When at a given simple node $n \in \mathcal{N}_S$, the outgoing information $r_n^\mathrm{out}$ is identically equal to zero, one often speak about \emph{transparent boundary conditions}. 
Here, one may notice that such conditions are obtained with the specific choice of feedback:
\begin{linenomath}
\begin{equation*}
\begin{aligned}
K_n = \big(\mathbf{C}_n^{-\sfrac{1}{2}} (\mathbf{C}_n^{\sfrac{1}{2}} \mathbf{M}_n \mathbf{C}_n^{\sfrac{1}{2}})^{\sfrac{1}{2}}\mathbf{C}_n^{-\sfrac{1}{2}}\big)\big(\ell_n\big), \qquad &\text{if }n\neq 0,\\
K_0 = \big(\mathbf{C}_1^{-\sfrac{1}{2}} (\mathbf{C}_1^{\sfrac{1}{2}} \mathbf{M}_1 \mathbf{C}_1^{\sfrac{1}{2}})^{\sfrac{1}{2}}\mathbf{C}_1^{-\sfrac{1}{2}}\big)\big(0\big), \qquad &\text{if }n = 0.
\end{aligned}
\end{equation*}
\end{linenomath}
Indeed, one may compute that in this case in this case $\overline{K}_n = \sigma^n_n$.

\item \label{remItem:extra_regularity}
In Assumption \ref{as:mass_flex}, the extra-regularity required from the mass and flexibility matrices and from the decomposition of $\Theta_i$ is needed to ensure enough regularity for $\mathbf{D}_i$ and $B_i$, for the results in \cite{BC2016} are given in terms of the diagonal system. Thus, one way to be free of this assumption could be to prove well-posedness directly for the original (``physical'') system.

\item
Theorem \ref{th:local_existence} also holds if beams are clamped \eqref{eq:diri_cond_0n} at some simple nodes, provided that the compatibility conditions \eqref{eq:compat_BC_fb} are accordingly changed. 
At simple nodes, \eqref{eq:nIGEBfbR_simple} takes the form $r_n^\mathrm{out}(t) = r_n^\mathrm{in}(t)$ for a free beam, while if the beam is clamped it becomes $r_n^\mathrm{out}(t) = - r_n^\mathrm{in}(t)$.
\end{enumerate}
\end{remark}

\medskip

\noindent \textbf{Single beam}. For further reference, let us give the form of \eqref{eq:IGEBfb} in Riemann invariants. 
In the spirit of the network case, we may introduce 
$\widetilde{\sigma} = \mathbf{C}^{-\sfrac{1}{2}} U^\intercal D^{-1} U \mathbf{C}^{- \sfrac{1}{2}}$
and
$\widetilde{\gamma} = \mathbf{C}^{\sfrac{1}{2}} U^\intercal$. Then the diagonal system, with unknown state $r=Ly$, reads
\begin{align}\label{eq:IGEBfbR}
\begin{cases}
\partial_t r + \mathbf{D} \partial_x r + B r = g(\cdot, r) &\text{in } (0, \ell)\times(0, T)\\
r_0^\mathrm{out}(t) = - r_0^\mathrm{in}(t) &t \in (0, T)\\
(\widetilde{\sigma} + K)\widetilde{\gamma}\, r_\ell^\mathrm{out}(t) = (\widetilde{\sigma} - K)\widetilde{\gamma}\, r_\ell^\mathrm{in}(t) &t \in (0, T)\\
r(x, 0) = r^0(x) &x \in (0, \ell)
\end{cases}
\end{align}
with outgoing information $r_0^\mathrm{out}=r^+(0, \cdot)$ and $r_\ell^\mathrm{out}=r^-(\ell, \cdot)$, and incoming information $r_0^\mathrm{in}=r^-(0, \cdot)$ and $r_\ell^\mathrm{in}=r^+(\ell, \cdot)$.

\section{The GEB model: inverting the transformation}
\label{sec:invert_transfo}

The object of this section, is the inversion of the transformation from the GEB model to the IGEB model and the well-posedness results that we may deduce, by that means, for the former model.
The material here is based on \ref{A:SICON} and \ref{A:JMPA}. The initial idea was developed in \ref{A:SICON} and the invertibility of the transformation is thus proved in detail there. However, the context in \ref{A:SICON} is that of feedback stabilization with constant diagonal mass and flexibility matrices, and the state consists of velocities and strains. In \ref{A:JMPA}, on the other hand, the aim is to extend the previous work to the general linear-elastic constitutive law, to nonhomogeneous boundary/nodal conditions, and also to networks.

\subsection{For a single beam}
\label{sec:1b_invert_transfo}

Here, we present the proof in a simple example, that of a single beam clamped at $x=0$ and controlled via velocity feedback (or free) at $x=\ell$. Later on, in Section \ref{sec:invert_transfo_net}, we move to different boundary conditions and networks.
When such a beam is described by the GEB model, its dynamics are given by the following system:
\begin{subnumcases}{\label{eq:GEBfb}}
\label{eq:GEBfb_gov}
\begingroup 
\setlength\arraycolsep{3pt}
\renewcommand*{\arraystretch}{0.9}
\begin{bmatrix}
\partial_t & \mathbf{0}\\
(\partial_t \widehat{\mathbf{p}}) & \partial_t
\end{bmatrix} \left[ \begin{bmatrix}
\mathbf{R} & \mathbf{0}\\ \mathbf{0} & \mathbf{R}
\end{bmatrix}
\mathbf{M} v \right] = \begin{bmatrix}
\partial_x & \mathbf{0} \\ (\partial_x \widehat{\mathbf{p}}) & \partial_x
\end{bmatrix} \begin{bmatrix}
\phi \\ \psi
\end{bmatrix}
\endgroup
&$\text{in }(0, \ell)\times(0, T)$\\
\label{eq:GEBfb_BC0}
(\mathbf{p}, \mathbf{R})(0, t) = (f^\mathbf{p}, f^\mathbf{R}) &$t \in (0, T)$\\ 
\label{eq:GEBfb_BCell}
\begingroup 
\setlength\arraycolsep{3pt}
\renewcommand*{\arraystretch}{0.9}
\begin{bmatrix}
\phi \\ \psi
\end{bmatrix}(\ell, t)
= - \begin{bmatrix}
\mathbf{R}(\ell, t) & \mathbf{0}\\ \mathbf{0} & \mathbf{R}(\ell, t)
\end{bmatrix} K v(\ell, t)
\endgroup
&$t \in (0, T)$\\
\label{eq:GEBfb_IC0}
(\mathbf{p}, \mathbf{R})(x,0) = (\mathbf{p}^0, \mathbf{R}^0)(x) &$x \in (0, \ell)$\\
\label{eq:GEBfb_IC1}
(\partial_t \mathbf{p}, \mathbf{R}W)(x,0) = (\mathbf{p}^1, w^0)(x) &$x \in (0, \ell)$,
\end{subnumcases}
where we recall that $v, \phi, \psi$ are the functions of $(\mathbf{p}, \mathbf{R})$ defined by \eqref{eq:def_v_z}, \eqref{eq:def_VWPhiPsi} and \eqref{eq:def_phi_psi}, in System \eqref{eq:GEBfb}.
The corresponding IGEB system reads
\begin{subnumcases}{\label{eq:IGEBfb}}
\label{eq:IGEBfb_gov}
\partial_t y + A(x) \partial_x y + \overline{B}(x) y = \overline{g}(x, y) &$\text{in }(0, \ell)\times(0, T)$\\
\label{eq:IGEBfb_BC0}
v(0, t) = \mathbf{0} &$\text{for }t \in (0, T)$\\
\label{eq:IGEBfb_BCell}
z(\ell, t) = - K v(\ell, t) &$\text{for }t \in (0, T)$\\
\label{eq:IGEBfb_IC}
y(x, 0) = y^0(x) &$\text{for }x \in (0, \ell)$.
\end{subnumcases}
Indeed, the transformation $\mathcal{T}$, defined in \eqref{eq:transfo}, has been introduced in Section \ref{sec:pres_transfo}, and we have seen that the initial data of \eqref{eq:GEBfb} and \eqref{eq:IGEBfb} can be linked by \eqref{eq:rel_inidata}.
By the definition of $\mathcal{T}$ and and using Proposition \ref{prop:transfoGov} one can see that $\mathcal{T}\colon E_1 \rightarrow E_2$ is well defined for $E_1, E_2$ defined by
\begin{align*}
E_1 &= \left\{(\mathbf{p}, \mathbf{R}) \in C^2\left([0, \ell]\times[0, T]; \mathbb{R}^3 \times \mathrm{SO}(3)\right) \colon \eqref{eq:GEBfb_BC0}, \eqref{eq:GEBfb_IC0} \text{ hold} \right\}\\
E_2 &= \left\{y=(v^\intercal, z^\intercal)^\intercal \in C^1\left([0, \ell]\times[0, T] ; \mathbb{R}^{12}\right) \colon \text{\eqref{eq:inv1b_comp} holds for } s := \mathbf{C} z\right\}.
\end{align*}
Observe that $E_1$ includes the Dirichlet boundary conditions and initial condition of order zero, while $E_2$ involves the following ``compatibility conditions'':
\begin{subequations}\label{eq:inv1b_comp}
\begin{align}
& \tfrac{\mathrm{d}}{\mathrm{d}x}\mathbf{p}^0 = \mathbf{R}^0 (s_1(\cdot, 0) + e_1),\quad \tfrac{\mathrm{d}}{\mathrm{d}x} \mathbf{R}^0 = \mathbf{R}^0 (\widehat{s}_2(\cdot, 0) + \widehat{\Upsilon}_c), \quad \text{in } (0, \ell) 
\label{eq:inv1b_compIC}\\
&\partial_t \begin{bmatrix}
s_1 \\ s_2
\end{bmatrix} - \partial_x \begin{bmatrix}
v_{1} \\ v_{2}
\end{bmatrix} - 
\begingroup 
\setlength\arraycolsep{3pt}
\renewcommand*{\arraystretch}{0.9}
\begin{bmatrix}
\widehat{\Upsilon}_c & \widehat{e}_1 \\
\mathbf{0} & \widehat{\Upsilon}_c
\end{bmatrix}
\endgroup
\begin{bmatrix}
v_{1} \\ v_{2}
\end{bmatrix}  = 
\begin{bmatrix}
\widehat{v}_{2} & \widehat{v}_{1}\\
\mathbf{0} & \widehat{v}_{2}
\end{bmatrix} \begin{bmatrix}
s_1 \\ s_2
\end{bmatrix}, \quad \text{in }(0, \ell)\times (0, T)
\label{eq:inv1b_compGOV}
\\
& v_1(0, \cdot) = \mathbf{0}, \quad v_2(0, \cdot) = \mathbf{0}, \quad \text{in } (0, T), \label{eq:inv1b_compBC}
\end{align}
\end{subequations}
where we use the notation $v = (v_1^\intercal, v_2^\intercal)^\intercal$ and $s = (s_1^\intercal, s_2^\intercal)^\intercal$ for $v_1, v_2, s_1, s_2$ having values in $\mathbb{R}^3$.
The quantity $s$ in \eqref{eq:inv1b_comp} is nothing but the variable of strains (rather stresses), as we have just multiplied $z$ by the flexibility matrix. Then, one has the following theorem.

\begin{theorem}[Th. 1.7 \ref{A:SICON}, Lem. 5.1 \ref{A:JMPA}] \label{th:invert_transfo_fb}
If $(f^\mathbf{p}, f^\mathbf{R}) \in \mathbb{R}^3 \times \mathrm{SO}(3)$ and $(\mathbf{p}^0, \mathbf{R}^0) \in C^2([0, \ell]; \mathbb{R}^3 \times \mathrm{SO}(3))$ satisfy
\begin{align} \label{eq:inv1b_comp-1}
(f^\mathbf{p}, f^\mathbf{R}) = (\mathbf{p}^0, \mathbf{R}^0)(0),
\end{align}
then the transformation $\mathcal{T}\colon E_1 \rightarrow E_2$ defined in \eqref{eq:transfo} is bijective.
\end{theorem}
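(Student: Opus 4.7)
The plan is to build an explicit inverse of $\mathcal{T}$ by reconstructing $(\mathbf{p}, \mathbf{R})$ from a given $y = (v^\intercal, z^\intercal)^\intercal \in E_2$ via time-ODEs solved pointwise in $x$, initialized by $(\mathbf{p}^0, \mathbf{R}^0)$. Injectivity will be almost automatic from uniqueness of these ODEs; the entire content lies in surjectivity, where the compatibility system packaged in \eqref{eq:inv1b_comp} plays the central role.

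First, for each $x \in [0, \ell]$, I would define $\mathbf{R}(x, \cdot)$ as the unique solution of the linear matrix ODE
\[
\partial_t \mathbf{R}(x, t) = \mathbf{R}(x, t)\, \widehat{v}_2(x, t), \qquad \mathbf{R}(x, 0) = \mathbf{R}^0(x),
\]
where $v_2$ denotes the last three components of $v$. Skew-symmetry of $\widehat{v}_2$ yields $\partial_t(\mathbf{R}^\intercal \mathbf{R}) = 0$, so $\mathbf{R}^\intercal \mathbf{R} \equiv \mathbf{I}$; combined with continuity of $\det \mathbf{R}$ starting from $\det \mathbf{R}^0(x) = 1$, this forces $\mathbf{R}(x, t) \in \mathrm{SO}(3)$ for all $t$. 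Then I would define $\mathbf{p}(x, \cdot)$ by
\[
\partial_t \mathbf{p}(x, t) = \mathbf{R}(x, t)\, v_1(x, t), \qquad \mathbf{p}(x, 0) = \mathbf{p}^0(x).
\]
Standard smooth-dependence theory for linear ODEs, together with $(v, z) \in C^1$, $(\mathbf{p}^0, \mathbf{R}^0) \in C^2$, and the $C^1$ regularity of $\mathbf{C}$ and $\Upsilon_c$, will yield $(\mathbf{p}, \mathbf{R}) \in C^2([0, \ell] \times [0, T]; \mathbb{R}^3 \times \mathrm{SO}(3))$ once the spatial identities below are established.

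The crux is verifying the two spatial relations
\[
\partial_x \mathbf{R} = \mathbf{R}(\widehat{s}_2 + \widehat{\Upsilon}_c), \qquad \partial_x \mathbf{p} = \mathbf{R}(s_1 + e_1),
\]
with $s := \mathbf{C} z = (s_1^\intercal, s_2^\intercal)^\intercal$: these, together with the two time ODEs above, instantly give $\mathcal{T}(\mathbf{p}, \mathbf{R}) = (v^\intercal, z^\intercal)^\intercal$ from the definition \eqref{eq:transfo}. For the first relation, I introduce the skew-symmetric-valued function $M := \mathbf{R}^\intercal \partial_x \mathbf{R} - (\widehat{s}_2 + \widehat{\Upsilon}_c)$. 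The initial-data compatibility \eqref{eq:inv1b_compIC} gives $M(\cdot, 0) \equiv \mathbf{0}$, while a direct differentiation in $t$ using $\partial_t \mathbf{R} = \mathbf{R}\widehat{v}_2$, the Schwarz identity $\partial_t \partial_x \mathbf{R} = \partial_x \partial_t \mathbf{R}$, and the second (angular) row of \eqref{eq:inv1b_compGOV} yields a commutator ODE $\partial_t M = [M, \widehat{v}_2]$, which by uniqueness forces $M \equiv \mathbf{0}$. For the second relation, I set $n := \mathbf{R}^\intercal \partial_x \mathbf{p} - (s_1 + e_1)$; once more $n(\cdot, 0) \equiv \mathbf{0}$ by \eqref{eq:inv1b_compIC}, and combining the just-established expression for $\partial_x \mathbf{R}$, the definition of $\mathbf{p}$, and the first (linear) row of \eqref{eq:inv1b_compGOV} produces a homogeneous linear ODE of the form $\partial_t n = -\widehat{v}_2\, n$. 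This last reduction is the main obstacle: several quadratic terms involving $\widehat{v}_2 s_1$, $\widehat{v}_1 s_2$, $\widehat{v}_2 e_1$ and $\widehat{e}_1 v_2$ must be collapsed using $\widehat{a}b = -\widehat{b}a$, and only the precise form of \eqref{eq:inv1b_compGOV} makes all nonlinear cross terms cancel to leave a purely linear-in-$n$ right-hand side.

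It remains to pin down the Dirichlet boundary condition and injectivity. Restricting both time ODEs to $x = 0$ and using $v(0, t) = \mathbf{0}$ from \eqref{eq:inv1b_compBC}, their right-hand sides vanish, so $\mathbf{p}(0, t) = \mathbf{p}^0(0)$ and $\mathbf{R}(0, t) = \mathbf{R}^0(0)$, which by hypothesis \eqref{eq:inv1b_comp-1} equal $f^\mathbf{p}$ and $f^\mathbf{R}$ respectively; hence $(\mathbf{p}, \mathbf{R}) \in E_1$ and surjectivity is proved. For injectivity, any two preimages $(\mathbf{p}_j, \mathbf{R}_j) \in E_1$ ($j = 1, 2$) of the same $(v^\intercal, z^\intercal)^\intercal \in E_2$ satisfy the same linear time ODEs with the same initial data $(\mathbf{p}^0, \mathbf{R}^0)$ and are therefore equal by Picard–Lindelöf, completing the proof that $\mathcal{T} \colon E_1 \to E_2$ is bijective.
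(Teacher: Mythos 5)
Your proof is correct, and it reaches the result by a route that differs in two substantive ways from the paper's. First, the paper handles the $\mathrm{SO}(3)$ constraint by parametrizing $\mathbf{R}$ with quaternions (Lemma \ref{lem:quat_rot_ODE}) and then solving a seemingly overdetermined linear system for $\mathbf{q}$ via Lemma \ref{lem:quat_overter_syst}, with data prescribed only at the single point $(0,0)$; the full initial profile $(\mathbf{p}^0, \mathbf{R}^0)$ is then recovered through \eqref{eq:inv1b_compIC}. You instead integrate the time ODE $\partial_t \mathbf{R} = \mathbf{R}\widehat{v}_2$ from the whole profile $\mathbf{R}^0(x)$ and observe that skew-symmetry of $\widehat{v}_2$ keeps $\mathbf{R}$ in $\mathrm{SO}(3)$, which makes the quaternion machinery unnecessary here. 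Second, where the paper invokes the Frobenius-type solvability lemma, you verify the spatial equations a posteriori by showing that the defects $M$ and $n$ satisfy homogeneous linear ODEs with vanishing initial data; this is in fact the same mechanism that proves Lemma \ref{lem:quat_overter_syst}, but applied directly to $\mathbf{R}$ and $\mathbf{p}$ rather than to $\mathbf{q}$, and it uses \eqref{eq:inv1b_compIC} at $t=0$ where the paper uses it along $x$. Your identities $\partial_t M = [M, \widehat{v}_2]$ and $\partial_t n = -\widehat{v}_2\, n$ do check out, provided \eqref{eq:inv1b_compGOV} is read with the sign convention of the governing system \eqref{eq:IGEB_pres} (i.e.\ as derived in Proposition \ref{prop:transfoGov}). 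What the paper's route buys is a formulation that transfers directly to the network setting and is reused in the Dirichlet-node and rigid-joint arguments of Theorem \ref{thm:solGEB}; what yours buys is a shorter, more self-contained argument for the single-beam case that never leaves the rotation group.
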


\noindent \textbf{Idea of the proof.} Our aim, given $y \in E_2$ is to show that there exists a unique $(\mathbf{p}, \mathbf{R}) \in E_1$ such that $\mathcal{T}(\mathbf{p}, \mathbf{R}) = y$. The latter rewrites easily as a system of linear PDEs consisting of twelve equations, but where there are effectively only six unknowns, if one takes into account that $\mathbf{R}(x,t)$ should be a rotation matrix. 
Then, two key steps are 1) that we use quaternions to transmute the search for a solution $\mathbf{R}$ having values in $\mathrm{SO}(3)$ to that of a $\mathbb{R}^4$-valued solution, and 2) that the last six governing equations of the IGEB model turn out to be compatibility conditions ensuring that the aforementioned system is in fact not overdetermined. More precisely, we proceed as follows.
\begin{itemize}
\item \textbf{Casting the inverse transformation as a PDE system.} Due to the first compatibility condition \eqref{eq:inv1b_compIC}, finding $(\mathbf{p}, \mathbf{R})$ that fulfill the identity $\mathcal{T}(\mathbf{p}, \mathbf{R}) = y$ and the boundary and initial conditions \eqref{eq:GEBfb_BC0} and \eqref{eq:GEBfb_IC0}, is equivalent to solving the following two coupled systems 
\begin{subnumcases}{\label{eq:ODE_for_R}}
\label{eq:ODE_for_R_v}
\partial_t \mathbf{R} = \mathbf{R} \widehat{v}_{2}&$\text{in }(0, \ell) \times (0,T)$\\
\label{eq:ODE_for_R_z}
\partial_x \mathbf{R} = \mathbf{R} (\widehat{s}_2 + \widehat{\Upsilon}_c) &$\text{in }(0, \ell) \times (0,T)$\\
\label{eq:ODE_for_R_ini}
\mathbf{R}(0, 0) = \mathbf{R}^0(0)
\end{subnumcases}
and
\begin{subnumcases}{\label{eq:ODE_for_p}}
\label{eq:ODE_for_p_v}
\partial_t \mathbf{p} = \mathbf{R} v_{1} &$\text{in }(0, \ell) \times (0,T)$\\
\label{eq:ODE_for_p_z}
\partial_x \mathbf{p} = \mathbf{R} (s_{1} + e_1) &$\text{in }(0, \ell) \times (0,T)$\\
\label{eq:ODE_for_p_ini}
\mathbf{p}(0, 0) = \mathbf{p}^0(0).
\end{subnumcases}

\item \textbf{Quaternions.} We transform the first above system by means of Lemma \ref{lem:quat_rot_ODE} below. A quaternion \cite{chou1992} is a pair of real value $q_0 \in \mathbb{R}$ and vectorial value $q \in \mathbb{R}^3$, that we denote here as the vector $\mathbf{q} =(q_0, q^\intercal)^\intercal$.
A rotation matrix $\mathbf{R} \in \mathrm{SO}(3)$ is said to be parametrized by the quaternion $\mathbf{q} \in \mathbb{R}^4$, if $|\mathbf{q}| = 1$ and 
\begin{align} \label{eq:rot_param_quat}
\mathbf{R} = (q_0^2 - \langle q \,, q \rangle)\mathbf{I}_3 + 2 q q^\intercal + 2 q_0 \widehat{q}.
\end{align}
Then, $\mathbf{R}\colon [0, \ell]\times [0, T] \rightarrow \mathrm{SO}(3)$ is parametrized by the quaternion-valued function $\mathbf{q}\colon [0, \ell]\times [0, T] \rightarrow \mathbb{R}^4$, if $|\mathbf{q}| \equiv 1$ and \eqref{eq:rot_param_quat} holds in $[0, \ell]\times [0, T]$.
Consider the linear map $\mathcal{U}$ defined by
\begin{align}\label{eq:def_calU} 
\mathcal{U}(w) = \frac{1}{2} \begin{bmatrix}
0 & - w^\intercal \\
w & -\widehat{w}
\end{bmatrix}, \qquad \text{for all }w \in \mathbb{R}^3. 
\end{align}
This function is commonly used to represent (modulo the factor $\frac{1}{2}$) the quaternion product between the vector that the resulting matrix $\mathcal{U}(w)$ acts on, and $(0, w^\intercal)^\intercal$.

\begin{lemma} \label{lem:quat_rot_ODE}
\vspace{7pt}
Let $f \in C^1([0, \ell]\times[0, T]; \mathbb{R}^3)$ and let $z$ represent either of the spatial or time variables $x, t$.
The function $\mathbf{q} \in C^1([0, \ell]\times[0, T]; \mathbb{R}^4)$ fulfills both $|\mathbf{q}| \equiv 1$ and
\begin{align} \label{eq:zODE_quat_U}
\partial_z \mathbf{q} = \mathcal{U}(f) \mathbf{q}, \quad \text{in } (0, \ell)\times (0, T),
\end{align}
if and only if the function $\mathbf{R} \in C^1([0, \ell]\times[0, T]; \mathrm{SO}(3))$ parametrized by $\mathbf{q}$ fulfills
\begin{align} \label{eq:zODE_R}
\partial_z \mathbf{R} = \mathbf{R} \widehat{f}, \quad \text{in } (0, \ell)\times(0, T).
\end{align}
\end{lemma}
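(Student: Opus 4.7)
The plan is to prove each direction by a direct computation based on the parametrization \eqref{eq:rot_param_quat}. Writing $\mathbf{q} = (q_0, q^\intercal)^\intercal$ with $q\in\mathbb{R}^3$, I would first unpack \eqref{eq:zODE_quat_U} using the definition \eqref{eq:def_calU} of $\mathcal{U}$ into the component identities
\begin{equation*}
\partial_z q_0 = -\tfrac{1}{2}\langle f, q\rangle, \qquad \partial_z q = \tfrac{1}{2}\bigl(q_0 f - f\times q\bigr),
\end{equation*}
and record the consequence $q_0 \partial_z q_0 + q^\intercal \partial_z q = 0$, which follows from differentiating $|\mathbf{q}|^2 \equiv 1$ (and, incidentally, is automatic from the two identities above, so that the constraint $|\mathbf{q}|\equiv 1$ is preserved along the quaternion ODE).

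For the direction $\Rightarrow$, I would differentiate the parametrization
\begin{equation*}
\mathbf{R} = (q_0^2 - \langle q, q\rangle)\mathbf{I}_3 + 2qq^\intercal + 2q_0\widehat{q}
\end{equation*}
term by term with respect to $z$, and substitute the expressions for $\partial_z q_0$ and $\partial_z q$ listed above. The computation is then reduced to a book-keeping exercise based on the standard $\mathbb{R}^3$ identities $q^\intercal(f\times q) = 0$, $\widehat{f\times q} = qf^\intercal - fq^\intercal$, and $\widehat{u}\,\widehat{v} = vu^\intercal - \langle u, v\rangle \mathbf{I}_3$, after which the five resulting terms should collect into the factored form $\mathbf{R}\widehat{f}$.

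For the direction $\Leftarrow$, the cleanest route is via uniqueness of Cauchy problems (with the variable other than $z$ treated as a parameter). Since $\mathbf{R}$ is by hypothesis parametrized by $\mathbf{q}$, one automatically has $|\mathbf{q}|\equiv 1$. Fixing a reference value $z_0$, I would introduce the unique $C^1$ solution $\widetilde{\mathbf{q}}$ of the linear problem $\partial_z\widetilde{\mathbf{q}} = \mathcal{U}(f)\widetilde{\mathbf{q}}$ with $\widetilde{\mathbf{q}}(z_0) = \mathbf{q}(z_0)$; because $\mathcal{U}(f)$ is skew-symmetric, $|\widetilde{\mathbf{q}}|\equiv 1$. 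By the already-established forward direction, the rotation $\widetilde{\mathbf{R}}$ parametrized by $\widetilde{\mathbf{q}}$ satisfies $\partial_z \widetilde{\mathbf{R}} = \widetilde{\mathbf{R}}\widehat{f}$ with $\widetilde{\mathbf{R}}(z_0) = \mathbf{R}(z_0)$, and uniqueness for this linear matrix ODE forces $\widetilde{\mathbf{R}} \equiv \mathbf{R}$. Since the quaternion parametrization is two-to-one with only the sign ambiguity $\pm\widetilde{\mathbf{q}}$, continuity together with the matching at $z_0$ yields $\widetilde{\mathbf{q}} \equiv \mathbf{q}$, giving \eqref{eq:zODE_quat_U}.

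The main obstacle will be the bookkeeping in the $\Rightarrow$ direction: expanding $\partial_z \mathbf{R}$ produces five terms involving $q_0, q, f$, and reassembling them into exactly $[(q_0^2 - |q|^2)\mathbf{I}_3 + 2qq^\intercal + 2q_0\widehat{q}]\widehat{f}$ requires some care. A convenient sanity check is that $\mathbf{R}\widehat{f} = \widehat{\mathbf{R}f}\,\mathbf{R}$ since $\mathbf{R}\in\mathrm{SO}(3)$, so one can equivalently aim for either side — whichever matches the expanded derivative more transparently.
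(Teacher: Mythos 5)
Your proposal is correct, but it follows a genuinely different route from the paper's proof. The paper never differentiates the Euler--Rodrigues formula \eqref{eq:rot_param_quat} directly: it factorizes $\mathbf{R} = EG^\intercal$ with $E = [-q,\ q_0\mathbf{I}_3 + \widehat{q}]$ and $G = [-q,\ q_0\mathbf{I}_3 - \widehat{q}]$, reduces both implications to the single algebraic identity $\widehat{f} = 2G(\partial_z G)^\intercal$ via the quaternion product $\mathbf{q}^\ast \circ (\partial_z\mathbf{q})\circ\mathbf{w}$, and then uses the auxiliary relations $G^\intercal G = \mathbf{I}_4 - \mathbf{q}\mathbf{q}^\intercal$, $E\mathbf{q} = 0$, $\partial_z\mathbf{R} = 2E(\partial_z G)^\intercal$ for the forward direction and an explicit expansion of $G(\partial_z G)^\intercal$ for the converse; both directions are thus purely algebraic and symmetric about one identity. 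Your forward direction is more elementary and self-contained — it needs no factorization and no quaternion product, only the listed cross-product identities — and I confirm the five terms of $\partial_z\mathbf{R}$ do recombine into $\mathbf{R}\widehat{f}$ once one uses $(q\times f)q^\intercal - q(q\times f)^\intercal = \widehat{q\times(q\times f)} = \langle q,f\rangle\widehat{q} - |q|^2\widehat{f}$, so the plan closes. Your converse trades the paper's algebraic inversion for an ODE-uniqueness argument: this is clean and avoids computation, at the price of invoking well-posedness of two linear Cauchy problems and a connectedness argument to resolve the $\pm\mathbf{q}$ ambiguity of the double cover (the sets $\{\widetilde{\mathbf{q}} = \mathbf{q}\}$ and $\{\widetilde{\mathbf{q}} = -\mathbf{q}\}$ are closed, disjoint since $|\mathbf{q}|\equiv 1$, and cover the connected slice, so the one containing $z_0$ is everything). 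Just be explicit that this argument is run on each one-dimensional slice in the variable $z$, with the other variable frozen as a parameter; that suffices to recover \eqref{eq:zODE_quat_U} pointwise.
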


Thus, the system \eqref{eq:ODE_for_R} with unknown $\mathbf{R}$ is equivalent to
\begin{subnumcases}{\label{eq:ODE_for_q}}
\label{eq:ODE_for_q_v}
\partial_t \mathbf{q} = \mathcal{U}(v_2) \mathbf{q} &$\text{in }(0, \ell) \times (0,T)$\\
\label{eq:ODE_for_q_z}
\partial_x \mathbf{q} = \mathcal{U} (s_2 + \Upsilon_c) \mathbf{q} &$\text{in }(0, \ell) \times (0,T)$\\
\label{eq:ODE_for_q_ini}
\mathbf{q}(0, 0) = \mathbf{q}_{\mathrm{in}},
\end{subnumcases}
where $\mathbf{q}_\mathrm{in}$ is the quaternion parametrizing $\mathbf{R}^0(0)$.
This type of transformation is used in practice to numerically recover rotation matrices from velocities (see \cite{Artola2019mpc} for instance). However, we include a proof of the equivalence between both PDE systems (i.e., a proof of Lemma \ref{lem:quat_rot_ODE}) in Appendix \ref{ap:quaternion} for completeness since, up to the best of our knowledge, it is not present in the literature in such a form.

\item \textbf{Seemingly overdetermined systems.} At first sight it seems that both the system for $\mathbf{R}$ and the system for $\mathbf{p}$ are overdetermined, but the first three and last three equations in \eqref{eq:inv1b_compGOV} provide compatibility conditions for $\mathbf{p}$ and for $\mathbf{R}$, respectively, thereby permitting to solve both systems. Indeed, the last three equations in \eqref{eq:inv1b_compGOV} are equivalent to
\begin{align*}
\mathcal{U}(v_2) \mathcal{U}(s_2 + \Upsilon_c) - \mathcal{U}(s_2 + \Upsilon_c)\mathcal{U}(v_2)  + \partial_x(\mathcal{U}(v_2)) - \partial_t(\mathcal{U}(s_2 + \Upsilon_c)) = 0,
\end{align*}
and we may thus solve for $\mathbf{q}$ by means of the following lemma (see Appendix \ref{ap:quaternion} for a proof using basic differential equations tools).

\begin{lemma} \label{lem:quat_overter_syst}
\vspace{7pt}
Let $A, B \in C^1([0, \ell]\times [0, T]; \mathbb{R}^{n \times n})$ be such that the compatibility condition $A B - B A + (\partial_x A) - (\partial_t B) = 0$ holds in $(0, \ell)\times (0, T)$.
Then,
\begin{align} \label{eq:overdeter_y_AB}
\begin{cases}
\partial_t y = A y& \text{in }(0, \ell)\times (0, T)\\
\partial_x y = B y & \text{in }(0, \ell)\times (0, T)\\
y(0, 0) = y_\mathrm{in}.
\end{cases}
\end{align}
admits a unique solution $y \in C^1([0, \ell]\times[0,T]; \mathbb{R}^{n})$, for any given $y_\mathrm{in} \in \mathbb{R}^n$.
\end{lemma}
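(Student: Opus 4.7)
The plan is to construct $y$ by integrating the two equations one after the other, first solving the $x$-ODE along the edge $t=0$ and then propagating in time via the $t$-ODE; the compatibility condition $AB-BA+\partial_x A-\partial_t B=0$ will then be exactly what is needed to show that the $x$-equation survives the time propagation.

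Concretely, I would first consider the linear ODE $\frac{d\eta}{dx}=B(x,0)\eta$ on $[0,\ell]$ with $\eta(0)=y_{\mathrm{in}}$. Since $B(\cdot,0)$ is continuous, this has a unique $C^1$ solution $\eta$, and I set $y(x,0):=\eta(x)$. Next, for each fixed $x\in[0,\ell]$ I would solve the linear ODE in time $\partial_t y(x,t)=A(x,t)y(x,t)$ on $[0,T]$ with the initial datum just obtained. This produces a well-defined function $y(x,t)$, and since $A,B\in C^1$, standard parameter-dependence results for linear ODEs give $y\in C^1([0,\ell]\times[0,T];\mathbb{R}^n)$, with $\partial_x y$ also $C^1$ in $t$.

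The key step is to verify the second equation $\partial_x y=By$ for all $(x,t)$. I would introduce $w:=\partial_x y-By$, which by construction satisfies $w(x,0)\equiv 0$, and compute
\begin{align*}
\partial_t w &= \partial_x\partial_t y-(\partial_t B)y-B\partial_t y \\
&= \partial_x(Ay)-(\partial_t B)y-BAy \\
&= (\partial_x A-\partial_t B-BA)y+A\partial_x y \\
&= -AB\,y+A\partial_x y = A(\partial_x y-By) = Aw,
\end{align*}
where the compatibility identity is used in the penultimate line to replace $\partial_x A-\partial_t B$ by $BA-AB$. Thus $t\mapsto w(x,t)$ solves the homogeneous linear ODE $\dot w=A(x,\cdot)w$ with zero initial data, so $w\equiv 0$, giving the required identity. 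Uniqueness of $y$ follows by the same argument applied to the difference of two hypothetical solutions.

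The main technical point to watch is the interchange $\partial_t\partial_x y=\partial_x\partial_t y$ used in the computation of $\partial_t w$; this needs the $C^1$ regularity of $A,B$ together with smooth dependence of solutions of the $t$-ODE on the parameter $x$, both of which are standard but should be stated explicitly. Beyond that, the argument is essentially a Frobenius/Poincaré-type integrability statement, and I do not expect further obstacles.
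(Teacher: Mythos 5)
Your proposal is correct and follows essentially the same route as the paper's proof: solve the $x$-ODE along $t=0$, propagate forward in time with the $t$-ODE, and then show that $w=\partial_x y-By$ satisfies the homogeneous equation $\partial_t w=Aw$ with zero initial data via the compatibility condition, so $w\equiv 0$. The only cosmetic difference is that the paper phrases the key computation as $(\partial_t-A)(\partial_x y-By)=(AB-BA+\partial_x A-\partial_t B)y$ and obtains the needed joint regularity by running the fixed-point argument for the $t$-ODE in $C^0([0,t];C^1([0,\ell];\mathbb{R}^n))$, which is exactly the technical point you flag.
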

Then, for $\mathbf{R}$ parametrized by the obtained unit-norm quaternion $\mathbf{q}$, we can solve for $\mathbf{p}$ using \eqref{eq:inv1b_compBC} together with \eqref{eq:inv1b_compGOV} (first three equations). \hfill $\meddiamond$
\end{itemize}

Note that the time interval could be changed to $[0, T)$ or $[0, +\infty)$, the Dirichlet boundary condition could be at $x=\ell$ rather than $x=0$, or there could be no Dirichlet boundary conditions at all. The boundary conditions could also be more general, such as in \eqref{eq:GEB_genBC_Neu}-\eqref{eq:GEB_genBC_Diri} and \eqref{eq:IGEB_genBC}. This would influence the compatibility conditions to be enforced, but not the idea of the proof.
Building on Theorem \ref{th:invert_transfo_fb}, we may then prove the following theorem. 

\begin{theorem}[Th. 1.7 \ref{A:SICON}, Th. 2.7 \ref{A:JMPA}] \label{thm:inv1b_igeb2geb}
Assume that
\begin{align*}
\mathbf{M}, \mathbf{C} \in C^1([0, \ell]; \mathbb{R}^{6\times 6}),& \quad R\in C^2([0, \ell]; \mathrm{SO}(3)),\\
(\mathbf{p}^0, \mathbf{R}^0) \in C^2([0, \ell]; \mathbb{R}^3 \times \mathrm{SO}(3)),& \quad \mathbf{p}^1, w^0 \in C^1([0, \ell]; \mathbb{R}^3)
\end{align*}
 are such that \eqref{eq:inv1b_comp-1} holds.
Let $y^0$ be the associated function defined by \eqref{eq:rel_inidata}.
Then, if there exists a unique solution $y \in C^1([0, \ell]\times [0, T]; \mathbb{R}^{12})$ to \eqref{eq:IGEBfb} with initial data $y^0$ (for some $T>0$), there exists a unique solution
$(\mathbf{p}, \mathbf{R}) \in C^2([0, \ell]\times [0, T]; \mathbb{R}^3 \times \mathrm{SO}(3))$
to \eqref{eq:GEBfb} with initial data $(\mathbf{p}^0, \mathbf{R}^0, \mathbf{p}^1, w^0)$ and boundary data $(f^\mathbf{p}, f^\mathbf{R})$, and $y = \mathcal{T}(\mathbf{p}, \mathbf{R})$.
\end{theorem}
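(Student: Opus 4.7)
The plan is to construct $(\mathbf{p}, \mathbf{R})$ by inverting $\mathcal{T}$ on the given IGEB solution $y$, and then to verify that this $(\mathbf{p}, \mathbf{R})$ satisfies every line of \eqref{eq:GEBfb}. The machinery for the construction is already in place: Theorem \ref{th:invert_transfo_fb} tells us that $\mathcal{T}\colon E_1 \to E_2$ is a bijection, and Proposition \ref{prop:transfoGov} says that, once $\mathcal{T}(\mathbf{p},\mathbf{R}) = y$, the first six equations of \eqref{eq:IGEBfb_gov} and \eqref{eq:GEBfb_gov} are equivalent while the last six equations of the IGEB system are automatically satisfied as compatibility identities. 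So the core task reduces to two checks: $y \in E_2$, and the free boundary/initial conditions of \eqref{eq:GEBfb} are recovered from those of \eqref{eq:IGEBfb} through $\mathcal{T}$.

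First I would verify that $y \in E_2$. The regularity $y \in C^1([0,\ell]\times[0,T];\mathbb{R}^{12})$ is hypothesized. For the compatibility condition \eqref{eq:inv1b_compIC}, I would read off directly from \eqref{eq:rel_inidata} the expression of $s(\cdot,0) = \mathbf{C}z^0$, which by construction yields $\frac{\mathrm{d}}{\mathrm{d}x}\mathbf{p}^0 = \mathbf{R}^0(s_1(\cdot,0) + e_1)$ and $\frac{\mathrm{d}}{\mathrm{d}x}\mathbf{R}^0 = \mathbf{R}^0(\widehat{s}_2(\cdot,0) + \widehat{\Upsilon}_c)$. For \eqref{eq:inv1b_compGOV}, I would observe that this is nothing but the last six equations of the IGEB governing system \eqref{eq:IGEBfb_gov} rewritten in terms of $s = \mathbf{C}z$ (using $-\mathbf{E}^\intercal$, since $\widehat{u}^\intercal=-\widehat{u}$), which $y$ satisfies by hypothesis. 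The boundary compatibility \eqref{eq:inv1b_compBC} is literally \eqref{eq:IGEBfb_BC0}. Together with the condition \eqref{eq:inv1b_comp-1} on the boundary and initial data of the GEB problem, Theorem \ref{th:invert_transfo_fb} then yields a unique $(\mathbf{p},\mathbf{R}) \in E_1$ with $\mathcal{T}(\mathbf{p},\mathbf{R}) = y$.

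Next I would verify the remaining items of \eqref{eq:GEBfb}. The quasilinear governing system \eqref{eq:GEBfb_gov} comes for free from Proposition \ref{prop:transfoGov} applied to $\mathcal{T}(\mathbf{p},\mathbf{R}) = y$. The Dirichlet condition \eqref{eq:GEBfb_BC0} and the zeroth-order initial condition \eqref{eq:GEBfb_IC0} are built into the definition of $E_1$. For the feedback boundary condition \eqref{eq:GEBfb_BCell}, I would simply use $(\phi^\intercal,\psi^\intercal)^\intercal = \mathrm{diag}(\mathbf{R},\mathbf{R}) z$ at $x=\ell$ and substitute \eqref{eq:IGEBfb_BCell}. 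For the first-order initial condition \eqref{eq:GEBfb_IC1}, I would unwind the definitions $V = \mathbf{R}^\intercal \partial_t \mathbf{p}$ and $\widehat{W} = \mathbf{R}^\intercal \partial_t \mathbf{R}$ at $t = 0$: using $\mathbf{R}(x,0) = \mathbf{R}^0(x)$ together with \eqref{eq:rel_inidata}, one gets $\partial_t \mathbf{p}(x,0) = \mathbf{R}^0 V(x,0) = \mathbf{R}^0 (\mathbf{R}^0)^\intercal \mathbf{p}^1 = \mathbf{p}^1$ and analogously $\mathbf{R}W(x,0) = w^0$. Uniqueness of $(\mathbf{p},\mathbf{R})$ is immediate from uniqueness of $y$ and the bijectivity of $\mathcal{T}$.

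I expect the main obstacle to be conceptual rather than computational: one must be careful that the ``compatibility conditions'' defining $E_2$ are not additional assumptions on $y$ but are in fact forced by $y$ being an IGEB solution with the specified initial data and boundary condition. The rest is essentially bookkeeping, provided one consistently tracks the two changes of representation, namely $z \leftrightarrow s$ via $\mathbf{C}$ and body-attached $\leftrightarrow$ global via $\mathbf{R}$.
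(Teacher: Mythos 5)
Your proposal is correct and follows essentially the same route as the paper: you verify that an IGEB solution automatically lies in $E_2$ (via the last six governing equations, the last six initial conditions linked through \eqref{eq:rel_inidata}, and the Dirichlet condition), invoke Theorem \ref{th:invert_transfo_fb} to produce the candidate $(\mathbf{p},\mathbf{R})$, recover the remaining equations of \eqref{eq:GEBfb} from Proposition \ref{prop:transfoGov} and the residual boundary/initial conditions, and conclude uniqueness from bijectivity of $\mathcal{T}$. No gaps.
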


\noindent \textbf{Idea of the proof.} A solution $y$ to \eqref{eq:IGEBfb} in fact always belongs to $E_2$ due to the last six governing equations in \eqref{eq:IGEBfb_gov}, the last six initial conditions in \eqref{eq:IGEBfb_IC} and the Dirichlet conditions \eqref{eq:IGEBfb_BC0} -- all satisfied by $y$ --, and since we maintained the link between the initial and boundary data of both models.
The previous theorem thus automatically provides $(\mathbf{p}, \mathbf{R})$, candidate to be a solution to \eqref{eq:GEBfb}. Of concern here is thus to show that this candidate fulfills the full system. 
This is indeed the case since $(\mathbf{p}, \mathbf{R})$ then fulfills the governing system of the GEB model as we already have seen in Proposition \ref{prop:transfoGov}, and we can use the rest of boundary and initial conditions of System \eqref{eq:IGEBfb} to deduce those of \eqref{eq:GEBfb}. The uniqueness results from that of the IGEB model and from the fact that $\mathcal{T}$ is bijective. \hfill $\meddiamond$

\subsection{For a network}
\label{sec:invert_transfo_net}

In Section \ref{sec:invert_transfo_net},  we work with the network systems \eqref{eq:nGEBgen} and \eqref{eq:nIGEBgen} for beams described by the GEB model and the IGEB model, respectively.
For the initial and boundary data, we assume the following regularity
\begin{linenomath}
\begin{align} \label{eq:reg_Idata_GEB}
(\mathbf{p}_i^0, \mathbf{R}_i^0) \in C^2([0, \ell_i]; \mathbb{R}^3 \times \mathrm{SO}(3)), \quad \mathbf{p}_i^1, w_i^0 \in C^1([0, \ell_i]; \mathbb{R}^3), \quad &i \in\mathcal{I},\\
\label{eq:reg_Ndata_GEB_N}
f_n \in C^1([0, T]; \mathbb{R}^6), \quad &n \in \mathcal{N}_M \cup \mathcal{N}_S^N,\\
\label{eq:reg_Ndata_GEB_D}
(f_n^\mathbf{p}, f_n^\mathbf{R}) \in C^2([0, T]; \mathbb{R}^3\times \mathrm{SO}(3)), \quad &n \in\mathcal{N}_S^D.
\end{align}
\end{linenomath}
The transformation $\mathcal{T}_\mathrm{net}$ for a network simply consists in applying the previous transformation $\mathcal{T}$ (for a single beam) to each of the beams $i \in \mathcal{I}$. In other words,
\begin{align} \label{eq:transfo_net} 
\mathcal{T}_\mathrm{net}((\mathbf{p}_i, \mathbf{R}_i)_{i\in \mathcal{I}}) = (\mathcal{T}_i(\mathbf{p}_i, \mathbf{R}_i))_{i \in \mathcal{I}}, \quad \text{where} \quad \mathcal{T}_i(\mathbf{p}_i, \mathbf{R}_i) = \begin{bmatrix}
v_i \\ z_i
\end{bmatrix}, 
\end{align}
where $v_i, z_i$ are the functions of $\mathbf{p}_i, \mathbf{R}_i$ defined by \eqref{eq:def_v_z}.
In Lemma \ref{lem:invert_transfo} below we will invert $\mathcal{T}_\mathrm{net}$ on the following spaces
\begin{linenomath}
\begin{align*}
\overline{E}_1 &= \left\{(\mathbf{p}_i, \mathbf{R}_i)_{i \in \mathcal{I}} \in {\textstyle \prod_{i=1}^N} C^2\left([0, \ell_i]\times[0, T]; \mathbb{R}^3 \times \mathrm{SO}(3)\right) \colon \eqref{eq:nGEBgen_nSD}, \eqref{eq:nGEBgen_IC0} \text{ hold} \right\}\\
\overline{E}_2 &= \Big\{\Big(y_i= (v_i^\intercal, z_i^\intercal)^\intercal\Big)_{i \in \mathcal{I}} \in {\textstyle \prod_{i=1}^N} C^1\left([0, \ell_i]\times[0, T] ; \mathbb{R}^{12}\right) \colon \text{\eqref{eq:invNet_comp} holds for all }s_i := \mathbf{C}_i z_i \Big\},
\end{align*}
\end{linenomath}
where, using the notation $v_{i} = (v_{i, 1}^\intercal, v_{i, 2}^\intercal)^\intercal$ and $s_{i} = (s_{i, 1}^\intercal, s_{i, 2}^\intercal)^\intercal$ for $v_{i, 1}, v_{i, 2}, s_{i, 1}, s_{i, 2}$ having values in $\mathbb{R}^3$, the condition \eqref{eq:invNet_comp} consists of 
\begin{subequations}\label{eq:invNet_comp}
\begin{align}
\label{eq:compat_last6eq}
\begingroup 
\begin{aligned}
& \partial_t \begin{bmatrix}
s_{i,1} \\ s_{i,2}
\end{bmatrix} - \partial_x \begin{bmatrix}
v_{i,1} \\ v_{i,2}
\end{bmatrix} - 
\begingroup 
\renewcommand*{\arraystretch}{0.8}
\begin{bmatrix}
\widehat{\Upsilon}_c^i & \widehat{e}_1 \\
\mathbf{0} & \widehat{\Upsilon}_c^i
\end{bmatrix}
\endgroup
\begin{bmatrix}
v_{i,1} \\ v_{i,2}
\end{bmatrix} \\
& \hspace{3.25cm}= 
\begin{bmatrix}
\widehat{v}_{i,2} & \widehat{v}_{i,1}\\
\mathbf{0} & \widehat{v}_{i,2}
\end{bmatrix} \begin{bmatrix}
s_{i,1} \\ s_{i,2}
\end{bmatrix},
\end{aligned}
\endgroup
\qquad
&\begin{aligned}
&\text{in }(0, \ell_i)\times (0, T),\\
&\text{for all }i \in \mathcal{I},
\end{aligned}
\\
\label{eq:compat_ini}
\left\{\begin{array}{l}
\tfrac{\mathrm{d}}{\mathrm{d}x}\mathbf{p}_i^0(\cdot) = \mathbf{R}_i^0(\cdot) (s_{i,1}(\cdot, 0) + e_1),\\
\tfrac{\mathrm{d}}{\mathrm{d}x} \mathbf{R}_i^0(\cdot) = \mathbf{R}_i^0(\cdot)(\widehat{s}_{i,2}(\cdot, 0) + \widehat{\Upsilon}_c^i(\cdot)),
\end{array}\right.
\qquad 
&\text{in } (0, \ell_i), \, \text{for all }i \in \mathcal{I},
\\
\label{eq:compat_nod}
\left\{\begin{array}{l}
\tfrac{\mathrm{d}}{\mathrm{d}t} f_n^\mathbf{p} (\cdot) = f_n^\mathbf{R}(\cdot) \widehat{v}_{{i^n},1}(\mathbf{x}_{i^n}^n, \cdot),\\
\tfrac{\mathrm{d}}{\mathrm{d}t} f_n^\mathbf{R} (\cdot) = f_n^\mathbf{R}(\cdot) \widehat{v}_{{i^n},2}(\mathbf{x}_{i^n}^n, \cdot),
\end{array}\right.
\qquad
&\text{in } (0, T), \, \text{for all } n \in \mathcal{N}_S^D.
\end{align}
\end{subequations}
Then, we have the following lemma. 
The essential ideas of the proof have been presented for Theorem \ref{th:invert_transfo_fb}, and even though the setting is slightly more general here, it is still to use quaternions to parametrize $\mathbf{R}_i$ and use the conditions \eqref{eq:invNet_comp} at different stages of the proof in order to solve the -- apparently overdetermined -- system of linear PDEs that characterizes $\mathcal{T}_\mathrm{net}$.


\begin{lemma}[Lem. 5.1 \ref{A:JMPA}] \label{lem:invert_transfo}
The transformation $\mathcal{T}_\mathrm{net}\colon \overline{E}_1 \rightarrow \overline{E}_2$ defined in \eqref{eq:transfo_net} is bijective if $(\mathbf{p}_i^0, \mathbf{R}_i^0, f_n^\mathbf{p}, f_n^\mathbf{R})$ are of regularity \eqref{eq:reg_Idata_GEB} and \eqref{eq:reg_Ndata_GEB_D}, and fulfill 
\begin{align}
\label{eq:compat_GEB_-1}
&(f_n^\mathbf{p}, f_n^\mathbf{R})(0) = (\mathbf{p}_{i^n}^0, \mathbf{R}_{i^n}^0)(\mathbf{x}_{i^n}^n), \quad n\in \mathcal{N}_S^D.
\end{align}
\end{lemma}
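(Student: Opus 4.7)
The plan is to invert $\mathcal{T}_\mathrm{net}$ one beam at a time, essentially re-running the argument of Theorem \ref{th:invert_transfo_fb} on each edge, and then checking that the Dirichlet simple-node conditions \eqref{eq:nGEBgen_nSD} are recovered automatically from the extra compatibility \eqref{eq:compat_nod} together with \eqref{eq:compat_GEB_-1}. So, given $(y_i)_{i\in\mathcal{I}} \in \overline{E}_2$, I set $s_i := \mathbf{C}_i z_i$ and, for each $i\in\mathcal{I}$, fix a reference endpoint $\mathbf{x}_i^{n_i}\in\{0,\ell_i\}$ (any $n_i\in\mathcal{I}^n$ will do). On each beam I then look for $(\mathbf{p}_i,\mathbf{R}_i)\in C^2([0,\ell_i]\times[0,T];\mathbb{R}^3\times\mathrm{SO}(3))$ with $\mathcal{T}_i(\mathbf{p}_i,\mathbf{R}_i)=y_i$ and with the single Cauchy datum $(\mathbf{p}_i,\mathbf{R}_i)(\mathbf{x}_i^{n_i},0) = (\mathbf{p}_i^0,\mathbf{R}_i^0)(\mathbf{x}_i^{n_i})$.

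The construction is exactly as in Theorem \ref{th:invert_transfo_fb}. Finding $\mathbf{R}_i$ amounts to solving the overdetermined pair $\partial_t\mathbf{R}_i=\mathbf{R}_i\widehat{v}_{i,2}$ and $\partial_x\mathbf{R}_i=\mathbf{R}_i(\widehat{s}_{i,2}+\widehat{\Upsilon}_c^i)$. Parameterizing $\mathbf{R}_i$ by a unit quaternion $\mathbf{q}_i$ via Lemma \ref{lem:quat_rot_ODE} turns this into a linear system $\partial_t\mathbf{q}_i=\mathcal{U}(v_{i,2})\mathbf{q}_i$, $\partial_x\mathbf{q}_i=\mathcal{U}(s_{i,2}+\Upsilon_c^i)\mathbf{q}_i$. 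The last three components of the compatibility condition \eqref{eq:compat_last6eq} translate into exactly the Frobenius identity required by Lemma \ref{lem:quat_overter_syst}, which yields a unique $\mathbf{q}_i\in C^1$ with $|\mathbf{q}_i|\equiv 1$ and hence a unique $\mathbf{R}_i$. With $\mathbf{R}_i$ in hand, the first three components of \eqref{eq:compat_last6eq} give the Frobenius compatibility for $\partial_t\mathbf{p}_i=\mathbf{R}_iv_{i,1}$, $\partial_x\mathbf{p}_i=\mathbf{R}_i(s_{i,1}+e_1)$, and a second application of Lemma \ref{lem:quat_overter_syst} (with $n=3$ and $\mathcal{U}$ replaced by the matrices $\widehat{v}_{i,2}\cdot$ and $(\widehat{s}_{i,2}+\widehat{\Upsilon}_c^i)\cdot$ multiplying $\mathbf{R}_i$) produces a unique $\mathbf{p}_i$. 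Finally, the initial-data compatibility \eqref{eq:compat_ini} shows that at $t=0$ the maps $x\mapsto\mathbf{R}_i(x,0)$ and $x\mapsto\mathbf{R}_i^0(x)$ satisfy the same $x$-ODE with the same value at $\mathbf{x}_i^{n_i}$, and similarly for $\mathbf{p}_i$; by uniqueness, $(\mathbf{p}_i,\mathbf{R}_i)(x,0)=(\mathbf{p}_i^0,\mathbf{R}_i^0)(x)$ on $[0,\ell_i]$, which is \eqref{eq:nGEBgen_IC0}.

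The genuinely new step, specific to the network setting, is verifying the Dirichlet condition \eqref{eq:nGEBgen_nSD} at each $n\in\mathcal{N}_S^D$, namely $(\mathbf{p}_{i^n},\mathbf{R}_{i^n})(\mathbf{x}_{i^n}^n,t)=(f_n^{\mathbf{p}},f_n^{\mathbf{R}})(t)$ for all $t\in[0,T]$. Evaluating the $t$-ODEs that define $\mathcal{T}_{i^n}$ at $x=\mathbf{x}_{i^n}^n$ shows that $t\mapsto(\mathbf{p}_{i^n},\mathbf{R}_{i^n})(\mathbf{x}_{i^n}^n,t)$ satisfies a linear ODE system driven by $v_{i^n,1}(\mathbf{x}_{i^n}^n,\cdot)$ and $v_{i^n,2}(\mathbf{x}_{i^n}^n,\cdot)$; by the nodal compatibility \eqref{eq:compat_nod} the candidate datum $t\mapsto(f_n^{\mathbf{p}},f_n^{\mathbf{R}})(t)$ solves the \emph{same} ODE system. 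The values at $t=0$ match thanks to \eqref{eq:compat_GEB_-1} combined with the initial condition $(\mathbf{p}_{i^n},\mathbf{R}_{i^n})(x,0)=(\mathbf{p}_{i^n}^0,\mathbf{R}_{i^n}^0)(x)$ already established, so uniqueness for linear ODEs forces the two to agree on $[0,T]$. Hence $(\mathbf{p}_i,\mathbf{R}_i)_{i\in\mathcal{I}}\in\overline{E}_1$ and $\mathcal{T}_\mathrm{net}$ is surjective.

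Injectivity is immediate: if two elements of $\overline{E}_1$ are mapped to the same $(y_i)_{i\in\mathcal{I}}$, then for every $i$ they agree at $(\mathbf{x}_i^{n_i},0)$ by \eqref{eq:nGEBgen_IC0} and satisfy the same linear Cauchy problem, whence they coincide. I expect no deep new difficulty beyond the per-beam inversion; the hard part is simply organising the compatibility conditions so that the Dirichlet data propagated along each Dirichlet-adjacent edge is forced, through \eqref{eq:compat_nod} and \eqref{eq:compat_GEB_-1}, to match the prescribed $(f_n^{\mathbf{p}},f_n^{\mathbf{R}})$ rather than being a free integration constant, and this is precisely what the formulation of $\overline{E}_2$ ensures.
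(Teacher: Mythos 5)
Your proposal is correct and follows essentially the same route as the paper: per-edge inversion via the quaternion parametrization and the overdetermined-system Lemma \ref{lem:quat_overter_syst}, with \eqref{eq:compat_last6eq} supplying the Frobenius compatibility, \eqref{eq:compat_ini} recovering \eqref{eq:nGEBgen_IC0}, and \eqref{eq:compat_nod} together with \eqref{eq:compat_GEB_-1} forcing the Dirichlet traces by ODE uniqueness. The only cosmetic slip is the second invocation of Lemma \ref{lem:quat_overter_syst} for $\mathbf{p}_i$: the system $\partial_t\mathbf{p}_i=\mathbf{R}_iv_{i,1}$, $\partial_x\mathbf{p}_i=\mathbf{R}_i(s_{i,1}+e_1)$ has no linear term in $\mathbf{p}_i$, so one simply integrates the closed $\mathbb{R}^3$-valued $1$-form (as the paper does) rather than applying the homogeneous lemma verbatim.
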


We may now, with Theorem \ref{thm:solGEB} below, make the link between the existence of a unique classical solution to \eqref{eq:nIGEBgen} and the same property for \eqref{eq:nGEBgen}.
As explained at the beginning of Section \ref{sec:invert_transfo_net}, we focus on the network described by \eqref{eq:nIGEBgen}, even though such results could also be developed for the system \eqref{eq:nIGEBfb} with boundary feedback -- from Section \ref{sec:local_existence_igeb}.

One constraint that might have been overlooked when considering the latter system, is that in order to switch from the GEB point of view to the IGEB point of view and use the existence and uniqueness results of the latter to deduce such results for the former, is that the Neumann data $f_n$ should have the form
\begin{align}\label{eq:def_fn} 
f_n
= \left\{ \begin{array}{ll}
\begingroup 
\setlength\arraycolsep{3pt}
\renewcommand*{\arraystretch}{0.9}
\begin{bmatrix}
\mathbf{R}_{i^n} &  \mathbf{0}\\
\mathbf{0} & \mathbf{R}_{i^n}
\end{bmatrix}
\endgroup
(\mathbf{x}_{i^n}^n, \cdot)\, q_n &n \in \mathcal{N}_S^N\\
\begingroup 
\setlength\arraycolsep{3pt}
\renewcommand*{\arraystretch}{0.8}
\begin{bmatrix}
\mathbf{R}_{i^n} R_{i^n}^\intercal &  \mathbf{0}\\
\mathbf{0} & \mathbf{R}_{i^n} R_{i^n}^\intercal
\end{bmatrix}
\endgroup
(\mathbf{x}_{i^n}^n, \cdot) \, q_n &n \in \mathcal{N}_M
\end{array}\right.
\end{align}
(equivalent to \eqref{eq:def_q_nSN} and \eqref{eq:def_q_nM}). In other words, its form in the body-attached basis must be available.



\begin{theorem}[Th. 2.7 \ref{A:JMPA}] \label{thm:solGEB}
Consider a general network, and assume that:
\begin{enumerate}[label=(\roman*)]
\item \label{thm:solGEB_c1} the beam parameters $(\mathbf{M}_i, \mathbf{C}_i, R_i)$ and initial data $(\mathbf{p}_i^0, \mathbf{R}_i^0, \mathbf{p}_i^1, w_i^0)$ have the regularity \eqref{eq:reg_beampara} and \eqref{eq:reg_Idata_GEB}, and $y_i^0$ is the associated function defined by \eqref{eq:rel_inidata},

\item \label{thm:solGEB_c2} the Neumann data $f_n$ have the form \eqref{eq:def_fn}, for given $q_n$ of regularity \eqref{eq:reg_Ndata_IGEB}, 

\item \label{thm:solGEB_c3} the Dirichlet data $(f_n^\mathbf{p}, f_n^\mathbf{R})$  are of regularity \eqref{eq:reg_Ndata_GEB_D}, and $q_n$ are the associated functions defined as in \eqref{eq:def_q_nSD},

\item \label{thm:solGEB_c4} the compatibility conditions \eqref{eq:compat_GEB_-1} and the following hold:
\begin{align}
\label{eq:compat_GEB_transmi}
&\mathbf{p}_i^0(\mathbf{x}_i^n)  = \mathbf{p}_{i^n}^0(\mathbf{x}_{i^n}^n), \quad (\mathbf{R}_i^0 R_i^\intercal)(\mathbf{x}_i^n)  = (\mathbf{R}_{i^n}^0 R_{i^n}^\intercal)(\mathbf{x}_{i^n}^n), \quad i\in \mathcal{I}^n, \, n \in \mathcal{N}_M.
\end{align}
\end{enumerate}
\vspace{-0.25cm}
Then, if there exists a unique solution $(y_i)_{i \in \mathcal{I}} \in \prod_{i=1}^N C^1([0, \ell_i]\times [0, T]; \mathbb{R}^{12})$ to \eqref{eq:nGEBgen} with initial and nodal data $y_i^0$ and $q_n$ (for some $T>0$), there exists a unique solution
$(\mathbf{p}_i, \mathbf{R}_i)_{i \in \mathcal{I}} \in  \prod_{i=1}^N C^2([0, \ell_i]\times [0, T]; \mathbb{R}^3 \times \mathrm{SO}(3))$
to \eqref{eq:nGEBgen} with initial data $(\mathbf{p}_i^0, \mathbf{R}_i^0, \mathbf{p}_i^1, w_i^0)$ and nodal data $f_n$, $(f_n^\mathbf{p}, f_n^\mathbf{R})$, and $(y_i)_{i\in \mathcal{I}} = \mathcal{T}_\mathrm{net}((\mathbf{p}_i, \mathbf{R}_i)_{i\in\mathcal{I}})$.
\end{theorem}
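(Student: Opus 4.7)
The strategy is to apply Lemma \ref{lem:invert_transfo} to the IGEB solution $(y_i)_{i\in\mathcal{I}}$, which will hand back the unique candidate $(\mathbf{p}_i, \mathbf{R}_i)_{i\in\mathcal{I}} \in \overline{E}_1$ with $\mathcal{T}_\mathrm{net}((\mathbf{p}_i, \mathbf{R}_i)) = (y_i)$; then verify that this candidate actually satisfies the whole GEB network system \eqref{eq:nGEBgen}; uniqueness will finally follow from the injectivity of $\mathcal{T}_\mathrm{net}$ on $\overline{E}_1$ combined with uniqueness of $(y_i)$ for the IGEB system.

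To invoke the lemma one must first show that $(y_i) \in \overline{E}_2$. The PDE compatibility \eqref{eq:compat_last6eq} is just a rewriting of the last six equations of \eqref{eq:nIGEBgen_gov}, hence automatic; the initial compatibility \eqref{eq:compat_ini} reads off the definition \eqref{eq:rel_inidata} of $y_i^0$; and the nodal compatibility \eqref{eq:compat_nod} at $n \in \mathcal{N}_S^D$ follows from the Dirichlet condition \eqref{eq:nIGEBgen_nSD} combined with the definition \eqref{eq:def_q_nSD} of $q_n$. Once $(\mathbf{p}_i, \mathbf{R}_i) \in \overline{E}_1$ has been produced, the governing equations \eqref{eq:nGEBgen_gov} come at once from the forward direction of Proposition \ref{prop:transfoGov}; the Dirichlet conditions \eqref{eq:nGEBgen_nSD} and the position/rotation initial conditions \eqref{eq:nGEBgen_IC0} are built into $\overline{E}_1$; the velocity initial condition \eqref{eq:nGEBgen_IC1} follows by evaluating the $v_i$ block of $\mathcal{T}_i(\mathbf{p}_i,\mathbf{R}_i)=y_i$ at $t=0$ and using \eqref{eq:rel_inidata}; and the Neumann condition \eqref{eq:nGEBgen_nSN} follows from left-multiplying \eqref{eq:nIGEBgen_nSN} by $\mathrm{diag}(\mathbf{R}_{i^n}, \mathbf{R}_{i^n})$ and invoking the prescribed form \eqref{eq:def_fn} of $f_n$.

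The main obstacle is the recovery of the transmission conditions at multiple nodes, because \eqref{eq:nIGEBgen_cont} encodes only the $t$-derivative of \eqref{eq:nGEBgen_cont_p}--\eqref{eq:nGEBgen_rigid}, so rigidity and centerline continuity must be integrated back in time using \eqref{eq:compat_GEB_transmi} as the initial datum. Setting $\widetilde{R}_i := \mathbf{R}_i R_i^\intercal$ and using the standard identity $R\widehat{u}R^\intercal = \widehat{R u}$, the second block of \eqref{eq:nIGEBgen_cont} at the node rewrites as $\widetilde{R}_i^\intercal \partial_t \widetilde{R}_i = \widetilde{R}_{i^n}^\intercal \partial_t \widetilde{R}_{i^n}$; denoting this common skew-symmetric matrix by $\Omega(t)$, differentiating $S := \widetilde{R}_i \widetilde{R}_{i^n}^\intercal$ in time gives $\partial_t S = \widetilde{R}_i(\Omega + \Omega^\intercal)\widetilde{R}_{i^n}^\intercal = 0$, so that $S$ is constant, and the initial compatibility \eqref{eq:compat_GEB_transmi} forces $S \equiv \mathbf{I}_3$, i.e.\ the rigidity \eqref{eq:nGEBgen_rigid}. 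Rigidity then reduces the first block of \eqref{eq:nIGEBgen_cont} to $\partial_t \mathbf{p}_i = \partial_t \mathbf{p}_{i^n}$ at the node, and a second time integration, again using \eqref{eq:compat_GEB_transmi}, gives the centerline continuity \eqref{eq:nGEBgen_cont_p}. Using rigidity once more under the form $\mathbf{R}_i = \widetilde{R}_{i^n} R_i$ at the node, the IGEB Kirchhoff condition \eqref{eq:nIGEBgen_Kir} multiplied on the left by $\mathrm{diag}(\widetilde{R}_{i^n}, \widetilde{R}_{i^n})$, together with the prescribed form \eqref{eq:def_fn} of $f_n$ at $n\in\mathcal{N}_M$, yields the GEB Kirchhoff condition \eqref{eq:nGEBgen_Kir}, which completes the verification and hence the proof.
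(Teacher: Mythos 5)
Your proposal is correct and follows essentially the same route as the paper: apply Lemma \ref{lem:invert_transfo} after checking $(y_i)_{i\in\mathcal{I}} \in \overline{E}_2$, recover the governing equations from Proposition \ref{prop:transfoGov}, and integrate the IGEB transmission conditions back in time using \eqref{eq:compat_GEB_transmi} to recover rigidity, centerline continuity and the Kirchhoff condition, with uniqueness from bijectivity of $\mathcal{T}_\mathrm{net}$. Your rigidity step (showing $S=\widetilde{R}_i\widetilde{R}_{i^n}^\intercal$ has vanishing time derivative by skew-symmetry) is only a cosmetic variant of the paper's linear-ODE-uniqueness argument for $R_i\mathbf{R}_i^\intercal$.
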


\noindent \textbf{Idea of the proof.}
The main difference between the network case and that of a single beam is that one has to take care of the rigid joint condition in order to be able to recover the other transmission conditions of the GEB model.
As for a single beam, one can check that the solution $(y_i)_{i \in \mathcal{I}}$ to \eqref{eq:nIGEBgen} belongs to $\overline{E}_2$ -- one sees, once again, the role of the last six governing equations of the IGEB model and the importance of the link between the initial and boundary data of both models -- and thus obtain a candidate $(\mathbf{p}_i, \mathbf{R}_i)_{i \in \mathcal{I}}$ by Lemma \ref{lem:invert_transfo}. One soon gets that this candidate fulfills the governing system \eqref{eq:nGEBgen_gov} due to Proposition \ref{prop:transfoGov}, as well as the remaining initial conditions \eqref{eq:nGEBgen_IC1} and Neumann conditions \eqref{eq:nGEBgen_nSN} at simple nodes.
The last steps of the proof are the following.
\begin{itemize}

\item \textbf{At Dirichlet nodes.} Here, we make use of the compatibility condition \eqref{eq:compat_GEB_-1}.
Indeed, for $n \in \mathcal{N}_S^D$, one can show that $(\mathbf{p}_{i^n},\mathbf{R}_{i^n})(\mathbf{x}_{i^n}^n, \cdot) \equiv (f_n^\mathbf{p}, f_n^\mathbf{R})$ by proving that both sides solve the following system with state $(\alpha, \beta)$
\begin{align} \label{eq:nodPDE}
\left\{ \begin{aligned}
&\frac{\mathrm{d}\beta}{\mathrm{d}t}(t) = \beta(t)  \widehat{q_n^2}(t), \ \ \frac{\mathrm{d}\alpha}{\mathrm{d}t}(t) = \beta(t) q_n^1(t) && \ \text{ in }(0, T)\\
&(\alpha, \beta)(0) = (\mathbf{p}_{i^n}^0, \mathbf{R}_{i^n}^0)(\mathbf{x}_{i^n}^n),
\end{aligned} \right.
\end{align}
where we denoted by $q_n^1$ and $q_n^2$ the first and last three components of $q_n$, respectively.
That System \eqref{eq:nodPDE} admits a unique solution in $C^2([0, T]; \mathbb{R}^{3}\times \mathrm{SO}(3))$ may be seen from classical ODE theory after having parametrized rotation matrices by quaternions as in Theorem \ref{th:invert_transfo_fb}.

\item \textbf{Rigid joint condition.}
This is where the second equation in \eqref{eq:compat_GEB_transmi} comes into play. Together with the last three equations in the continuity condition \eqref{eq:nIGEBgen_cont} of the IGEB model, it yields that for any $n \in \mathcal{N}_M$ and all $i \in \mathcal{I}^n$ the time dependent function $\Lambda_i := (R_i \mathbf{R}_i^\intercal)(\mathbf{x}_i^n, \cdot)$ solves 
\begin{align*}
\begin{dcases}
\frac{\mathrm{d}}{\mathrm{d}t} \Lambda_i(t) = F_n(t) \Lambda_i(t) & \text{for all }t \in (0, T)\\
\Lambda_i(0) = a_n,
\end{dcases}
\end{align*}
where $F_n := \big(\frac{\mathrm{d}}{\mathrm{d}t} \Lambda_{i^n} \big)\Lambda_{i^n}^\intercal$ and $a_n := \big(R_{i^n}{\mathbf{R}_{i^n}^0}^\intercal \big)(\mathbf{x}_{i^n}^n)$, which admits a unique solution in $C^1([0, T]; \mathbb{R}^{3 \times 3})$ (see \cite[Sec. 2.1 and Th. 4.1.1 or Coro. 2.4.4]{vrabie2004}, for instance).

\item \textbf{Continuity of the displacement.}
Then, the first equation in \eqref{eq:compat_GEB_transmi} is used. Together with the rigid joint condition, we can use this time the first three equations in \eqref{eq:nIGEBgen_cont} to see that for any $n \in \mathcal{N}_M$ and all $i \in \mathcal{I}^n$, the function $\mathbf{p}_i(\mathbf{x}_i^n, \cdot)$ solves
\begin{linenomath}
\begin{align*} 
\begin{dcases}
\partial_t \mathbf{p}_i(\mathbf{x}_i^n, t) = h_n(t) & \text{for all }t \in (0, T)\\
\mathbf{p}_i(\mathbf{x}_i^n, 0) = \alpha_n,
\end{dcases}
\end{align*}
\end{linenomath}
where $h_n := \partial_t \mathbf{p}_{i^n}(\mathbf{x}_{i^n}^n, \cdot)$ and $\alpha_n := \mathbf{p}_{i^n}^0(\mathbf{x}_{i^n}^n)$, which admits a unique $C^1([0, T];\mathbb{R}^3)$ solution.
\end{itemize}
Thanks to the rigid joint condition, the Kirchhoff condition of the IGEB network is recovered similarly to the Neumann nodes. The uniqueness question is treated as in Theroem \ref{th:invert_transfo_fb}. \hfill  $\meddiamond$

\medskip

Then, Corollary \ref{coro:wellposedGEB} below follows from Theorems \ref{th:semi_global_existence} and \ref{thm:solGEB}.

\begin{corollary}[Coro. 2.9 \ref{A:JMPA}]
\label{coro:wellposedGEB}
Consider a general network and suppose that \ref{thm:solGEB_c1}-\ref{thm:solGEB_c2}-\ref{thm:solGEB_c3}-\ref{thm:solGEB_c4} of Theorem \ref{thm:solGEB} are fulfilled, suppose that the beam parameters $(\mathbf{M}_i, \mathbf{C}_i)$ satisfy Assumption \ref{as:mass_flex} with $m=2$, and that $(y_i^0)_{i\in \mathcal{I}}$ fulfills the first-order compatibility conditions of \eqref{eq:nIGEBgen}.
Then, for any $T>0$, there exists $\varepsilon_0>0$ such that for all $\varepsilon \in (0, \varepsilon_0)$, and for some $\delta>0$, if moreover $\|y_i^0\|_{C_x^1}+ \|q_n\|_{C_t^1}\leq \delta$, then there exists a unique solution $(\mathbf{p}_i, \mathbf{R}_i)_{i \in \mathcal{I}} \in \prod_{i=1}^N C^2([0, \ell_i]\times[0, T]; \mathbb{R}^3 \times \mathrm{SO}(3))$ to \eqref{eq:nGEBgen} with initial data $(\mathbf{p}_i^0, \mathbf{R}_i^0, \mathbf{p}_i^1, w_i^0)$ and nodal data $f_n$, $(f_n^\mathbf{p}, f_n^\mathbf{R})$.
\end{corollary}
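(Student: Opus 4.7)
The plan is to obtain Corollary \ref{coro:wellposedGEB} as a direct composition of the two principal results already established in this chapter, namely Theorem \ref{th:semi_global_existence} on the IGEB side and Theorem \ref{thm:solGEB} which transports well-posedness from IGEB to GEB. No new estimates are needed; the work is purely to check that the hypotheses are aligned.

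First, I would verify that the assumptions placed on the data in Corollary \ref{coro:wellposedGEB} entail all the hypotheses of Theorem \ref{th:semi_global_existence} for the intrinsic network system \eqref{eq:nIGEBgen}. Assumption \ref{as:mass_flex} is assumed with $m=2$; the regularity \eqref{eq:reg_beampara} of the beam parameters $(\mathbf{M}_i, \mathbf{C}_i, R_i)$ is part of hypothesis \ref{thm:solGEB_c1} of Theorem \ref{thm:solGEB} (which we inherit through \ref{thm:solGEB_c1}-\ref{thm:solGEB_c2}-\ref{thm:solGEB_c3}-\ref{thm:solGEB_c4}); the boundary data $q_n$ have the $C^1_t$ regularity required by \eqref{eq:reg_Ndata_IGEB} (for $n \in \mathcal{N}_M \cup \mathcal{N}_S^N$ via \ref{thm:solGEB_c2}, for $n \in \mathcal{N}_S^D$ via the definition \eqref{eq:def_q_nSD} applied to $(f_n^\mathbf{p}, f_n^\mathbf{R}) \in C^2_t$); and the initial data $y_i^0$ defined through \eqref{eq:rel_inidata} from $(\mathbf{p}_i^0, \mathbf{R}_i^0, \mathbf{p}_i^1, w_i^0)$ of regularity \eqref{eq:reg_Idata_GEB} lies in $C^1([0,\ell_i]; \mathbb{R}^{12})$. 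The first-order compatibility conditions of \eqref{eq:nIGEBgen} for $(y_i^0)_{i\in\mathcal{I}}$ are assumed explicitly.

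Next, I would apply Theorem \ref{th:semi_global_existence}: for the prescribed $T>0$, it furnishes $\varepsilon_0>0$ such that for each $\varepsilon \in (0,\varepsilon_0)$ some $\delta>0$ exists with the property that whenever $\|y_i^0\|_{C_x^1} + \|q_n\|_{C_t^1} \le \delta$, the network IGEB system \eqref{eq:nIGEBgen} admits a unique classical solution $(y_i)_{i\in\mathcal{I}} \in \prod_{i=1}^N C^1([0,\ell_i]\times[0,T]; \mathbb{R}^{12})$ with $\|y_i\|_{C_{x,t}^1} \le \varepsilon$. I would fix the same $\varepsilon_0$ and $\delta$ for the statement of the corollary.

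Finally, since conditions \ref{thm:solGEB_c1}–\ref{thm:solGEB_c4} of Theorem \ref{thm:solGEB} are assumed to hold, and the hypothesis of that theorem---existence and uniqueness of a classical $C^1$ solution to \eqref{eq:nIGEBgen} with the prescribed initial and nodal data on $[0,T]$---has just been verified, I would invoke Theorem \ref{thm:solGEB} to conclude that there exists a unique $(\mathbf{p}_i, \mathbf{R}_i)_{i\in\mathcal{I}} \in \prod_{i=1}^N C^2([0,\ell_i]\times[0,T]; \mathbb{R}^3 \times \mathrm{SO}(3))$ solving \eqref{eq:nGEBgen} with the prescribed data, and satisfying $(y_i)_{i\in\mathcal{I}} = \mathcal{T}_{\mathrm{net}}((\mathbf{p}_i,\mathbf{R}_i)_{i\in\mathcal{I}})$. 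There is no serious obstacle to overcome: the only point requiring care is the bookkeeping between the two formulations, specifically checking that the form \eqref{eq:def_fn} assumed for the Neumann data $f_n$ and the definitions \eqref{eq:def_q_nSD} at Dirichlet nodes produce precisely the $q_n$ whose smallness is measured in the hypothesis, so that the thresholds $\varepsilon_0, \delta$ provided by Theorem \ref{th:semi_global_existence} can legitimately be read off as the constants asserted by the corollary.
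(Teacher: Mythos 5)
Your proposal is correct and follows exactly the route the paper takes: the paper states that Corollary \ref{coro:wellposedGEB} "follows from Theorems \ref{th:semi_global_existence} and \ref{thm:solGEB}", i.e.\ apply the semi-global IGEB well-posedness result to the data $y_i^0$, $q_n$ induced by the GEB data, then transfer the resulting $C^1$ solution to a $C^2$ solution of \eqref{eq:nGEBgen} via the inverse transformation theorem. Your hypothesis bookkeeping (regularity of $y_i^0$ from \eqref{eq:rel_inidata}, $C^1$ regularity of $q_n$ at Dirichlet nodes from \eqref{eq:def_q_nSD}, and the compatibility conditions) is precisely the verification the paper leaves implicit.
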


%
%
%
%
%
%
%

\begin{subappendices}

\section{Incoming and outgoing information}
\label{ap:out_in_info}

In this Appendix we give precise definitions of the outgoing information $r_n^\mathrm{out}$ and incoming information $r_n^\mathrm{in}$ at the nodes $n \in \mathcal{N}$, which play an important role in the well-posedness of the IGEB model on networks.

\medskip

\noindent \textbf{A single system.}
Consider the IGEB model for a single beam -- for instance \eqref{eq:IGEBtyp}. We have seen in Section \ref{sec:wellp_igeb} that this system may be diagonalized and that the new variable $r$ is such that $r^-(x,t) \in \mathbb{R}^6$ (resp. $r^+(x,t) \in \mathbb{R}^6$) are the components of $r$ corresponding to the negative (resp. positive) diagonal entries of $\mathbf{D}$.

Then, the \emph{outgoing information} consists of the components of $r$ corresponding to characteristics which are outgoing at the boundaries $x=0$ and $x=\ell$ (in other words, entering the domain $[0, \ell]\times [0, T]$): these are $r^{+}(0, t)$ and $r^{-}(\ell, t)$ respectively. 
Likewise, we mean by \emph{incoming information}, the components of $r$ corresponding to characteristics which are incoming at the boundaries $x=\ell$ and $x=0$ (leaving the domain $[0, \ell]\times[0, T]$): these are $r^{-}(0, t)$ and $r^{+}(\ell, t)$ respectively. 
We refer to Fig. \ref{fig:charac} for visualization.

\begin{figure}[h]
  \centering
    \includegraphics[scale=0.7]{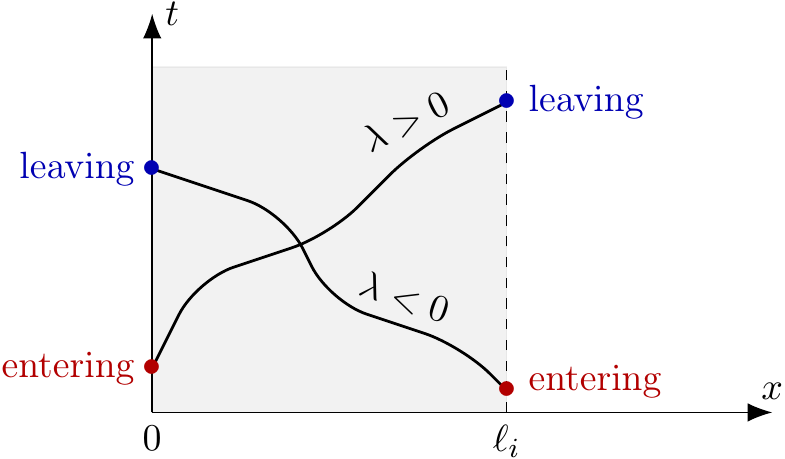}
\caption{Characteristic curves $(\mathbf{x}(t), t)$ with $\frac{\mathrm{d}\mathbf{x}}{\mathrm{d}t}(t) = \lambda(\mathbf{x}(t))$, where either $\lambda(s)>0$ or $\lambda(s)<0$ for all $s \in [0, \ell_i]$.}
\label{fig:charac}
\end{figure}

\medskip

\noindent \textbf{General networks.} For hyperbolic systems on networks, for any beam $i\in \mathcal{I}$, the outgoing information is then $r_i^- (\ell, t)$ and $r_i^+ (0, t)$, while the incoming information is $r_i^+ (\ell, t)$ and $r_i^- (0, t)$. 

We may also specify what is the outgoing and incoming information at each node. Let us use the notation introduced in \eqref{eq:notation_I^n} for the set $\mathcal{I}^n$.
Then, the outgoing information at the node $n\in \mathcal{N}$ is $r_{i_\alpha}^-(\ell_{i_\alpha}, t)$ for all $\alpha \in \{1, \ldots, s_n\}$, and $r_{i_k}^+(0, t)$ for all $k \in \{s_n+1, \ldots, k_n\}$. On the other hand, the incoming information at the node $n$ is $r_{i_\alpha}^+(\ell_{i_\alpha}, t)$ for all $\alpha \in \{1, \ldots, s_n\}$, and $r_{i_k}^-(0, t)$ for all $k \in \{s_n+1, \ldots, k_n\}$.
We then define the functions $r_n^\mathrm{out}, r_n^\mathrm{in} \colon [0, T] \rightarrow \mathbb{R}^{6k_n}$ by
\begingroup 
\setlength\arraycolsep{3pt}
\renewcommand*{\arraystretch}{0.9}
\begin{linenomath}
\begin{align} \label{eq:r_out_in_gene}
r_n^\mathrm{out}(t) = \begin{bmatrix}
r_{i_1}^-(\ell_{i_1}, t)\\
\vdots\\
r_{i_{s_n}}^-(\ell_{i_{s_n}}, t)\\
r_{i_{s_n+1}}^+(0, t) \\
\vdots\\
r_{i_{k_n}}^+(0, t)
\end{bmatrix}, \qquad r_n^\mathrm{in}(t) = \begin{bmatrix}
r_{i_1}^+(\ell_{i_1}, t)\\
\vdots\\
r_{i_{s_n}}^+(\ell_{i_{s_n}}, t)\\
r_{i_{s_n+1}}^-(0, t) \\
\vdots\\
r_{i_{k_n}}^-(0, t)
\end{bmatrix}.
\end{align}
\end{linenomath}
\endgroup

\medskip

\noindent \textbf{Tree-shaped networks.} Let us do the same for tree-shaped networks, this time using the notation introduced in \eqref{eq:nota_In_indices} for the set $\mathcal{I}_n$. For any node $n \in \mathcal{N}$, the outgoing and incoming information $r^\mathrm{out}_n, r^\mathrm{in}_n \colon [0, T] \rightarrow \mathbb{R}^{6k_n}$ are then given by
\begin{equation} \label{eq:r_out_in_mult_simp}
r^\mathrm{out}_n =
\begin{cases}
\left[\begin{smallmatrix}
r_n^- (\ell_n, \cdot)\\
r_{i_2}^+ (0, \cdot)\\
\vdots\\
r_{i_{k_n}}^+ (0, \cdot)
\end{smallmatrix}\right], & n \in \mathcal{N}_M\\
r_n^- (\ell_n, \cdot), & n \in \mathcal{N}_S \setminus \{0\}\\
r_1^+(0, \cdot), & n = 0
\end{cases}, \quad
r^\mathrm{in}_n =
\begin{cases}
\left[\begin{smallmatrix}
r_n^+(\ell_n, \cdot)\\
r_{i_2}^- (0, \cdot)\\
\vdots\\
r_{i_{k_n}}^- (0, \cdot)
\end{smallmatrix}\right], & n \in \mathcal{N}_M\\
r_n^+(\ell_n, \cdot), &n \in  \mathcal{N}_S \setminus \{0\}\\
r_1^-(0, \cdot), & n = 0.
\end{cases}
\end{equation}

\section{Quaternions and differential equations}

\label{ap:quaternion}



Consider the following quaternion $\mathbf{q}\in \mathbb{R}^4$ with real value $q_0 \in \mathbb{R}$ and vectorial value $q \in \mathbb{R}^3$:
\begin{align} \label{eq:notation_quaternion}
\mathbf{q} = \begin{bmatrix}
q_0 \\ q
\end{bmatrix}.
\end{align}
Its \emph{conjugate} $\mathbf{q}^*$ and the \emph{quaternion product} $\circ$ are defined by
\begin{align*}
\mathbf{q}^* = \begin{bmatrix}
q_0 \\ -q
\end{bmatrix}, \qquad \mathbf{q} \circ \mathbf{w}= \begin{bmatrix}
w_0q_0 -  \langle w \,, q \rangle \\
w_0 q + q_0 w + \widehat{q}\, w
\end{bmatrix} =\begin{bmatrix}
q_0 & - q^\intercal \\ q & q_0 \mathbf{I}_3 + \widehat{q}
\end{bmatrix} \mathbf{w},
\end{align*}
for all $ \mathbf{q}, \mathbf{w} \in \mathbb{R}^4$ where we use an analogous notation to \eqref{eq:notation_quaternion} for $\mathbf{w}$.
Note that $\mathbf{q} \circ \mathbf{q}^* = (|\mathbf{q}|^2, 0, 0, 0)^\intercal$.
As already explained in Section \ref{sec:1b_invert_transfo}, 
%
$\mathbf{R}\colon [0, \ell]\times [0, T] \rightarrow \mathrm{SO}(3)$ is parametrized by the quaternion-valued function $\mathbf{q}\colon [0, \ell]\times [0, T] \rightarrow \mathbb{R}^4$ if $|\mathbf{q}| \equiv 1$ and \eqref{eq:rot_param_quat} is fulfilled for all $(x,t) \in [0, \ell]\times [0, T]$.
Below, we make use the following identities:
\begin{align} \label{eq:cross_prod_identities}
\widehat{ ( \widehat{u} v ) } &= \widehat{u}\widehat{v} - \widehat{v}\widehat{u}, \qquad 
\widehat{u}\widehat{v} = vu^\intercal + \langle u \,, v \rangle \mathbf{I}_3,
\end{align}
which hold for all $u, v \in \mathbb{R}^3$.

\medskip

\noindent Lemma \ref{lem:quat_rot_ODE} allows us to rewrite a linear PDE system whose unknown $\mathbf{R}$ has values in $\mathrm{SO}(3)$, as another linear PDE system whose unknown state is the quaternion-valued map $\mathbf{q}$ which parametrizes $\mathbf{R}$.
Recall that the function $\mathcal{U}$ is defined by \eqref{eq:def_calU}. In fact, by definition of the quaternion product, $\partial_z \mathbf{q} = \mathcal{U}(f) \mathbf{q}$, in \eqref{eq:zODE_quat_U}, is just an equivalent way of writing the following equivalent equations
\begin{align} \label{eq:ODE_quat_circ}
\partial_z \mathbf{q} = \frac{1}{2} \mathbf{q} \circ \begin{bmatrix}
0 \\ f
\end{bmatrix} \qquad \text{and} \qquad \begin{bmatrix}
0 \\ f
\end{bmatrix} = \frac{2}{|\mathbf{q}|^2} \mathbf{q}^* \circ \partial_z q.
\end{align}
Note also that, in itself, \eqref{eq:zODE_quat_U} implies that $\partial_z (|\mathbf{q}|^2) \equiv 0$, since it yields $\partial_z (|\mathbf{q}|^2) = 2 \langle \mathbf{q} \,, \mathcal{U}(f) \mathbf{q} \rangle$ and straightforward computations yield that the right-hand side is null.
Then, the proof unfolds as follows.

\begin{proof}[Proof of Lemma \ref{lem:quat_rot_ODE}]
We will use the fact that a rotation matrix $\mathbf{R}$ parametrized by a quaternion $\mathbf{q}$ also writes as $\mathbf{R} = EG^\intercal$ where $E, G \colon [0, \ell]\times[0, +\infty) \rightarrow \mathbb{R}^{3\times 4}$ are defined by
\begin{align*}
E = \begin{bmatrix}
- q & q_0 \mathbf{I}_3 + \widehat{q}
\end{bmatrix} ,\qquad G = \begin{bmatrix}
- q & q_0 \mathbf{I}_3 - \widehat{q}
\end{bmatrix}.
\end{align*}
Assume that $|\mathbf{q}| \equiv 1$ and \eqref{eq:zODE_quat_U} hold, and let $\mathbf{R}$ be the rotation parametrized by $\mathbf{q}$. For any quaternion $\mathbf{w} \in \mathbb{R}^4$, by definition of the quaternion product and of $G$, one has
\begin{align} \label{eq:qstar_circ_qdot_circ}
\mathbf{q}^* \circ (\partial_z \mathbf{q}) \circ \mathbf{w}
&= \begin{bmatrix}
\mathbf{q}^\intercal \\ G
\end{bmatrix}\begin{bmatrix}
\partial_z \mathbf{q} & \partial_z G^\intercal
\end{bmatrix}\mathbf{w}
= \begin{bmatrix}
\mathbf{q}^\intercal (\partial_z \mathbf{q}) & \mathbf{q}^\intercal (\partial_z G)^\intercal \\
G (\partial_z \mathbf{q}) & G (\partial_z G)^\intercal
\end{bmatrix}\mathbf{w}.
\end{align}
Since \eqref{eq:ODE_quat_circ} and \eqref{eq:qstar_circ_qdot_circ} hold, one has for all $v \in \mathbb{R}^3$
\begin{align*}
\begin{bmatrix}
0\\f
\end{bmatrix} \circ \begin{bmatrix}
0 \\v 
\end{bmatrix} = 2\begin{bmatrix}
\mathbf{q}^\intercal (\partial_z \mathbf{q}) & \mathbf{q}^\intercal (\partial_z G)^\intercal \\
G (\partial_z \mathbf{q}) & G (\partial_z G)^\intercal
\end{bmatrix}  \begin{bmatrix}
0 \\ v
\end{bmatrix}.
\end{align*}
In particular, the last three equations of this system read $\widehat{f}v = 2 G (\partial_z G)^\intercal v$. Hence,
\begin{align*}
\mathbf{R} \widehat{f} = 2 E G^\intercal G (\partial_z G)^\intercal,
\end{align*}
where we used that $\mathbf{R}  =EG^\intercal$.
The definition of $E,G$, the second identity in \eqref{eq:cross_prod_identities} and the fact that $|\mathbf{q}| \equiv 1$, lead via straightforward computations to the identities $G^\intercal G = (\mathbf{I}_4 - \mathbf{q}\mathbf{q}^\intercal)$, $E \mathbf{q} = 0$ and $\partial_z \mathbf{R} = 2 E (\partial_z G)^\intercal$. Using this, we deduce that
\begin{align*}
\mathbf{R} \widehat{f} &= 2 \big(E (\partial_z G)^\intercal - E\mathbf{q}\mathbf{q}^\intercal (\partial_z G)^\intercal\big) \\
&= 2E (\partial_z G)^\intercal \\
&= \partial_z \mathbf{R},
\end{align*}
which concludes the first part of the proof.

Let us now prove the converse. For $\mathbf{R}$ fulfilling \eqref{eq:zODE_R}, let $\mathbf{q}$ be the quaternion of unit norm $|\mathbf{q}| \equiv 1$ that parametrizes $\mathbf{R}$. We know, from the computations done in the first part of the proof, that $\partial_z \mathbf{R} = \mathbf{R} \widehat{f}$ is equivalent to $\widehat{f} = 2 G (\partial_z G)^\intercal$. Using the definition of $G$, we first rewrite $G (\partial_zG)^\intercal$ as
\begin{align*}
G (\partial_zG)^\intercal
&= q(\partial_z q)^\intercal + q_0 (\partial_z q_0) \mathbf{I}_3 +q_0 \widehat{(\partial_z q)} - (\partial_zq_0) \widehat{q} - \widehat{q} \widehat{(\partial_zq)}.
\end{align*}
By \eqref{eq:cross_prod_identities}, one has the identity $-\widehat{ {\widehat{q}\, \partial_z q}} + \langle q \,, \partial_z q \rangle \mathbf{I}_3=-\widehat{q} \widehat{(\partial_zq)} + q(\partial_z q)^\intercal$. Then, we obtain
\begin{align*}
G (\partial_zG)^\intercal
&= -\widehat{\widehat{q}\, \partial_z q} + \langle \mathbf{q}\,, \partial_z \mathbf{q}\rangle \mathbf{I}_3 +q_0 \widehat{(\partial_z q)} - (\partial_zq_0) \widehat{q},
\end{align*}
which is equivalent to $\mathrm{vec}(G (\partial_zG)^\intercal) = -\widehat{q}(\partial_z q) + q_0 (\partial_z q) - (\partial_zq_0) q$, since $\langle \mathbf{q}\,, \partial_z \mathbf{q}\rangle \equiv \frac{1}{2} \partial_z(|\mathbf{q}|^2)$ is identically equal to zero. We have obtained
\begin{align} \label{eq:zODE_R_to_q}
\frac{1}{2} f = -\widehat{q}(\partial_z q) + q_0 (\partial_z q) - (\partial_zq_0) q.
\end{align}
However, by definition of the quaternion product, the quantity $\mathbf{q}^* \circ (\partial_z\mathbf{q})$ also reads
\begin{align*}
\mathbf{q}^*\circ (\partial_z\mathbf{q})
= \begin{bmatrix}
0 \\
-(\partial_z q_0)q + q_0(\partial_zq) - \widehat{q}(\partial_z q)
\end{bmatrix}.
\end{align*}
Using \eqref{eq:zODE_R_to_q} and the fact that $\mathbf{q}$ has constant unit norm, we obtain \eqref{eq:ODE_quat_circ}.
\end{proof}

Lemma \ref{lem:quat_overter_syst} yields the existence of a unique solution to a seemingly overdetermined system of first-order linear PDEs. Its proof unfolds as follows.


\begin{proof}[Proof of Lemma \ref{lem:quat_overter_syst}]
There exists a unique solution $w \in C^1([0, \ell]; \mathbb{R}^n)$ to 
\begin{align} \label{eq:ODE_w}
\left\{ \begin{aligned}
&\frac{\mathrm{d}}{\mathrm{d}x} w(x) = B(x, 0) w(x) &&\text{in } (0, \ell)\\
&w(0) = y_\mathrm{in}
\end{aligned}\right.
\end{align}
(see for example \cite[Sec. 2.1 and Th. 4.1.1]{vrabie2004}).
Furthermore, there exists a unique solution $y \in C^1([0, \ell] \times [0, T]; \mathbb{R}^n)$ to
\begin{align} \label{ODE_y_B}
\left\{\begin{aligned}
&\partial_t y(x,t) = A(x,t) y(x,t) &&\text{in }(0, \ell)\times (0, T)\\
&y(x, 0) = w(x) &&x \in (0, \ell).
\end{aligned}\right.
\end{align}
This follows, for instance, from a fixed point argument and the use of Gronwall's inequality, for the map $y \longmapsto w + \int_{0}^t (Ay)(\cdot, \tau) d \tau$ defined on $C^0([0, t]; C^1([0, \ell]; \mathbb{R}^n))$.
Since $w$ is the solution to \eqref{eq:ODE_w}, we know that $y$ fulfills
\begin{align*}
\partial_x y(x,0) = B(x,0) y(x, 0), \qquad \text{for all }x \in [0, \ell].
\end{align*}
Let us now show that $y$ also fulfills
\begin{align} \label{eq:exist_overdeter}
(\partial_t - A)(\partial_x y - By) = 0, \qquad \text{in }[0, \ell]\times [0, T].
\end{align}
We have $(\partial_t - A)(\partial_x y - By)
= (\partial_x \partial_t y - (\partial_t B)y - B (\partial_t y) - A \partial_x y + AB y)$.
Using that $y$ is solution to \eqref{ODE_y_B}, we may replace $\partial_t y$ by $Ay$ in the above equation, thus obtaining
\begin{align*}
(\partial_t - A)(\partial_x y - By)
&=  (\partial_x A) y + A (\partial_x y) - (\partial_t B)y - BA - A \partial_x y + AB y\\
&= (AB -BA + (\partial_x A)   - (\partial_t B))y,
\end{align*}
which, making use of the assumption $A B - B A + (\partial_x A) - (\partial_t B) = 0$, yields \eqref{eq:exist_overdeter}. Then, the function $u \colon [0, \ell]\times [0, T] \rightarrow \mathbb{R}^4$ defined by $u = \partial_x y - By$ is solution to \eqref{ODE_y_B} with initial condition $u(x, 0) = 0$.
Hence, $u \equiv 0$ in $[0, \ell]\times [0,T]$, implying that $y$ is solution to \eqref{eq:overdeter_y_AB}.
\end{proof}

\end{subappendices}

\chapter{Exponential stabilization}
\label{ch:stab}

\section{Main contributions and related works}


By means of \emph{quadratic Lyapunov functionals}, we investigate exponential stabilization problems, for a single beam and networks of beams without loops.
As explained in Chapter \ref{ch:wellposedness}, at first we consider the (network) system for beams described by the IGEB model. Indeed, our approach relies on the fact that the latter is a one-dimensional first-order hyperbolic system.
For such systems, in the case of a single system, Bastin and Coron \cite{BC2016, bastin2017exponential} have systematized the search of quadratic Lyapunov functionals, giving sufficient criteria for their existence on the system written in diagonal form. 

More precisely, it is sufficient to find a certain matrix-valued function -- which will be part of the definition of the Lyapunov functional -- fulfilling matrix inequalities that involve both the coefficients appearing in the equations and the boundary conditions (and thus, the feedback controls).
In view of this, after having chosen appropriate \emph{velocity feedback boundary controls} so as to make the energy of the beam decreasing, we studied the model's coefficients, the form of the beam energy, in order to prove the existence of such a functional.
Up to the best of our knowledge, this method of \cite{BC2016, bastin2017exponential}, has been applied to chemotaxis models or the Saint-Venant equations for instance, but not in the context of precise mechanical models for beams such as the IGEB model.

Then, we turn to \emph{tree-shaped} networks of beams for which one may extend the definition of the previous Lyapunov functional. Using a similar approach to \cite{BC2016, bastin2017exponential}, we are also faced with the task of finding a matrix-valued function fulfilling a series of matrix inequalities. Among these inequalities, in comparison with the single beam case, differences appear in those that concern the nodal conditions, due to the transmission conditions at multiple nodes.

We present the stabilization study both from the point of view of \eqref{eq:IGEBfb} or \eqref{eq:nIGEBfb} -- that is, the ``\emph{physical system}'' -- and that of the \emph{diagonalized system} \eqref{eq:GEBfb} or \eqref{eq:nIGEBfbR}, respectively. 
One may equivalently look to prove exponential stability from both perspectives, and each has its own advantages and drawbacks.
%
The latter is the one adopted in \ref{A:SICON} for a single beam with constant and diagonal mass and flexibility matrices, in which case the change of variable to the diagonal system is more explicit (see Remark \ref{rem:diag} \ref{remItem:diag_moreExplicit}). 
For a general linear-elastic material law, the former point of view conveniently spares us some extensive computations, this is why it is rather used at first in \ref{A:MCRF}.
However, from the diagonal point of view, the conditions on the boundary terms are more explicit (see Section \ref{sec:pov_diag_syst}).

More precisely, in the subsequent sections, we present the following.
\begin{itemize}

\item We start with the case of a single beam, proving in Theorem \ref{thm:1b_stabilization} the local exponential stability of the zero steady state for the $H^1$ and $H^2$ norms, when the beam is clamped at one end and the other end is under feedback control. 
This theorem is a combination of \ref{A:SICON} where the proof was initially developed for a beam with \emph{constant diagonal} mass and flexibility matrices $\mathbf{M}, \mathbf{C}$ and where the form of the velocity feedback was constrained to the case $K = \mathrm{diag}(\mu_1 \mathbf{I}_3, \mu_2 \mathbf{I}_3)$ for some feedback parameters $\mu_1, \mu_2>2$ to be chosen, and of \ref{A:MCRF} where (in particular) these results were extended to general linear constitutive laws and velocity feedback controls. 

\item In Theorem \ref{thm:stabilization}, for a \emph{star-shaped} network, we show that if velocity feedback controls are applied at \emph{all} external nodes, then the zero steady state is locally exponentially stable for the $H^1$ and $H^2$ norms.
This theorem, and the considerations in the following item, are based on \ref{A:MCRF}.

\item A natural ensuing question is the following. Does the stabilization result for the star-shaped network also holds for after removal of one of the controls (for instance, clamping one of the simple nodes)?
For the type of Lyapunov functional found here, the difficulty in removing one control lies in the estimation of the boundary terms (at the simple and multiple nodes) that appear when looking to obtain the exponential decay of the Lyapunov functional.
Though no positive conclusion is reached, Lemma \ref{lem:class_barQi_Riem} and Proposition \ref{prop:rel_Lyap_Q_barQ} render more explicit the conditions for achieving exponential stability (notably at multiple nodes), and thus also where the difficulty in removing one control comes from in our proof.

\item Then, as for the well-posedness study in Section \ref{sec:invert_transfo}, we make use of the transformation $\mathcal{T}$. For the single beam under feedback control, in Corollary \ref{coro:fromIGEBtoGEB}, we deduce the existence of a unique global in time solution to the corresponding GEB model, and properties of this solution (in terms of velocities and strains) as time goes to infinity. In fact, though in \ref{A:MCRF} we did not use the transformation to also deduce the corresponding result for the star-shaped network under feedback control, let us mention that such a corollary may also be derived in a similar manner -- by means of a result analogous to Theorem \ref{thm:solGEB} for a network with feedback controls applied at the nodes.
\end{itemize}

\subsection*{Stabilization of beams, beam networks and first-order hyperbolic systems}
%

Stabilization of beam equations by means of feedback boundary controls goes back to \cite{quinnrussel78} for the string, \cite{kimrenardy87} for the Timoshenko beam; see also \cite{do18, hegarty12, morgul91, xu2005} and the references therein for other linear and nonlinear beam models. 
As mentionned above, we focus on the Lyapunov approach to prove stability, and adopt at first the perspective of the IGEB model.

For one-dimensional first-order hyperbolic systems, such as \eqref{eq:IGEBfb}, several results of stabilization under boundary control are shown by means of quadratic Lyapunov functionals in \cite{BC2016} and the references therein. There, when the system does not have any lower order term such as $\overline{B}y$ and $\overline{g}(y)$ here (one then speaks of systems of \emph{conservation laws}), the exponential stability result may rely on the dissipativity of the boundary conditions alone.
However, when lower order terms are present (systems of \emph{balance laws}) the equations must also be taken into consideration. Some systems of nonlinear balance laws with a uniform steady state may be seen as systems of nonlinear conservation laws perturbed by the lower order terms: if the perturbation is small enough then the $C^1$-\,exponential stability is preserved, see \cite[Th. 6.1]{BC2016}.
See also \cite{gugat2018} for two by two quasilinear systems with small lower order terms. 
System \eqref{eq:IGEBfb} may have dissipative boundary conditions due to the presence of the feedback control, however as we have seen in Section \ref{sec:pres_IGEB} the perturbation ($\overline{B}y + \overline{g}(y)$ here) is not small in general because of the term $\overline{B}y$. Indeed, the linearized system is no homogeneous.
In that case, for general linear, semilinear and quasilinear systems, assumptions on both the boundary conditions and the system's coefficients are required in \cite[Pr. 5.1]{BC2016}, \cite[Th. 10.2]{bastin2017exponential}, \cite{hayat2018exponential} and \cite[Th. 6.10]{BC2016} for $L^2, H^1$, $C^1$ and $H^2$ exponential stability respectively.

\medskip

\noindent Numerous works have been carried out on the stabilization of tree-shaped networks of d'Alembert wave equations \cite{DagerZuazuaBook}, by means of velocity feedback controls applied at some nodes. 
In \cite{ValeinZuazua2009, Zuazua2012}, the control is located at a single simple node and stability properties (e.g. polynomial) are proved by making use of suitable observability inequalities. 
In \cite{NicaiseValein2007} the exponential stabilization is obtained by applying velocity feedback controls with delay at the multiple nodes.
In \cite{AlabauPerrollazRosier2015}, the authors apply transparent boundary conditions (see Remark \ref{rem:wellp_fb} \ref{remItem:transparent}) at all simple nodes in addition to velocity feedback controls at the multiple nodes, in order to obtain finite time stabilization. For the exponential stabilization of star-shaped networks using spectral methods, we refer to \cite{GuoXu2011} where the controls are applied at the multiple nodes and all simple nodes but one, and \cite{ZhangXu2013} where the controls are applied at all simple nodes but one. In \cite{GugatSigalotti2019}, the authors showed that, for star-shaped networks, finite time stability is achieved by applying velocity feedback controls at all simple nodes, and that exponential stability is still realized if one of the controls is removed from time to time.
Applying the controls at all simple nodes but one, \cite{LLS} also studies the exponential stabilization of tree-shaped networks of strings, as well as that of beams.

The stabilization of beam networks has also been considered by \cite{HanXu2011}, who applied time-delay controls at all the simple nodes of a star-shaped network of Timoshenko beams, to obtain exponential stability via spectral methods.
In \cite{ZhangXuMastorakis2009}, by means of semigroup theory and spectral analysis, the exponential stability of a tree-shaped network of Euler-Bernoulli beams is proved, when all simple nodes are clamped while velocity feedback controls are applied at the interior nodes. 
Also using spectral methods to study exponential stabilization, \cite{XuHanYung2007} considered serially connected Timoshenko beams, applying velocity feedback controls at all nodes except one simple node, while \cite{HanXu2010} considered a specific star-shaped network of Timoshenko beams, where velocity feedback controls are applied all simple nodes but one.

Stabilization problems for networks of first order hyperbolic systems have also been extensively studied, in particular for the Saint-Venant equations -- e.g. \cite{AlabauPerrollazRosier2015}, as well as \cite{coron2003} and \cite{LeugeringSchmidt2002} which both make use of the Li-Greenberg Theorem \cite[Chap. 5, Th. 1.3]{Li_blue_book} to obtain the exponential decay result.
Since the tree-shaped network system may be rewritten as a single hyperbolic system -- as done for example in \cite{Coron2007} where exponential stabilization is then proved by means of a Lyapunov functional -- the aforementioned literature on such systems \cite{BC2016,bastin2017exponential, gugat2018, hayat2018exponential, HertyYu2018} is also of interest here.

\section{Stabilization and energy of the beam}
\label{sec:stab_energy}

Below, we use the notation $\mathbf{H}_x^k$ which reduces to $H_x^k$ when a single beam is considered (see Section \ref{sec:notation}), as well as $y=(y_i)_{i\in \mathcal{I}}$ and $y^0=(y_i^0)_{i\in \mathcal{I}}$ which reduce to simply $y$ and $y^0$ for a single beam.
Both for a single beam and for networks, our interest is with the following notion of stabilization.

\begin{definition}[Local exponential stability]
\label{def:stability}
Let $k \in \{1, 2\}$ and let $(\mathrm{S})$ represent either \eqref{eq:IGEBfb} or \eqref{eq:nIGEBfb}. The \emph{steady state $y\equiv 0$ of $(\mathrm{S})$ is locally $\mathbf{H}^k_x$ exponentially stable} if there exist $\varepsilon>0$, $\beta>0$ and $\eta \geq 1$ such that the following holds. For any $y^0 \in \mathbf{H}^k_x$ such that $\|y^0\|_{\mathbf{H}^k_x}\leq \varepsilon$ and the $(k-1)$-order compatibility conditions hold, there exists a unique global in time solution $y \in C^0([0, +\infty); \mathbf{H}^k_x)$ to $(\mathrm{S})$, and
\begin{linenomath}
\begin{equation*}
\|y(\cdot, t)\|_{\mathbf{H}^k_x} \leq \eta  e^{- \beta t } \|y^0\|_{\mathbf{H}^k_x}, \qquad \text{for all } \, t \in [0, +\infty).
\end{equation*}
\end{linenomath}
\end{definition}

This notion is indeed \emph{local} since the initial datum has to be small enough for the resulting solution to have such properties as time goes to infinity. As suggested at the beginning of this chapter, our choice of feedback control has to do with the \emph{energy of the beam}.

\medskip

\noindent \textbf{In the physical system.} The energy of a freely vibrating beam, is by definition the sum of the \emph{kinetic} and \emph{elastic} energies, and -- for the kind of beam considered here -- it takes the form 
\begin{equation} \label{eq:1b_def_calEP}
\mathcal{E}^\mathcal{P}(t) = \int_0^{\ell} \langle y(x,t) \,, Q^\mathcal{P}(x) y(x,t) \rangle dx,
\end{equation}
for $y$ solution to \eqref{eq:IGEBfb}, and where $Q^\mathcal{P} = \mathrm{diag}(\mathbf{M}, \mathbf{C})$ was in fact already presented in \eqref{eq:def_boldE_calP}. 
The velocity feedback controls have been introduced in System \eqref{eq:IGEBfb} in such a way that the energy $\mathcal{E}^\mathcal{P}$ is dissipated. Indeed, one can check that $\frac{\mathrm{d}}{\mathrm{d}t}\mathcal{E}^\mathcal{P}(t) \leq 0$ for any $K \in \mathbb{S}_+^6$.

Here, the notation $\mathcal{P}$ stresses that we refer to System \eqref{eq:IGEBfb}, whose unknown state is the ``\emph{physical variable}'' $y$, as opposed to the ``\emph{diagonal variable}'' $r$ for the system written in Riemann invariants.
For networks similar considerations hold, one can check that for any positive semi-definite matrices $K_n$ ($n \in \mathcal{N}$) the energy $\mathcal{E}_\mathrm{net}^\mathcal{P}$ of the beam network, defined by
\begin{equation*} 
\mathcal{E}_\mathrm{net}^\mathcal{P} = \sum_{i \in \mathcal{I}} \mathcal{E}_i^\mathcal{P}, \quad \text{with} \quad \mathcal{E}_i^\mathcal{P} = \int_0^{\ell_i} \langle y_i \,, Q_i^\mathcal{P} y_i \rangle dx
\end{equation*}
for $y = (y_i)_{i \in \mathcal{I}}$ solution to \eqref{eq:nIGEBfb} and $Q_i^\mathcal{P} = \mathrm{diag}(\mathbf{M}_i, \mathbf{C}_i)$, satisfies $\frac{\mathrm{d}}{\mathrm{d}t}\mathcal{E}_\mathrm{net}^\mathcal{P}(t) \leq 0$.

\medskip

\noindent \textbf{In the diagonal system.} 
The physical and diagonal systems are related by the change of variable $r = Ly$ (with $L$ defined by \eqref{eq:def_bfD_L}), or \eqref{eq:change_var_Li} for the network. Hence, the energy $\mathcal{E}^\mathcal{D}$ of a beam described by the diagonalized system takes the form  
\begin{equation} \label{eq:1b_def_calED}
\mathcal{E}^\mathcal{D}(t) = \int_0^{\ell} \langle
r(x,t) \,, Q^\mathcal{D}(x) r(x,t) \rangle dx,
\end{equation}
for $r = (r_i)_{i \in \mathcal{I}}$ solution to \eqref{eq:IGEBfbR}, and where $Q^\mathcal{D} = (L^{-1})^\intercal Q^\mathcal{P} L^{-1}$; one may compute that $Q^\mathcal{D} = \frac{1}{2}\mathrm{diag}\left(D^{-2}, D^{-2} \right)$.
Just as $\mathcal{P}$ refers to the physical system, here the subscript $\mathcal{D}$ refers to the diagonal system. For networks, it will take the form
\begin{align*} 
\mathcal{E}_\mathrm{net}^\mathcal{D} = \sum_{i \in \mathcal{I}} \mathcal{E}_i^\mathcal{D}, \quad \text{with} \quad \mathcal{E}_i^\mathcal{D} = \int_0^{\ell} \langle
r_i \,, Q_i^\mathcal{D} r_i \rangle dx,
\end{align*}
for $r_i$ solution to \eqref{eq:nIGEBfbR}, where $Q_i^\mathcal{D} = \frac{1}{2}\mathrm{diag}\left(D_i^{-2}, D_i^{-2} \right)$. Here also, the energy is nonincreasing if the matrices $K$ or $K_n$ are positive semi-definite.

\section{For a single beam}
\label{sec:stab_1b}

As in the previous chapter, in order to divide the difficulty in two parts, we first present the result for a single beam, Theorem \ref{thm:1b_stabilization} below, and afterwards for the network (see Section \ref{sec:stab_net}). 

\begin{theorem}[Th. 1.5 \ref{A:SICON}, Th. 2.4 \ref{A:MCRF}] \label{thm:1b_stabilization}
Let $k \in \{1, 2\}$, suppose that the beam parameters $(\mathbf{M}, \mathbf{C})$ satisfy Assumption \ref{as:mass_flex} with $m=k+1$, and that $\mathbf{E} \in C^k([0, \ell]; \mathbb{R}^{6 \times 6})$. If $K \in \mathbb{S}_{++}^6$, then the steady state $y \equiv 0$ of \eqref{eq:IGEBfb} is locally $H^k_x$ exponentially stable.
\end{theorem}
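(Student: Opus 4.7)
The plan is to combine the local-in-time existence result of Theorem \ref{th:local_existence} with a quadratic Lyapunov functional $\mathcal{V}$ on $\mathbf{H}_x^k$-type spaces that is equivalent to $\|y(\cdot,t)\|_{H_x^k}^2$ and satisfies $\frac{\mathrm{d}}{\mathrm{d}t}\mathcal{V}(t)\le -\beta\mathcal{V}(t)$ as long as $\|y(\cdot,t)\|_{H_x^k}$ stays below some small threshold $\varepsilon_0$. A standard bootstrap then forces the solution to remain in the small ball and to be defined for all $t\ge 0$, while the Lyapunov decay transfers to $\|y(\cdot,t)\|_{H_x^k}^2 \leq \eta e^{-\beta t}\|y^0\|_{H_x^k}^2$, which is exactly local $H_x^k$ exponential stability in the sense of Definition \ref{def:stability}.

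I would work primarily in Riemann invariants, i.e. on system \eqref{eq:IGEBfbR}, where the boundary conditions take the clean form $r^{\mathrm{out}}_0 = -r^{\mathrm{in}}_0$ at $x=0$ and $(\widetilde{\sigma}+K)\widetilde{\gamma}\,r^{\mathrm{out}}_\ell = (\widetilde{\sigma}-K)\widetilde{\gamma}\,r^{\mathrm{in}}_\ell$ at $x=\ell$, with $K\in\mathbb{S}_{++}^{6}$. Following the framework of Bastin--Coron, I would look for a Lyapunov candidate of the form
\begin{equation*}
\mathcal{V}_0(t) = \int_0^\ell \langle r(x,t),\, P(x)\, r(x,t)\rangle\, dx,
\end{equation*}
with $P \in C^1([0,\ell]; \mathbb{S}_{++}^{12})$ block-diagonal so as to respect the split between negative and positive characteristic speeds, and a suitable weight of the form $P(x)=e^{-\mu x}Q^{\mathcal D}(x)$ where $Q^{\mathcal D}=\frac{1}{2}\mathrm{diag}(D^{-2},D^{-2})$ is the matrix defining the beam energy \eqref{eq:1b_def_calED} in diagonal coordinates. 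The differential inequality $\dot{\mathcal{V}}_0\le-2\beta\mathcal{V}_0$ at the linear level reduces to (i) an interior matrix inequality stating that $-\frac{\mathrm{d}}{\mathrm{d}x}(P\mathbf{D})-(B^\intercal P + PB)-2\beta P$ is positive semi-definite, and (ii) a boundary quadratic-form inequality at each endpoint, obtained by substituting the boundary conditions into the boundary term $-[\langle r,P\mathbf{D}r\rangle]_0^\ell$. The central algebraic observation that makes this tractable is that in the physical variables the weight $Q^{\mathcal P}=\mathrm{diag}(\mathbf{M},\mathbf{C})$ makes the lower-order coefficient skew: $Q^{\mathcal P}\overline{B}+\overline{B}^\intercal Q^{\mathcal P}=0$ by inspection of \eqref{eq:def_A_barB}, and moreover Lemma \ref{lem:dissip_barg} gives $\langle y,Q^{\mathcal P}\overline{g}(\cdot,y)\rangle=0$. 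Transported to the diagonal picture this shows that, at $\mu=0$, the interior contribution of $B$ to $\dot{\mathcal{V}}_0$ collapses to a harmless transport derivative; the exponential weight $e^{-\mu x}$ then produces a strictly positive definite term $\mu\,|\mathbf{D}|P$ in the interior inequality, which for $\mu$ large enough dominates the remaining first-order coefficient terms. At $x=0$ the clamped condition is isometric (the relevant quadratic form vanishes in absolute value), so the exponential weight provides a strict margin; at $x=\ell$ the feedback gain $K\in\mathbb{S}_{++}^6$ renders the boundary quadratic form strictly dissipative for $\mu$ not too large. The feasibility of choosing $\mu$ in this (nonempty) range is the decisive algebraic step.

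To upgrade from $L_x^2$-stability to $H_x^k$ with $k\in\{1,2\}$, I would set $\mathcal{V}=\sum_{j=0}^{k}\mathcal{V}_j$ where $\mathcal{V}_j(t)=\int_0^\ell\langle\partial_t^j r,\,P\,\partial_t^j r\rangle\,dx$, noting that $\partial_t^j r$ satisfies a system with the same principal part $\partial_t+\mathbf{D}\partial_x+B$ but whose right-hand side contains additional semilinear terms bounded by a polynomial in $\|r\|_{C_x^{j-1}}$ times $|\partial_t^j r|$. Thanks to $H^1\hookrightarrow L^\infty$ (resp.\ $H^2\hookrightarrow C^1$) in one space dimension, these extra terms are absorbed into the linear decay provided $\|y\|_{H_x^k}$ stays small, yielding $\dot{\mathcal{V}}\le-\beta'\mathcal{V}$ with $\beta'<\beta$. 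Using the governing equation to express $\partial_x r$ as a combination of $\partial_t r$, $r$ and a quadratic term in $r$, one checks that $\mathcal{V}$ is equivalent to $\|r(\cdot,t)\|_{H_x^k}^2$ and hence, via \eqref{eq:change_var_Li}, to $\|y(\cdot,t)\|_{H_x^k}^2$; the first-order compatibility conditions in Definition \ref{def:comp_cond} guarantee that $\partial_t^j y(\cdot,0)$ is well defined and has the correct trace for the Lyapunov computation to be meaningful.

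The main obstacle I expect is precisely the \emph{non-homogeneity of the linearized system}: the lower-order term $\overline{B}y$ is not small (as emphasised in the discussion following \eqref{eq:def_A_barB}), so one cannot treat \eqref{eq:IGEBfb} as a perturbation of a system of conservation laws, and a generic positive-definite weight $P(x)$ will not satisfy both the interior and the boundary inequalities simultaneously. Overcoming it requires to tie the weight $P$ directly to the \emph{physical energy} matrix $Q^{\mathcal P}$ of the beam, which is the whole point of the construction; once this is done, the extra exponential factor $e^{-\mu x}$ is the last remaining degree of freedom, and verifying that a single $\mu>0$ satisfies all three inequalities (interior and at both endpoints) is the core technical calculation of the proof. \hfill $\meddiamond$
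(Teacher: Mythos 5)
Your overall architecture (a quadratic Lyapunov functional built from the beam's energy matrix, summed over time derivatives $\partial_t^j$ for the $H^k$ norms, combined with the local existence result and a bootstrap) is exactly the paper's strategy, and your identification of the two key structural facts --- $Q^{\mathcal P}A$ constant and $Q^{\mathcal P}\overline{B}$ skew-symmetric, so that the pure energy weight gives a \emph{marginal} (identically zero) interior matrix --- is correct. But the step you yourself single out as ``the decisive algebraic step'' fails: the weight $P(x)=e^{-\mu x}Q^{\mathcal D}(x)$, with a \emph{single} scalar factor $e^{-\mu x}$ multiplying both characteristic blocks, cannot satisfy the interior inequality of Lemma \ref{lem:class_Q} for any $\mu$. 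Since $\mathbf{D}=\mathrm{diag}(-D,D)$ (see \eqref{eq:def_bfD_L}) and $Q^{\mathcal D}=\tfrac12\mathrm{diag}(D^{-2},D^{-2})$, one has $P\mathbf{D}=\tfrac12 e^{-\mu x}\mathrm{diag}(-D^{-1},D^{-1})$, so the term you claim is ``strictly positive definite, $\mu\,|\mathbf{D}|P$'' is in fact
\begin{equation*}
-\frac{\mathrm{d}}{\mathrm{d}x}\bigl(e^{-\mu x}\bigr)\,\tfrac12\,\mathrm{diag}(-D^{-1},D^{-1})
=\tfrac{\mu}{2}\,e^{-\mu x}\,\mathrm{diag}\bigl(D^{-1},-D^{-1}\bigr),
\end{equation*}
which is sign-indefinite: positive definite on the $r^-$ block and negative definite on the $r^+$ block for $\mu>0$, and the reverse for $\mu<0$. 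No choice of $\mu$ makes $S=\frac{\mathrm{d}}{\mathrm{d}x}(P\mathbf{D})-PB-B^\intercal P$ negative definite on both families simultaneously, so the core inequality is violated at every $x$.

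The repair is precisely what the paper does: the weights on the two characteristic families must have \emph{opposite monotonicity} in $x$. In the diagonal picture the paper's functional is $Q=\mathrm{diag}\bigl((\rho+w)\mathbf{I}_6,(\rho-w)\mathbf{I}_6\bigr)Q^{\mathcal D}$ (see \eqref{eq:corresponding_Q}) with $w$ increasing, so that differentiating $Q\mathbf{D}$ produces $-\tfrac12\frac{\mathrm{d}w}{\mathrm{d}x}\,\mathrm{diag}(D^{-1},D^{-1})$, negative definite on \emph{both} blocks; equivalently, in the physical variables one perturbs $\rho Q^{\mathcal P}$ by the \emph{off-diagonal} block $w\,\mathrm{antidiag}(\mathbf{W},\mathbf{W}^\intercal)$ with $\mathbf{W}$ as in \eqref{eq:boldW}. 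The remaining constraints ($w(0)\ge 0$ so the isometric clamped reflection at $x=0$ is only marginally, not strictly, dissipated; $|w(\ell)|$ small relative to $\rho$ and to the spectrum of $K$ so that the feedback still dominates at $x=\ell$; and $\frac{\mathrm{d}w}{\mathrm{d}x}>C_\Xi C_\Lambda^{-1}|w|$ in the interior) are then met by the explicit exponential-type or polynomial weights of Lemma \ref{lem:exist_g}. Note also that with your weight the boundary term at the clamped end is exactly zero, not a ``strict margin''; this is harmless (semi-definiteness suffices there) but is a further sign that the single-exponential ansatz is not doing the work you attribute to it.
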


Let $k \in \{1, 2\}$.
To prove Theorem \ref{thm:1b_stabilization}, in the spirit of Bastin and Coron \cite{BC2016, bastin2017exponential}, we look for a so-called \emph{quadratic $H^k_x$ Lyapunov functional}. Namely, a functional $\overline{\mathcal{L}}\colon [0, T] \rightarrow [0, +\infty)$ of the form
\begin{equation} \label{eq:1b_def_calLP}
\overline{\mathcal{L}} = \sum_{\alpha=0}^k \overline{\mathcal{L}}_{\alpha}, \quad \text{with} \ \ \overline{\mathcal{L}}_{\alpha} = \int_0^{\ell} \left\langle \partial_t^\alpha y \,, \overline{Q} \partial_t^\alpha y \right\rangle dx,
\end{equation}
where $\overline{Q} \in C^1([0, \ell_i]; \mathbb{S}^{12})$ and $y \in C^0([0, T]; H_x^k)$ is solution to \eqref{eq:IGEBfb}, such that \emph{when $y$ is in some ball of $C_t^0 C_x^{k-1}$, then $\overline{\mathcal{L}}(t)$ is equivalent to the squared $H_x^k$ norm of $y(\cdot, t)$ and has an exponential decay with respect to time}. In other words, the Lyapunov functional should fulfill the assumptions of the following proposition due to \cite{BC2016, bastin2017exponential}.

\begin{proposition} \label{prop:1b_existence_Lyap}
Let $k \in \{1, 2\}$. Assume that for any fixed $T>0$, there exists $\delta>0$, $\eta \geq 1$ and $\beta >0$ such that for any $y \in C^0([0, T]; {H}^k_x)$ solution to \eqref{eq:IGEBfb} and fulfilling $\|y(\cdot, t)\|_{{C}^{k-1}_{x}} \leq \delta$ for all $t \in [0, T]$,
\begin{subequations}
\begin{align}
\eta^{-1} \|y(\cdot, t)\|_{{H}^k_x}^2 &\leq \overline{\mathcal{L}}(t) \leq \eta \|y(\cdot, t)\|_{{H}^k_x}^2,  &&\text{for all } t \in [0, T] \label{eq:lyap_equiv}\\
\overline{\mathcal{L}}(t) &\leq e^{-2 \beta t} \overline{\mathcal{L}}(0), &&\text{for all }t \in [0, T] \label{eq:lyap_decay}
\end{align}
\end{subequations}
both hold. Then, the steady state $y\equiv 0$ of \eqref{eq:IGEBfb} is locally ${H}^k_x$ exponentially stable.
\end{proposition}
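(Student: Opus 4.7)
The plan is to combine the local well-posedness result of Theorem \ref{th:local_existence} with a standard bootstrap/continuation argument based on the Sobolev embedding $H^1(0,\ell) \hookrightarrow C^0([0,\ell])$. Let $\delta_0>0$ be the constant from Theorem \ref{th:local_existence} and $C_\mathrm{Sob}>0$ denote the embedding constant of $H^k_x \hookrightarrow C^{k-1}_x$. I will first pick a time horizon $T_1>0$ conveniently large (for instance so that $\eta^2 e^{-2\beta T_1} \leq \tfrac{1}{2}$) and apply the hypotheses of the proposition with this fixed $T=T_1$ to obtain the constants $\delta, \eta, \beta$. Then I will choose the threshold $\varepsilon>0$ sufficiently small so that, simultaneously, $\varepsilon \leq \delta_0$ and $C_\mathrm{Sob}\, \eta\, \varepsilon \leq \tfrac{1}{2}\delta$.

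Given initial data $y^0 \in H^k_x$ satisfying the $(k-1)$-order compatibility conditions and $\|y^0\|_{H^k_x}\leq \varepsilon$, Theorem \ref{th:local_existence} supplies a unique maximal solution $y \in C^0([0,T^*); H^k_x)$. The first key step is to introduce
\[
T_0 := \sup\Big\{ \tau \in [0, \min(T^*, T_1)] \colon \|y(\cdot,t)\|_{C^{k-1}_x} \leq \delta \ \text{for all } t \in [0, \tau] \Big\},
\]
which is positive by continuity. On the interval $[0,T_0]$, the hypotheses of the proposition apply, yielding via \eqref{eq:lyap_equiv}--\eqref{eq:lyap_decay} the estimate
\[
\|y(\cdot,t)\|_{H^k_x}^2 \leq \eta\, \overline{\mathcal{L}}(t) \leq \eta\, e^{-2\beta t} \overline{\mathcal{L}}(0) \leq \eta^2 e^{-2\beta t} \|y^0\|_{H^k_x}^2.
\]
Combining with Sobolev embedding and the choice of $\varepsilon$, one deduces $\|y(\cdot,t)\|_{C^{k-1}_x} \leq C_\mathrm{Sob}\, \eta\, \varepsilon \leq \tfrac{1}{2}\delta$ on $[0,T_0]$, strictly below the threshold $\delta$.

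The second step is a continuation argument. The strict inequality above, together with continuity in time of $\|y(\cdot,t)\|_{C^{k-1}_x}$, forbids $T_0 < \min(T^*, T_1)$; hence $T_0 = \min(T^*, T_1)$. Moreover the estimate $\|y(\cdot,t)\|_{H^k_x}\leq \eta \varepsilon \leq \delta_0$ remains valid on the entire lifespan up to $T_0$, so by the last part of Theorem \ref{th:local_existence} one must have $T^* = +\infty$, and thus $T_0 = T_1$. Iterating this argument on successive intervals $[nT_1,(n+1)T_1]$ (using $y(\cdot,nT_1)$ as the new initial datum, whose $H^k_x$ norm is now controlled by $\eta e^{-\beta n T_1}\varepsilon$) propagates the bound globally and delivers the exponential decay
\[
\|y(\cdot,t)\|_{H^k_x} \leq \eta\, e^{-\beta t}\,\|y^0\|_{H^k_x}, \qquad t \in [0,+\infty).
\]

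The main subtlety—and what must be verified carefully rather than the algebra—is that the smallness of the initial datum, quantified in the $H^k_x$-norm, yields through the Sobolev embedding a $C^{k-1}_x$-bound \emph{strictly} below the threshold $\delta$ throughout the evolution, so that the Lyapunov hypotheses remain applicable uniformly in time. For $k=1$ this embedding is in dimension one a direct consequence of the fundamental theorem of calculus and Cauchy--Schwarz; for $k=2$ it applies analogously. The scheme is the classical one used in \cite{BC2016, bastin2017exponential}, adapted here to the fact that the norm in which smallness is propagated ($H^k_x$) is stronger than the norm required by the Lyapunov hypothesis ($C^{k-1}_x$), which is precisely why the bootstrap closes.
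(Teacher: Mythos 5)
Your proposal is the standard Bastin--Coron bootstrap, which is exactly what the paper relies on: Proposition \ref{prop:1b_existence_Lyap} is not proved in the exposition but attributed to \cite{BC2016, bastin2017exponential}, and the argument there is precisely local well-posedness (Theorem \ref{th:local_existence}) plus the Sobolev embedding $H^k_x \hookrightarrow C^{k-1}_x$, a continuation argument keeping the $C^{k-1}_x$ norm strictly below the threshold $\delta$, and iteration over time intervals. So the structure is right and matches the intended proof.

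One point in your write-up is logically circular as stated: you choose $T_1$ so that $\eta^2 e^{-2\beta T_1} \leq \tfrac12$ and \emph{then} apply the hypothesis with $T = T_1$ to obtain $\eta$ and $\beta$ — but these constants depend on $T_1$, so you cannot use them to select $T_1$. The condition is in fact unnecessary: fix any $T_1$ (say $T_1 = 1$), get $\delta,\eta,\beta$ for that $T_1$, and observe that the decay estimate \eqref{eq:lyap_decay} chains across successive intervals \emph{at the level of $\overline{\mathcal{L}}$ itself} (since $\overline{\mathcal{L}}$ is defined pointwise in time from the solution, one gets $\overline{\mathcal{L}}(t) \leq e^{-2\beta(t-nT_1)}\overline{\mathcal{L}}(nT_1)$ on each $[nT_1,(n+1)T_1]$ and hence $\overline{\mathcal{L}}(t) \leq e^{-2\beta t}\overline{\mathcal{L}}(0)$ globally), with the factor $\eta$ paid only once at the two ends via \eqref{eq:lyap_equiv}. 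With that modification your bootstrap closes exactly as you describe; you should also tighten the requirement $\varepsilon \leq \delta_0$ to $\eta\varepsilon \leq \delta_0$, since it is the bound $\|y(\cdot,t)\|_{H^k_x} \leq \eta\varepsilon$ that must stay below $\delta_0$ to invoke the global-continuation clause of Theorem \ref{th:local_existence}.
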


The same proposition holds if instead of taking the point of view of the physical system \eqref{eq:IGEBfb}, we rather take that of the diagonal system \eqref{eq:IGEBfbR}. One just has to replace, in Proposition \ref{prop:1b_existence_Lyap}, the system \eqref{eq:IGEBfb} by \eqref{eq:IGEBfbR}, the state $y$ by $r$, and the functional $\overline{\mathcal{L}}$ by
\begin{align} \label{eq:1d_def_calLD}
{\mathcal{L}} = \sum_{\alpha=0}^k {\mathcal{L}}_{\alpha}, \quad \text{with} \ \ {\mathcal{L}}_{\alpha} = \int_0^{\ell} \left\langle \partial_t^\alpha r \,, {Q} \partial_t^\alpha r \right\rangle dx,
\end{align}
where $r$ is solution to \eqref{eq:IGEBfbR}. As for the energy, if $Q$ and $\overline{Q}$ are such that
\begin{equation} \label{eq:rel_Q_barQ}
Q = (L^{-1})^\intercal \overline{Q} L^{-1},
\end{equation}
then $\mathcal{L}$ and $\overline{\mathcal{L}}$ are equivalent expressions.
One can make more explicit the task of finding the Lyapunov functional via the following lemma.

\begin{lemma}[Lem. 5.1 \ref{A:MCRF}] \label{lem:class_barQ}
If there exists $\overline{Q} \in C^1([0, \ell]; \mathbb{R}^{12 \times 12})$ fulfilling
\begin{enumerate}[label=(\roman*)]
\item \label{pty:barQ_symm_posDef}
$\overline{Q}(x) \in \mathbb{S}_{++}^{12}$ for all $x \in [0, \ell]$,
\item \label{pty:barQA_symm}
$(\overline{Q}A)(x) \in \mathbb{S}^{12}$ for all $x \in [0, \ell]$,
\item \label{pty:barS_negDef}
$\overline{S}(x) \in \mathbb{S}_{--}^{12}$ for all $x \in [0, \ell]$, where $\overline{S} := \frac{\mathrm{d}}{\mathrm{d}x}( \overline{Q} A ) - \overline{Q}\,\overline{B} - \overline{B}^\intercal\overline{Q}$,
\item \label{pty:bt0ell_nonPos}
for all $y \in C^0([0, T]; C^0_x)$ satisfying \eqref{eq:IGEBfb_BC0}-\eqref{eq:IGEBfb_BCell}, and all $t \in [0, T]$,
\begin{equation} \label{eq:1b_boundaryTerms_barQ}
 \left\langle y \,, \overline{Q} A y \right\rangle (0, t) - \left\langle y \,, \overline{Q} A y\right\rangle (\ell, t)\leq 0,
\end{equation}
\end{enumerate}
\vspace{-0.2cm}
then the associated $\overline{\mathcal{L}}$ $($see \eqref{eq:1b_def_calLP}$)$ fulfills the assumptions of Proposition \ref{prop:1b_existence_Lyap}.
\end{lemma}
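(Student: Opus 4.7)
The structure is the standard Lyapunov strategy for one-dimensional first-order hyperbolic systems \cite{BC2016, bastin2017exponential}, with the additional care required here because of the linear lower-order term $\overline{B}y$ and the quadratic semilinearity $\overline{g}$. The plan is to establish first the norm equivalence \eqref{eq:lyap_equiv} and then the exponential decay \eqref{eq:lyap_decay}.

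\textbf{Step 1: Norm equivalence.} Property \ref{pty:barQ_symm_posDef} together with the continuity of $\overline{Q}$ on the compact interval $[0, \ell]$ immediately gives uniform positive constants $c_1, c_2>0$ such that $c_1 \mathbf{I}_{12} \leq \overline{Q}(x) \leq c_2 \mathbf{I}_{12}$. Hence $\overline{\mathcal{L}}_\alpha(t)$ is equivalent to $\|\partial_t^\alpha y(\cdot, t)\|_{L_x^2}^2$, uniformly in $t$. To obtain equivalence with the full $H^k_x$ norm, I will exploit the fact that Proposition \ref{prop:A_hyperbolic} guarantees $A(x)$ is invertible (its eigenvalues are nonzero), so that the governing equation \eqref{eq:IGEBfb_gov} can be solved for the spatial derivative:
\begin{equation*}
\partial_x y = -A^{-1}\bigl( \partial_t y + \overline{B}y - \overline{g}(\cdot, y)\bigr).
\end{equation*}
Since $\overline{g}(x, u)$ is quadratic with $\overline{g}(x, 0) = 0$ and locally Lipschitz, one has $|\overline{g}(x, y)| \leq C \delta\, |y|$ under the smallness condition $\|y(\cdot, t)\|_{C_x^{k-1}} \leq \delta$. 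This yields $\|\partial_x y\|_{L_x^2}^2 \lesssim \|\partial_t y\|_{L_x^2}^2 + \|y\|_{L_x^2}^2$, and the reverse bound is analogous. For $k=2$, one differentiates the governing equation once more in $t$ or $x$ and iterates; the smallness of $\delta$ ensures the nonlinear contributions may be absorbed. This gives \eqref{eq:lyap_equiv}.

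\textbf{Step 2: Exponential decay.} I will compute $\frac{d}{dt}\overline{\mathcal{L}}_\alpha$ for $\alpha = 0, 1, \ldots, k$. For $\alpha = 0$, substituting $\partial_t y$ from the governing equation and using property \ref{pty:barQA_symm} (which states $A^\intercal \overline{Q} = \overline{Q}A$) to symmetrize the convective term, an integration by parts gives
\begin{equation*}
\frac{d}{dt}\overline{\mathcal{L}}_0 = -\bigl[\langle y, \overline{Q}Ay\rangle\bigr]_0^\ell + \int_0^\ell \langle y, \overline{S}y\rangle\,dx + 2\int_0^\ell \langle \overline{g}(\cdot, y), \overline{Q}y\rangle\,dx.
\end{equation*}
The boundary term is nonpositive by \ref{pty:bt0ell_nonPos}; property \ref{pty:barS_negDef} combined with continuity yields $\int_0^\ell \langle y, \overline{S}y\rangle\,dx \leq -2\beta_0 \overline{\mathcal{L}}_0$ for some $\beta_0 > 0$; and the nonlinear term is bounded by $C\delta\, \overline{\mathcal{L}}_0$ using the quadratic structure of $\overline{g}$. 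For $\alpha \geq 1$, differentiating \eqref{eq:IGEBfb_gov} in $t$ repeatedly produces an analogous system for $\partial_t^\alpha y$ with the same principal part $A$, additional lower-order terms coming from commutators with $\overline{B}$ (harmless) and from the nonlinearity (controlled by the smallness hypothesis); the boundary conditions \eqref{eq:IGEBfb_BC0}--\eqref{eq:IGEBfb_BCell} are linear and time-invariant, so they are preserved by $\partial_t^\alpha$ and property \ref{pty:bt0ell_nonPos} still applies to $\partial_t^\alpha y$. Summing over $\alpha$ and choosing $\delta$ small enough so that the nonlinear contributions are absorbed into the strict negativity coming from \ref{pty:barS_negDef}, I obtain $\frac{d}{dt}\overline{\mathcal{L}} \leq -2\beta\, \overline{\mathcal{L}}$ and hence \eqref{eq:lyap_decay} by Gronwall.

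\textbf{Main obstacle.} The delicate point is the treatment of $\alpha = k$ (especially $k = 2$): differentiating the semilinearity $\overline{g}$ generates cubic terms involving products of $\partial_t y$ and $\partial_t^2 y$, and one must show that all such terms can be estimated by $C\delta\, \|y\|_{H_x^k}^2$ rather than by a stronger norm. This is exactly where the smallness assumption $\|y(\cdot, t)\|_{C_x^{k-1}} \leq \delta$ is used: $C^{k-1}_x \hookrightarrow L^\infty$ gives pointwise control of the first $k-1$ time/space derivatives, allowing each cubic term to be written as (bounded factor)$\times$(quadratic form in the Lyapunov variables). Choosing $\delta$ sufficiently small in terms of the constants arising from properties \ref{pty:barS_negDef} and \ref{pty:bt0ell_nonPos} closes the argument.
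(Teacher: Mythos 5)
Your proposal is correct and follows essentially the same route as the paper's own argument: property \ref{pty:barQ_symm_posDef} plus the equation (and invertibility of $A$) for the norm equivalence, then the symmetrization via \ref{pty:barQA_symm} and integration by parts to arrive at the decomposition of $\frac{\mathrm{d}}{\mathrm{d}t}\overline{\mathcal{L}}$ into the $\overline{S}$-term, the nonlinear term of size $O(\delta+\delta^2)\|\cdot\|_{L_x^2}^2$, and the boundary terms handled by \ref{pty:bt0ell_nonPos}, with \ref{pty:barS_negDef} absorbing the nonlinear contribution for $\delta$ small. Your observations that the linear time-invariant boundary conditions are inherited by $\partial_t^\alpha y$ and that the $C_x^{k-1}$ smallness controls the cubic terms at order $\alpha=k$ are exactly the points the paper relies on.
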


\noindent \textbf{Idea of the proof.} The arguments of this proof are following that of \cite{BC2016, bastin2017exponential} where the authors work with systems in diagonal form. We provide them here for the sake of completeness and to illustrate where each assumption -- in particular \ref{pty:barQA_symm} -- comes from (see \ref{A:MCRF} for more detail). 
\begin{itemize}
\item \textbf{Equivalence of the norms.} The assumption \ref{pty:barQ_symm_posDef} is of use to obtain the equivalence between $\overline{\mathcal{L}}_\alpha(t)$ and the $L_x^2$ norm of $\partial_t^\alpha y(\cdot, t)$, modulo a constant depending on $\overline{Q}$. Complemented with the fact that $\sum_{\alpha=0}^k|\partial_t^\alpha y(\cdot, t)|^2$ is itself equivalent to $\sum_{\alpha=0}^k|\partial_x^\alpha y(\cdot, t)|^2$ modulo a constant depending on the beam parameters and $\|y\|_{C_{x,t}^0}$, this yields \eqref{eq:lyap_equiv}.

\item \textbf{Exponential decay.} The second assumption \ref{pty:barQA_symm} is instrumental in obtaining the following expression of the derivative of the Lyapunov functional
\begin{align} \label{eq:def_calLP}
\begin{aligned}
\frac{\mathrm{d}}{\mathrm{d}t}\overline{\mathcal{L}} 
&=  \sum_{\alpha=0}^k \int_0^{\ell} \Big\langle \partial_t^\alpha y \,, \overline{S} \partial_t^\alpha y \Big\rangle dx + \sum_{\alpha=0}^k \int_0^{\ell} \Big\langle \partial_t^\alpha y \,,2\overline{Q} \partial_t^\alpha \big(\overline{g}(\cdot, y)\big) \Big\rangle dx \\
&\quad +\sum_{\alpha=0}^k  \Big[- \left\langle \partial_t^\alpha y \,, \overline{Q} A \partial_t^\alpha y \right\rangle \Big]_0^{\ell}.
\end{aligned}
\end{align}
We may neglect the boundary terms in \eqref{eq:def_calLP} by means of \ref{pty:bt0ell_nonPos}.
Due to the form of $\overline{g}$, the second term in the right-hand side of \eqref{eq:def_calLP} is bounded above by $(\delta + \delta^2)\|\sum_{\alpha=0}^k \partial_t^\alpha y(\cdot, t)\|_{L_x^2}^2$, modulo a constant depending on $\overline{Q}$ and the beam parameters. 
The third assumption \ref{pty:barS_negDef} then comes into play as we can balance between the largest (negative) eigenvalue of $\overline{S}$ and the size of $\delta$ to deduce an inequality of the form $\frac{\mathrm{d}}{\mathrm{d}t}\overline{\mathcal{L}} \leq -2 \beta \overline{\mathcal{L}}$, thus yielding \eqref{eq:lyap_decay}. \hfill $\meddiamond$
\end{itemize}

Such arguments applied in the context of the diagonal system yield the following lemma.

\begin{lemma}[Lem. 5.3 \ref{A:MCRF}] \label{lem:class_Q}
If there exists $Q \in C^1([0, \ell]; \mathbb{R}^{12 \times 12})$ fulfilling
\begin{enumerate}[label=(\roman*)]
\item 
$Q(x) \in \mathbb{D}_{++}^{12}$ for all $x \in [0, \ell]$,
\item 
$\overline{S}(x) \in \mathbb{S}_{--}^{12}$ for all $x \in [0, \ell]$, where $\overline{S} := \frac{\mathrm{d}}{\mathrm{d}x}( Q\mathbf{D} ) - QB - B^\intercal Q$,
\item \label{pty:bound_terms} 
denoting $Q = \mathrm{diag}(Q^-, \ Q^+)$ with $Q^-, Q^+ \in C^1([0, \ell]; \mathbb{D}^6)$, the following matrices are negative semi-definite
\begin{equation*}
Q^+(0)D(0)-Q^-(0), \qquad Q^-(\ell) D(\ell) - Q^+(\ell), 
\end{equation*}
\end{enumerate}
then steady state $r\equiv 0$ of \eqref{eq:IGEBfbR} is locally ${H}^k_x$ exponentially stable.
\end{lemma}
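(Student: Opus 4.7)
The approach mirrors that of Lemma \ref{lem:class_barQ} but now applied directly to the diagonal system \eqref{eq:IGEBfbR} through the Lyapunov functional $\mathcal{L}$ from \eqref{eq:1d_def_calLD}. The plan is to verify the diagonal analogue of Proposition \ref{prop:1b_existence_Lyap}, namely that whenever $r\in C^0([0,T];H^k_x)$ solves \eqref{eq:IGEBfbR} with $\|r(\cdot,t)\|_{C^{k-1}_x}\le\delta$ for all $t\in[0,T]$, one has both $\eta^{-1}\|r(\cdot,t)\|^2_{H^k_x}\le \mathcal{L}(t)\le \eta\|r(\cdot,t)\|^2_{H^k_x}$ and $\mathcal{L}(t)\le e^{-2\beta t}\mathcal{L}(0)$. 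A pleasant feature of the diagonal setting is that the analogue of assumption \ref{pty:barQA_symm} of Lemma \ref{lem:class_barQ} is automatic: since both $Q$ and $\mathbf{D}$ are diagonal, $Q\mathbf{D}$ is symmetric for free. So the three numbered hypotheses of the present lemma take over the roles of \ref{pty:barQ_symm_posDef}, \ref{pty:barS_negDef} and \ref{pty:bt0ell_nonPos}, respectively.

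First I would establish the norm equivalence. By (i) and the $C^1$ regularity of $Q$, its diagonal entries are bounded above and below uniformly on $[0,\ell]$, whence $\mathcal{L}_\alpha(t)$ is equivalent to $\|\partial_t^\alpha r(\cdot,t)\|_{L^2_x}^2$. Since $\mathbf{D}$ is pointwise invertible, iterating the identity $\partial_t r=-\mathbf{D}\partial_x r-Br+g(\cdot,r)$ lets us exchange time and space derivatives modulo lower-order and (thanks to the quadratic nature of $g$) quadratic terms absorbed by smallness of $\delta$, yielding $\sum_{\alpha=0}^k\|\partial_t^\alpha r(\cdot,t)\|_{L^2_x}^2\sim\|r(\cdot,t)\|^2_{H^k_x}$. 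Next, since $\partial_t^\alpha r$ satisfies $\partial_t(\partial_t^\alpha r)+\mathbf{D}\partial_x(\partial_t^\alpha r)+B\partial_t^\alpha r=\partial_t^\alpha g(\cdot,r)$, multiplying by $2Q\partial_t^\alpha r$, integrating on $[0,\ell]$, and integrating by parts (which is legal because $Q\mathbf{D}$ is symmetric) gives
\begin{equation*}
\frac{\mathrm{d}}{\mathrm{d}t}\mathcal{L}_\alpha = -\bigl[\langle \partial_t^\alpha r,\, Q\mathbf{D}\,\partial_t^\alpha r\rangle\bigr]_0^\ell + \int_0^\ell \langle \partial_t^\alpha r,\, \overline{S}\,\partial_t^\alpha r\rangle\, dx + 2\int_0^\ell \langle \partial_t^\alpha r,\, Q\,\partial_t^\alpha g\rangle\, dx.
\end{equation*}
Hypothesis (ii) bounds the interior quadratic term by $-c\,\|\partial_t^\alpha r\|_{L^2_x}^2$ with $c>0$ uniform in $x$; the quadratic (hence locally Lipschitz with small constant) structure of $g$, combined with $\|r\|_{C^{k-1}_x}\le\delta$, bounds the source term by $C(\delta+\delta^2)\sum_{\beta=0}^k\|\partial_t^\beta r\|_{L^2_x}^2$. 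Summing over $\alpha$ and choosing $\delta$ small enough, these two contributions combine into $-2\beta\,\mathcal{L}$ for some $\beta>0$, and Gronwall together with the norm equivalence delivers the $H^k_x$ exponential decay.

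The delicate step, and the main obstacle, is the handling of the boundary contribution $-[\langle\partial_t^\alpha r,Q\mathbf{D}\partial_t^\alpha r\rangle]_0^\ell$ via hypothesis (iii). The boundary relations in \eqref{eq:IGEBfbR} are linear with time-independent coefficients, so differentiating $\alpha$ times in $t$ yields the same relations for $\partial_t^\alpha r$: at $x=0$ one has $\partial_t^\alpha r^+(0,t)=-\partial_t^\alpha r^-(0,t)$, and at $x=\ell$ one has $(\widetilde\sigma+K)\widetilde\gamma\,\partial_t^\alpha r^-(\ell,t)=(\widetilde\sigma-K)\widetilde\gamma\,\partial_t^\alpha r^+(\ell,t)$. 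Substituting these into the quadratic form $\langle\partial_t^\alpha r,\,Q\mathbf{D}\partial_t^\alpha r\rangle$ and using the block structure $Q\mathbf{D}=\mathrm{diag}(-Q^-D,\,Q^+D)$ collapses each boundary contribution into a quadratic form in a single block of $\partial_t^\alpha r$, and one must verify that the resulting matrices are precisely (or are dominated by) the matrices $Q^+(0)D(0)-Q^-(0)$ and $Q^-(\ell)D(\ell)-Q^+(\ell)$ appearing in hypothesis (iii). This algebraic reduction — particularly at $x=\ell$, where the feedback matrix $K$, the invertible weight $\widetilde\gamma=\mathbf{C}^{1/2}U^\intercal$ and the positive matrix $\widetilde\sigma=\mathbf{C}^{-1/2}U^\intercal D^{-1}U\mathbf{C}^{-1/2}$ all intervene — is the computationally nontrivial piece, but once carried out it shows that assumption (iii) is exactly what is needed to render the boundary term nonpositive, thus completing the proof.
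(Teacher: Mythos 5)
Your overall strategy is the right one and coincides with the paper's: the lemma is proved exactly as Lemma \ref{lem:class_barQ}, with the observation that $Q\mathbf{D}$ is automatically symmetric standing in for hypothesis \ref{pty:barQA_symm}, hypothesis (i) giving the norm equivalence, hypothesis (ii) giving the interior decay, and hypothesis (iii) replacing the state-dependent boundary requirement \ref{pty:bt0ell_nonPos}. The energy identity you write, the treatment of the interior term, the absorption of $\partial_t^\alpha g$ by the smallness of $\delta$, and the time-differentiation of the linear autonomous boundary relations are all correct.

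The genuine gap is the step you yourself single out and then skip: verifying that hypothesis (iii) makes the boundary contribution $-\bigl[\langle\partial_t^\alpha r,\,Q\mathbf{D}\,\partial_t^\alpha r\rangle\bigr]_0^\ell$ nonpositive. This is the only place where (iii), the distinguishing feature of the lemma, enters, and your assertion that the substitution produces ``precisely'' the matrices $Q^+(0)D(0)-Q^-(0)$ and $Q^-(\ell)D(\ell)-Q^+(\ell)$ is not what the computation gives. At $x=0$, using $Q\mathbf{D}=\mathrm{diag}(-Q^-D,\,Q^+D)$ and $\partial_t^\alpha r^+(0,\cdot)=-\partial_t^\alpha r^-(0,\cdot)$, the contribution is $\langle \partial_t^\alpha r^-(0,\cdot),\,(Q^+(0)-Q^-(0))D(0)\,\partial_t^\alpha r^-(0,\cdot)\rangle$, whose sign is governed by $(Q^+(0)-Q^-(0))D(0)$; for diagonal $D(0)\neq\mathbf{I}$ this is a genuinely different condition from $Q^+(0)D(0)-Q^-(0)\preceq 0$ (entrywise, $q_j^+(0)\le q_j^-(0)$ versus $q_j^+(0)d_j(0)\le q_j^-(0)$, and neither implies the other). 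At $x=\ell$ the discrepancy is worse, because the feedback condition is not a sign flip but $r^-(\ell,\cdot)=\mathcal{B}\,r^+(\ell,\cdot)$ with $\mathcal{B}=((\widetilde\sigma+K)\widetilde\gamma)^{-1}(\widetilde\sigma-K)\widetilde\gamma$, so the substitution yields $\mathcal{B}^\intercal Q^-(\ell)D(\ell)\mathcal{B}-Q^+(\ell)D(\ell)$ --- exactly the matrix $\mathcal{M}_n$ of the network version, Lemma \ref{lem:class_barQi_Riem} --- and relating its negativity to the stated condition requires the congruence and spectral analysis of Proposition \ref{prop:calMn_form}, which exploits $\widetilde\sigma=\widetilde\gamma^{-\intercal}D^{-1}\widetilde\gamma^{-1}$ and the positive definiteness of $K$. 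None of this appears in your argument: as written, the proof never actually connects hypothesis (iii) to the sign of the boundary terms, and the identification you assert in its place is false at face value. You must either carry out that algebra or reconcile the target matrices with what the boundary substitution actually produces.
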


The main difference with Lemma \ref{lem:class_barQ} is that the constraints on the boundary terms (\ref{pty:bound_terms} in Lemma \ref{lem:class_Q}) are here more explicit, as they do not involve the state anymore. We may now present the key steps of the proof of the stabilization result Theorem \ref{thm:1b_stabilization}.

\medskip



\noindent \textbf{Idea of the proof of Theorem \ref{thm:1b_stabilization}.} 
From the perspective of the physical system, it is sufficient to look for a map $\overline{Q}$ fulfilling all assumptions of Lemma \ref{lem:class_barQ}.
\begin{itemize}
\item \textbf{Step 1: Ansatz for $\overline{Q}$.}
We use the ``\emph{energy matrix}'' $Q^\mathcal{P}$ that characterizes $\mathcal{E}^\mathcal{P}$ in \eqref{eq:1b_def_calEP}, multiplying this matrix by a constant $\rho \in \mathbb{R}$, and adding extradiagonal terms $\mathbf{W} \in C^1([0, \ell]; \mathbb{R}^{6 \times 6})$, themselves multiplied by a \emph{weight function} $w \in C^1([0, \ell])$:
\begin{equation*} 
\overline{Q} = \rho Q^\mathcal{P} + w \begin{bmatrix}
\mathbf{0} & \mathbf{W} \\ \mathbf{W}^\intercal & \mathbf{0}
\end{bmatrix}.
\end{equation*}
This allows us to make use of the fact that $Q^\mathcal{P} A$ is constant and $Q^\mathcal{P} \overline{B}$ skew-symmetric, so that $\overline{S}$ reduces to the sum of two matrices (the latter possibly indefinite)
\begin{linenomath}
\begin{equation*}
\overline{S} = - \frac{\mathrm{d}w}{\mathrm{d}x} \Lambda + |w| \Xi
\end{equation*}
\end{linenomath}
where 
\begin{align} \label{eq:def_Lambda_Xi}
\begin{aligned}
\Lambda &:= \begin{bmatrix}
\Lambda^\mathrm{I} & \mathbf{0}\\
\mathbf{0}& \Lambda^\mathrm{II}
\end{bmatrix}, \qquad \text{with} \quad \Lambda^\mathrm{I} := \mathbf{W} \mathbf{C}^{-1}, \quad \Lambda^\mathrm{II} := \mathbf{W}^\intercal \mathbf{M}^{-1},\\
\Xi &:= \mathrm{sign}(w) 
\begin{bmatrix}
\Lambda^\mathrm{I}\mathbf{E}^\intercal + (\Lambda^\mathrm{I}\mathbf{E}^\intercal)^\intercal - \tfrac{\mathrm{d}}{\mathrm{d}x} \Lambda^\mathrm{I}  & \mathbf{0}\\
\mathbf{0} & - \Lambda^\mathrm{II} \mathbf{E} - (\Lambda^\mathrm{II} \mathbf{E})^\intercal  - \tfrac{\mathrm{d}}{\mathrm{d}x}\Lambda^\mathrm{II}
\end{bmatrix}.
\end{aligned}
\end{align}

\item \textbf{Step 2: Constraints on $\mathbf{W}$.}
Exemples extradiagonal terms such that not only \ref{pty:barQA_symm} is fulfilled, but also $\Lambda(x) \in \mathbb{S}^{12}_{++}$ for all $x \in [0, \ell]$, are
\begin{subequations} \label{eq:boldW}
\begin{align}
\mathbf{W} &= \mathbf{I}_6, \label{eq:boldW_I}\\
\mathbf{W} &= \mathbf{M} \mathbf{C}, \label{eq:boldW_MC}\\
\label{eq:boldW_MCfrac}
\mathbf{W} &= \mathbf{C}^{-\sfrac{1}{2}}(\mathbf{C}^{\sfrac{1}{2}}\mathbf{M}\mathbf{C}^{\sfrac{1}{2}})^{\sfrac{1}{2}} \mathbf{C}^{\sfrac{1}{2}},
\end{align}
\end{subequations}
where the latter \eqref{eq:boldW_MCfrac} also writes as $\mathbf{W} = \mathbf{M}^{\sfrac{1}{2}}\mathbf{C}^{\sfrac{1}{2}}$ for commuting $\mathbf{M}, \mathbf{C}$.

\begin{figure}\centering
\includegraphics[scale=0.9]{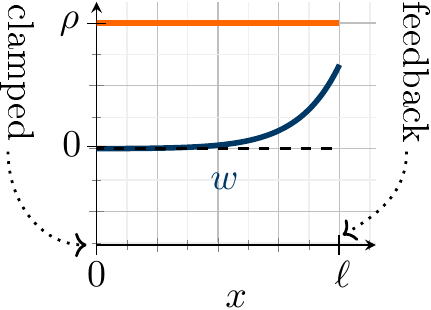}\hspace{1cm}
\includegraphics[scale=0.9]{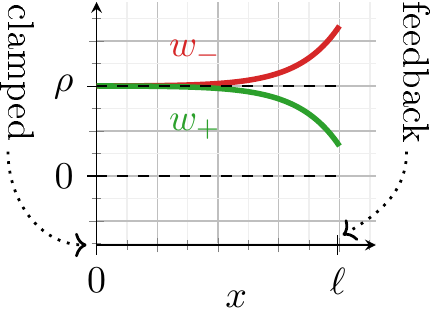}
\caption{Plots of $\rho$ and $w=q-q(0)$ obtained via \eqref{eq:def_q_pos}, and corresponding $w_- = \rho+w$ and $w_+ = \rho-w$. We used the constants $a=0$, $b=1$, $\eta=5$, $\rho=1.5$ and $\ell = 1$.}
\label{fig:1b_weights_tikz}
\end{figure}

\item \textbf{Step 3: Constraints on $w$ and $\rho$.}
We require $w$ to be increasing to render $-(\frac{\mathrm{d}w}{\mathrm{d}x} \Lambda)(x)$ negative definite for all $x \in [0, \ell]$. Then, denoting by $C_{\Lambda} >0$ and $C_{\Xi} \geq 0$ the maximum over $[0, \ell]$ of the smallest eigenvalue of $\Lambda(x)$ and largest eigenvalue of $\Xi(x)$, respectively, we require that
\begin{equation} \label{eq:cond_barSi_neg}
\frac{\mathrm{d}w}{\mathrm{d}x}> C_{\Xi} C_{\Lambda}^{-1} |w|,
\end{equation}
for \ref{pty:barSi_negDef} to hold \cite[Th. 4.3.1, Coro. 4.3.15]{horn2012matrix}.
Finally, the use of the Schur complement of $\rho \mathbf{C}$ yields that \ref{pty:barQi_symm_posDef} holds if and only if
\begin{equation*}
\rho > |w(x)| \sqrt{\lambda_{\theta(x)}}, \qquad \text{for all } x \in [0, \ell],
\end{equation*}
where $\lambda_{\theta(x)}$ denotes the largest eigenvalue of the matrix $\theta(x) = (\mathbf{M}^{-\sfrac{1}{2}} \mathbf{W} \mathbf{C}^{-1} \mathbf{W}^\intercal \mathbf{M}^{-\sfrac{1}{2}})(x)$.
Note that if $\mathbf{W}$ is defined by \eqref{eq:boldW_MCfrac} then $\theta(x) = \mathbf{I}_6$ and $\lambda_{\theta}(x) = 1$ for all $x \in [0, \ell]$, while in general a sufficient condition for the above to hold is that $|w| < \rho \sqrt{ C_{\theta}}$ in $[0 ,\ell]$ for $C_{\theta} := \max_{x \in [0, \ell]} \lambda_{\theta(x)}$.

%
%

\item \textbf{Step 4: Estimating the boundary terms.}
While the extradiagonal (block) terms are of help to make $\overline{S}(x)$ negative definite, they are also present in the boundary terms, thereby constraining further the choice of $w$ and $\mathbf{W}$. 
By definition $\langle y \,, \overline{Q} A y \rangle = -2\rho \langle v \,, z \rangle - w \langle v\,, \Lambda^\mathrm{I} v \rangle - w \langle z \,, \Lambda^\mathrm{II} z\rangle $. Thus, the boundary conditions, yield that \eqref{eq:1b_boundaryTerms_barQ} is equal to
\begin{align} \label{eq:boundary_terms_physical}
- \left \langle z \,, w \Lambda^\mathrm{II} z\right \rangle (0, t) +  \left \langle v \,, (- 2 \rho K + w \Lambda^\mathrm{I} + w K \Lambda^\mathrm{II} K) v\right \rangle (\ell, t).
\end{align}
The first term may be removed by assuming that $w(0)\geq 0$. Then, one can notice the importance of supposing that $K \in \mathbb{S}_{++}^6$. Indeed, the second term now writes as $\left\langle v, \mu(\ell, K) v \right \rangle (\ell, t)$ where $\mu \colon [0, \ell] \times \mathbb{S}_{++}^6 \rightarrow \mathbb{R}^{6\times 6}$ is defined by
\begin{linenomath}
\begin{equation} \label{eq:def_mu(x,K)}
\mu(x,K) = (-2\rho K  + |w| \Lambda^\mathrm{I} + |w| K \Lambda^\mathrm{II} K)(x),
\end{equation}
\end{linenomath}
and is in fact negative semi-definite if $\rho$ is large enough in comparison to $|w(x)|$. In more precise terms, it is sufficient that $|w(\ell)| \leq \rho C_{\mu(\ell,K)}^{-1}$ where $C_{\mu(x,K)}$ denotes the largest eigenvalue of $(K^{-\sfrac{1}{2}} \Lambda^\mathrm{I} K^{-\sfrac{1}{2}} + K^{\sfrac{1}{2}} \Lambda^\mathrm{II} K^{\sfrac{1}{2}})(x)$.

\item \textbf{Step 5: Existence of the weights.}
Overall, for $\mathbf{W}$ as in \eqref{eq:boldW}, one should find an increasing and nonnegative function $w\in C^1([0, \ell])$ and $\rho>0$ fulfilling \eqref{eq:cond_barSi_neg} and
\begin{equation*} 
w(\ell) < \chi \rho , \qquad \text{with } \chi = \min \left\{C_{\theta}^{-\sfrac{1}{2}}, C_{\mu(\ell,K)}^{-1}\right\}.
\end{equation*}
This is achieved by means of the following lemma.

\begin{lemma}[Lem. 5.2 \ref{A:MCRF}] \label{lem:exist_g}
\vspace{7pt}
Let $\eta>0$, $\ell>0$ be fixed. 
\begin{enumerate}[label=\alph*)]
\item \label{item:exist_g_neg}
For any choice of constants $a < b\leq 0$, there exists $q\in C^\infty([0, \ell])$ such that $q(0) = a$, $q(\ell) = b$ and $\frac{\mathrm{d}}{\mathrm{d}x}q(x) > (q(\ell)-q(x)) \eta$, for all $x \in [0, \ell]$.
\item \label{item:exist_g_pos}
For any choice of constants $0 \leq a < b$, there exists $q\in C^\infty([0, \ell])$ such that $q(0) = a$, $q(\ell) = b$ and $\frac{\mathrm{d}}{\mathrm{d}x}q(x) > (q(x) - q(0))\eta$, for all $x \in [0, \ell]$.
\end{enumerate}
\end{lemma}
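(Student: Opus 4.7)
The plan is to exhibit an explicit smooth function in each case. The natural ansatz is motivated by noticing that the differential inequality in part \ref{item:exist_g_pos}, namely $q'(x) > \eta\,(q(x)-q(0))$, is a strict version of the linear ODE $q'=\eta(q-a)$, whose only solution with $q(0)=a$ is the constant $a$. To satisfy the strict inequality \emph{and} reach the value $b>a$ at $x=\ell$, one therefore inflates the exponent: choose any $\alpha>\eta$ and interpolate using $e^{\alpha x}$.

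For part \ref{item:exist_g_pos}, I would take
\[
q(x) = a + (b-a)\,\frac{e^{\alpha x}-1}{e^{\alpha\ell}-1}, \qquad \alpha > \eta.
\]
This lies in $C^\infty([0,\ell])$ and obviously satisfies $q(0)=a$, $q(\ell)=b$. A direct computation yields
\[
q'(x) - \eta\,(q(x)-a) \;=\; \frac{b-a}{e^{\alpha\ell}-1}\Big( (\alpha-\eta)\,e^{\alpha x} + \eta \Big),
\]
and since $b-a>0$, $\alpha-\eta>0$ and $\eta>0$, the right-hand side is strictly positive on $[0,\ell]$, giving the required inequality.

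For part \ref{item:exist_g_neg}, the cleanest route is a reflection argument. Given data $a<b\leq 0$, set $Q(x):=-q(\ell-x)$; then $Q(0)=-b$, $Q(\ell)=-a$ with $0\leq -b<-a$, and
\[
Q'(x) - \eta\,(Q(x)-Q(0)) \;=\; q'(\ell-x) - \eta\,(q(\ell)-q(\ell-x)),
\]
so that $Q$ satisfies the hypothesis of \ref{item:exist_g_pos} with data $(-b,-a)$ if and only if $q$ satisfies the hypothesis of \ref{item:exist_g_neg} with data $(a,b)$. Applying the construction of part \ref{item:exist_g_pos} to $(-b,-a)$ and undoing the reflection produces the explicit formula
\[
q(x) = b - (b-a)\,\frac{e^{\alpha(\ell-x)}-1}{e^{\alpha\ell}-1}, \qquad \alpha > \eta,
\]
whose verification is the mirror image of the one above.

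There is no real obstacle: the only point to watch is that the bracketed quantity $(\alpha-\eta)e^{\alpha x}+\eta$ stays uniformly bounded away from zero on the compact interval $[0,\ell]$, which is immediate from $\alpha>\eta>0$. Consequently both statements reduce to elementary exponential interpolations, and the extra freedom in the choice of $\alpha$ can later be exploited (e.g.\ in Section~\ref{sec:stab_1b}) to tune the size of $w=q-q(0)$ relative to $\rho$ in the compatibility conditions \eqref{eq:cond_barSi_neg} and $|w(\ell)|<\chi\rho$.
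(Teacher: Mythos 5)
Your proof is correct. The overall strategy -- exhibit an explicit smooth interpolant and verify the strict differential inequality by a one-line computation -- is the same as the paper's, but the interpolant is genuinely different: the paper uses the exponential-times-linear ramps \eqref{eq:def_q_neg}--\eqref{eq:def_q_pos}, whose rate is locked to the given $\eta$ and whose verification leaves the constant remainder $(b-a)e^{-\eta(\ell-x)}/\ell>0$, whereas you use the pure exponential interpolant $(e^{\alpha x}-1)/(e^{\alpha\ell}-1)$ with a free rate $\alpha>\eta$. Both work; yours buys an extra tuning parameter for the shape of the weight $w=q-q(0)$ (relevant later when balancing \eqref{eq:cond_barSi_neg} against the endpoint constraint $w(\ell)<\chi\rho$), while the paper's choice keeps the formula parameter-free. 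Your reduction of part \ref{item:exist_g_neg} to part \ref{item:exist_g_pos} via the reflection $Q(x)=-q(\ell-x)$ is a genuine economy: the paper simply writes down a second formula and reverifies, although one can check that \eqref{eq:def_q_neg} is exactly the image of \eqref{eq:def_q_pos} under that same reflection, so your argument makes explicit a symmetry the paper leaves implicit. The one point that needs care -- strict positivity of $(\alpha-\eta)e^{\alpha x}+\eta$ on the closed interval, since the lemma demands strict inequality at the endpoints as well -- you have addressed.
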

\vspace{-7pt}
Using the item \ref{item:exist_g_pos}, for any $\rho>0$, with $\eta = C_{{\Xi}} C_{\Lambda}^{-1}$ and with $a,b$ such that $0\leq a<b<(a+ \chi \rho )$, one sets $w = q - q(0)$ (see Fig. \eqref{fig:1b_weights_tikz} (Left)). \hfill $\meddiamond$
\end{itemize}

One may use similar ideas to prove Theorem \ref{thm:1b_stabilization} from the point of view of the diagonal system, but with a different Ansatz for $Q$. Indeed, some computations will yield that the function $\overline{Q}$ found in the above proof takes the following form in the diagonal system (see \eqref{eq:rel_Q_barQ}):
\begin{align} \label{eq:corresponding_Q}
Q = \begin{cases}
\mathrm{diag}\big(\rho \mathbf{I}_6 + w D \,, \rho \mathbf{I}_6 - w D \big) Q^\mathcal{D} &\text{if }\eqref{eq:boldW_I}\\
\mathrm{diag}\big(\rho \mathbf{I}_6 + w D^{-1} \,, \rho \mathbf{I}_6 - w D^{-1} \big) Q^\mathcal{D} &\text{if }\eqref{eq:boldW_MC}\\
\mathrm{diag} \big ((\rho + w)\mathbf{I}_6 \,, (\rho - w) \mathbf{I}_6 \big )  Q^\mathcal{D} &\text{if }\eqref{eq:boldW_MCfrac},
\end{cases}
\end{align}
where $Q^\mathcal{D}$ is the matrix characterizing the energy \eqref{eq:1b_def_calED} in the diagonal system.
In \ref{A:SICON}, where we adopt the perspective of the diagonal diagonal system, we use the Ansatz $Q(x) = \mathrm{diag} \big (w_-\mathbf{I}_6 \,, w_+ \mathbf{I}_6 \big ) Q^\mathcal{D}$, 
where the ``energy matrix'' $Q^\mathcal{D}$ is multiplied by some weights $w_-, w_+ \in C^1([0, \ell])$. 
Comparing with the third expression in \eqref{eq:corresponding_Q}, the following link between the weights appears (see Fig. \ref{fig:1b_weights_tikz} (Right)):
\begin{align*}
w_- = \rho + w, \qquad  w_+ = \rho - w.
\end{align*}

\begin{figure}\centering
\includegraphics[height = 4.15cm]{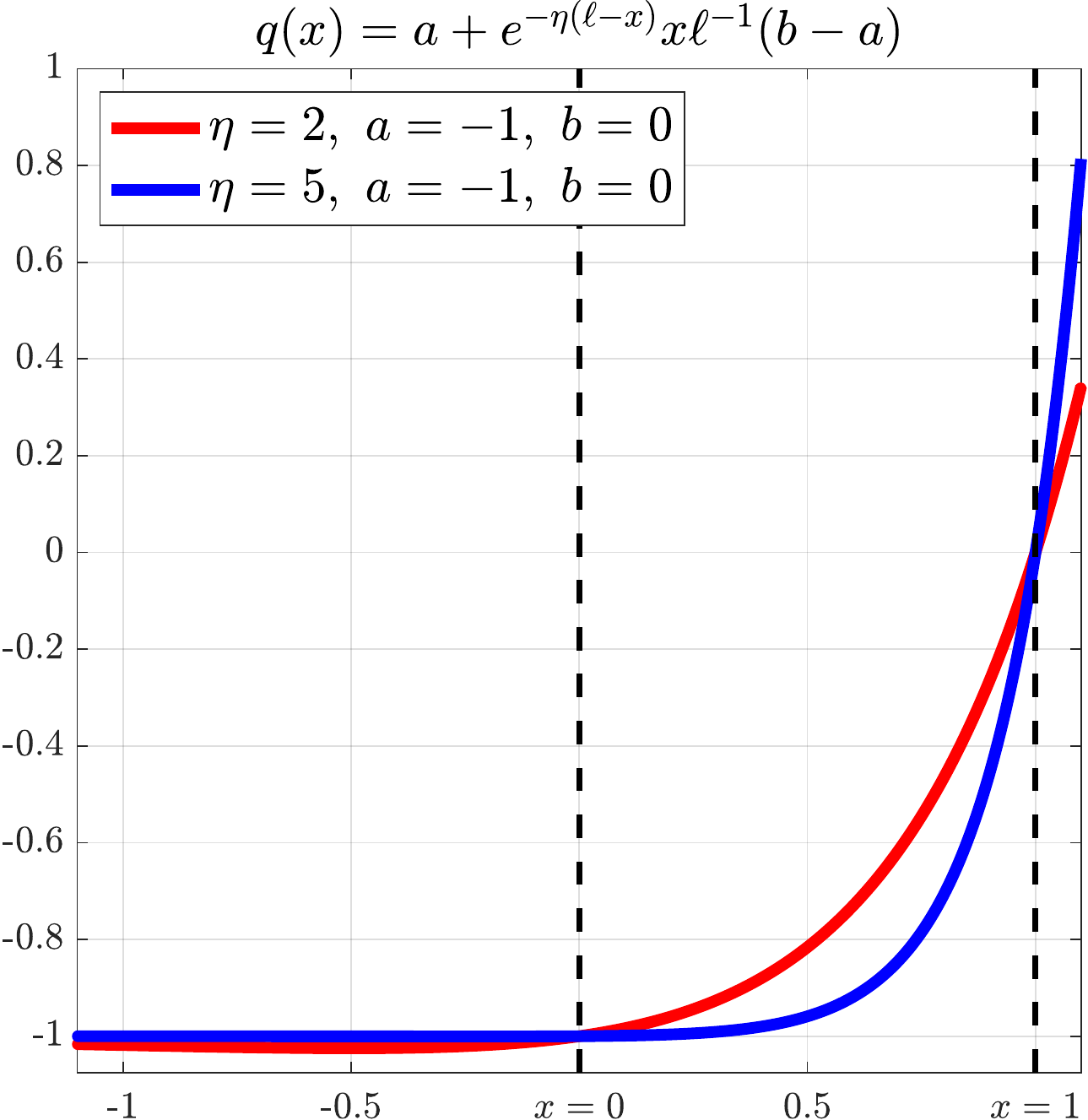}\ 
\includegraphics[height = 4.15cm]{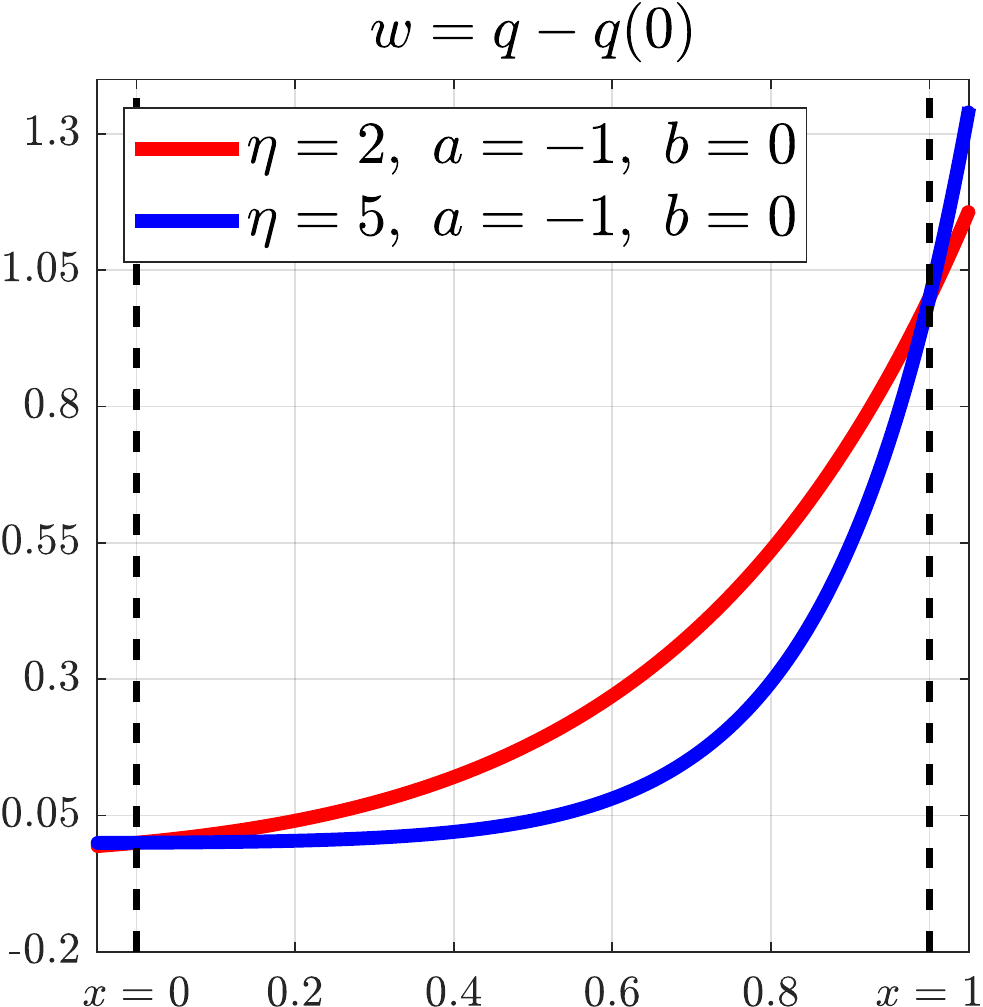}\ 
\includegraphics[height = 4.15cm]{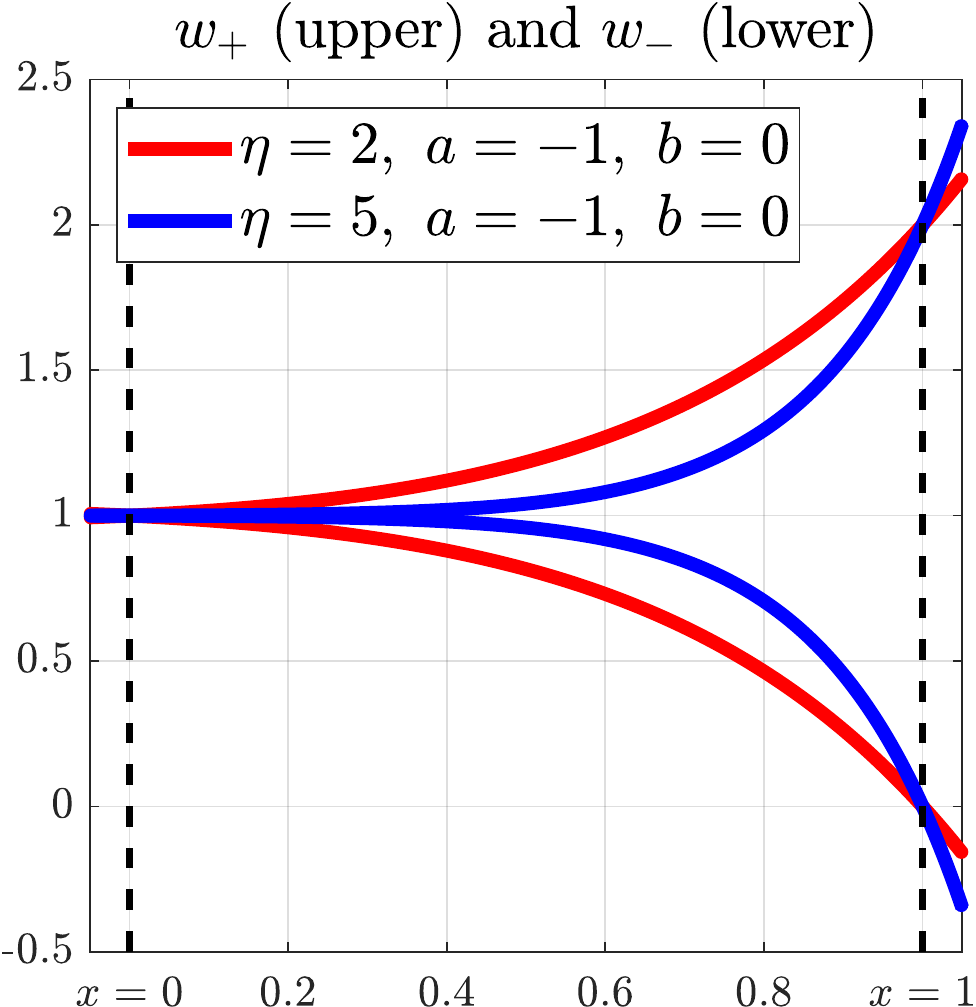}

\caption{The weights provided by the functions $q$ defined in \eqref{eq:def_q_pos} (with $\ell = 1$), used here for a beam controlled at $x=\ell$.}
\label{fig:weights_qPos}
\end{figure}

The proof of Lemma \ref{lem:exist_g} given in \ref{A:MCRF} yields functions of the form
\begin{subequations}
\begin{align} \label{eq:def_q_neg}
q(x) &= -e^{-\eta x}(b-a)(1 - x \ell^{-1}) + b,\\
\label{eq:def_q_pos}
q(x) &= a + e^{-\eta (\ell-x)} x \ell^{-1}(b - a),
\end{align}
\end{subequations}
for \ref{item:exist_g_neg} and \ref{item:exist_g_pos}, respectively. Examples of weight functions $w$ defined in terms of \eqref{eq:def_q_pos} can be seen in Fig. \ref{fig:weights_qPos}.

\begin{remark}[Other weights]
However, one can also look for other types of weights, such as polynomials. For instance, we see in the following lemma (proved in Appendix \ref{ap:poly_weights}), that $q$ may be replaced by polynomials $p_n^-$ and $p_n^+$ of degree $n \in \{1, 2, 3, \ldots\}$, provided that $n$ is large enough (depending on ${\eta}$ and $\ell$) for \eqref{eq:cond1_n} and $\chi \rho > \frac{1}{2^n}$ to be satisfied (see \eqref{eq:cond_ab_pn}).  
We then refer to Fig. \ref{fig:weights_polyPos} for visualization.
\end{remark}

\begin{lemma}\label{lem:polyWeights}
For any given $\ell, \eta >0$, if $n \in \{1, 2, \ldots\}$ is large enough to satisfy
\begin{align} \label{eq:cond1_n}
2{\eta}\ell < n,
\end{align}
then the polynomials $p_n^-$ and $p_n^+$, of degree $n$, defined by 
\begin{align} 
\label{eq:def_pn}
p_n^-(x) = - \left(\frac{1}{2} - \frac{{\eta}}{n}x \right)^{n}, \quad p_n^+(x) =  \frac{1}{2^n} +   \left(\frac{1}{2} + \frac{{\eta}}{n}(x-\ell) \right)^{n}
\end{align}
fulfill 
\begin{subequations}
\begin{align} \label{eq:cond_der_pn}
\frac{\mathrm{d}}{\mathrm{d}x}p_n^-(x)>(p_n^-(\ell) - p_n^-(x))\eta, \quad &\frac{\mathrm{d}}{\mathrm{d}x}p_n^+(x) > (p_n^+(x) - p_n^+(0))\eta,\\
\label{eq:cond_ab_pn}
p_n^-(\ell) - p_n^-(0) \leq \frac{1}{2^n}, \quad &p_n^+(\ell) - p_n^+(0) \leq \frac{1}{2^n}
\end{align}
\end{subequations}
for all $x \in (0, \ell)$.
\end{lemma}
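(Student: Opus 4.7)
The plan is to proceed by direct computation, exploiting the hypothesis $2\eta\ell < n$ only to keep the bases of the two polynomials positive on $[0,\ell]$; once positivity is secured, each of the four inequalities reduces to an elementary estimate.

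First I would observe that $2\eta\ell < n$ is equivalent to $\tfrac{\eta\ell}{n} < \tfrac{1}{2}$, so that for every $x \in [0,\ell]$ the quantities
\begin{equation*}
u(x) := \tfrac{1}{2} - \tfrac{\eta}{n}x, \qquad a(x) := \tfrac{1}{2} + \tfrac{\eta}{n}(x-\ell)
\end{equation*}
both lie in the interval $\bigl(\tfrac{1}{2}-\tfrac{\eta\ell}{n},\,\tfrac{1}{2}\bigr] \subset (0,\tfrac{1}{2}]$. With this notation $p_n^-(x) = -u(x)^n$ and $p_n^+(x) = \tfrac{1}{2^n} + a(x)^n$, and a direct differentiation gives $\tfrac{\mathrm{d}}{\mathrm{d}x}p_n^-(x) = \eta\, u(x)^{n-1}$ and $\tfrac{\mathrm{d}}{\mathrm{d}x}p_n^+(x) = \eta\, a(x)^{n-1}$.

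Next I would check the endpoint bounds \eqref{eq:cond_ab_pn}. Evaluating at $0$ and $\ell$ yields $p_n^-(0) = -\tfrac{1}{2^n}$, $p_n^-(\ell) = -u(\ell)^n$, $p_n^+(0) = \tfrac{1}{2^n} + u(\ell)^n$, and $p_n^+(\ell) = \tfrac{1}{2^{n-1}}$. In both cases the endpoint difference equals $\tfrac{1}{2^n} - u(\ell)^n$, which is nonnegative and at most $\tfrac{1}{2^n}$ since $u(\ell) \in (0,1/2]$.

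Finally, for the derivative inequalities \eqref{eq:cond_der_pn}, I would substitute the closed-form expressions. After dividing by $\eta$, the inequality for $p_n^-$ rearranges to
\begin{equation*}
u(x)^{n-1} + u(\ell)^n > u(x)^n, \qquad \text{i.e.,} \qquad u(x)^{n-1}\bigl(1-u(x)\bigr) + u(\ell)^n > 0,
\end{equation*}
which is immediate because $u(x) \in (0,\tfrac{1}{2}]$ gives $1-u(x) \geq \tfrac{1}{2}$, so the first summand is strictly positive while the second is nonnegative. The same manipulation reduces the inequality for $p_n^+$ to $a(x)^{n-1}(1-a(x)) + u(\ell)^n > 0$, which follows in the same way from $a(x) \in (0,\tfrac{1}{2}]$.

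The only subtlety is the opening book-keeping: one has to be sure that $u(x)$ and $a(x)$ remain strictly positive throughout $[0,\ell]$, since otherwise odd versus even powers would have to be tracked separately and the neat identity $p_n^-(\ell) - p_n^-(0) = p_n^+(\ell) - p_n^+(0) = \tfrac{1}{2^n} - u(\ell)^n$ would be lost. Once the quantitative bound $2\eta\ell < n$ is invoked for this single purpose, everything else is routine arithmetic and I do not anticipate any further obstacle.
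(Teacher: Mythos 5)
Your proof is correct and follows essentially the same route as the paper's: both arguments reduce everything to the observation that $2\eta\ell<n$ keeps the base quantities $\tfrac12-\tfrac{\eta}{n}x$ and $\tfrac12+\tfrac{\eta}{n}(x-\ell)$ in $(0,\tfrac12]$ on $[0,\ell]$, after which the derivative inequalities and endpoint bounds are elementary estimates on powers of these quantities. The only cosmetic difference is that you verify \eqref{eq:cond_der_pn} by factoring $u^{n-1}(1-u)+u(\ell)^n>0$ and \eqref{eq:cond_ab_pn} by direct evaluation, whereas the paper uses $u^{n-1}>u^n$ together with monotonicity of $p_n^{\mp}$ up to the points $\tfrac{n}{2\eta}$ and $\ell-\tfrac{n}{2\eta}$; both are equally routine.
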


Finally, the perspective of the GEB model is given in Corollary \ref{coro:fromIGEBtoGEB}, which follows from Theorems \ref{thm:inv1b_igeb2geb} and \ref{thm:1b_stabilization}.

\begin{corollary}[Th. 1.7 \ref{A:SICON}] \label{coro:fromIGEBtoGEB}
Let $K \in \mathbb{S}_{++}^6$. There exists $\varepsilon>0$, $C_1>0$ and $C_2>0$ such that the following holds. Assume that $\mathbf{p}^1, w^0 \in C^2([0, \ell];\mathbb{R}^3)$, $R, \mathbf{R}^0 \in H^3(0, \ell; \mathrm{SO}(3))$, $f^\mathbf{R}\in \mathrm{SO}(3)$, $\mathbf{p}^0 \in H^3(0, \ell; \mathbb{R}^3)$ and $f^\mathbf{p} \in \mathbb{R}^3$, with $f^\mathbf{p} = \mathbf{p}^0(0)$ and $f^\mathbf{R} = \mathbf{R}^0(0)$. Assume that the function $y^0 \in H^2(0, \ell; \mathbb{R}^{12})$ defined by \eqref{eq:rel_inidata} fulfills the first-order compatibility conditions of \eqref{eq:IGEBfb}, as well as $\|y^0\|_{H_x^2}\leq \varepsilon$. 
Then, there exists a unique solution $(\mathbf{p}, \mathbf{R}) \in C^2( [0, \ell]\times[0, +\infty); \mathbb{R}^3 \times \mathrm{SO}(3))$ to \eqref{eq:GEBfb}. Furthermore, for all $(x, t) \in [0, \ell]\times[0, +\infty)$,
\begin{align*}
|\partial_t \mathbf{p}(x,t)| + \|\partial_t \mathbf{R}(x,t)\| + |\Gamma(x,t)| +  |\Upsilon(x,t)| \leq C_1 e^{-C_2 t}.
\end{align*}
\end{corollary}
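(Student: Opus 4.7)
The strategy is to combine Theorem \ref{thm:1b_stabilization} with $k=2$ and Theorem \ref{thm:inv1b_igeb2geb}, transporting the $H^2_x$ exponential decay of the IGEB state $y$ into the pointwise exponential decay of the physical variables via the transformation $\mathcal{T}$.

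First, I would verify that the initial datum $y^0 \in H^2(0,\ell;\mathbb{R}^{12})$ constructed by \eqref{eq:rel_inidata} has the right regularity (which follows from the hypotheses on $\mathbf{p}^0, \mathbf{R}^0, \mathbf{p}^1, w^0$ and the smoothness of $R, \mathbf{C}, \mathbf{M}$) and satisfies the first-order compatibility conditions assumed in the statement. Shrinking $\varepsilon$ if needed so that $\|y^0\|_{H^2_x} \leq \varepsilon$ lies inside the neighborhood provided by Theorem \ref{thm:1b_stabilization}, I would obtain a unique global in time solution $y \in C^0([0,+\infty); H^2_x)$ to \eqref{eq:IGEBfb} satisfying
\begin{equation*}
\|y(\cdot,t)\|_{H^2_x} \leq \eta\, e^{-\beta t}\|y^0\|_{H^2_x},\qquad t \geq 0,
\end{equation*}
for some constants $\eta \geq 1$, $\beta > 0$ depending on $K$ and the beam parameters.

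Next, I would upgrade this to a $C^1_{x,t}$ regularity in order to invoke Theorem \ref{thm:inv1b_igeb2geb}. Since $y \in C^0([0,T]; H^2_x)$, differentiating in time using the governing equation \eqref{eq:IGEBfb_gov} shows $\partial_t y \in C^0([0,T]; H^1_x)$, and the one-dimensional Sobolev embeddings $H^2(0,\ell) \hookrightarrow C^1([0,\ell])$ and $H^1(0,\ell) \hookrightarrow C^0([0,\ell])$ give $y \in C^1([0,\ell]\times [0,T];\mathbb{R}^{12})$ for every $T>0$. Applying Theorem \ref{thm:inv1b_igeb2geb} on each such slab yields a unique solution $(\mathbf{p},\mathbf{R}) \in C^2([0,\ell]\times[0,T]; \mathbb{R}^3\times \mathrm{SO}(3))$ with $y = \mathcal{T}(\mathbf{p},\mathbf{R})$; uniqueness lets one patch these slabs into a global solution $(\mathbf{p},\mathbf{R}) \in C^2([0,\ell]\times[0,+\infty);\mathbb{R}^3\times\mathrm{SO}(3))$.

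Finally, for the pointwise decay I would unfold the identity $y = \mathcal{T}(\mathbf{p},\mathbf{R})$. By the definitions \eqref{eq:def_v_z}--\eqref{eq:def_VWPhiPsi}, one has $\partial_t \mathbf{p} = \mathbf{R} V$ and $\mathbf{R}^\intercal \partial_t \mathbf{R} = \widehat{W}$, whence $|\partial_t \mathbf{p}| = |V|$ and $\|\partial_t \mathbf{R}\| \lesssim |W|$; moreover $(\Gamma,\Upsilon) = \mathbf{C} z$ with $\mathbf{C} \in C^0([0,\ell];\mathbb{S}^6_{++})$, so $|\Gamma|+|\Upsilon| \lesssim |z|$. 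Combining these with the Sobolev embedding $H^2(0,\ell) \hookrightarrow C^0([0,\ell])$ applied to $y(\cdot,t)$, one gets
\begin{equation*}
|\partial_t \mathbf{p}(x,t)| + \|\partial_t\mathbf{R}(x,t)\| + |\Gamma(x,t)| + |\Upsilon(x,t)| \lesssim \|y(\cdot,t)\|_{H^2_x} \leq C_1 e^{-C_2 t}
\end{equation*}
uniformly in $x$, by absorbing $\eta \|y^0\|_{H^2_x} \leq \eta \varepsilon$ and $\beta$ into constants $C_1, C_2$.

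The main obstacle is not any individual calculation but the careful bookkeeping of regularity and compatibility across the two cited theorems: one must confirm that the compatibility conditions carried by $y^0$ (expressed intrinsically) correspond to admissible data for the GEB system under the clamped/feedback boundary conditions, and that the uniqueness statement of Theorem \ref{thm:inv1b_igeb2geb} is strong enough to glue the local-in-time GEB solutions into a single $C^2$ object on $[0,\ell]\times[0,+\infty)$. Beyond that, the exponential bound is inherited from Theorem \ref{thm:1b_stabilization} by the Sobolev embedding in a direct manner.
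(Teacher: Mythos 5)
Your proposal is correct and follows essentially the same route as the paper, which derives the corollary precisely by combining Theorem \ref{thm:1b_stabilization} (with $k=2$, giving a global $C^0_tH^2_x$ solution with exponential decay) and Theorem \ref{thm:inv1b_igeb2geb} (inverting $\mathcal{T}$ after upgrading to $C^1_{x,t}$ regularity via Sobolev embedding), and then reading off the pointwise decay of $\partial_t\mathbf{p}$, $\partial_t\mathbf{R}$, $\Gamma$, $\Upsilon$ from $y=\mathcal{T}(\mathbf{p},\mathbf{R})$ and the embedding $H^2(0,\ell)\hookrightarrow C^1([0,\ell])$. The bookkeeping you flag (compatibility of $y^0$, $H^3\hookrightarrow C^2$ for the GEB data, gluing the slabs by uniqueness) is exactly what the paper's argument relies on.
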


\begin{figure} \centering
\includegraphics[height = 4.15cm]{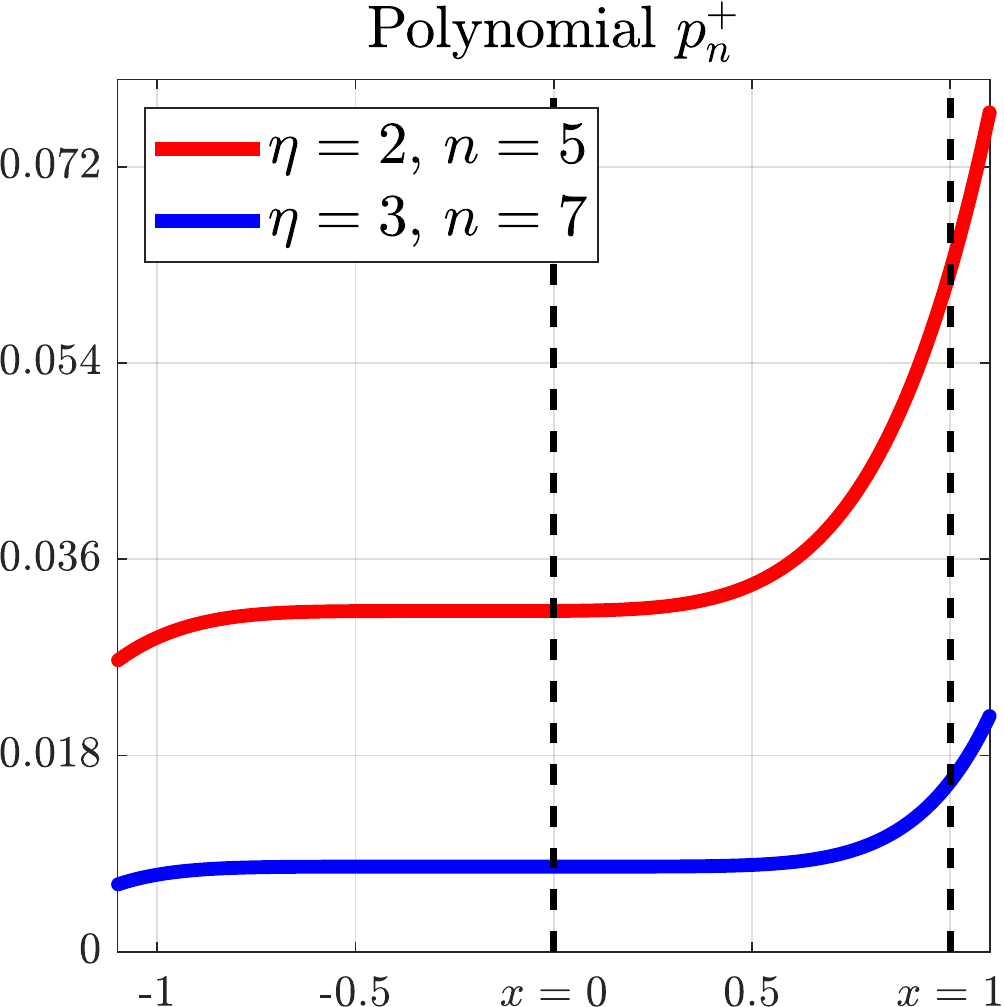}\ 
\includegraphics[height = 4.15cm]{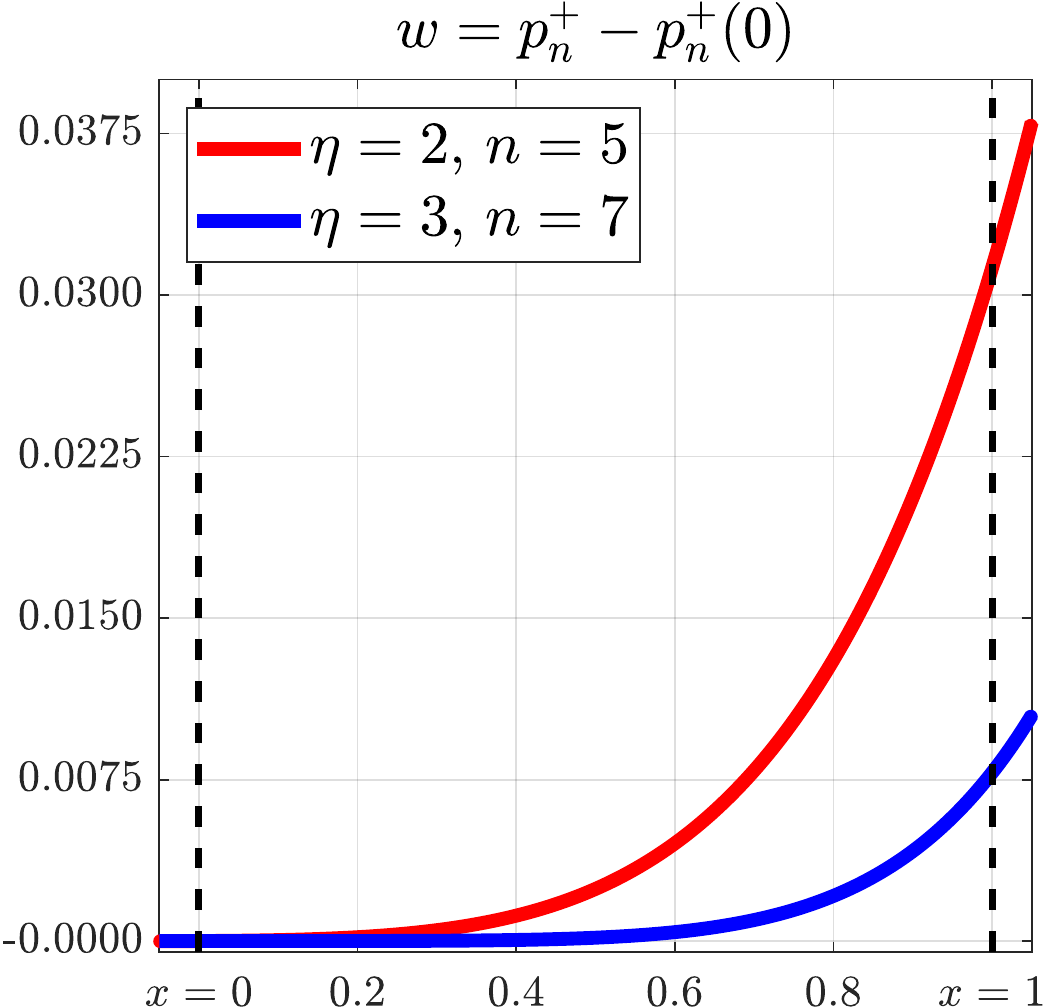}\ 
\includegraphics[height = 4.15cm]{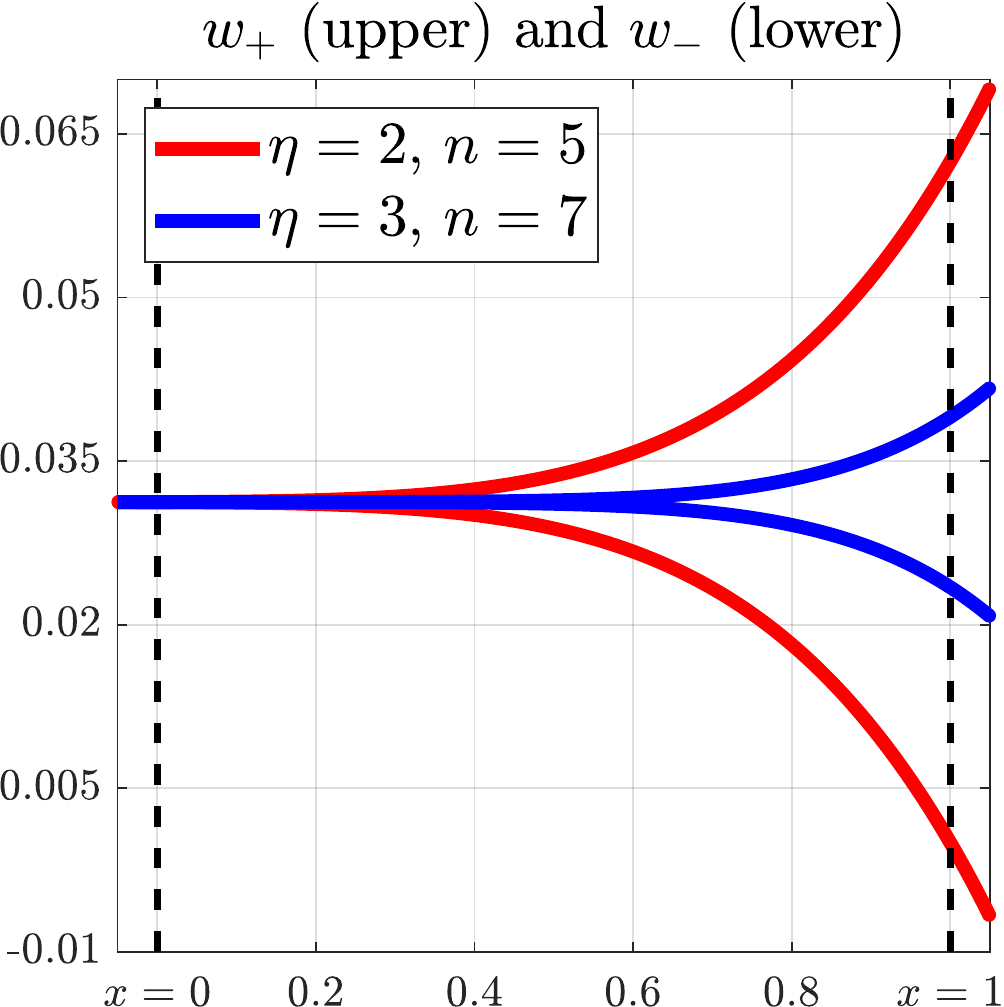}

\caption{The weights provided by the polynomial $p_n^+$ defined in \eqref{eq:def_pn} (with $\ell = 1$), used here for a beam controlled at $x=\ell$.}
\label{fig:weights_polyPos}
\end{figure}

\section{For a network}
\label{sec:stab_net}

We investigate exponential stabilization for \emph{tree-shaped} networks by means of quadratic Lyapunov functionals. As for the energy, we can extend the definition of $\overline{\mathcal{L}}$ in \eqref{eq:1b_def_calLP}, by $\overline{\mathcal{L}}_\mathrm{net} \colon [0, T] \rightarrow [0, +\infty)$ of the form
\begin{equation*} 
\overline{\mathcal{L}}_\mathrm{net} = \sum_{i \in \mathcal{I}} \sum_{\alpha=0}^k \overline{\mathcal{L}}_{\alpha i}, \quad \text{with} \ \ \overline{\mathcal{L}}_{\alpha i} := \int_0^{\ell_i} \left\langle \partial_t^\alpha y_i \,, \overline{Q}_i \partial_t^\alpha y_i \right\rangle dx,
\end{equation*}
where $\overline{Q}_i \in C^1([0, \ell_i]; \mathbb{S}^{12})$ and $y = (y_i)_{i\in \mathcal{I}} \in C^0([0, T]; \mathbf{H}_x^k)$ is solution to \eqref{eq:nIGEBfb}, and that of $\mathcal{L}$ in\eqref{eq:1b_def_calED}, by 
${\mathcal{L}}_\mathrm{net} \colon [0, T] \rightarrow [0, +\infty)$ of the form
\begin{equation*} 
{\mathcal{L}}_\mathrm{net} = \sum_{i \in \mathcal{I}} \sum_{\alpha=0}^k {\mathcal{L}}_{\alpha i}, \quad \text{with} \ \ {\mathcal{L}}_{\alpha i} := \int_0^{\ell_i} \left\langle \partial_t^\alpha r_i \,, {Q}_i \partial_t^\alpha r_i \right\rangle dx,
\end{equation*}
where ${Q}_i \in C^1([0, \ell_i]; \mathbb{S}^{12})$ and $r = (r_i)_{i\in \mathcal{I}} \in C^0([0, T]; \mathbf{H}_x^k)$ is solution to \eqref{eq:nIGEBfbR}. An analogous result to Proposition \ref{prop:1b_existence_Lyap} also holds for these two functionals, replacing $H_x^k$ with $\mathbf{H}_x^k$. Here, we have in mind to build maps $\overline{Q}_i$ (or $Q_i$) via the same procedure as the single beam case.
First, similar arguments of proofs to that of Lemma \ref{lem:class_barQ}, yield the following lemma for  the tree-shaped network.

\begin{lemma}[Lem. 5.1 \ref{A:MCRF}] \label{lem:class_barQi}
Assume that there exists $\overline{Q}_i \in C^1([0, \ell_i]; \mathbb{R}^{12 \times 12})$ $(i \in \mathcal{I})$ fulfilling:
\begin{enumerate}[label=(\roman*)]
\item \label{pty:barQi_symm_posDef}
$\overline{Q}_i(x) \in \mathbb{S}_{++}^{12}$ for all $x \in [0, \ell_i]$ and $i \in \mathcal{I}$;
\item \label{pty:barQiA_i_symm}
$(\overline{Q}_iA_i)(x) \in \mathbb{S}^{12}$ for all $x \in [0, \ell_i]$ and $i \in \mathcal{I}$;
\item \label{pty:barSi_negDef}
$\overline{S}_i(x) \in \mathbb{S}_{--}^{12}$ for all $x \in [0, \ell_i]$ and $i \in \mathcal{I}$, where $\overline{S}_i := \frac{\mathrm{d}}{\mathrm{d}x}( \overline{Q}_i A_i ) - \overline{Q}_i\overline{B}_i - \overline{B}_i^\intercal\overline{Q}_i$;
\item \label{pty:barcalR_nonPos}
for all $y \in C^0([0, T]; \mathbf{C}^0_x)$ satisfying \eqref{eq:nIGEBfb_cont}-\eqref{eq:nIGEBfb_Kir}-\eqref{eq:nIGEBfb_nSell}-\eqref{eq:nIGEBfb_nS0}, and all $t \in [0, T]$, $\overline{\mathcal{R}}(y,t)\leq 0$, where $\overline{\mathcal{R}}$ is defined by
\begin{align*} 
&
\begin{aligned}
\overline{\mathcal{R}} = \overline{\mathcal{R}}_0 + \overline{\mathcal{R}}_M + \overline{\mathcal{R}}_S,
\end{aligned}\\
\nonumber
\text{with}\qquad
&\overline{\mathcal{R}}_0(y,t) = \left\langle y_1 \,, \overline{Q}_1 A_1 y_1 \right\rangle (0, t), \\
\nonumber
&\overline{\mathcal{R}}_S(y,t) = - {\textstyle \sum_{n \in \mathcal{N}_S \setminus \{0\}}} \left\langle y_n \,, \overline{Q}_n A_n y_n\right\rangle (\ell_n, t),\\
\nonumber
&\overline{\mathcal{R}}_M(y,t) = {\textstyle \sum_{n \in \mathcal{N}_M}} \big[- \left\langle y_n \,, \overline{Q}_n A_n y_n \right \rangle (\ell_n, t) + {\textstyle \sum_{i \in \mathcal{I}_n} } \left\langle y_i \,, \overline{Q}_i A_i y_i \right \rangle (0, t)\big].
\end{align*}
\end{enumerate}
\vspace{-0.2cm}
Then, the steady state $y\equiv 0$ of \eqref{eq:nIGEBfb} is locally $\mathbf{H}^k_x$ exponentially stable.
\end{lemma}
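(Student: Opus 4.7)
The plan is to follow the same four-step blueprint used for the single-beam Lemma \ref{lem:class_barQ}, with the additional work concentrated on the bookkeeping of boundary terms across all edges and on their reassembly at each node of the network. Throughout I will work with the physical system \eqref{eq:nIGEBfb} and the functional $\overline{\mathcal{L}}_\mathrm{net}$ defined above.

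First, for the norm equivalence \eqref{eq:lyap_equiv}, property \ref{pty:barQi_symm_posDef} gives, on each edge $i\in\mathcal{I}$ and each time derivative order $\alpha\leq k$, an $L^2_x$-equivalence $\overline{\mathcal{L}}_{\alpha i}(t)\simeq \|\partial_t^\alpha y_i(\cdot,t)\|_{L^2_x}^2$ modulo constants depending only on $\overline{Q}_i$. Summing over $i$ and $\alpha$ and using the PDE \eqref{eq:nIGEBfb_gov} to trade time derivatives of $y_i$ for spatial derivatives (which is allowed as soon as $\|y\|_{\mathbf{C}_{x,t}^{k-1}}\leq\delta$ is small and $A_i,\overline{B}_i,\overline{g}_i$ are smooth enough, thanks to the extra regularity of $\mathbf{M}_i,\mathbf{C}_i,\mathbf{E}_i$), one obtains $\overline{\mathcal{L}}_\mathrm{net}(t)\simeq \|y(\cdot,t)\|_{\mathbf{H}_x^k}^2$ uniformly on $[0,T]$ for $y$ in a small $\mathbf{C}_{x,t}^{k-1}$-ball.

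Next, for the decay \eqref{eq:lyap_decay}, I would differentiate each $\overline{\mathcal{L}}_{\alpha i}$ in time, substitute $\partial_t (\partial_t^\alpha y_i) = -A_i\partial_x(\partial_t^\alpha y_i) - \overline{B}_i\partial_t^\alpha y_i + \partial_t^\alpha \overline{g}_i(\cdot,y_i)$, and integrate by parts in $x$. Property \ref{pty:barQiA_i_symm} makes $\overline{Q}_iA_i$ symmetric, so that the mixed terms collapse into a single total derivative whose interior contribution is exactly $\frac{\mathrm{d}}{\mathrm{d}x}(\overline{Q}_iA_i)$ and whose boundary contribution yields $[-\langle \partial_t^\alpha y_i,\overline{Q}_iA_i\partial_t^\alpha y_i\rangle]_0^{\ell_i}$. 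Summing,
\begin{align*}
\frac{\mathrm{d}}{\mathrm{d}t}\overline{\mathcal{L}}_\mathrm{net}(t) &= \sum_{i\in\mathcal{I}}\sum_{\alpha=0}^k\int_0^{\ell_i}\langle\partial_t^\alpha y_i,\overline{S}_i\partial_t^\alpha y_i\rangle\,dx \\
&\quad + \sum_{i\in\mathcal{I}}\sum_{\alpha=0}^k\int_0^{\ell_i}\langle\partial_t^\alpha y_i,2\overline{Q}_i\partial_t^\alpha \overline{g}_i(\cdot,y_i)\rangle\,dx + \mathcal{B}(t),
\end{align*}
where $\mathcal{B}(t)$ collects all boundary contributions. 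Property \ref{pty:barSi_negDef} controls the first sum by $-2\beta_0\sum_{i,\alpha}\|\partial_t^\alpha y_i\|_{L^2_x}^2$ for some $\beta_0>0$, and the quadratic nature of $\overline{g}_i$ together with its regularity allows the second sum to be bounded by $C(\delta+\delta^2)\sum_{i,\alpha}\|\partial_t^\alpha y_i\|_{L^2_x}^2$, as in the single-beam proof.

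The key new step is to show $\mathcal{B}(t)\leq 0$. For $\alpha=0$ the sum of the edge endpoint terms reassembles, by our edge-orientation convention and the tree structure, into exactly $\overline{\mathcal{R}}(y,t) = \overline{\mathcal{R}}_0(y,t) + \overline{\mathcal{R}}_M(y,t) + \overline{\mathcal{R}}_S(y,t)$, which is nonpositive by property \ref{pty:barcalR_nonPos}. For $\alpha\geq 1$, one differentiates the nodal conditions \eqref{eq:nIGEBfb_cont}--\eqref{eq:nIGEBfb_nS0} $\alpha$ times in $t$; because those conditions are time-independent and linear in $y$, the differentiated state $\partial_t^\alpha y$ satisfies the same nodal conditions, so property \ref{pty:barcalR_nonPos} applies to it as well and $\mathcal{B}(t)\leq 0$ term by term. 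Choosing $\delta$ small enough that $C(\delta+\delta^2)<\beta_0$ gives $\frac{\mathrm{d}}{\mathrm{d}t}\overline{\mathcal{L}}_\mathrm{net}\leq -2\beta\,\overline{\mathcal{L}}_\mathrm{net}$ for some $\beta>0$, whence \eqref{eq:lyap_decay}. The analog of Proposition \ref{prop:1b_existence_Lyap} (for networks) then concludes the proof.

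\textbf{Main obstacle.} The delicate point is the reassembly giving $\mathcal{B}(t)=\overline{\mathcal{R}}(y,t)$: one must check, edge by edge, that the orientation-dependent signs $\tau_i^n$ implicit in the bracket $[-\langle y_i,\overline{Q}_iA_i y_i\rangle]_0^{\ell_i}$ combine so that, at every multiple node $n\in\mathcal{N}_M$, the $n$-th beam (ending at $n$) contributes at $\ell_n$ with a minus sign while each $i\in\mathcal{I}_n$ (starting at $n$) contributes at $0$ with a plus sign, matching exactly the definition of $\overline{\mathcal{R}}_M$. Equally important is that property \ref{pty:barcalR_nonPos} is required to hold for \emph{all} $y\in C^0([0,T];\mathbf{C}_x^0)$ merely satisfying the transmission/nodal conditions, so that the same bound can be applied to the higher-order time derivatives $\partial_t^\alpha y$ once they have been shown to satisfy those same linear nodal conditions.
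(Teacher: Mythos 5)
Your proposal is correct and follows essentially the same route as the paper: the four assumptions play exactly the roles you assign them (norm equivalence from \ref{pty:barQi_symm_posDef}, integration by parts enabled by \ref{pty:barQiA_i_symm}, interior decay from \ref{pty:barSi_negDef} balanced against the quadratic term, and the nodal bracket terms reassembling into $\overline{\mathcal{R}}$ applied to $\partial_t^\alpha y$, which satisfies the same linear time-independent nodal conditions). The paper itself presents this lemma as obtained by "similar arguments" to the single-beam Lemma \ref{lem:class_barQ}, and your sign bookkeeping at the nodes matches the tree-orientation convention used there.
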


It turns out that the main difficulty in following this idea is the treatment of the boundary terms stored in $\overline{\mathcal{R}}$ appearing during the estimation of the derivative of the Lyapunov functional. Note that $\overline{\mathcal{R}}$ is just another expression for $\sum_{i\in\mathcal{I}} \sum_{\alpha=0}^k \big[\left\langle \partial_t^\alpha y_i \,, \overline{Q}_i A_i \partial_t^\alpha y_i \right\rangle\big]_0^{\ell_i}$.

For a \emph{star-shaped} network (i.e. $\mathcal{N}_M = \{1\}$) such that the multiple node is free (i.e. $K_1 = \mathbf{0}_6$) while velocity feedback controls are applied at \emph{all} simple nodes (i.e. $K_n$ is symmetric positive definite for all $n \in \mathcal{N}_S$), we obtain the following stabilization result.

\begin{theorem}[Th. 2.4 \ref{A:MCRF}] \label{thm:stabilization}
Let $k \in \{1, 2\}$, suppose that Assumption \ref{as:mass_flex} is fulfilled for $m=k+1$, and that $\mathbf{E}_i \in C^k([0, \ell_i]; \mathbb{R}^{6 \times 6})$ for all $i\in\mathcal{I}$. If $\mathcal{N}_M = \{1\}$, $K_1= \mathbf{0}_6$, and $K_n$ is symmetric positive definite for all $n \in \mathcal{N}_S$, then the steady state $y \equiv 0$ of \eqref{eq:nIGEBfb} is locally $\mathbf{H}^k_x$ exponentially stable.
\end{theorem}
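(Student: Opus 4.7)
\noindent \textbf{Idea of the proof of Theorem \ref{thm:stabilization}.} The plan is to apply Lemma \ref{lem:class_barQi} by constructing, on each edge $i\in \mathcal{I}$, a matrix-valued map $\overline{Q}_i$ using the same Ansatz as in the single-beam case (Step 1 of the proof of Theorem \ref{thm:1b_stabilization}):
\begin{equation*}
\overline{Q}_i = \rho \, Q_i^\mathcal{P} + w_i \begin{bmatrix} \mathbf{0} & \mathbf{W}_i \\ \mathbf{W}_i^\intercal & \mathbf{0} \end{bmatrix},
\end{equation*}
with a constant $\rho>0$ shared by \emph{all} edges, an extradiagonal matrix $\mathbf{W}_i$ chosen as in \eqref{eq:boldW} so that $(\overline{Q}_i A_i)(x) \in \mathbb{S}^{12}$ and $\Lambda_i(x) \in \mathbb{S}^{12}_{++}$ (see \eqref{eq:def_Lambda_Xi}), and a scalar weight $w_i \in C^1([0, \ell_i])$ to be designed. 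The point-wise conditions \ref{pty:barQi_symm_posDef}--\ref{pty:barQiA_i_symm}--\ref{pty:barSi_negDef} of Lemma \ref{lem:class_barQi} are then verbatim the single-beam conditions, handled by Steps 2-3 of the proof of Theorem \ref{thm:1b_stabilization}: we require $w_i$ strictly increasing with $\frac{\mathrm{d}w_i}{\mathrm{d}x} > C_{\Xi_i} C_{\Lambda_i}^{-1}|w_i|$ and $|w_i| < \rho \sqrt{C_{\theta_i}}$.

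The essentially new task is to enforce \ref{pty:barcalR_nonPos}, i.e. $\overline{\mathcal{R}} = \overline{\mathcal{R}}_0 + \overline{\mathcal{R}}_S + \overline{\mathcal{R}}_M \leq 0$. Using the identity $\langle y_i, \overline{Q}_i A_i y_i\rangle = -2\rho \langle v_i, z_i\rangle - w_i\langle v_i, \Lambda_i^\mathrm{I} v_i\rangle - w_i \langle z_i, \Lambda_i^\mathrm{II} z_i\rangle$, the feedback condition at each simple node injects the boundary condition and reduces $\overline{\mathcal{R}}_0$ and each contribution to $\overline{\mathcal{R}}_S$ to a quadratic form in $v_1(0)$ or $v_n(\ell_n)$ with matrix
\begin{equation*}
\mu_n(x,K_n) = -2\rho K_n + w_n(x)\big[\Lambda_n^\mathrm{I}(x) + K_n \Lambda_n^\mathrm{II}(x) K_n\big],
\end{equation*}
exactly as in Step 4 of the single-beam proof; these are nonpositive as soon as $\rho$ is large enough compared to $|w_1(0)|$ and $|w_n(\ell_n)|$ for $n \in \mathcal{N}_S\setminus\{0\}$, thanks to $K_n \in \mathbb{S}^6_{++}$.

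The multiple-node term $\overline{\mathcal{R}}_M$ is where the specific geometry ``star-shaped with free multiple node'' is decisive, and is the main obstacle. With $\mathcal{N}_M = \{1\}$ and $K_1 = \mathbf{0}_6$, the Kirchhoff condition \eqref{eq:nIGEBfb_Kir} becomes $\overline{R}_1(\ell_1) z_1(\ell_1) = \sum_{i \in \mathcal{I}_1} \overline{R}_i(0) z_i(0)$, and the continuity condition \eqref{eq:nIGEBfb_cont} yields $\overline{R}_i(0) v_i(0) = \overline{R}_1(\ell_1) v_1(\ell_1)$ for every $i \in \mathcal{I}_1$. Since $\overline{R}_j$ takes values in the orthogonal group, combining these two identities gives
\begin{equation*}
\langle v_1, z_1\rangle(\ell_1) = \sum_{i \in \mathcal{I}_1}\langle v_i, z_i\rangle(0),
\end{equation*}
so the $2\rho\langle v, z\rangle$-contributions cancel in $\overline{\mathcal{R}}_M$ precisely because we chose the \emph{same} constant $\rho$ on every edge. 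The remainder of $\overline{\mathcal{R}}_M$ reduces to
\begin{equation*}
w_1(\ell_1)\big[\langle v_1, \Lambda_1^\mathrm{I} v_1\rangle + \langle z_1, \Lambda_1^\mathrm{II} z_1\rangle\big](\ell_1) - \sum_{i\in \mathcal{I}_1} w_i(0)\big[\langle v_i, \Lambda_i^\mathrm{I} v_i\rangle + \langle z_i, \Lambda_i^\mathrm{II} z_i\rangle\big](0),
\end{equation*}
which is manifestly nonpositive as soon as $w_1(\ell_1) \leq 0$ and $w_i(0) \geq 0$ for every $i \in \mathcal{I}_1$.

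The final step is to produce weights $w_i$ meeting all constraints simultaneously: on edge $1$, $w_1$ must be increasing from some $w_1(0) \leq 0$ (small compared to $\rho$) up to $w_1(\ell_1) = 0$, obtained via Lemma \ref{lem:exist_g}\ref{item:exist_g_neg} with $a<b=0$; on each edge $i \in \mathcal{I}_1$, $w_i$ must be increasing from $w_i(0) = 0$ up to some $w_i(\ell_i) > 0$ (again small compared to $\rho$), obtained via Lemma \ref{lem:exist_g}\ref{item:exist_g_pos} with $a=0<b$. Choosing $|a|$ and $|b|$ first so that the growth conditions on $\frac{\mathrm{d}w_i}{\mathrm{d}x}$ are met, and then picking $\rho$ large enough so that $|w_1(0)| < \rho \min\{\sqrt{C_{\theta_1}},\, C_{\mu_1(0,K_0)}^{-1}\}$ and $|w_i(\ell_i)| < \rho \min\{\sqrt{C_{\theta_i}},\, C_{\mu_i(\ell_i,K_i)}^{-1}\}$ for $i \in \mathcal{I}_1$, guarantees all four properties of Lemma \ref{lem:class_barQi}, which concludes. \hfill $\meddiamond$
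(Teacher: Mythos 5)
Your proposal is correct and follows essentially the same route as the paper: the same Ansatz $\overline{Q}_i = \rho\, Q_i^\mathcal{P} + w_i\,\mathrm{offdiag}(\mathbf{W}_i)$ with a common $\rho$, the same reduction of the simple-node terms to the matrix $\mu$ of \eqref{eq:def_mu(x,K)}, the same sign constraints $w_1(\ell_1)\leq 0$, $w_i(0)\geq 0$ at the free multiple node, and the same construction of the weights via Lemma \ref{lem:exist_g}. Your explicit verification that the $2\rho\langle v,z\rangle$ contributions cancel at the multiple node (via orthogonality of $\overline{R}_i$ combined with the continuity and Kirchhoff conditions) is a welcome detail that the paper only states implicitly.
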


\noindent \textbf{Idea of the proof.} As we just said, if we want to extend the Lyapunov functional found in the proof of Theorem \ref{thm:1b_stabilization}, differences will appear during the estimation of the boundary terms. One may apply the steps 1, 2 and 3 of the proof of Theorem \ref{thm:1b_stabilization} for each $i \in \mathcal{I}$. Then, each $\overline{Q}_i$ has the form
\begin{align} \label{anzats_barQi}
\overline{Q}_i = \rho_i Q_i^\mathcal{P} + w_i \begin{bmatrix}
\mathbf{0} & \mathbf{W}_i\\
\mathbf{W}_i & \mathbf{0}
\end{bmatrix},
\end{align}
with $\mathbf{W}_i$ as in \eqref{eq:boldW}, and $w_i \in C^1([0, \ell_i])$ increasing and satisfying \eqref{eq:cond_barSi_neg}.
Up until now, we only assumed that the network is tree-shaped. Continuing with the tree-shaped point of view, we use the nodal conditions -- for the transmission conditions one needs $\rho_i = \rho$ to hold for all $i \in \mathcal{I}$ -- to obtain
\begin{align*}
\overline{\mathcal{R}}_0(y,t) &= \left \langle v_1 \,, (-2\rho K_0 - w_1 \Lambda_1^\mathrm{I} - w_1 K_0 \Lambda_1^\mathrm{II} K_0) v_1 \right\rangle (0, t),\\
\overline{\mathcal{R}}_M(y,t) &= {\textstyle \sum_{n \in \mathcal{N}_M}} \big[ \left(\left \langle v_n \,, w_n \Lambda_n^\mathrm{I} v_n\right \rangle  + \left \langle z_n \,, w_n \Lambda_n^\mathrm{II} z_n \right \rangle \right) (\ell_n, t)  \\
&\quad -2 \rho \left \langle v_n \,, \overline{K}_n v_n \right \rangle (\ell_n, t) - {\textstyle \sum_{i \in \mathcal{I}_n}}  \left( \left \langle v_i \,, w_i \Lambda_i^\mathrm{I} v_i\right \rangle + \left \langle z_i \,, w_i \Lambda_i^\mathrm{II} z_i\right \rangle \right) (0, t) \big],\\
\overline{\mathcal{R}}_S(y,t) &= {\textstyle \sum_{n \in \mathcal{N}_S \setminus \{0\}} } \left \langle v_n \,, (- 2 \rho K_n + w_n \Lambda_n^\mathrm{I} + w_n K_n \Lambda_n^\mathrm{II} K_n) v_n\right \rangle (\ell_n, t),
\end{align*}
where $\Lambda_i^\mathrm{I}$ and $\Lambda_i^\mathrm{II}$ are defined as in \eqref{eq:def_Lambda_Xi}.
On the other hand, if the network is star-shaped then the boundary terms take the form
\begin{linenomath}
\begin{equation*}
\begin{aligned}
\overline{\mathcal{R}}(y,t) &= 
\left \langle v_1 \,, (-2\rho K_0 - w_1 \Lambda_1^\mathrm{I} -w_1 K_0 \Lambda_1^\mathrm{II} K_0) v_1 \right \rangle (0, t) + \left \langle v_1 \,, w_1 \Lambda_1^\mathrm{I} v_1\right \rangle (\ell_1, t) \\
&\quad   + \left \langle z_1 \,, w_1 \Lambda_1^\mathrm{II} z_1 \right \rangle (\ell_1, t) + {\textstyle\sum_{i=2}^N} \big[ \left \langle v_i \,, (- 2 \rho K_i + w_i \Lambda_i^\mathrm{I} + w_i K_i \Lambda_i^\mathrm{II} K_i ) v_i\right \rangle (\ell_i, t)\\
&\quad - \left \langle v_i \,, w_i \Lambda_i^\mathrm{I} v_i \right \rangle (0, t) - \left \langle z_i \,, w_i \Lambda_i^\mathrm{II} z_i \right \rangle (0, t) \big].
\end{aligned}
\end{equation*}
\end{linenomath}
The last steps of the proof unfold as follows.
\begin{itemize}
\item \textbf{Estimating the boundary terms.}
To render nonpositive the scalar products containing neither $-2\rho K_0$ nor $-2\rho K_i$, we assume that $w_1(\ell_1)$ is nonpostitive and $w_i(0)$ for all $i \geq 2$ is nonnegative.
This yields $\overline{\mathcal{R}}(y,t) \leq \left \langle v_1 \,, \mu_1(0, K_0) v_1 \right \rangle (0, t) + \sum_{i=2}^N \left \langle v_i \,, \mu_i(\ell_i, K_i) v_i \right \rangle (\ell_i, t)$, defining the matrix $\mu_i(x,K)$ as in \eqref{eq:def_mu(x,K)}. Once again $\mu_i(\ell_i, K_i)$ is negative semi-definite if $\rho$ is large enough in comparison to $|w_i(x)|$, and this enables us to make the above sign assumption on the weights without disagreeing with the fact that they all have to be increasing.
To summarise, all weights should be increasing, $w_1$ nonpositive and $w_i$ $(i\geq 2)$ nonnegative, and should satisfy 
\begin{equation*} 
\begin{aligned}
&w_1(0) > - \rho \chi_1, \quad \text{with }\chi_1= \min \left\{C_{\theta_1}^{-\sfrac{1}{2}}, C_{\mu_1(0,K_0)}^{-1}\right\},\\
&w_i(\ell_i) < \rho \chi_i, \quad \ \ \, \text{with } \chi_i = \min \left\{C_{\theta_i}^{-\sfrac{1}{2}}, C_{\mu_i(\ell_i,K_i)}^{-1}\right\}, \quad \text{for all }i \in \{2, \ldots, N\}
\end{aligned}
\end{equation*}
for $C_{\theta_i}$ and $C_{\mu_i(x,K)}$ defined as in the step 3 and 4 of the proof of Theorem \ref{thm:1b_stabilization}. 

\item \textbf{Existence of the weights.} 
Such weights are provided by Lemma \ref{lem:exist_g}, using this time \ref{item:exist_g_neg} for the first beam (with $b-\rho\chi_1 < a < b \leq 0$) and setting $w_1 = q - q(\ell)$, while \ref{item:exist_g_pos} is again used for the other beams with $w_i$ chosen similarly to the single beam case in Theorem \ref{thm:1b_stabilization} (see Fig. \eqref{fig:star_weights}). \hfill $\meddiamond$
\end{itemize}

\begin{figure}
\centering
\includegraphics[scale=0.9]{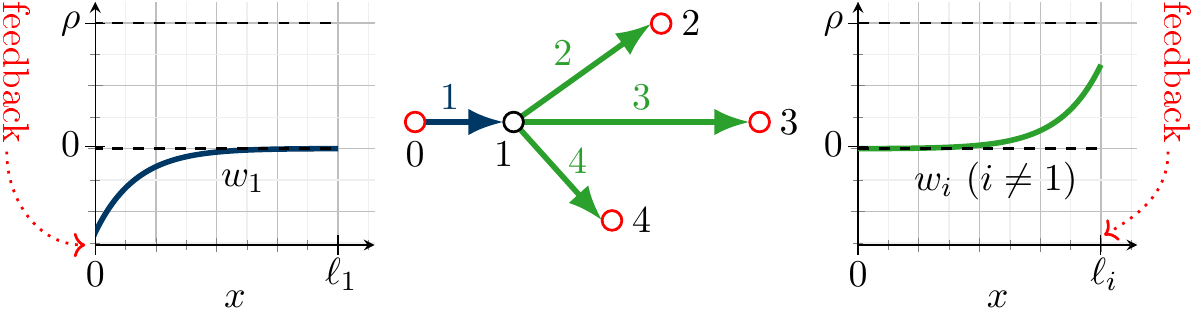}
\caption{Plot of $\rho$ and the weight functions $w_1 = q - q(\ell)$ with $q$ defined by \eqref{eq:def_q_neg} ($a=-1$, $b=0$), and $w_i = q-q(0)$ with $q$ defined by \eqref{eq:def_q_pos} for $i\geq 2$ ($a=0$, $b=1$). 
We used the constants $\eta=5$, $\rho=1.5$ and $\ell = 1$.}
\label{fig:star_weights}
\end{figure}


\medskip

Out of the setting of star-shaped networks controlled at all simple nodes, it is not clear how one may obtain the property \ref{pty:barcalR_nonPos} of Lemma \ref{lem:class_barQi} without contradicting the monotonicity assumption on $w_i$ ($i \in \mathcal{I}$). 
One may wonder what happens when one of the controls is removed, the beam of index $i=1$ being clamped or free at the node $n=0$ for instance, while we may apply a feedback at the multiple node -- meaning that $K_1 \in \mathbb{S}^6_{++}$. In that case, the boundary terms stored in $\overline{\mathcal{R}}(y,t)$ become
\begin{linenomath}
\begin{equation*}
\begin{aligned}
\overline{\mathcal{R}}&(y,t) = 
- \left \langle z_1 \,, w_1 \Lambda_1^\mathrm{II} z_1\right \rangle (0, t) + \left \langle v_1 \,, (- 2\rho\overline{K}_1 + w_1 \Lambda_1^\mathrm{I}) v_1\right \rangle (\ell_1, t)  \\
&+ \left \langle z_1 \,, w_1 \Lambda_1^\mathrm{II} z_1 \right \rangle (\ell_1, t) + { \textstyle \sum_{i=2}^N} \big[ \left \langle v_i \,, (- 2 \rho K_i + w_i \Lambda_i^\mathrm{I} + w_i K_i \Lambda_i^\mathrm{II} K_i ) v_i\right \rangle (\ell_i, t) \\
&- \left \langle v_i \,, w_i \Lambda_i^\mathrm{I} v_i \right \rangle (0, t) + \left \langle z_i \,, w_i \Lambda_i^\mathrm{II} z_i \right \rangle (0, t) \big].
\end{aligned}
\end{equation*}
\end{linenomath}
Here, one cannot both assume that $w_1(0) \geq 0$ -- in order to estimate the first term in the above expression -- and that $w_1(\ell_1) \leq 0$ -- in order to estimate the term $\left \langle z_1 \,, w_1 \Lambda_1^\mathrm{II} z_1 \right \rangle (\ell_1, t)$ -- without contradicting the fact that $w_1$ should be increasing.

\subsection*{Point of view of the diagonal system}
\label{sec:pov_diag_syst}

%
We now take the perspective of the system written in Riemann invariants.
We introduce some additional notation for multiple nodes $n \in \mathcal{N}_M$. Recall that $k_n$ is the degree of the $n$ and that the matrices $\gamma_0^0, \sigma_0^0$, $\gamma_i^n, \sigma_i^n$ and $\overline{K}_n$ are defined in \eqref{eq:def_gamma0_sigma0}, \eqref{eq:def_gamma_sigma_in} and \eqref{eq:def_barKn}. We define $\sigma^n \in \mathbb{S}_{++}^{6k_n}$ and the square block matrix $\textswab{I}_n \in \mathbb{R}^{6k_n \times 6k_n}$ by
\begin{equation*}
\sigma^n = \sigma_n^n + {\textstyle \sum_{j=2}^{k_n}} \sigma_{i_j}^n, \qquad \textswab{I}_n = \begin{bmatrix}
\mathbf{I}_6 &\ldots & \mathbf{I}_6\\
\vdots & \ddots  & \vdots \\
\mathbf{I}_6 & \ldots & \mathbf{I}_6
\end{bmatrix},
\end{equation*}
and the block diagonal matrices $\mathrm{diag}(\sigma^n + \overline{K}_n)\in \mathbb{R}^{6k_n \times 6k_n}$ and $\mathrm{diag}(\sigma_i^n) \in \mathbb{R}^{6k_n \times 6k_n}$ by
\begin{equation*}
\begin{aligned}
\mathrm{diag}(\sigma_i^n) &=\mathrm{diag}\big(\sigma_n^n, \sigma_{i_2}^n, \ldots, \sigma_{i_{k_n}}^n \big),\\
 \mathrm{diag}(\sigma^n+\overline{K}_n) &= \mathrm{diag}\big(\sigma^n + \overline{K}_n , \ldots , \sigma^n + \overline{K}_n\big).
\end{aligned}
\end{equation*}
Then for the network system in Riemann invariants, one may rewrite the nodal conditions as in the following lemma.

\begin{lemma}[Lem. 4.4 \ref{A:MCRF}] 
The nodal conditions of \eqref{eq:nIGEBfbR} are equivalent to 
\begin{equation} \label{eq:nodal_cond_calBn}
r^{\mathrm{out}}_n(t) = \mathcal{B}_n r^{\mathrm{in}}_n(t), \qquad t \in [0, T], \text{ for all } n \in \mathcal{N},
\end{equation}
where $\mathcal{B}_n \in \mathbb{R}^{6 k_n \times 6 k_n}$ is defined by
\begin{equation*} 
\mathcal{B}_n =
\begin{dcases}
((\sigma_n^n + \overline{K}_n)\gamma_n^n)^{-1}(\sigma_n^n - \overline{K}_n)\gamma_n^n, & \text{if }n \in \mathcal{N}_S,\\
2 \mathbf{G}_n^{-1}\mathrm{diag}(\sigma^n+\overline{K}_n)^{-1}  
\textswab{I}_n \mathrm{diag}(\sigma_i^n) \mathbf{G}_n -  \mathbf{I}_{6k_n}, & \text{if } n \in \mathcal{N}_M.
\end{dcases}
\end{equation*}
\end{lemma}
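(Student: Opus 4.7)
The claim splits into two independent cases, one for each type of node; in both, the strategy is simply to rewrite the nodal conditions of \eqref{eq:nIGEBfbR} as an explicit linear relation between $r_n^{\mathrm{out}}(t)$ and $r_n^{\mathrm{in}}(t)$, so the whole argument is linear algebra.

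\textbf{Case $n \in \mathcal{N}_S$.} Here the plan is immediate: the nodal condition \eqref{eq:nIGEBfbR_simple} reads $(\sigma_n^n + \overline{K}_n)\gamma_n^n r_n^{\mathrm{out}} = (\sigma_n^n - \overline{K}_n)\gamma_n^n r_n^{\mathrm{in}}$. Since $\sigma_n^n \in \mathbb{S}_{++}^6$ (by the definition \eqref{eq:def_gamma_sigma_in}, or \eqref{eq:def_gamma0_sigma0} when $n=0$) and $\overline{K}_n \in \mathbb{S}_+^6$, the sum $\sigma_n^n + \overline{K}_n$ is symmetric positive definite, hence invertible; $\gamma_n^n$ is invertible by construction. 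Left-multiplying by $((\sigma_n^n+\overline{K}_n)\gamma_n^n)^{-1}$ yields \eqref{eq:nodal_cond_calBn} with the stated $\mathcal{B}_n$.

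\textbf{Case $n \in \mathcal{N}_M$.} Here the plan is to invert the linear system \eqref{eq:nIGEBfbR_transmi} block by block. I introduce the shorthand $u = \mathbf{G}_n r_n^{\mathrm{out}}$ and $w = \mathbf{G}_n r_n^{\mathrm{in}}$, decomposed into $k_n$ blocks $u_1,\dots,u_{k_n}$ and $w_1,\dots,w_{k_n}$ in $\mathbb{R}^6$ (with the convention $i_1 = n$ compatible with \eqref{eq:nota_In_indices}). The $2$nd through $k_n$-th block-rows of $\mathbf{A}_n u = \mathbf{B}_n w$ give the $k_n - 1$ relations
\begin{equation*}
-u_1 + u_j \;=\; w_1 - w_j, \qquad j = 2, \dots, k_n,
\end{equation*}
i.e.\ $u_j = u_1 + w_1 - w_j$. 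Substituting into the first block-row and using $\sigma^n = \sigma_n^n + \sum_{j=2}^{k_n}\sigma_{i_j}^n$, the terms reorganize as
\begin{equation*}
(\sigma^n + \overline{K}_n)\,u_1 \;=\; -(\sigma^n + \overline{K}_n)\,w_1 \;+\; 2\sum_{j=1}^{k_n}\sigma_{i_j}^n w_j,
\end{equation*}
and since $\sigma^n + \overline{K}_n \in \mathbb{S}_{++}^{6}$ one may invert it to obtain $u_1$. Reinjecting this expression in $u_j = u_1 + w_1 - w_j$ gives the uniform formula
\begin{equation*}
u_j \;=\; -w_j + 2\,(\sigma^n+\overline{K}_n)^{-1}\sum_{k=1}^{k_n}\sigma_{i_k}^n w_k, \qquad j = 1,\dots,k_n.
\end{equation*}
The final step is to recognise this as a block-matrix identity: the map $w \mapsto \bigl(\sum_k \sigma_{i_k}^n w_k\bigr)_{j=1}^{k_n}$ is precisely $\textswab{I}_n \,\mathrm{diag}(\sigma_i^n)\, w$, and replication of $(\sigma^n+\overline{K}_n)^{-1}$ along the diagonal is $\mathrm{diag}(\sigma^n+\overline{K}_n)^{-1}$. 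Undoing $u = \mathbf{G}_n r_n^{\mathrm{out}}$, $w = \mathbf{G}_n r_n^{\mathrm{in}}$ yields \eqref{eq:nodal_cond_calBn} with the stated $\mathcal{B}_n$. The invertibility of $\mathbf{A}_n \mathbf{G}_n$ is a by-product of this solvability argument, so no separate verification is needed.

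\textbf{Main obstacle.} There is essentially no analytical difficulty; the only point requiring care is the bookkeeping of the block structure (the conventions \eqref{eq:nota_In_indices} and $i_1 = n$, the decomposition $\sigma^n = \sum_{j=1}^{k_n}\sigma^n_{i_j}$, and the fact that $\textswab{I}_n\,\mathrm{diag}(\sigma_i^n)$ is exactly the operator producing the common sum in every block). Once that bookkeeping is fixed, both equivalences drop out of a direct computation, and the conclusion holds for every $t \in [0,T]$ since the argument is pointwise in time.
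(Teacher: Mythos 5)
Your proof is correct and follows what is essentially the only natural route, which is also the one taken in the paper: for simple nodes one inverts $(\sigma_n^n+\overline{K}_n)\gamma_n^n$ directly, and for multiple nodes one performs the block elimination of \eqref{eq:nIGEBfbR_transmi} (rows $2,\dots,k_n$ give $u_j=u_1+w_1-w_j$, row $1$ then determines $u_1$ via $(\sigma^n+\overline{K}_n)^{-1}$), recognising the result as $2\,\mathrm{diag}(\sigma^n+\overline{K}_n)^{-1}\textswab{I}_n\,\mathrm{diag}(\sigma_i^n)-\mathbf{I}_{6k_n}$ conjugated by $\mathbf{G}_n$. The bookkeeping with the convention $i_1=n$ and the observation that invertibility of $\mathbf{A}_n\mathbf{G}_n$ falls out of the elimination are both consistent with the paper's setup.
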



On the other hand, if the beam is rather free (or clamped) at the node $n \in \mathcal{N}_S$, then $\mathcal{B}_n$ is given by $\mathcal{B}_n = \mathbf{I}_6$ (resp. $\mathcal{B}_n = - \mathbf{I}_6$).
Henceforth, for functions $Q_i$ ($i \in \mathcal{I}$) having values in $\mathbb{D}^{12}$, we denote $Q_i = \mathrm{diag}(Q_{i}^-, \ Q_{i}^+)$ where $Q_{i}^-, Q_{i}^+$ have values in $\mathbb{D}^6$. Moreover, for all $n \in \mathcal{N}$, we introduce the matrices $Q_n^\mathrm{out}, Q_n^\mathrm{in}, \bar{D}_n \in \mathbb{R}^{6k_n \times 6 k_n}$ defined as follows (see \eqref{eq:nota_In_indices}):
\begin{align*}
Q_n^\mathrm{out} &=
\begin{cases}
Q_1^+(0) & n=0\\
Q_{n}^-(\ell_n) & n \in \mathcal{N}_S \setminus \{0\}\\
\mathrm{diag}(Q_{n}^-(\ell_n), \ Q_{i_2}^+(0), \ Q_{i_3}^+(0) , \ \ldots, \ Q_{i_{k_n}}^+(0) ) & n \in \mathcal{N}_M,
\end{cases}\\
Q_n^\mathrm{in} &= 
\begin{cases}
Q_1^-(0) & n=0\\
Q_n^+(\ell_n) & n\in\mathcal{N}_S \setminus\{0\}\\
\mathrm{diag}(Q_{n}^+(\ell_n), \ Q_{i_2}^-(0), \ Q_{i_3}^-(0) , \ \ldots, \ Q_{i_{k_n}}^-(0) ) &n\in \mathcal{N}_M,
\end{cases}\\
\bar{D}_n &= 
\begin{cases}
D_1(0) & n=0\\
D_n(\ell_n) & n \in \mathcal{N}_S\setminus \{0\}\\
\mathrm{diag} ( D_n(\ell_n), \ D_{ i_2}(0), \ D_{i_3}(0) , \ \ldots, \ D_{i_{k_n}}(0) ) & n\in\mathcal{N}_M.
\end{cases}
\end{align*}
Then, one obtains the following lemma, analogous to Lemma \ref{lem:class_barQi} but where the boundary terms $\sum_{i\in\mathcal{I}} \sum_{\alpha=0}^k \big[\left\langle \partial_t^\alpha r_i \,, Q_i \mathbf{D}_i \partial_t^\alpha r_i \right\rangle\big]_0^{\ell_i}$ have been further processed by means of \eqref{eq:nodal_cond_calBn}.

\begin{lemma}[Lem. 5.3 \ref{A:MCRF}] \label{lem:class_barQi_Riem}
Assume that there exists $Q_i  \in C^1([0, \ell_i]; \mathbb{R}^{12\times 12})$ $(i \in \mathcal{I})$, fulfilling
\begin{enumerate}[label=(\roman*)]
\item \label{pty:Qi_diag_posDef}
$Q_i(x) \in \mathbb{D}_{++}^{12}$ for all $x \in [0, \ell_i]$ and $i\in \mathcal{I}$;
\item \label{pty:Si_negDef}
for all $x \in [0, \ell_i]$ and $i\in \mathcal{I}$, $S_i(x) \in \mathbb{S}_{--}^{12}$, where $S_i := \frac{\mathrm{d}}{\mathrm{d}x}( Q_i \mathbf{D}_i ) - Q_i B_i - B_i^\intercal Q_i$;
\item \label{pty:calMn_negSemiDef}
$\mathcal{M}_n \in \mathbb{S}_-^{6k_n}$ for all $n \in \mathcal{N}$, where $\mathcal{M}_n := \mathcal{B}_n^\intercal Q_n^\mathrm{out} \bar{D}_n \mathcal{B}_n - Q_n^\mathrm{in} \bar{D}_n$.
\end{enumerate}
Then, the steady state $r\equiv 0$ of \eqref{eq:nIGEBfbR} is locally $\mathbf{H}^k_x$ exponentially stable.
\end{lemma}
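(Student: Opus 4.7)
The plan is to adapt the arguments of Lemma \ref{lem:class_barQi} to the diagonal formulation, working directly in the Riemann invariants. I would define the candidate Lyapunov functional
\begin{equation*}
\mathcal{L}_{\mathrm{net}}(t) = \sum_{i \in \mathcal{I}} \sum_{\alpha=0}^{k} \int_0^{\ell_i} \langle \partial_t^\alpha r_i \,, Q_i \partial_t^\alpha r_i \rangle \, dx,
\end{equation*}
and then verify the two properties needed by the diagonal analogue of Proposition \ref{prop:1b_existence_Lyap}: equivalence with the $\mathbf{H}^k_x$-norm, and exponential decay along trajectories. Equivalence follows directly from hypothesis (i): the positive-diagonal property of $Q_i(x)$ uniformly in $x$ gives a two-sided bound between $\mathcal{L}_{\mathrm{net}}$ and $\sum_i \sum_\alpha \|\partial_t^\alpha r_i(\cdot,t)\|_{L^2_x}^2$, and this latter sum is equivalent to $\sum_i \|r_i(\cdot,t)\|_{H^k_x}^2$ whenever $r$ stays in a small $\mathbf{C}^{k-1}_x$-ball, using the diagonalised governing equations to trade time derivatives for spatial derivatives modulo quadratic terms.

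For the decay, I would differentiate $\mathcal{L}_{\mathrm{net}}$ in time along a solution of \eqref{eq:nIGEBfbR}, substitute $\partial_t(\partial_t^\alpha r_i)$ using the governing equation, and integrate by parts in $x$. Because the coefficient matrices $Q_i$ and $\mathbf{D}_i$ are both diagonal (hence commute and are symmetric), the awkward symmetry condition \ref{pty:barQiA_i_symm} of Lemma \ref{lem:class_barQi} is automatic here; the interior contribution becomes
\begin{equation*}
-\sum_{i, \alpha}\int_0^{\ell_i} \langle \partial_t^\alpha r_i \,, S_i \partial_t^\alpha r_i \rangle \, dx \;+\; \text{(nonlinear lower-order terms)} \;+\; \text{(boundary terms)},
\end{equation*}
where the nonlinear contribution coming from $g_i(\cdot, r_i)$ is quadratic in $r_i$ with an extra $\partial_t^\alpha r_i$ factor, so it is bounded by a constant times $(\delta + \delta^2) \sum_{i,\alpha}\|\partial_t^\alpha r_i\|_{L^2_x}^2$ as long as $\|r(\cdot,t)\|_{\mathbf{C}^{k-1}_x} \leq \delta$. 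Hypothesis (ii) provides a uniform strictly negative upper bound for $\langle \cdot , S_i \cdot \rangle$, so for $\delta$ sufficiently small the interior plus nonlinear terms are bounded above by $-2\beta \mathcal{L}_{\mathrm{net}}$ for some $\beta>0$.

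The main obstacle—and the step specific to this lemma—is the treatment of the boundary terms. The integration by parts produces at each edge endpoint a quantity of the form $-[\langle \partial_t^\alpha r_i\,, Q_i \mathbf{D}_i \partial_t^\alpha r_i\rangle]_0^{\ell_i}$. I would regroup these edge-endpoint contributions by nodes, separating at each node $n$ the components belonging to the outgoing information $\partial_t^\alpha r_n^{\mathrm{out}}$ and the incoming information $\partial_t^\alpha r_n^{\mathrm{in}}$. With the sign conventions and orientations of Appendix \ref{ap:out_in_info}, this contribution at node $n$ takes the form
\begin{equation*}
\langle \partial_t^\alpha r_n^{\mathrm{in}}\,, Q_n^{\mathrm{in}} \bar D_n \partial_t^\alpha r_n^{\mathrm{in}}\rangle - \langle \partial_t^\alpha r_n^{\mathrm{out}}\,, Q_n^{\mathrm{out}} \bar D_n \partial_t^\alpha r_n^{\mathrm{out}}\rangle,
\end{equation*}
up to sign bookkeeping that uses the positivity of $\bar D_n$. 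Now I would invoke the equivalent reformulation \eqref{eq:nodal_cond_calBn} of the nodal conditions, $\partial_t^\alpha r_n^{\mathrm{out}} = \mathcal{B}_n \partial_t^\alpha r_n^{\mathrm{in}}$ (the matrix $\mathcal{B}_n$ is time independent, so the relation differentiates trivially in $t$), which turns the sum above into exactly $-\langle \partial_t^\alpha r_n^{\mathrm{in}}\,, \mathcal{M}_n \partial_t^\alpha r_n^{\mathrm{in}}\rangle$. Hypothesis (iii) makes each such contribution nonnegative when carrying a minus sign in $\frac{d}{dt}\mathcal{L}_{\mathrm{net}}$, so the total boundary contribution is $\leq 0$ and can be discarded.

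Combining the three estimates yields $\frac{d}{dt}\mathcal{L}_{\mathrm{net}}(t) \leq -2\beta \mathcal{L}_{\mathrm{net}}(t)$ for sufficiently small solutions, hence $\mathcal{L}_{\mathrm{net}}(t) \leq e^{-2\beta t} \mathcal{L}_{\mathrm{net}}(0)$, and then the equivalence of norms gives the desired local $\mathbf{H}^k_x$ exponential stability, after a standard bootstrap/continuation argument using Theorem \ref{th:local_existence} to ensure the solution remains in the smallness regime for all $t \geq 0$. I anticipate that the only delicate point is verifying carefully the sign bookkeeping in the regrouping by nodes (in particular at multiple nodes where $\bar D_n$ mixes the signs coming from incoming vs.\ outgoing characteristics on different edges), which is precisely what forces the specific definition of $\mathcal{M}_n$ in hypothesis (iii).
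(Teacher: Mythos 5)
Your proposal is correct and follows essentially the same route as the paper: the diagonal analogue of Lemma \ref{lem:class_barQi} (where symmetry of $Q_i\mathbf{D}_i$ is automatic), with the endpoint terms regrouped node by node into outgoing/incoming blocks and eliminated via the reformulated nodal conditions \eqref{eq:nodal_cond_calBn}, which is exactly how $\mathcal{M}_n$ is designed. The only blemish is the sign in your displayed node contribution: the term entering $\frac{\mathrm{d}}{\mathrm{d}t}\mathcal{L}_{\mathrm{net}}$ at node $n$ is $\left\langle \partial_t^\alpha r_n^{\mathrm{out}}, Q_n^{\mathrm{out}}\bar{D}_n \partial_t^\alpha r_n^{\mathrm{out}}\right\rangle - \left\langle \partial_t^\alpha r_n^{\mathrm{in}}, Q_n^{\mathrm{in}}\bar{D}_n \partial_t^\alpha r_n^{\mathrm{in}}\right\rangle = \left\langle \partial_t^\alpha r_n^{\mathrm{in}}, \mathcal{M}_n \partial_t^\alpha r_n^{\mathrm{in}}\right\rangle \leq 0$ directly, with no extra minus sign needed.
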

%
%

Note that for any functions $\{Q_i\}_{i\in \mathcal{I}}$ defined by \eqref{eq:rel_Q_barQ} with $\{\overline{Q}_i\}_{i \in \mathcal{I}}$ satisfying the assumptions of Lemma \ref{lem:class_barQi}, the assumptions of Lemma \ref{lem:class_barQi_Riem} hold. Indeed, basic computations yield the following proposition.

\begin{proposition} \label{prop:rel_Lyap_Q_barQ}
Let $i \in \mathcal{I}$ and $\overline{Q}_i, Q_i \in C^1([0, \ell_i]; \mathbb{S}^{12})$ be such that \eqref{eq:rel_Q_barQ}. 
\begin{itemize}
\item For any $x \in [0, \ell_i]$, if $(\overline{Q}_iA_i)(x) \in \mathbb{S}^{12}$, then $\overline{S}_i(x) \in \mathbb{S}_{--}^{12}$ if and only if $S_i(x) \in \mathbb{S}_{--}^{12}$.
\item  For any $x \in [0, \ell_i]$, $\overline{Q}_i(x)$ is positive definite if and only if $Q_i(x)$ is positive definite. 
\item For any $y\in C^0([0, T]; \mathbf{C}^0_x)$ fulfilling the nodal conditions \eqref{eq:nIGEBfb_cont}-\eqref{eq:nIGEBfb_Kir}-\eqref{eq:nIGEBfb_nSell}-\eqref{eq:nIGEBfb_nS0} of the physical system and any $t \in [0, T]$, $\overline{\mathcal{R}}(y,t)\leq 0$ if and only if $\{\mathcal{M}_n\}_{n \in \mathcal{N}} \subset \mathbb{S}_-^{6k_n}$.
\end{itemize}
\end{proposition}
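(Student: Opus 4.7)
The plan is to deduce all three items from a single observation: by \eqref{eq:rel_Q_barQ} the matrix $Q_i$ is obtained from $\overline{Q}_i$ by a (pointwise) congruence with the invertible matrix $L_i^{-1}$, and this congruence is compatible with the identities $A_i = L_i^{-1}\mathbf{D}_i L_i$ from Proposition \ref{prop:A_hyperbolic} and $B_i = L_i \overline{B}_i L_i^{-1} + L_i A_i \tfrac{\mathrm{d}}{\mathrm{d}x}L_i^{-1}$ from Step~1 of the proof of Theorem \ref{th:semi_global_existence}. Item~(b) is then immediate because a congruence by an invertible matrix preserves positive definiteness. For item~(a), I would expand $S_i = \tfrac{\mathrm{d}}{\mathrm{d}x}(Q_i \mathbf{D}_i) - Q_i B_i - B_i^\intercal Q_i$ after substituting $Q_i \mathbf{D}_i = (L_i^{-1})^\intercal \overline{Q}_i A_i L_i^{-1}$ and the expression for $B_i$. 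Two occurrences of $(L_i^{-1})^\intercal \overline{Q}_i A_i \tfrac{\mathrm{d}}{\mathrm{d}x}L_i^{-1}$ (one from $\tfrac{\mathrm{d}}{\mathrm{d}x}(Q_i \mathbf{D}_i)$ and one from $Q_i B_i$) cancel without any hypothesis, while the two terms $\tfrac{\mathrm{d}}{\mathrm{d}x}(L_i^{-1})^\intercal \overline{Q}_i A_i L_i^{-1}$ and $\tfrac{\mathrm{d}}{\mathrm{d}x}(L_i^{-1})^\intercal A_i^\intercal \overline{Q}_i L_i^{-1}$ cancel precisely under the hypothesis $(\overline{Q}_i A_i)^\intercal = \overline{Q}_i A_i$. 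What survives is $S_i = (L_i^{-1})^\intercal \overline{S}_i L_i^{-1}$, a congruence, so $S_i(x)\in \mathbb{S}_{--}^{12}$ iff $\overline{S}_i(x) \in \mathbb{S}_{--}^{12}$.

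For item~(c), I would rewrite $\overline{\mathcal{R}}(y,t)$ in Riemann invariants through $y_i = L_i^{-1} r_i$, using
\begin{equation*}
(L_i^{-1})^\intercal \overline{Q}_i A_i L_i^{-1} = Q_i \mathbf{D}_i = \mathrm{diag}(-Q_i^- D_i,\, Q_i^+ D_i),
\end{equation*}
so that $\langle y_i, \overline{Q}_i A_i y_i\rangle(x,t) = -\langle r_i^-, Q_i^- D_i\, r_i^-\rangle(x,t) + \langle r_i^+, Q_i^+ D_i\, r_i^+\rangle(x,t)$. Collecting these contributions endpoint by endpoint and regrouping them by node, and recalling from Appendix \ref{ap:out_in_info} that the outgoing component at $x=0$ (resp.\ $x=\ell_i$) is $r_i^+$ (resp.\ $r_i^-$), one recognises in $\overline{\mathcal{R}}_0 + \overline{\mathcal{R}}_S + \overline{\mathcal{R}}_M$ exactly the block vectors $r_n^{\mathrm{out}}, r_n^{\mathrm{in}}$ and the block-diagonal matrices $Q_n^{\mathrm{out}}, Q_n^{\mathrm{in}}, \bar{D}_n$. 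The three pieces then merge into
\begin{equation*}
\overline{\mathcal{R}}(y,t) = \sum_{n \in \mathcal{N}} \bigl[\langle r_n^{\mathrm{out}}, Q_n^{\mathrm{out}} \bar{D}_n\, r_n^{\mathrm{out}}\rangle - \langle r_n^{\mathrm{in}}, Q_n^{\mathrm{in}} \bar{D}_n\, r_n^{\mathrm{in}}\rangle\bigr],
\end{equation*}
and substituting the nodal law $r_n^{\mathrm{out}} = \mathcal{B}_n r_n^{\mathrm{in}}$ from \eqref{eq:nodal_cond_calBn} yields $\overline{\mathcal{R}}(y,t) = \sum_{n \in \mathcal{N}} \langle r_n^{\mathrm{in}}, \mathcal{M}_n r_n^{\mathrm{in}}\rangle$. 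Since each edge endpoint contributes its incoming component to exactly one $r_n^{\mathrm{in}}$, the family $\{r_n^{\mathrm{in}}\}_{n\in \mathcal{N}}$ can be prescribed arbitrarily and independently by choosing $y \in \mathbf{C}^0_x$ suitably (consistency at nodes only fixes $r_n^{\mathrm{out}}$ from $r_n^{\mathrm{in}}$). Hence $\overline{\mathcal{R}}(y,t)\leq 0$ for every admissible $y$ is equivalent to $\mathcal{M}_n \in \mathbb{S}_-^{6k_n}$ for every $n \in \mathcal{N}$.

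The hard part will not be any conceptual obstruction but rather the bookkeeping in item~(c): one must carry the index conventions of $\mathcal{I}^n$, $\mathcal{I}_n$, $r_n^{\mathrm{out}}$, $r_n^{\mathrm{in}}$ consistently through the three cases $n=0$, $n\in\mathcal{N}_S\setminus\{0\}$ and $n\in\mathcal{N}_M$, verifying in particular that each endpoint appears exactly once in the final sum. Items (a) and (b) are then short algebraic exercises on congruence transformations, and the symmetry hypothesis $(\overline{Q}_i A_i)^\intercal = \overline{Q}_i A_i$ enters only at the precise point where it is needed.
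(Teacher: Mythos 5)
Your proposal is correct and fills in precisely the "basic computations" that the paper leaves implicit: items (a) and (b) via the congruence $S_i=(L_i^{-1})^\intercal \overline{S}_i L_i^{-1}$, $Q_i=(L_i^{-1})^\intercal \overline{Q}_i L_i^{-1}$ (with the symmetry of $\overline{Q}_iA_i$ entering exactly where you say, to cancel the two derivative-of-$L_i^{-1}$ terms), and item (c) via the identity $\langle y_i,\overline{Q}_iA_iy_i\rangle=-\langle r_i^-,Q_i^-D_ir_i^-\rangle+\langle r_i^+,Q_i^+D_ir_i^+\rangle$ regrouped node by node and combined with $r_n^{\mathrm{out}}=\mathcal{B}_nr_n^{\mathrm{in}}$ and the free prescribability of $\{r_n^{\mathrm{in}}\}_{n\in\mathcal{N}}$. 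This is the same route the paper intends, so no further comment is needed.
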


Consider $\overline{Q}_i$ ($i \in \mathcal{I}$) defined by \eqref{anzats_barQi} with $\mathbf{W}_i$ given by \eqref{eq:boldW_MCfrac}, where each $w_i\in C^1([0, \ell_i])$ is increasing with its derivative fulfilling \eqref{eq:cond_barSi_neg} for all $x \in [0, \ell_i]$, and where $\rho_i = \rho >0$ for all $i \in \mathcal{I}$. We have seen that associated $Q_i$ defined by \eqref{eq:rel_Q_barQ} takes the form
\begin{align}\label{eq:Qi_multEnergy}
Q_i = \mathrm{diag} \left( w_-^i \mathbf{I}_6, w_+^i \mathbf{I}_6 \right) Q_i^\mathcal{D}, \qquad \text{with} \quad w_\mp^i = \rho \pm w_i
\end{align}
and we know from Proposition \ref{prop:rel_Lyap_Q_barQ} that  \ref{pty:Qi_diag_posDef}-\ref{pty:Si_negDef} of Lemma \ref{lem:class_barQi_Riem} are satisfied by these maps $\{Q_i\}_{i \in \mathcal{I}}$. 

Examples of weights provided by Lemma \ref{lem:exist_g} \ref{item:exist_g_pos} and Lemma \ref{lem:polyWeights} (with $p_n^+$), which can be used for a beam controlled at $x = \ell$, have been presented in Fig. \ref{fig:weights_qPos} and \ref{fig:weights_polyPos}. This is also representative of the weights used for the star shaped network, here, for the beams of indexes $i \in \{2, \ldots, N\}$. For a beam controlled at $x=0$, such as the first beam of the network, one rather uses the function $q$ from Lemma \ref{lem:exist_g} \ref{item:exist_g_neg} (or the polynomial $p_n^-$ from Lemma \ref{lem:polyWeights}). The latter are depicted in Fig. \ref{fig:weights_Neg}, together with the corresponding weights in the physical and diagonal systems.

\medskip

\begin{figure}\centering
\includegraphics[height = 4.15cm]{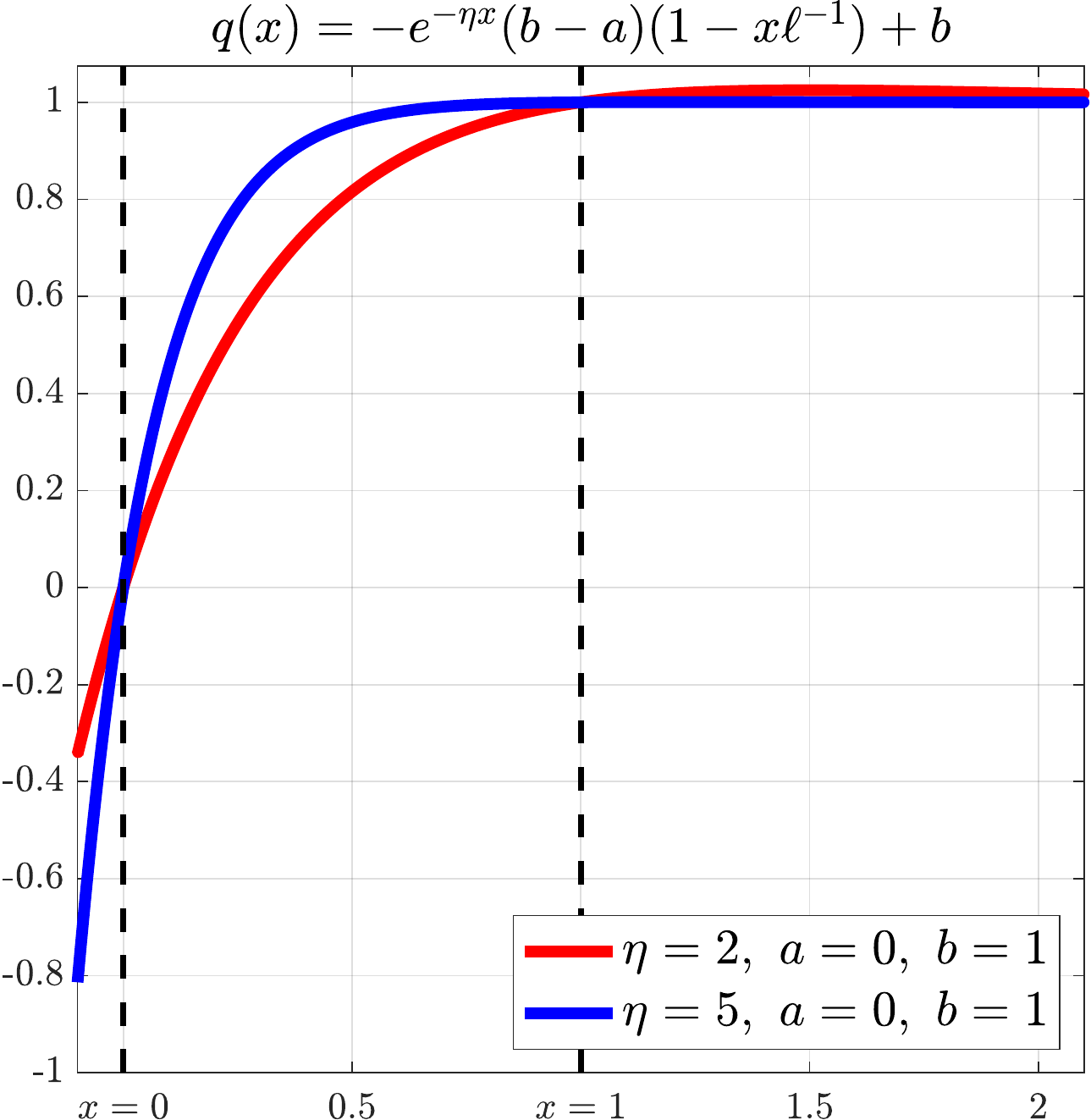}\ 
\includegraphics[height = 4.15cm]{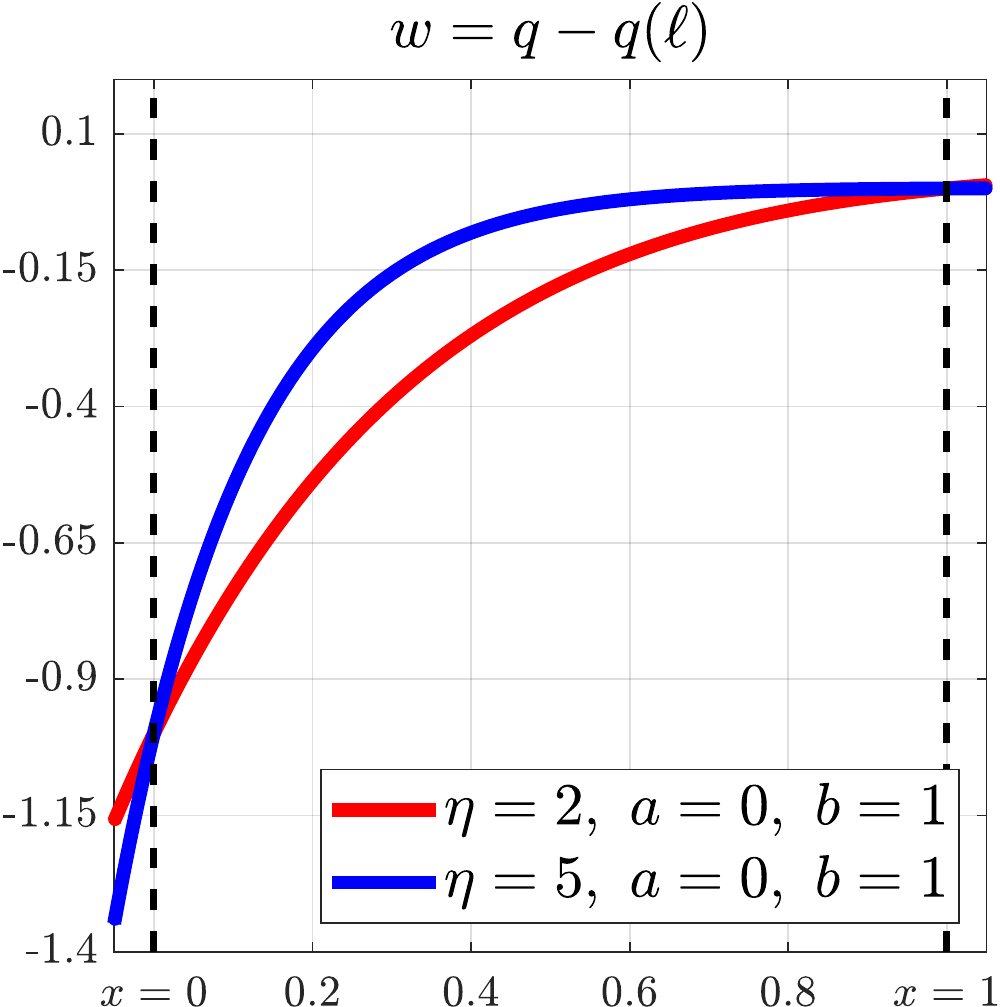}\ 
\includegraphics[height = 4.15cm]{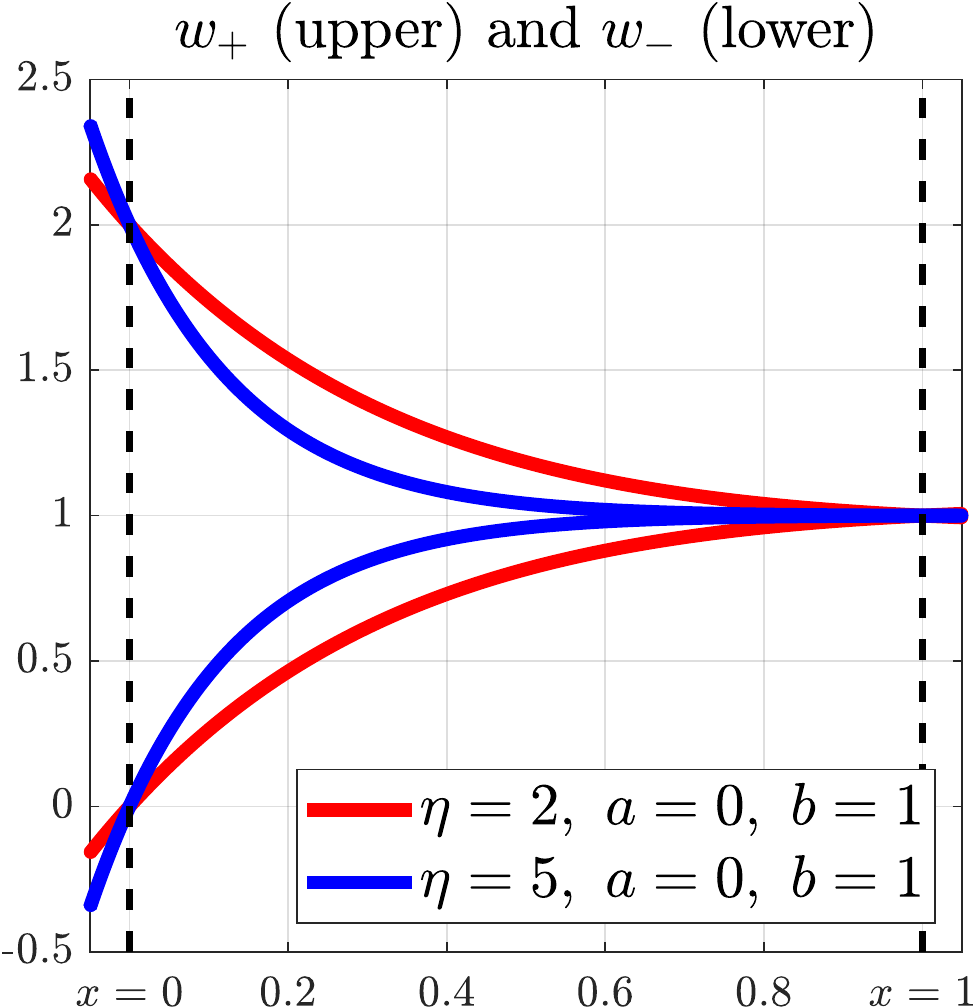}\\
\vspace{0.25cm}
\includegraphics[height = 4.15cm]{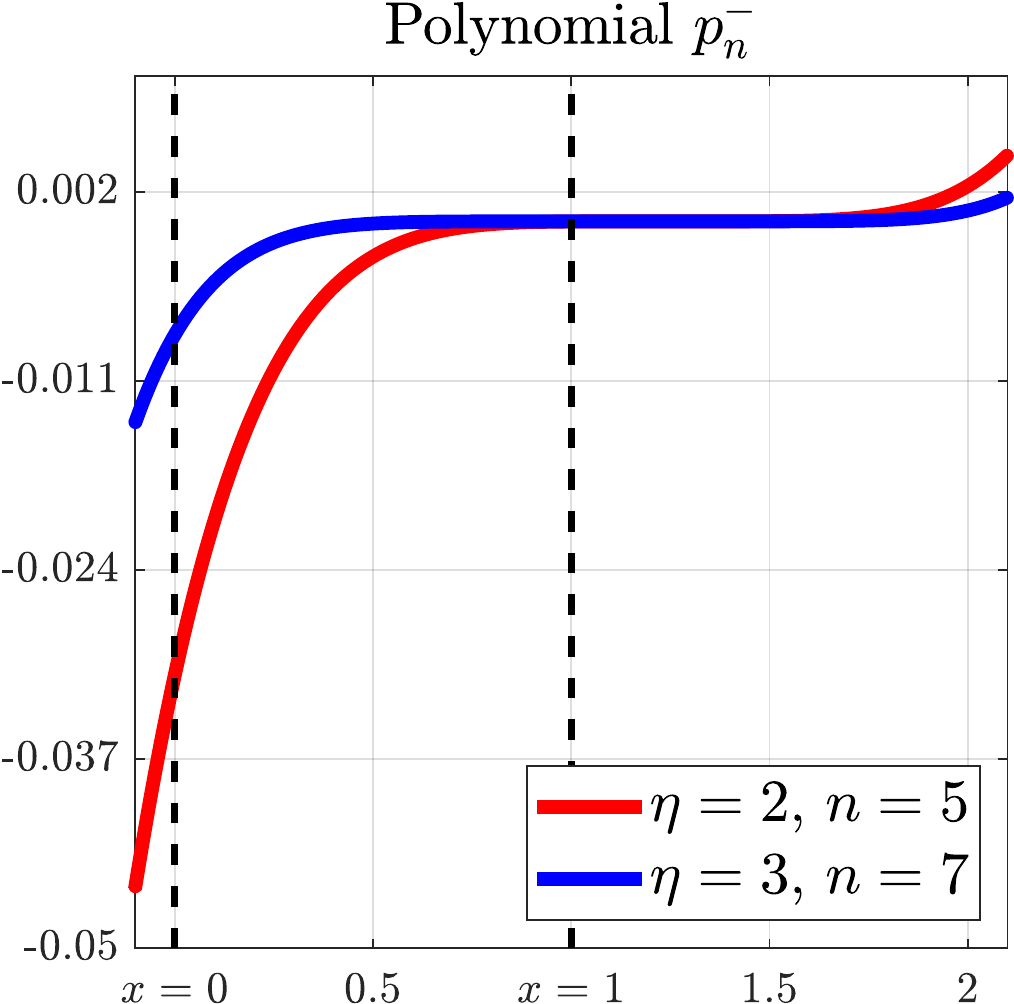}\ 
\includegraphics[height = 4.15cm]{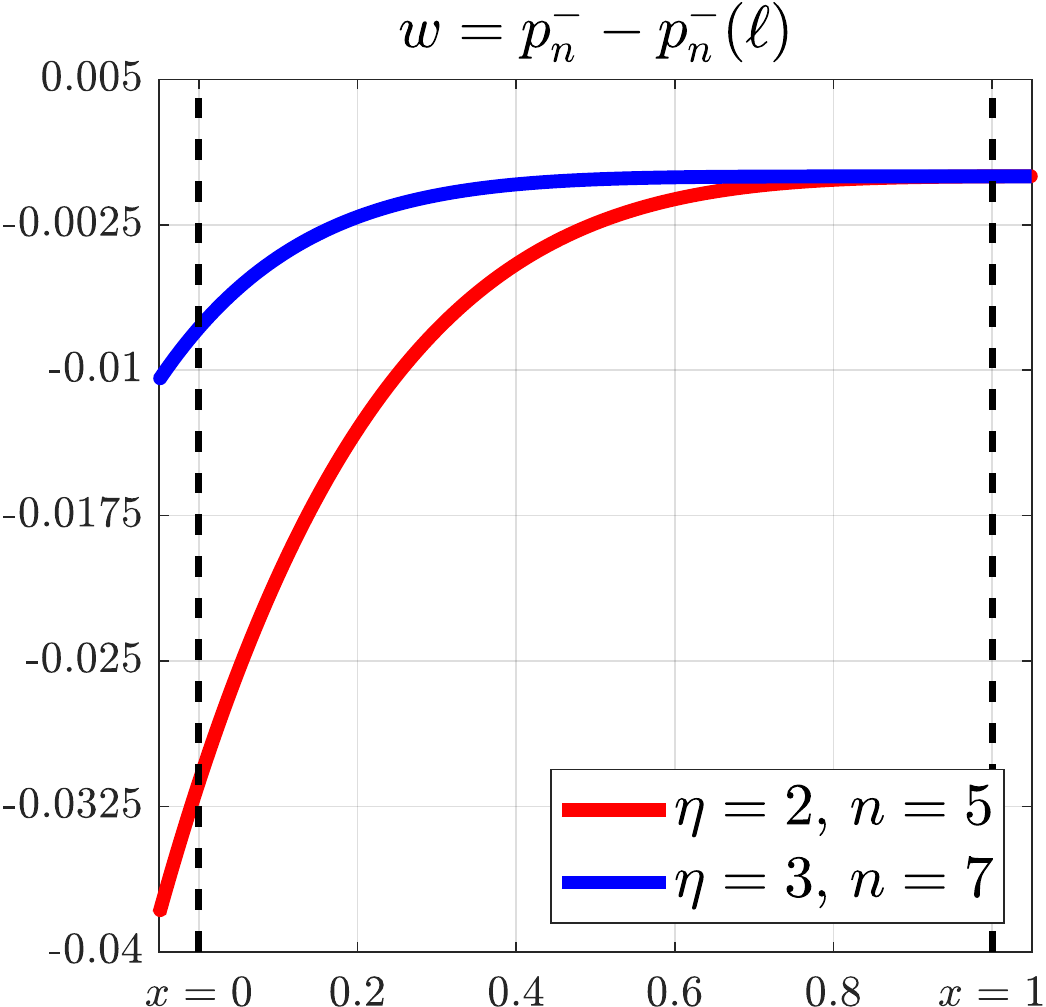}\ 
\includegraphics[height = 4.15cm]{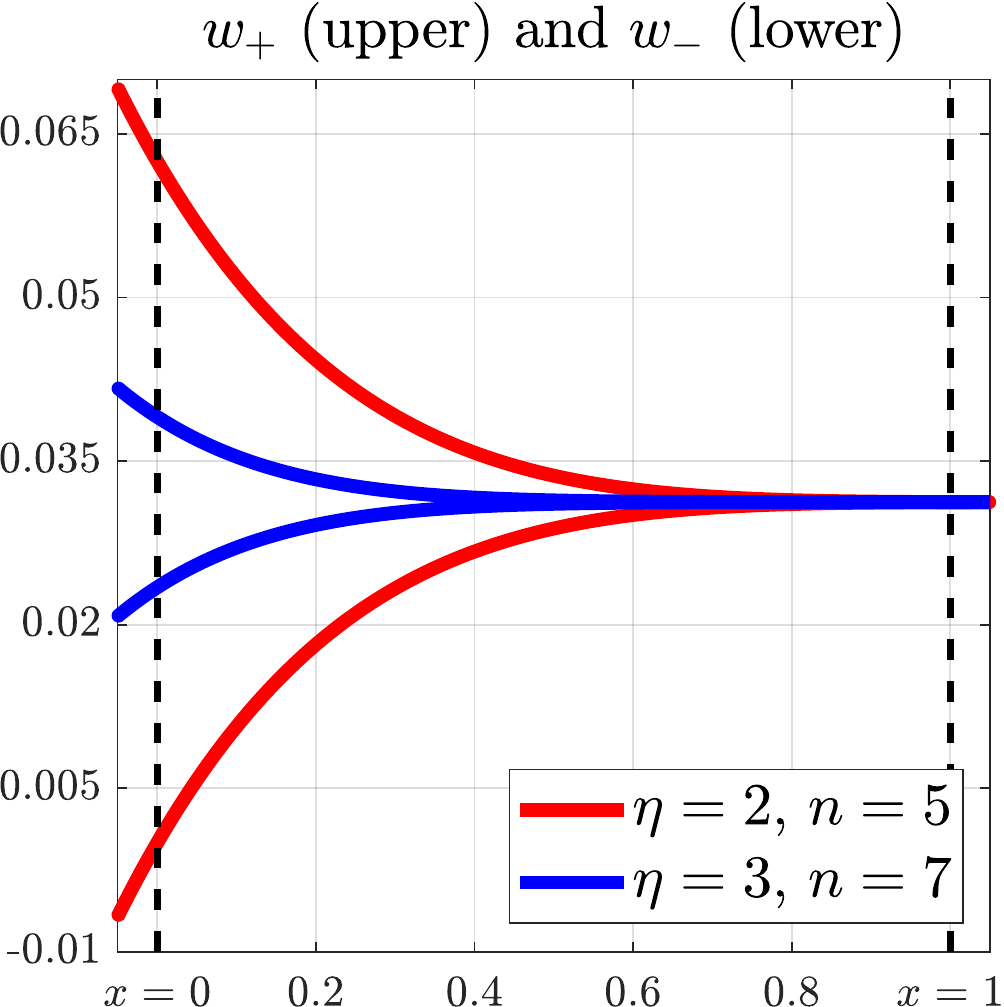}

\caption{Top: the weights provided by the functions $q$ defined in \eqref{eq:def_q_neg}. Bottom: the weights provided by the polynomial $p_n^-$ defined in \eqref{eq:def_pn}. Here $\ell = 1$. These weights are used here for a beam controlled at $x=0$.}

\label{fig:weights_Neg}
\end{figure}

\noindent By means of the following proposition, we look further into the form of the matrices $\mathcal{M}_n$, observing that they are congruent\footnote{For any $d \in \{1, 2, \ldots\}$, two matrices $A, B \in \mathbb{R}^{d \times d}$ are called congruent if there exists $P \in \mathbb{R}^{d\times d}$ invertible such that $A = P^\intercal B P$. In particular, $A$ is negative semi-definite if and only if $B$ is also negative semi-definite.} to some matrices which are less difficult to analyse.
Beforehand, for any $n \in \mathcal{N}$, let us define the matrix $\overline{w}_n \in \mathbb{R}^{6k_n \times 6k_n}$ by
\begin{equation*}
\begin{aligned}
\overline{w}_n &= \begin{cases}
-w_1(0)\mathbf{I}_6, & \text{if } n=0\\
w_n(\ell_n)\mathbf{I}_6, & \text{if } n \in \mathcal{N}_S\setminus \{0\}\\
\mathrm{diag}(w_n(\ell_n) \mathbf{I}_6\, , \, -w_{i_2}(0)\mathbf{I}_6\, , \, \ldots\, , \, -w_{i_{k_n}}(0)\mathbf{I}_6), & \text{if } n\in\mathcal{N}_M.
\end{cases}
\end{aligned}
\end{equation*}

\begin{proposition}[Prop. 4 \ref{A:MCRF}] \label{prop:calMn_form}
Let $Q_i$ $(i \in \mathcal{I})$ be given by \eqref{eq:Qi_multEnergy} and let $\{\mathcal{M}_n\}_{n \in \mathcal{N}}$ be the matrices defined in Lemma \ref{lem:class_barQi_Riem} \ref{pty:calMn_negSemiDef}.
\begin{enumerate}[label=\arabic*), noitemsep]
\item 
Let $m \in \mathcal{N}_M$. If $K_m \in \mathbb{S}_{++}^6$ or $K_m = \mathbf{0}_6$, then $\mathcal{M}_m$ is congruent to
\begin{equation*} 
\begin{aligned}
\widetilde{\mathcal{M}}_m &= - 2 \rho k_m^{-1} \textswab{I}_{m} \mathrm{diag}(\overline{K}_m)  \textswab{I}_m + 2
\textswab{I}_m \overline{w}_m \mathrm{diag}(\sigma_i^m) \textswab{I}_m - \textswab{I}_m \overline{w}_m \mathrm{diag}(\sigma^m + \overline{K}_m)  \\
& -  \mathrm{diag}(\sigma^m + \overline{K}_m)\overline{w}_m \textswab{I}_m + \overline{w}_m \mathrm{diag}(\sigma^m+\overline{K}_m)\mathrm{diag}(\sigma_i^m)^{-1}\mathrm{diag}(\sigma^m+\overline{K}_m).
\end{aligned}
\end{equation*}
\item 
Let $n\in \mathcal{N}_S$. If $K_n \in \mathbb{S}_{++}^6$, then $\mathcal{M}_n$ is congruent to the matrix
\begin{equation*} 
\widetilde{\mathcal{M}}_n = \rho  \big[ (\mathbf{I} + \Upsilon^n)^{-2}(\mathbf{I} - \Upsilon^n)^2 - \mathbf{I} \big] + \overline{w}_n \big[
(\mathbf{I} + \Upsilon^n)^{-2}(\mathbf{I} - \Upsilon^n)^2 + \mathbf{I} \big],
\end{equation*}
where $\Upsilon^n \in \mathbb{D}_{++}^6$ has the eigenvalues of $\bar{D}_n^{\sfrac{1}{2}} (\gamma_n^n)^\intercal \overline{K}_n \gamma_n^n \bar{D}_n^{\sfrac{1}{2}}$ as diagonal entries.\\
If $K_n = \mathbf{0}_6$ or if the beam clamped, then $\mathcal{M}_n = \overline{w}_n \bar{D}_n^{-1}$.
\end{enumerate}
\end{proposition}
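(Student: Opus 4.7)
The plan is to exhibit, for each case, an explicit invertible matrix $P_n$ such that $P_n^\intercal \mathcal{M}_n P_n = \widetilde{\mathcal{M}}_n$ (or $\mathcal{M}_n = \overline{w}_n \bar{D}_n^{-1}$ in the clamped/free case). Two elementary algebraic ingredients will drive every step. First, from \eqref{eq:def_gamma_sigma_in}, the orthogonality of $\overline{R}_i$ and the unitarity of $U_i$, one obtains at once $(\gamma_i^n)^\intercal \sigma_i^n \gamma_i^n = D_i^{-1}(\mathbf{x}_i^n)$, or in block-matrix form $\mathbf{G}_n^{-\intercal}\bar{D}_n^{-1}\mathbf{G}_n^{-1} = \mathrm{diag}(\sigma_i^n)$. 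Second, since each diagonal block of $\overline{w}_n$ is a scalar multiple of $\mathbf{I}_6$, it commutes with every block-diagonal matrix. Combined with $Q_i = \tfrac{1}{2}\mathrm{diag}(w_-^i D_i^{-2}, w_+^i D_i^{-2})$ from \eqref{eq:Qi_multEnergy} and the fact that the orientation-dependent signs built into $\overline{w}_n$ are precisely those that distinguish $w_\pm^i$ across incoming and outgoing edges at $n$, a short case inspection will give the compact identities
\begin{equation*}
Q_n^\mathrm{out}\bar{D}_n = \tfrac{1}{2}(\rho+\overline{w}_n)\bar{D}_n^{-1}, \qquad Q_n^\mathrm{in}\bar{D}_n = \tfrac{1}{2}(\rho-\overline{w}_n)\bar{D}_n^{-1}.
\end{equation*}

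For a clamped or free simple node ($\mathcal{B}_n = \pm\mathbf{I}_6$), the formula $\mathcal{M}_n = \overline{w}_n\bar{D}_n^{-1}$ then follows immediately as $(Q_n^\mathrm{out}-Q_n^\mathrm{in})\bar{D}_n$. For a feedback simple node, I will first factor $\mathcal{B}_n = (\gamma_n^n)^{-1}(\sigma_n^n+\overline{K}_n)^{-1}(\sigma_n^n-\overline{K}_n)\gamma_n^n$ and apply the congruence by $\gamma_n^n$; the first identity above then transmutes $\bar{D}_n^{-1}$ into $\sigma_n^n$. A further congruence by $(\sigma_n^n)^{-1/2}$ and the substitution $A := (\sigma_n^n)^{-1/2}\overline{K}_n(\sigma_n^n)^{-1/2}$ will bring $\mathcal{M}_n$ into the polynomial form $\rho[(\mathbf{I}+A)^{-2}(\mathbf{I}-A)^2-\mathbf{I}] + \overline{w}_n[(\mathbf{I}+A)^{-2}(\mathbf{I}-A)^2+\mathbf{I}]$. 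Finally, an orthogonal congruence will diagonalise $A$; since $A$ is similar to $\bar{D}_n^{1/2}(\gamma_n^n)^\intercal\overline{K}_n\gamma_n^n\bar{D}_n^{1/2}$ (again via the key identity), the resulting diagonal matrix is precisely $\Upsilon^n$.

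For the multiple node, I will rewrite $\mathcal{B}_m = \mathbf{G}_m^{-1}\mathcal{C}_m\mathbf{G}_m$ with $\mathcal{C}_m := 2P^{-1}\textswab{I}_m\mathrm{diag}(\sigma_i^m) - \mathbf{I}$ and $P := \mathrm{diag}(\sigma^m+\overline{K}_m)$. The congruence $\mathcal{M}_m \mapsto \mathbf{G}_m^{-\intercal}\mathcal{M}_m\mathbf{G}_m^{-1}$ absorbs the outer $\mathbf{G}_m^{\pm 1}$ factors of $\mathcal{B}_m$ and, by the two identities above, produces
\begin{equation*}
\tfrac{1}{2}\bigl[\mathcal{C}_m^\intercal(\rho+\overline{w}_m)\mathrm{diag}(\sigma_i^m)\mathcal{C}_m - (\rho-\overline{w}_m)\mathrm{diag}(\sigma_i^m)\bigr].
\end{equation*}
Expanding this expression, the crucial structural observation is that $\textswab{I}_m\cdot Y\cdot\textswab{I}_m$, for $Y$ block-diagonal, collapses to a block matrix whose every $6\times 6$ block is the total sum of the diagonal blocks of $Y$, and that $\sigma^m = \sum_j\sigma_{i_j}^m$. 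Collecting the expanded contributions, and applying a last congruence by an invertible block matrix built from $P$ to extract the normalising factor $k_m^{-1}$ in front of $\textswab{I}_m\mathrm{diag}(\overline{K}_m)\textswab{I}_m$, one will identify each of the five terms of $\widetilde{\mathcal{M}}_m$.

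The main obstacle will be precisely this multiple-node expansion, in which one must simultaneously track the mixing action of $\textswab{I}_m$, the failure of $\overline{w}_m$ to commute with $P$ through $\textswab{I}_m$, and the orientation-dependent signs absorbed into $\overline{w}_m$. No new conceptual tool beyond the two identities highlighted above will be required, but the book-keeping is substantial, and the extraction of the specific coefficient $-2\rho k_m^{-1}$ will be the most delicate point.
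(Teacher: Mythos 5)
Your proposal is correct and follows essentially the same route as the paper: the identity $(\gamma_i^n)^\intercal \sigma_i^n \gamma_i^n = D_i^{-1}(\mathbf{x}_i^n)$ together with $Q_n^{\mathrm{out}}\bar D_n = \tfrac12(\rho+\overline w_n)\bar D_n^{-1}$, $Q_n^{\mathrm{in}}\bar D_n = \tfrac12(\rho-\overline w_n)\bar D_n^{-1}$ reduces everything to congruences by $\gamma_n^n$, $(\sigma_n^n)^{-\sfrac{1}{2}}$ and an orthogonal factor at simple nodes, and by $\mathbf{G}_m^{-1}$ followed by $\mathrm{diag}(\sigma_i^m)^{-1}\mathrm{diag}(\sigma^m+\overline K_m)$ at multiple nodes, where the commutation of $\textswab{I}_m$ with $\mathrm{diag}(\sigma^m+\overline K_m)$ and the collapse $\textswab{I}_m\mathrm{diag}(\sigma_i^m)\textswab{I}_m=\mathrm{diag}(\sigma^m)\textswab{I}_m$ produce exactly the five stated terms, including the factor $-2\rho k_m^{-1}$ via $\textswab{I}_m\mathrm{diag}(\overline K_m)=k_m^{-1}\textswab{I}_m\mathrm{diag}(\overline K_m)\textswab{I}_m$. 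All the intermediate identities you invoke check out, so the plan goes through as written.
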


Let us now make some observations by using Proposition \ref{prop:calMn_form}.
For any \emph{multiple node} $m \in \mathcal{N}_M$, at which $K_m \in \mathbb{S}^6_{++}$ or $K_m = \mathbf{0}_6$, if $w_m(\ell_m)\leq 0$ and $w_i(0)\geq 0$ for all $i \in \mathcal{I}_m$, then $\mathcal{M}_m$ is necessarily negative semi-definite.
For any \emph{controlled simple node} $n \in \mathcal{N}_S$ with $K_n \in \mathbb{S}_{++}^6$, one sees that $\mathcal{M}_n \in \mathbb{S}_-^{6 k_n}$ if and only if: the largest diagonal entry of $\widetilde{\mathcal{M}}_n$ (namely, $\rho C_{K_n} - w_1(0)$ if $n=0$ or $\rho C_{K_n} + w_n(\ell_n)$ if $n\neq 0$) is nonpositive, where the \emph{negative} constant
$C_{K_n}<0$ is defined by
\begin{linenomath}
\begin{equation*}
C_{K_n} = \max_{1\leq j \leq 6} \frac{(1- \Upsilon^n_j)^2(1+\Upsilon^n_j)^{-2} - 1 }{(1- \Upsilon^n_j)^2(1+\Upsilon^n_j)^{-2} + 1},
\end{equation*}
\end{linenomath}
$\{\Upsilon^n_j\}_{j=1}^6$ denoting the diagonal entries of $\Upsilon^n$.
Note that such inequalities always hold provided that $\rho>0$ is large enough. 
For any \emph{free or clamped simple node} $n \in \mathcal{N}_S$, one sees that $\mathcal{M}_n$ is negative semi-definite if and only if: $w_1(0)\geq 0$ if $n=0$, and $w_n(\ell_n) \leq 0$ if $n \neq 0$.
We thus recover the stabilization result of Theorem \ref{thm:stabilization}. 

\medskip 

\noindent \textbf{Removing one control.} Suppose that the node $n=0$ is clamped or free rather than controlled. Then, $w_1$ being increasing, one cannot both assume that $w_1(0)\geq 0$ (an equivalent characterization of $\mathcal{M}_0 \in \mathbb{S}_-^6$) and $w_1(\ell_1) \leq 0$ (which, together with the controls applied at the other simple nodes, would be sufficient to make $\mathcal{M}_1$ negative semi-definite).
Moreover, even though choosing $K_1 \in \mathbb{S}_{++}^6$ would lead to the presence of the matrix $- 2 \rho k_1^{-1} \textswab{I}_1 \mathrm{diag}(\overline{K}_1)  \textswab{I}_1$ in the expression of $\widetilde{\mathcal{M}}_1$. This matrix is only negative semi-definite (and not negative definite), which does not seem to be enough to estimate the matrix $\overline{w}_1 \mathrm{diag}(\sigma^1+\overline{K}_1)\mathrm{diag}(\sigma_i^1)^{-1}\mathrm{diag}(\sigma^1+\overline{K}_1)$ which contains the term $w_1(\ell_1)>0$.

\medskip

\noindent \textbf{Several aligned beams.} A single beam having been divided into several shorter beams by placing nodes at different locations of its spatial domain amounts to a network of beams which are serially connected (i.e. $k_n=2$) at $n \in \mathcal{N}_M$, without angle (i.e. $R_n(\ell_n) = R_{i_2}(0)$), with $K_n = \mathbf{0}_6$ and having the same material and geometrical properties at the multiple nodes -- i.e. the mass and flexibility matrices coincide at any $n \in \mathcal{N}_M$. Then for $w_n(\ell_n) = w_{i_2}(0)$, one can compute that $\widetilde{\mathcal{M}}_n$ (and consequently $\mathcal{M}_n$) is equal to the null matrix. Hence, one can stabilize these beams by applying a feedback control at one of the two simple nodes of the overall network (see Fig. \ref{fig:serial_beams}). This is in agreement with the stabilization result Theorem \ref{thm:1b_stabilization} for a single beam.

\begin{figure}\centering
\includegraphics[scale=0.8]{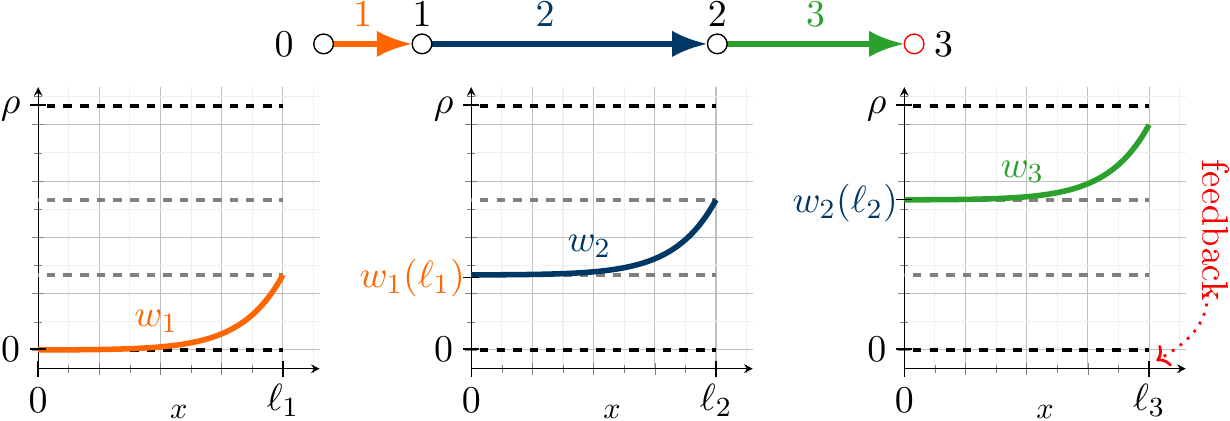}
\caption{Recovering the single beam case: example of choice of $\rho$ and weight functions.}
\label{fig:serial_beams}
\end{figure}

\begin{subappendices}
\section{Proof of Lemma \ref{lem:polyWeights}}
\label{ap:poly_weights}



%

\begin{proof}[Proof of Lemma \ref{lem:polyWeights}]
Let $n \in \{1, 2, \ldots\}$ be such that \eqref{eq:cond1_n} holds. Then,
\begin{align}
\label{eq:cond1_n_1}
&\left(\ell-\frac{n}{2\eta}\right) < 0 \leq x \leq \ell < \frac{n}{2{\eta}}.
\end{align}
For $p_n^-$, $p_n^+$ defined by \eqref{eq:def_pn}, one has
\begin{align*}
&p_n^-(0) = - \frac{1}{2^n}, \quad p_n^-(\ell) = - \left( \frac{1}{2} - \frac{{\eta}}{n}\ell \right)^{n}, \quad p_n^-\left(\frac{n}{2{\eta}}\right) = 0,\\
&p_n^+\left(\ell -\frac{n}{2{\eta}}\right) = \frac{1}{2^n}, \quad p_n^+(0) = \left(\frac{1}{2}\right)^{n} + \left( \frac{1}{2} - \frac{\eta}{n}\ell \right)^n, \quad p_n^+(\ell) = 2 \frac{1}{2^n},
\end{align*}
while the derivatives take the form
\begin{align*}
\frac{\mathrm{d}}{\mathrm{d}x} p_n^-(x) = {\eta}\left(\frac{1}{2} - \frac{{\eta}}{n}x \right)^{n-1}, \quad \frac{\mathrm{d}}{\mathrm{d}x} p_n^+(x) = {\eta}\left(\frac{1}{2} + \frac{{\eta}}{n}(x-\ell) \right)^{n-1}.
\end{align*}
Notice that due to \eqref{eq:cond1_n}, 
\begin{align} \label{eq:cond1_n_2}
0<\left(\frac{1}{2} - \frac{{\eta}}{n}x\right)\leq \frac{1}{2}, \quad \text{and} \quad 0< \left( \frac{1}{2} + \frac{{\eta}}{n}(x-\ell) \right) \leq \frac{1}{2},
\end{align}
for all $x \in [0, \ell]$, so that both polynomials are strictly increasing (in fact $2{\eta}\ell \leq n$ would be enough for this property).
On another hand, due to \eqref{eq:cond1_n_2}, one has
\begin{align*}
\frac{\mathrm{d}}{\mathrm{d}x} p_n^-(x)
&> {\eta} \left[\left(\frac{1}{2} - \frac{{\eta}}{n}x \right)^{n} - \left(\frac{1}{2} - \frac{{\eta}}{n}\ell \right)^{n}\right]\\
&= {\eta}(p_n^-(\ell) - p_n^-(x)),
\end{align*}
while
\begin{align*}
\frac{\mathrm{d}}{\mathrm{d}x} p_n^+(x)
&> {\eta} \left[\left(\frac{1}{2} + \frac{{\eta}}{n}(x-\ell) \right)^{n} - \left( \frac{1}{2} - \frac{{\eta}}{n}\ell \right)^{n}\right]\\
&= {\eta}(p_n^+(x) - p_n^+(0)).
\end{align*}
Thus, \eqref{eq:cond_der_pn} is fulfilled.
Furthermore, due to \eqref{eq:cond1_n_1}, one has
\begin{align*}
&\left(p_n^-(\ell) - p_n^-(0)\right) \leq \left(p_n^-\left(\frac{n}{2{\eta}}\right) - p_n^-(0) \right) = \frac{1}{2^n},\\
&\left(p_n^+(\ell) - p_n^+(0)\right) \leq \left(p_n^+(\ell) - p_n^+\left(\ell-\frac{n}{2\eta}\right)\right)= \frac{1}{2^n}. \qedhere
\end{align*}
\end{proof}

\end{subappendices}
\chapter{Exact controllability of nodal profiles}
\label{ch:control}


\section{Main contributions and related works.}

The problem of \emph{nodal profile controllability} of partial differential equations on networks refers to the task of steering the solution thereof to fit prescribed profiles on specific nodes. Formally speaking, this amounts to saying that said solution should be controlled to given time-dependent functions (called \emph{nodal profiles}) over certain time intervals by means of controls actuating at one or several other nodes.
This is in contrast to the classical question of exact controllability, wherein one seeks to steer the state, at a certain time, to a given final state on the entire network.
The nodes with prescribed profiles are then called \emph{charged nodes} \cite{YWang2019partialNP} (or \emph{object-nodes} \cite{Zhuang2021}), while the nodes at which the controls are applied are the \emph{controlled nodes} \cite{YWang2019partialNP} (or \emph{control nodes} \cite{Zhuang2021}).

One of the specificities of this notion of controllability is that it is not hindered by the presence of loops in the network, a situation which is encountered in many practical applications. This has notably been exhibited for the Saint-Venant equations. Whilst the exact controllability of these equations on networks with loops is not true in general \cite{LLS, li2010no}, for certain networks with cycles, the exact nodal profile controllability can be shown by means of a so-called \emph{cut-off} method \cite{Zhuang2021, Zhuang2018}.

\medskip

\noindent We have seen in Theorem \ref{th:semi_global_existence} that any network of beams governed by the IGEB model (System \eqref{eq:nIGEBgen}) admits a unique \emph{semi-global in time} $C_{x,t}^1$ solution, in the sense that the solution may exist on arbitrarily large time intervals provided that the initial and boundary/nodal data are small enough.
Via Theorem \ref{thm:solGEB}, we also made the link between \eqref{eq:nIGEBgen} and the corresponding system \eqref{eq:nGEBgen} in which the beams dynamics are given by the GEB model. 
More precisely, we showed that the existence of a unique $C^1_{x,t}$ solution to \eqref{eq:nIGEBgen} implies that of a unique $C^2_{x,t}$ solution to \eqref{eq:nGEBgen}, provided that the data of both systems fulfill some compatibility conditions.

\begin{figure} \centering
\includegraphics[scale=0.75]{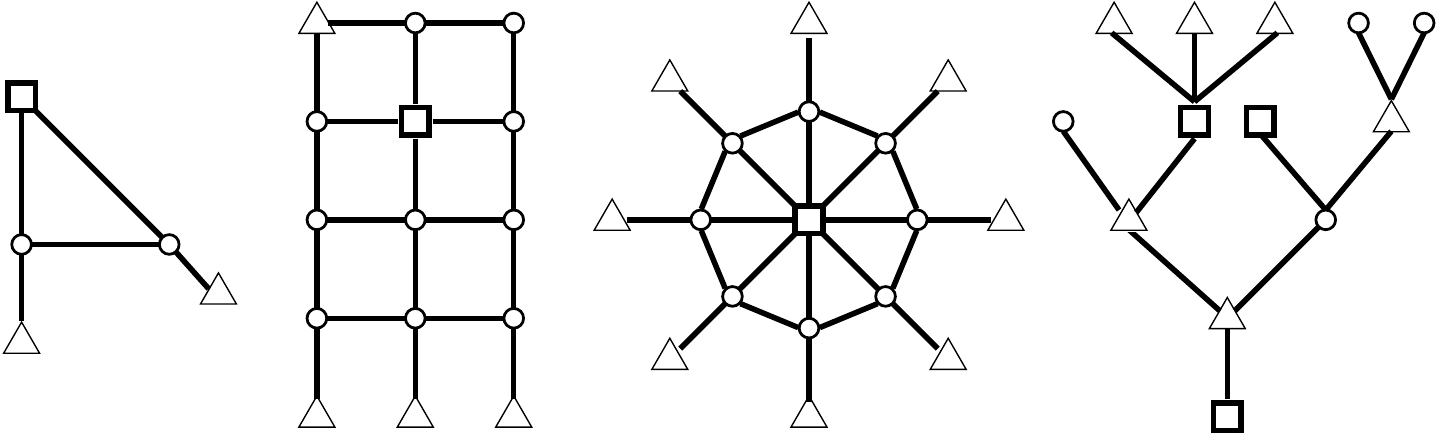}
\caption{Examples of nodal profile control problem on networks. Squares represent charged nodes, while triangles represent controlled nodes.}
\label{fig:pres_nodal_control}
\end{figure}

\medskip

\noindent In this chapter we consider the problem of \emph{local exact nodal profile controllability} in the context of a specific network of geometrically exact beams containing one cycle: the A-shaped network represented in Fig. \ref{fig:pres_nodal_control} (left).
Our main results will be given on the IGEB network (Theorem \ref{th:controllability}) and the GEB network \eqref{eq:nGEBgen} (Corollary \ref{coro:controlGEB}) as follows.
In Theorem \ref{th:controllability}, in line with \cite{Zhuang2021, Zhuang2018}, we drive the solution of \eqref{eq:nIGEBgen} to satisfy given profiles at one of the multiple nodes -- the ``tip'' of the A-shaped network -- by controlling the internal forces and moments at the two simple nodes. There, the semi-global existence and uniqueness result Theorem \ref{th:semi_global_existence} plays an important role.
Then, Theorem \ref{thm:solGEB} permits to translate this to a corresponding result in terms of the GEB network, which is Corollary \ref{coro:controlGEB}.

Afterwards, the case of other networks, possibly containing several cycles, is discussed in Section \ref{sec:more_gene_net}: we give a few typical examples together with a brief  algorithm (Algorithm \ref{algo:control}) to realize nodal profile controllability under some requirements.

\subsection*{Nodal profile control.}

The notion of exact boundary controllability of nodal profiles was, to our knowledge, first introduced by Gugat, Herty and Schleper in \cite{gugat10}, motivated by applications in the context of gas transport through pipelines networks. 
Therein, consumers are located at the endpoints of the network and the nodal profiles represent the consumer satisfaction, and the former are sought to be attained by the flow which is controlled by means of a number of compressors actuating at several nodes.

Motivated by the abundant practical relevance of such control problems, Tatsien Li and coauthors generalized the aforementioned results to one-dimensional first-order quasilinear hyperbolic systems with nonlinear boundary conditions \cite{gu2011, li2010nodal, li2016book}. 
Results on the wave equation on a tree-shaped networks with a general topology or the unsteady flow in open canals, may be found in \cite{kw2011, kw2014, YWang2019partialNP} and \cite{gu2013}, respectively. As mentionned above, for the Saint-Venant equations on networks with loops, one is referred to \cite{Zhuang2021, Zhuang2018}.


\medskip

\noindent
The method used by Li et al. to prove nodal profile controllability is \emph{constructive} in nature, in the sense that it relies on solving the equations forward in time and sidewise, to build a specific solution which achieves the desired goal, before evaluating the trace of this solution to obtain the controls. 
All this is done in the context of regular semi-global in time $C^1_{x,t}$ solutions for first-order systems.
This notion of solution is used in \cite{LiRao2002_cam, LiRao2003_sicon} for proving local exact boundary controllability of one-dimensional quasilinear hyperbolic systems. In these works, a general framework for a constructive method is proposed, from which all subsequent constructive methods derive.
The cornerstone of Li's method is thus the proof of semi-global existence and uniqueness, and, in the case of networks, a thorough study of the transmission conditions at multiple nodes. As solving a sidewise problem entails exchanging the role of the spatial and time variables, 
this method fundamentally exploits the one-dimensional nature of the system.

Very recently, in the context of the one-dimensional linear wave equation, the controllability of nodal profiles has also been studied in the context of less regular states and controls spaces, by using the duality between controllability and observability and showing an observability inequality. For star-shaped networks, one may see \cite{YWang2021_NP_HUM} where the sidewise D’Alembert Formula is used, and for a single string one may see \cite{Sarac2021} which relies on sidewise energy estimates.

\section{The A-shaped network}

\begin{figure}[b]\centering
\includegraphics[scale=0.65]{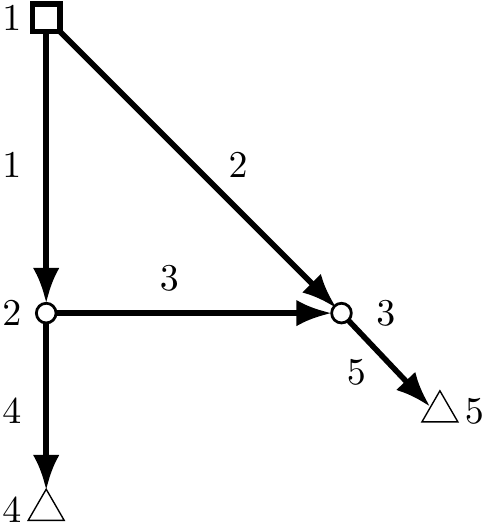}
\caption{The A-shaped network.}
\label{fig:Ashaped_control}
\end{figure}

Consider the A-shaped network illustrated in Fig. \ref{fig:Ashaped_control}, a network consisting of five nodes, five edges and containing one cycle. In other words, we set
\begin{linenomath}
\begin{align} \label{eq:A_netw}
\begin{aligned}
&\mathcal{N}_S = \mathcal{N}_S^N = \{4, 5\}, \ \mathcal{N}_M = \{1, 2, 3\}, \ \mathcal{I} = \{1, \ldots, 5\}\\
&\mathbf{x}_1^1 = 0, \ \ \mathbf{x}_2^1 = 0, \ \ \mathbf{x}_3^2 = 0, \ \ \mathbf{x}_4^2 = 0, \ \ \mathbf{x}_5^3 = 0\\
&\mathbf{x}_1^2 = \ell_1, \ \mathbf{x}_2^3 = \ell_2, \ \mathbf{x}_3^3 = \ell_3, \ \mathbf{x}_4^4 = \ell_4, \ \mathbf{x}_5^5 = \ell_5.
\end{aligned}
\end{align}
\end{linenomath}
Due to Assumption \ref{as:mass_flex}, the eigenvalues $\{\lambda_i^k\}_{k=1}^{12}$ of $A_i$ belong to $C^2([0, \ell_i])$ and we may define, for any $i\in\mathcal{I}$, the function $\Lambda_i \in C^0([0, \ell_i]; (0, +\infty))$ and the time $T_i>0$ by
\begin{linenomath}
\begin{align} \label{eq:def_Lambdai_Ti}
    \Lambda_i(x) = \left( \min_{k \in \{1, \ldots, 6\}} \left| \lambda_i^k(x) \right| \right)^{-1} \quad \text{and} \quad T_i = \int_0^{\ell_i} \Lambda_i(x) dx;
\end{align}
\end{linenomath}
note that the minimum ranges over the \textit{negative} eigenvalues of $A_i(x)$ (see \eqref{eq:sign_eigval}). 
The latter, $T_i$, corresponds to the transmission (or travelling) time from one end of the beam $i$ to its other end.

Then, one has the following local nodal profile controllability result, Theorem \ref{th:controllability}, where one sees that the \emph{controllability time} $T^*$ (from which one can prescribe nodal profiles) has to be large enough, depending on the lengths of the beams and eigenvalues of $(A_i)_{i \in \mathcal{I}}$ -- and so, on the geometrical and material properties of the beam. More precisely, it should be larger than the transmission time $\overline{T}$ from the controlled nodes to the charged node.

\begin{theorem}[Th. 2.5 \ref{A:JMPA}] \label{th:controllability}
Consider the A-shaped network defined by \eqref{eq:A_netw}.
Suppose that $R_i$ has the regularity \eqref{eq:reg_beampara} and that Assumption \ref{as:mass_flex} is fulfilled.
Let $\overline{T}>0$ be defined by (see \eqref{eq:def_Lambdai_Ti})
\begin{linenomath}
\begin{align} \label{eq:minT}
\overline{T} = \max \left\{T_1, T_2 \right\} + \max \left\{T_4, T_5 \right\}. 
\end{align}
\end{linenomath}
Then, for any $T> T^*>\overline{T}$, there exists $\varepsilon_0>0$ such that for all $\varepsilon \in (0, \varepsilon_0)$, for some $\delta, \gamma>0$, and
\begin{enumerate}[label=(\roman*)]
\item for all initial data $(y_i^0)_{i \in \mathcal{I}}$ and boundary data $(q_n)_{n \in \{1, 2, 3\}}$ of regularity \eqref{eq:reg_Idata_IGEB}-\eqref{eq:reg_Ndata_IGEB}, satisfying $\|y_i^0\|_{C_x^1} + \|q_n\|_{C_t^1} \leq \delta$ and the first-order compatibility conditions of \eqref{eq:nIGEBgen}, and
\item for all nodal profiles $\overline{y}_1, \overline{y}_2 \in C^1([T^*, T]; \mathbb{R}^{12})$, satisfying $\|\overline{y}_i\|_{C_t^1} \leq \gamma$ and the transmission conditions \eqref{eq:nIGEBgen_cont}-\eqref{eq:nIGEBgen_Kir} at the node $n=1$,
\end{enumerate}
there exist controls $q_4, q_5 \in C^1([0, T]; \mathbb{R}^6)$ with $\|q_i\|_{C_t^1}\leq \varepsilon$, such that \eqref{eq:nIGEBgen} admits a unique solution $(y_i)_{i \in \mathcal{I}} \in \prod_{i=1}^N C^1([0, \ell_i] \times [0, T]; \mathbb{R}^{12})$, which fulfills $\|y_i\|_{C_x^1} \leq \varepsilon$ and
\begin{linenomath}
\begin{align}\label{eq:aim}
y_i(0, t) = \overline{y}_i(t) \quad \text{for all }i \in \{1, 2\}, \, t \in [T^*, T].
\end{align}
\end{linenomath}
\end{theorem}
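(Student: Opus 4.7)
The strategy follows the \emph{constructive method} of Li and collaborators, adapted to networks containing a cycle as in \cite{Zhuang2021, Zhuang2018}. The central analytical tool is Theorem \ref{th:semi_global_existence}, used both in the ordinary forward-in-$t$ direction \emph{and} in its ``sidewise'' form obtained by exchanging the roles of $x$ and $t$. The latter is legitimate here because Assumption \ref{as:mass_flex} together with \eqref{eq:sign_eigval} guarantees that $A_i(x)$ has no zero eigenvalue; hence multiplying \eqref{eq:nIGEBgen_gov} by $A_i^{-1}$ yields a first-order quasilinear hyperbolic system in $x$ with characteristic speeds $\{1/\lambda_i^k(x)\}$, to which the same semi-global well-posedness theory applies. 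I would first fix some reference controls $\widetilde q_4,\widetilde q_5 \in C^1([0,T];\mathbb{R}^6)$ of small $C^1$-norm, compatible with the initial data at the simple nodes, and apply Theorem \ref{th:semi_global_existence} to obtain a ``background'' solution $\widetilde y = (\widetilde y_i)_{i\in\mathcal{I}}$ on $[0,T]$, with $\|\widetilde y_i\|_{C^1_{x,t}} \lesssim \delta$.

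\textbf{Building the target trace and propagating sidewise on edges $1,2$.} For $i \in \{1,2\}$ I would construct a $C^1$ function $\widehat y_i:[0,T]\to\mathbb{R}^{12}$ that coincides with $\widetilde y_i(0,\cdot)$ on $[0,T_a]$ (some intermediate time slightly smaller than $T^*$), equals the prescribed profile $\overline y_i$ on $[T^*,T]$, and smoothly interpolates between them on $[T_a,T^*]$. The linearity of the transmission conditions \eqref{eq:nIGEBgen_cont}--\eqref{eq:nIGEBgen_Kir} at node $1$ (satisfied at both ends by hypothesis) lets me make the bridge satisfy them pointwise, while preserving smallness. With $\widehat y_i$ in hand, I would solve the sidewise Cauchy problem
\begin{equation*}
\partial_x y_i + A_i^{-1} \partial_t y_i = A_i^{-1}\!\bigl(\overline g_i(\cdot,y_i) - \overline B_i y_i\bigr),\qquad y_i(0,t)=\widehat y_i(t),
\end{equation*}
for $i=1,2$, obtaining a unique small $C^1$ solution on $[0,\ell_i]\times[0,T]$ by the sidewise version of Theorem \ref{th:semi_global_existence}. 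Matching with the initial datum $y_i^0$ at $t=0$ is automatic, since $\widehat y_i$ agrees with $\widetilde y_i(0,\cdot)$ near $t=0$ and both sidewise Cauchy problems share initial data there.

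\textbf{Propagating around the cycle and reading off the controls.} Next, I would use the transmission relations \eqref{eq:nIGEBgen_cont}--\eqref{eq:nIGEBgen_Kir} at node $2$ to convert the traces $\widehat y_1(\ell_1,\cdot)$ and a suitable trace on edge $3$ at $x=0$ into a boundary datum for edge $4$ at $x=0$, and symmetrically at node $3$ for edge $5$. Because the velocity-continuity part of these conditions couples edges $1$, $3$, $4$ at node $2$ and edges $2$, $3$, $5$ at node $3$, I cannot leave edge $3$ unchanged: I would solve a further sidewise problem on edge $3$, taking as Cauchy data at $x=0$ the value of $y_3$ forced by the node-$2$ continuity relation, and verify that the resulting trace at $x=\ell_3$ is consistent with $\widehat y_2(\ell_2,\cdot)$ under the node-$3$ transmission conditions. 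A final pair of sidewise problems on edges $4$ and $5$ then propagates the node-$2$ and node-$3$ data out to the simple nodes, where the traces at $x=\ell_4$ and $x=\ell_5$ define the sought controls $q_4,q_5\in C^1([0,T];\mathbb{R}^6)$. The identity \eqref{eq:aim} and the norm bound $\|y_i\|_{C^1_{x,t}}\le\varepsilon$ follow directly from the construction and the continuous dependence built into each application of Theorem \ref{th:semi_global_existence}, provided $\delta,\gamma$ are chosen sufficiently small.

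\textbf{Main obstacle.} The hard part is the cycle-compatibility step: the three transmission conditions at nodes $2$ and $3$ must hold \emph{simultaneously} once the profile has been imposed at node $1$, and a naive composition of the sidewise problems on edges $1,2,3$ overdetermines the problem at node $3$. The role of the lower bound $\overline T = \max\{T_1,T_2\}+\max\{T_4,T_5\}$ in \eqref{eq:minT} is precisely to leave enough temporal slack in the interval $[T_a,T^*]$ used to glue the profile, so that the free parameters in the $C^1$ bridge of Step $2$ can absorb the cycle constraint at node $3$. Making this absorption quantitative — essentially an implicit-function argument around the background solution, in which the linearization at node $3$ is surjective thanks to the structure of $\mathbf{A}_n\mathbf{G}_n$ already used in the proof of Theorem \ref{th:semi_global_existence} — is the technical core of the argument, and is where the smallness assumptions on $\delta$ and $\gamma$ are ultimately consumed.
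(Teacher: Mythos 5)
Your overall architecture (forward background solution, bridge to the profiles at node $1$, sidewise propagation, read off the controls as traces at the simple nodes) is the right one and matches the paper's strategy, but there are two genuine gaps where your proposal would not go through as written. First, the cycle-compatibility step. You propose to solve edge $3$ \emph{sidewise} from node $2$ and then ``verify'' that its trace at $x=\ell_3$ is consistent with the node-$3$ transmission conditions; you correctly observe that this overdetermines the problem, but your proposed fix (an implicit-function argument absorbing the constraint into the $C^1$ bridge at node $1$) is neither carried out nor needed. The paper's resolution is structural: after the sidewise problems on edges $1$ and $2$, one solves edge $3$ \emph{forward in time} on $(0,\ell_3)\times(0,T)$, prescribing as boundary data at $x=0$ and $x=\ell_3$ the \emph{velocities} forced by the continuity conditions \eqref{eq:nIGEBgen_cont} at nodes $2$ and $3$ (i.e., built from $v_1(\ell_1,\cdot)$ and $v_2(\ell_2,\cdot)$), so that velocity continuity holds at \emph{both} multiple nodes by construction. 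The Kirchhoff conditions \eqref{eq:nIGEBgen_Kir} at nodes $2$ and $3$ are then satisfied by using the remaining freedom in the Cauchy data $\widetilde y_4,\widetilde y_5$ for the sidewise problems on edges $4$ and $5$ — there is exactly enough freedom there, and no surjectivity/IFT argument is required.

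Second, you misidentify the role of $\overline T=\max\{T_1,T_2\}+\max\{T_4,T_5\}$ and, relatedly, you dismiss the recovery of the initial conditions as ``automatic.'' The bridge on $[T_a,T^*]$ only needs $T^*>\overline T$ to be a nonempty interval; the \emph{specific value} of $\overline T$ is consumed elsewhere, namely in showing that the solution assembled edge by edge actually satisfies $y_i(\cdot,0)=y_i^0$ on \emph{all five} edges. For edges $1$ and $2$ this follows because the sidewise solution and the forward background solution solve the same one-sided rightward problem, hence coincide on the domain of determinacy $\{0\le t\le \mathbf t_i(x)\}$ with $\mathbf t_i(x)=T_i+\max\{T_4,T_5\}-\int_0^x\Lambda_i(\xi)\,d\xi$, which contains $[0,\ell_i]\times[0,\max\{T_4,T_5\}]$; this coincidence then propagates to edge $3$ (same forward problem on $[0,\max\{T_4,T_5\}]$) and finally to edges $4$ and $5$ (whose domains of determinacy contain $[0,\ell_i]\times\{0\}$ precisely because of the $T_4,T_5$ contributions). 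Your claim that matching the initial datum is automatic ``since $\widehat y_i$ agrees with $\widetilde y_i(0,\cdot)$ near $t=0$'' only addresses the trace at $x=0$, not the initial line $t=0$ across the whole edge, and says nothing about edges $3,4,5$. Without this domain-of-determinacy argument the constructed $(y_i)$ need not satisfy \eqref{eq:nIGEBgen_IC}, and the theorem is not proved.
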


\begin{figure} 
\includegraphics[scale=0.74]{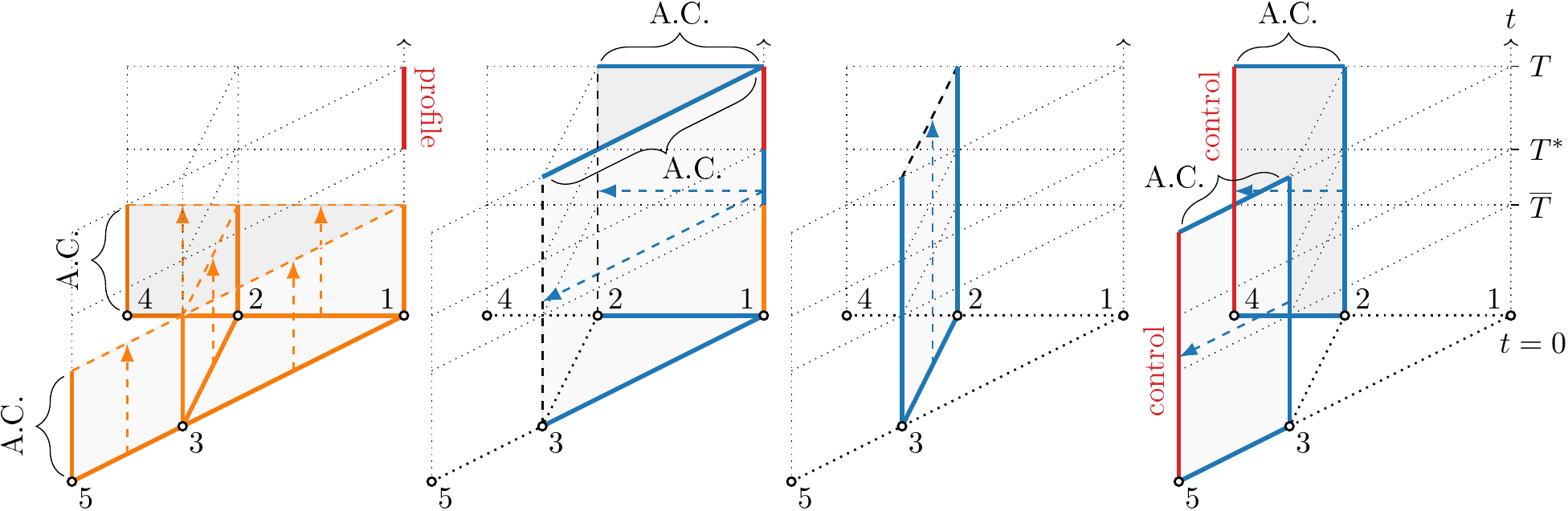}
\caption{Construction of the solution (left to right), where ``A.C.'' stands for ``artificial conditions''.}
\label{fig:A}
\end{figure}

%

\noindent \textbf{Idea of the proof.} The proof of Theorem \ref{th:controllability} relies upon the existence and uniqueness theory of \emph{semi-global} classical solutions to the network problem (here, Theorem \ref{th:semi_global_existence}), the form of the transmission condition of the network, and on the \emph{constructive method} of Li and al.. The idea of the proof is to build a solution $(y_i)_{i\in \mathcal{I}}$ to \eqref{eq:nIGEBgen} -- following the steps illustrated in Fig. \ref{fig:A} --, such that it satisfies the initial conditions, the nodal conditions, and the given nodal profiles. Substituting this solution into the conditions at the nodes $n \in \{4, 5\}$, one then obtains the desired controls $q_4, q_5$. \textcolor{black}{Our proof follows the lines of \cite{Zhuang2018}, where the authors develop a methodology for proving the nodal profile controllability for A-shaped networks of canals governed by the Saint-Venant equations.}
%
%
%
During the construction of the solution, we make sure to keep the solution and data small in order to be able to go on with the construction in the subsequent construction step.
\begin{itemize}
\item[1.] \textbf{Forward problem for the network.}
This is the step at which we make use of Theorem \ref{th:semi_global_existence}, solving the forward problem for the entire network until time $\overline{T}$, where some ``artificial conditions'' are arbitrarily set at the simple nodes $n \in \{4,5\}$. This is one of the reasons why the controls obtained at the end of the proof are in fact not unique. We call the obtained solution $(y_i^f)_{i\in \mathcal{I}}$.

Another reason why the obtained controls are not unique is that, now at the multiple node $n=1$ where the solution has to match the profiles in $[T^*, T]$, we complete the gap between the $y_1^f, y_2^f$ and $\overline{y}_1, \overline{y}_2$ for $t \in [\overline{T}, T^*]$ (using for instance Hermite splines), in order to obtain continuously differentiable ``data'' $\overline{\overline{y}}_1, \overline{\overline{y}}_2$ on the entire time interval, that coincide with the profiles and forward solution and also fulfill the transmission conditions.

\end{itemize}
\noindent In the next steps, we construct a solution of the network system, though not at once for the whole network but rather edge by edge (thus relying on \cite{li2010controllability, wang2006exact} for well-posedness, as previously for Theorem \ref{th:semi_global_existence}), not always forward in time but sometime rather sidewise.
Solving a sidewise problem  for \eqref{eq:nIGEBgen_gov} entails changing the role of $x$ and $t$, considering a governing system of the form 
\begin{linenomath}
\begin{align*}
\partial_x y_i + A_i^{-1}\partial_t y_i + A_i^{-1}\overline{B}_i y_i = A_i^{-1}\overline{g}_i(\cdot, y_i)
\end{align*}
\end{linenomath}
and providing ``boundary conditions'' at $t=0$ and $t = T$, and ``initial conditions'' at $x=0$ (rightward problem) or $x = \ell_i$ (leftward problem). It is consequently important here that $A_i$ does not have any zero eigenvalue.
\begin{itemize}

\item[2.] \textbf{Sidewise problem for the edges $1$ and $2$.}
Now that we have $\overline{\overline{y}}_i$, we solve a rightward problem on $[0, \ell_i] \times [0, T]$ for each of the edges $i\in \{1, 2\}$, the ``initial data'' at $x=0$ being $\overline{\overline{y}}_i$, while for the ``boundary data'' at $t=0$ we use the initial velocities $v_i^0$ and set artificial boundary conditions on the stresses $z_i$ at $t = T$. 


\item[3.] \textbf{Forward problem for the edge $3$.}
Then, from these two solutions $y_i$ obtained in the preceding step, we use the velocities $v_i$ at $x = \ell_i$  to build boundary data for solving a forward in time problem on the edge $i=3$. Here we must again pay attention to the transmission conditions, making sure that the data for velocities are chosen so that the continuity of velocities are respected for the overall network.


\item[4.] \textbf{Sidewise problem for the edges $4$ and $5$.}
Finally, we now solve a rightward problem on $[0, \ell_i]\times[0, T]$ for the edges $i \in \{4, 5\}$.
In addition to the values $y_1(\ell_1, \cdot)$, $y_2(\ell_2, \cdot)$, we use the values of the solution $y_3$ obtained in the preceding step at each end of its spatial interval. 
We have just enough freedom to choose ``initial data'' $\widetilde{y}_i$ ($i \in \{4, 5\}$) at the nodes $2$ and $3$ in such a way that the transmission conditions of the network are respected.

\end{itemize}
\begin{figure}\centering
\includegraphics[scale=0.8]{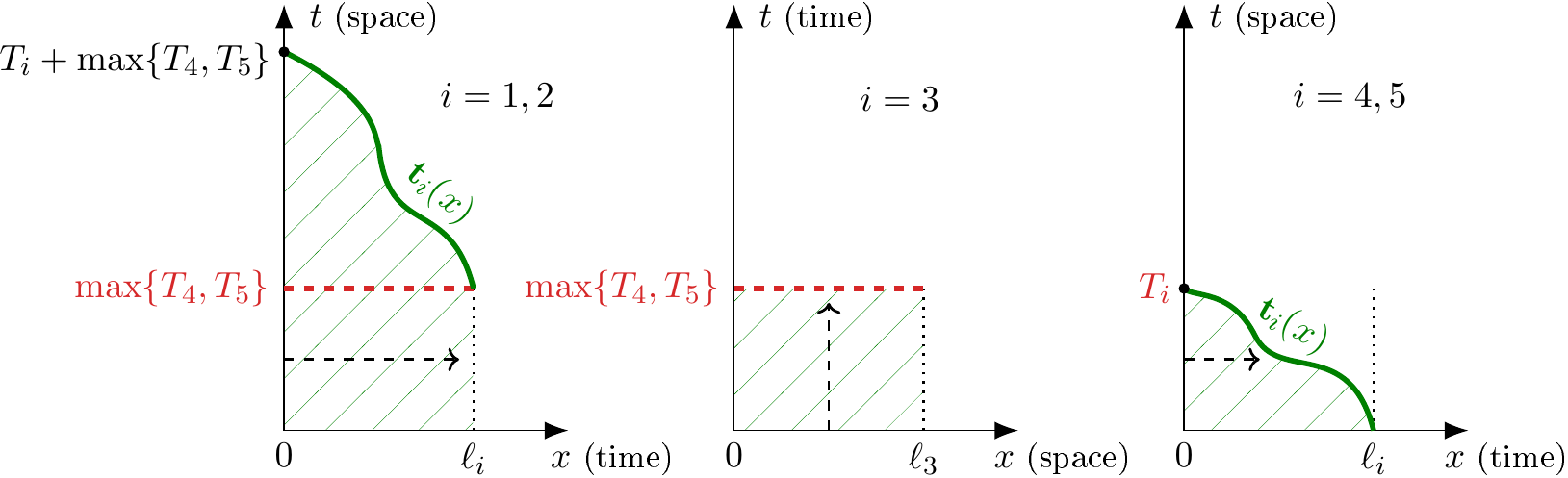}
\caption{Recovering the initial conditions: \textcolor{black}{meaning of the controllability time}.}
\label{fig:A_ini}
\end{figure}

%
\noindent
The desired controls are then chosen as $q_i = z_i(\ell_i, \cdot)$ for $i \in \{4, 5\}$, 
and the associated solution $(y_i)_{i\in\mathcal{I}}$ fulfills the nodal profiles and all nodal conditions. The remaining question is: why does this solution also fulfill the initial conditions of \eqref{eq:nIGEBgen}? We see below that this is due to the choice of $\overline{T}$ which permits to show that $(y_i)_{i\in\mathcal{I}}$ coincides with $y_i^f$ on some domain including $[0, \ell_i]\times \{0\}$ for all $i \in \mathcal{I}$ (depicted with hashed lines in Fig. \ref{fig:A_ini}).
\begin{itemize}
\item[5.] \textbf{Coincidence on the edges $1$ and $2$.}
For $i \in \{1, 2\}$, both $y_i$ and $y_i^f$ solve the one-sided rightward problem with ``initial data'' $\overline{\overline{y}}_i$ at $x=0$ and boundary data $v_i^0$ at $t=0$. Now, by \cite[Section 1.7]{li2016book} the solution to this problem is unique in the domain $\{(x,t)\colon 0 \leq x \leq \ell_i, \ 0 \leq t \leq \mathbf{t}_i(x)\}$ delimited by\footnote{This is related to the fact that any characteristic curve of the system passing by a point $(x,t)$ of this domain (i.e. the curves specified by the functions $\mathbf{t}_i^k$ for $k \in \{1, \ldots, 12\}$ with derivative $\frac{\mathrm{d}}{\mathrm{d}s} \mathbf{t}_i^k(\xi) = \lambda_i^k(\xi)^{-1}$ and such that $\mathbf{t}_i^k(x) = t$) is necessarily entering the domain at $\{0\} \times [0, T_i+\max \{T_4, T_5\}]$ or at $[0, \ell_i] \times \{0\}$.}
\begin{align*}
\mathbf{t}_i(x) = T_i + \max \{T_4, T_5\} + \int_0^x \underbrace{\min_{1\leq k \leq 12} \frac{1}{\lambda_i^k(\xi)}}_{ =-\Lambda_i(\xi)} d\xi,
\end{align*}
which implies in particular -- due to the definition of $T_1, T_2$ and $\Lambda_1, \Lambda_2$ -- that both functions coincide in $[0, \ell_i] \times [0, \max\{T_4, T_5\}]$.

\item[6.] \textbf{Coincidence on the edge $3$.}
Since $y_1 \equiv y_1^f$ and $y_2 \equiv y_2^f$ for all $t \in [0, \max\{T_4, T_5\}]$, the function $y_3^f$ solves the same forward problem as $y_3$ in $(0, \ell_3)\times (0, \max\{T_4, T_5\})$ and both are consequently equal on this domain.


\item[7.] \textbf{Coincidence on the edges $4$ and $5$.}
Due to the fact $y_1^f, y_2^f, y_3^f$ coincide with $y_1, y_2, y_3$ at the nodes $2$ and $3$, we deduce that for all $i\in\{4, 5\}$ the function $y_i^f$ solves the same the one-sided rightward problem as $y_i$, with ``initial data'' $\widetilde{y}_i$ at $x=0$ and boundary data $v_i^0$ at $t=0$. As above, since the solution to this problem is unique in the domain $\{(x,t)\colon 0 \leq x \leq \ell_i, \ 0 \leq t \leq \mathbf{t}_i(x)\}$ delimited by 
\begin{align*}
\mathbf{t}_i(x) = T_i - \int_0^x \Lambda_i(\xi)d\xi,
\end{align*}
we deduce -- by definition of $T_4,T_5$ and $\Lambda_4, \Lambda_5$ -- that $y_i \equiv y_i^f$ on $(0, \ell_i)\times\{0\}$. \hfill $\meddiamond$

\end{itemize}

Finally, from Theorems \ref{th:controllability} and \ref{thm:solGEB}, one obtains the corresponding controllability result from the perspective of the GEB model in Corollary \ref{coro:controlGEB} below. There, the profiles prescribed at the node $n=1$ affect the intrinsic variables $\{\mathcal{T}_i(\mathbf{p}_i, \mathbf{R}_i)\}_{i \in \{1, 2\}}$.

\begin{corollary}[Coro. 2.11 \ref{A:JMPA}] \label{coro:controlGEB}
Consider the A-shaped network defined by \eqref{eq:A_netw}, and assume that

\begin{enumerate}[label=(\roman*)]
\item the beam parameters $(\mathbf{M}_i, \mathbf{C}_i, R_i)$ and initial data $(\mathbf{p}_i^0, \mathbf{R}_i^0, \mathbf{p}_i^1, w_i^0)$ have the regularity \eqref{eq:reg_beampara} and \eqref{eq:reg_Idata_GEB}, satisfy Assumption \ref{as:mass_flex} and \eqref{eq:compat_GEB_transmi}, and $y_i^0$ is the associated function defined by \eqref{eq:rel_inidata},

\item the Neumann data $f_n = f_n(t, \mathbf{R}_{i^n})$, for $n \in \{1, 2, 3\}$ are of the form \eqref{eq:def_fn}, for given functions $q_n$ of regularity \eqref{eq:reg_Ndata_IGEB},

\item $(y_i^0)_{i \in \mathcal{I}}$ satisfies the first-order compatibility conditions of \eqref{eq:nIGEBgen}.
\end{enumerate}
Let $\overline{T}>0$ be defined by \eqref{eq:minT}. Then, for any $T>T^*>\overline{T}$, there exists $\varepsilon_0>0$ such that for all $\varepsilon\in (0, \varepsilon_0)$, for some $\delta, \gamma>0$, and for any nodal profiles $\overline{y}_1, \overline{y}_2 \in C^1([T^*, T]; \mathbb{R}^{12})$ satisfying $\|\overline{y}_i\|_{C_t^1}\leq \gamma$ and the  transmission conditions \eqref{eq:nIGEBgen_cont}-\eqref{eq:nIGEBgen_Kir} at the node $n=1$, if additionally $\|y_i^0\|_{C_x^1} + \|f_n\|_{C_t^1} \leq \delta$ ($i \in \mathcal{I}, \ n \in \{1, 2, 3\}$), then there exist controls $f_4, f_5 \in C^1([0, T]; \mathbb{R}^6)$ with $\|f_n\|_{C_t^1}\leq \varepsilon$ such that System \eqref{eq:nGEBgen} with initial data $(\mathbf{p}_i^0, \mathbf{R}_i^0, \mathbf{p}_i^1, w_i^0)$ and boundary data $(f_n)_{n \in \{1, 2, 3\}}$, admits a unique solution $(\mathbf{p}_i, \mathbf{R}_i)_{i \in \mathcal{I}} \in \prod_{i=1}^N C^2([0, \ell_i]\times[0, T]; \mathbb{R}^3 \times \mathrm{SO}(3))$, and $(y_i)_{i\in\mathcal{I}} := \mathcal{T}((\mathbf{p}_i, \mathbf{R}_i)_{i\in\mathcal{I}})$ fulfills $\|y_i\|_{C_{x,t}^1}\leq \varepsilon$ and the nodal profiles \eqref{eq:aim}.
\end{corollary}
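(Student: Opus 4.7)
\textbf{Plan of proof for Corollary \ref{coro:controlGEB}.} The strategy is to reduce the problem to the intrinsic setting by using the nonlinear transformation $\mathcal{T}_{\mathrm{net}}$ studied in Section \ref{sec:invert_transfo_net}, apply the controllability result Theorem \ref{th:controllability} on the IGEB system, and then translate back to the GEB formulation via Theorem \ref{thm:solGEB}. More precisely, the plan is to proceed in the following four steps.

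First, I would translate all of the GEB data into intrinsic data. From the initial data $(\mathbf{p}_i^0, \mathbf{R}_i^0, \mathbf{p}_i^1, w_i^0)$ I build $y_i^0$ via formula \eqref{eq:rel_inidata} (this is hypothesis (i) of the corollary), and from the given Neumann data $f_n$, for $n \in \{1,2,3\}$, I read off $q_n$ by inverting \eqref{eq:def_fn}. The hypotheses on regularity and the bound $\|y_i^0\|_{C_x^1} + \|f_n\|_{C_t^1} \leq \delta$ yield, by continuity of $\mathcal{T}_{\mathrm{net}}$ and of its inverse (Lemma \ref{lem:invert_transfo}) on bounded sets, a corresponding bound $\|y_i^0\|_{C_x^1}+\|q_n\|_{C_t^1}\le \delta'$ with $\delta' = \delta'(\delta) \to 0$ as $\delta \to 0$, and moreover $(y_i^0)_{i\in\mathcal{I}}$ satisfies the first-order compatibility conditions of \eqref{eq:nIGEBgen} (this is hypothesis (iii)).

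Next, given any prescribed nodal profiles $\overline{y}_1, \overline{y}_2 \in C^1([T^*,T];\mathbb{R}^{12})$ of norm at most $\gamma$ and compatible with the transmission conditions at node $1$, I apply Theorem \ref{th:controllability} to the A-shaped IGEB network to obtain controls $q_4,q_5 \in C^1([0,T];\mathbb{R}^6)$ with $\|q_n\|_{C_t^1}\leq \varepsilon$ and a unique solution $(y_i)_{i\in\mathcal{I}} \in \prod_{i} C^1([0,\ell_i]\times[0,T];\mathbb{R}^{12})$ of \eqref{eq:nIGEBgen} satisfying $y_i(0,t)=\overline{y}_i(t)$ for $i\in\{1,2\}$ on $[T^*,T]$, provided that $\delta',\gamma$ are chosen small enough (which we can enforce by taking $\delta$ small).

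Then I invoke Theorem \ref{thm:solGEB}. By construction, hypotheses (i)--(ii)--(iii) of that theorem hold for $n \in \{1,2,3\}$; hypothesis (iv) reduces to the transmission compatibility \eqref{eq:compat_GEB_transmi} since $\mathcal{N}_S^D = \emptyset$, and this is assumed in the corollary. For the controlled nodes $n\in\{4,5\}$, I \emph{define} the GEB Neumann control a posteriori by
\begin{equation*}
f_n(t) := \mathrm{diag}\bigl(\mathbf{R}_{i^n}(\mathbf{x}_{i^n}^n,t),\,\mathbf{R}_{i^n}(\mathbf{x}_{i^n}^n,t)\bigr)\,q_n(t),
\end{equation*}
using the rotation $\mathbf{R}_{i^n}$ produced by Theorem \ref{thm:solGEB} at these nodes; this is precisely the required form \eqref{eq:def_fn}. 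Theorem \ref{thm:solGEB} then yields the unique GEB solution $(\mathbf{p}_i,\mathbf{R}_i)_{i\in\mathcal{I}}$, and since $y_i = \mathcal{T}_i(\mathbf{p}_i,\mathbf{R}_i)$ the nodal profile condition \eqref{eq:aim} is automatic. The smallness $\|f_n\|_{C_t^1}\leq \varepsilon$ follows from $\|q_n\|_{C_t^1}\leq\varepsilon$ together with the uniform bound on $\mathbf{R}_{i^n}$ coming from the $C_{x,t}^2$ control of $(\mathbf{p}_i,\mathbf{R}_i)$, up to a harmless redefinition of $\varepsilon_0$.

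The main obstacle I anticipate is the apparent circularity in the last step: the Neumann control $f_n$ depends on the state $\mathbf{R}_{i^n}$ at the boundary, which itself depends on $f_n$. This is resolved by the constructive nature of Theorem \ref{thm:solGEB}: given the IGEB solution, the rotations $\mathbf{R}_i$ are first reconstructed from the quaternion ODEs in Section \ref{sec:1b_invert_transfo} \emph{without} reference to $f_n$, and only then is $f_n$ defined to match \eqref{eq:def_fn} at the trace. A secondary technical point is to keep track of how the smallness thresholds propagate under $\mathcal{T}_{\mathrm{net}}$ and $\mathcal{T}_{\mathrm{net}}^{-1}$, but this is routine because both maps are locally Lipschitz on the relevant $C^1$ spaces.
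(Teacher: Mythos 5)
Your proposal is correct and follows essentially the same route as the paper: the corollary is obtained precisely by applying Theorem \ref{th:controllability} to the intrinsic A-shaped network to produce $q_4,q_5$ and the IGEB solution matching the profiles, and then invoking Theorem \ref{thm:solGEB} to recover the unique GEB solution and define $f_4,f_5$ via \eqref{eq:def_fn} from the reconstructed rotations. Your remark resolving the apparent circularity (the rotations being recovered from the IGEB state before $f_n$ is read off at the trace) is exactly the right observation and is implicit in the paper's argument.
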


%

\section{Toward more general networks}
\label{sec:more_gene_net}

\begin{figure}[b]
    \centering
    \includegraphics[scale = 0.775]{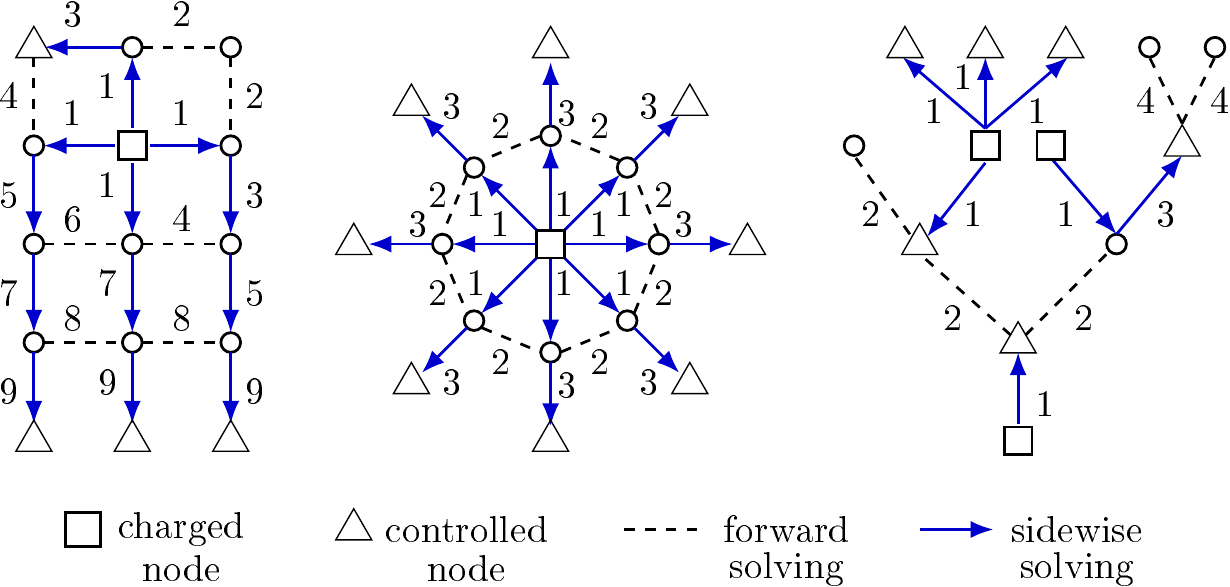}
    \caption{Other networks for which local exact controllability of nodal profiles is achievable by following Algorithm \ref{algo:control} (the numbers refer to the variable \textsf{step}).}
    \label{fig:otherControllableNet}
\end{figure}

The A-shaped network gives an illustrative example of a network with cycles where the controllability of nodal profiles is achievable, and similar arguments can be applied to various networks, and with controls at different locations.
On tree-shaped networks, some conditions were proved to be sufficient for the exact controllability of nodal profiles to be achieved \cite{gu2011, gu2013, li2016book, kw2011, kw2014, YWang2019partialNP}.
The nodal profile controllability has also been established for the Saint-Venant system \cite{Zhuang2021, Zhuang2018} for numerous networks with cycles, of various shapes and with several charged nodes. 
 

\begin{algorithm}
\DontPrintSemicolon

\Input{%
$\mathcal{I}, \, \mathcal{N}, \, \mathcal{N}_S, \, k_n$\tcp*{edges, nodes, simple nodes, degrees}
\hspace{1.2cm}$\mathcal{I}^n$ for all $n\in \mathcal{N}$\tcp*{edges incident to the node $n$}
\hspace{1.2cm}$\mathcal{N}^i$ for all $i \in \mathcal{I}$\tcp*{nodes at the tips of the edge $i$}
\hspace{1.2cm}$\mathcal{P}$, $\mathcal{C}$\tcp*{charged nodes, controlled nodes}
\hspace{1.2cm}$\mathcal{S}$\tcp*{edges on control paths}
}

\setcounter{AlgoLine}{0}
\ShowLn
$J$ $\leftarrow$ $[\, 0, \quad $for $n = 1 \ldots \#\mathcal{N}\, ]$; \ \lFor{all $n \in \mathcal{P}$}{($J(n)$ $\leftarrow$ $k_n - 1$);} 
\tcp*{amount $J(n)$ of data available at $n$ to solve sidewise}

\ShowLn
$\mathcal{F}$ $\leftarrow$ $\emptyset$;\tcp*{solved edges}
 
\ShowLn
\textsf{step} $\leftarrow$ $1$; \ \textsf{moved} $\leftarrow$ \textsf{false};\tcp*{to count the steps}

\ShowLn
$\mathcal{M}$  $\leftarrow$ $\mathcal{P}\cup \{n \in \mathcal{N} \colon\text{$n$ not incident with any edge in $\mathcal{S}$}\};$\;
 
\ShowLn
\While(\hfill \tcp*[h]{while entire network not solved}){$\mathcal{F} \neq \mathcal{I}$}{
 
\ShowLn
\For(\hfill \tcp*[h]{Principle 1}){$m =\#\mathcal{M}, \ldots, 3, 2, 1$}{

\ShowLn
\For{all ${\mathcal{N}}^\dagger \subseteq \mathcal{M}$ such that $\#\mathcal{N}^\dagger = m$}{

\ShowLn
\If{there exists a connected subgraph with nodes ${\mathcal{N}}^\dagger$ and edges ${\mathcal{I}}^\dagger$, such that ${\mathcal{I}}^\dagger \cap ( \mathcal{S}\cup \mathcal{F}) = \emptyset$}{

\ShowLn
  solve forward problem for the network $({\mathcal{I}^\dagger}, {\mathcal{N}}^\dagger)$;\;
  
\ShowLn
\lFor{all $n \in {\mathcal{N}}^\dagger$}{
  ($J(n)$ $\leftarrow$ $J(n) + 1$);}
  
\ShowLn
$\mathcal{M}$ $\leftarrow$ $\mathcal{M} \cup {\mathcal{N}}^\dagger$; \, $\mathcal{F}$ $\leftarrow$ $\mathcal{F} \cup {\mathcal{I}}^\dagger$; \ \textsf{moved} $\leftarrow$ \textsf{true};\;
}}}

\ShowLn
\lIf{\upshape \textsf{moved} $=$ \textsf{true} }{(\textsf{step} $\leftarrow$ \textsf{step} $+ 1$; \ \textsf{moved} $\leftarrow$ \textsf{false});}  
   
\ShowLn
\For(\hfill \tcp*[h]{Principle 2}){all $n \in \mathcal{M}$}{

\ShowLn
\If(\hfill \tcp*[h]{if enough data at $n$}){$J(n) = k_n - 1$}{

\ShowLn
\For{all $i \in \mathcal{I}^n \cap (\mathcal{S} \setminus \mathcal{F}$)}{

\ShowLn
solve sidewise problem for the edge $i$ with ``initial \;
  conditions'' at the node $n$;\;

\ShowLn
$\mathcal{M}$ $\leftarrow$ $\mathcal{M} \cup \mathcal{N}^i$; \, $\mathcal{F}$ $\leftarrow$ $\mathcal{F}\cup \{i\}$;\;

\ShowLn
\lFor{all $m \in \mathcal{N}^i$}{
  ($J(m)$ $\leftarrow$ $J(m) + 1$);}
  } 
  
\ShowLn 
\textsf{moved} $\leftarrow$ \textsf{true};\;
}}

\ShowLn
\lIf{\upshape \textsf{moved} $=$ \textsf{true}}{(\textsf{step} $\leftarrow$ \textsf{step} $+ 1$; \ \textsf{moved} $\leftarrow$ \textsf{false});} 
}

\ShowLn  
Compute controls $q_n$ by evaluating the trace at nodes $n\in\mathcal{C}$;\;
 
\caption{Steps of controllability proof for other networks}
\label{algo:control}
\end{algorithm}

\medskip

\noindent It arises, from these works, a series of conditions on the number and location of the charged nodes, which are sufficient to achieve the respective controllability goals; see \cite[Theorem 5.1]{YWang2019partialNP}, and \cite[Sections 7 and 8]{Zhuang2021}.
%
Let us denote by $\mathcal{P}$ and $\mathcal{C}$ the set of indexes of the \emph{charged nodes} and \emph{controlled nodes}, respectively.
Given a charged node $n\in\mathcal{P}$ and a controlled node $m\in\mathcal{C}$, a \emph{control path} between $n$ and $m$ \cite{YWang2019partialNP}, is any connected subgraph (of that which represents the beam network) forming a \emph{path graph}\footnote{Namely, an oriented graph without a loop such that two of its nodes are of degree $1$, and all others have a degree $2$.} with $n$ and $m$ as nodes of degree $1$. Examples of control paths are highlighted by arrows in Fig. \ref{fig:otherControllableNet}.
In the case of the beam networks considered in this article, these conditions become:
\begin{enumerate}
\item The total number of controlled nodes $\#\mathcal{C}$ is equal to $\displaystyle \sum_{n\in \mathcal{P}} k_n$ (recall that $k_n$ is the degree of $n$).
\item For any $n \in \mathcal{P}$, there are $k_n$ controlled nodes connected with $n$ through control paths. The sole common node of these control paths is the charged node $n$. 
\item Control paths corresponding to different charged nodes do not have any common node.
\end{enumerate}
We stress that at any $n \in\mathcal{P}$, profiles are prescribed for all incident beams and the entire state.
Then, one may use the constructive method by following the steps instructed by Algorithm \ref{algo:control} for networks such as those depicted in Fig. \ref{fig:otherControllableNet}. However, note that there isn't any proof that Algorithm \ref{algo:control} yields a controllability result for \emph{any} network fulfilling the three above conditions.
In this algorithm, edges belonging to control paths are solved according to the \emph{Principle 2} -- solving a sidewise problem as for the edges $1, 2, 4, 5$ -- while the other edges are solved according to the \emph{Principle 1} -- solving a forward problem as for the edge $3$.

\chapter{Summary and outlook}
\label{ch:conclusion}



We have studied freely vibrating beams described by the GEB and IGEB models, the former being expressed in terms of positions $\mathbf{p}$ and rotation matrices $\mathbf{R}$ (or $\mathbf{p}_i$ and $\mathbf{R}_i$ for networks) expressed in a fixed coordinate system, whereas the latter is rather expressed in terms of the velocities $v$ and internal forces and moments $z$ (resp. $v_i$ and $z_i$) expressed in a moving coordinate system attached to the beam. For such beams, the strains are small but the beams may undergo large displacements of the centerline and rotations of the cross sections. The constitutive law includes possibly composite anisotropic materials. 
We took advantage of the fact that when the beam is described by the IGEB model, the governing system is of first order in space and time, hyperbolic and only semilinear, while for the GEB model the governing system is of second order and quasilinear.

For the IGEB model, for a single beam, tree-shaped networks, or networks with loops, one obtains existence and uniqueness of local in time $C_t^0H_x^k$ solution and semi-global in time $C_{x,t}^1$ solutions. While well-posedness for a single beam is almost immediately provided by the existing literature, for networks one should pay attention to the transmission conditions in the \emph{diagonal system} (the system written in Riemann invariants) and verify that at each node the components of the state corresponding to characteristics entering the domain are expressed in terms of those leaving the domain. From the classical rigid joint and Kirchhoff conditions given in terms of $\mathbf{p}_i$ and $\mathbf{R}_i$, we derive the corresponding transmission conditions in terms of $v_i$ and $z_i$, and prove that the diagonal system has the desired properties.

Looking at the definition of the unknowns of the GEB model, one can see that it is connected to the IGEB model by the nonlinear transformation
\begin{align*}
\mathcal{T} \colon (\mathbf{p}, \mathbf{R}) \longmapsto \begin{bmatrix}
\mathbf{I}_6 & \mathbf{0} \\
\mathbf{0} & \mathbf{C}^{-1}
\end{bmatrix}
\begin{bmatrix}
\mathbf{R}^\intercal \partial_t \mathbf{p}\\ \mathrm{vec}\left( \mathbf{R}^\intercal \partial_t \mathbf{R} \right) \\
\mathbf{R}^\intercal \partial_x \mathbf{p}  - e_1 \\ 
\mathrm{vec}\left(\mathbf{R}^\intercal \partial_x \mathbf{R} \right) - \Upsilon_c
\end{bmatrix}
\end{align*}
(or $\mathcal{T}_\mathrm{net}$ for networks, defined in \eqref{eq:transfo_net}).
It turns out that inverting this transformation can be seen as solving a linear system of PDEs which at first sight seems overdetermined, but not if the last six governing equations of the IGEB model are used as compatibility conditions. Then, if the initial and boundary data of both systems are compatible, one may see that the existence and uniqueness of classical solutions to the IGEB system, implies that of classical solutions to the corresponding GEB system. 

The utility of this result is, notably, that one has the possibility to choose data for the GEB model in such a way that well-posedness, exponential stabilization or nodal profile controllability is achieved for the corresponding IGEB model, and as a result obtain a solution to the GEB model with some properties of the states -- as time goes to infinity or after some controllability time for instance -- in terms of velocities and internal forces and moments.

For a single beam and for a star-shaped network we show that the zero steady state is locally exponentially stable for the $H^1$ and $H^2$ norms. We use quadratic Lyapunov functionals characterized by functions $\overline{Q}$ (or $\overline{Q}_i$ for networks) of the form
\begin{align*}
\overline{Q} = \rho \, Q^\mathcal{P} + w \begin{bmatrix}
\mathbf{0} & \mathbf{C}^{-\sfrac{1}{2}}(\mathbf{C}^{\sfrac{1}{2}}\mathbf{M}\mathbf{C}^{\sfrac{1}{2}})^{\sfrac{1}{2}} \mathbf{C}^{\sfrac{1}{2}}\\
\mathbf{C}^{-\sfrac{1}{2}}(\mathbf{C}^{\sfrac{1}{2}}\mathbf{M}\mathbf{C}^{\sfrac{1}{2}})^{\sfrac{1}{2}} \mathbf{C}^{\sfrac{1}{2}} & \mathbf{0}
\end{bmatrix}
\end{align*}
in the \emph{physical system}, or $Q$ (resp. $Q_i$) in the \emph{diagonal system}
\begin{align*}
Q = \begin{bmatrix}
(\rho + w)\mathbf{I}_6 & \mathbf{0}\\
\mathbf{0} & (\rho - w)\mathbf{I}_6
\end{bmatrix} Q^\mathcal{D},
\end{align*}
where $\rho>0$ and $w$ is increasing -- with other properties depending on the boundary or nodal conditions -- and where $Q^\mathcal{P}$ and $Q^\mathcal{D}$ are the matrices characterizing the energy of the beam from each perspective. We also discuss the obstacles encountered if one of the external nodes of the star-shaped network is clamped or free, rather than controlled.

Finally, for a network with a loop -- more precisely an A-shaped network -- we prove local exact nodal profile controllability of the multiple node at the ``tip'' of the A when internal forces and moments are controlled at the two simple nodes. Just as the exponential stability proof rests on the existence and uniqueness of local in time solutions in $C_t^0H_x^k$ -- extending this local solution to the whole time interval by means of the Lyapunov functionnal \cite{BC2016} --, this controllability proof relies on the existence and uniqueness of semi-global in times solutions in $C_{x,t}^1$. The constructive method of \cite{LiRao2002_cam, LiRao2003_sicon, Zhuang2021, Zhuang2018} makes use of this well-posedness result to build a solution matching the desired profiles. We conclude by discussing the case of different networks of geometrically exact beams.

\medskip

Throughout this work, we encountered several new questions, obstacles, and paths to possible extensions. Let us now present some of them.

\section{Modelling, analysis}

\noindent \textbf{More general data and external forces and moments.}
One may observe that the proofs here make use of the fact that the given data -- initial and boundary data, external forces and moments -- are expressed in a way that is suitable to the study of the IGEB model.
As mentioned in Remark \ref{rem:equiv_geb_igeb}, one may want to assume that external forces, such as gravity \cite[eq. (4)]{Artola2019mpc} or aerodynamic forces \cite[eq. (12)]{Palacios2011intrinsic}, which may be functions of $x$ or $(\mathbf{p}, \mathbf{R})$, are applied on the beam, or to set boundary conditions whose expression in the body-attached basis is unavailable.
It would thus be interesting to see if studying simultaneously the IGEB system and the transformation $\mathcal{T}$ (or $\mathcal{T}_\mathrm{net}$ for networks) could help in this direction. This is, in some way, what is being done on the computational side, for instance in \cite{Artola2019mpc} to implement the force exerted by gravity in simulations of intrinsic beams.

\medskip


\begin{figure}[b]\centering
\includegraphics[scale=0.6]{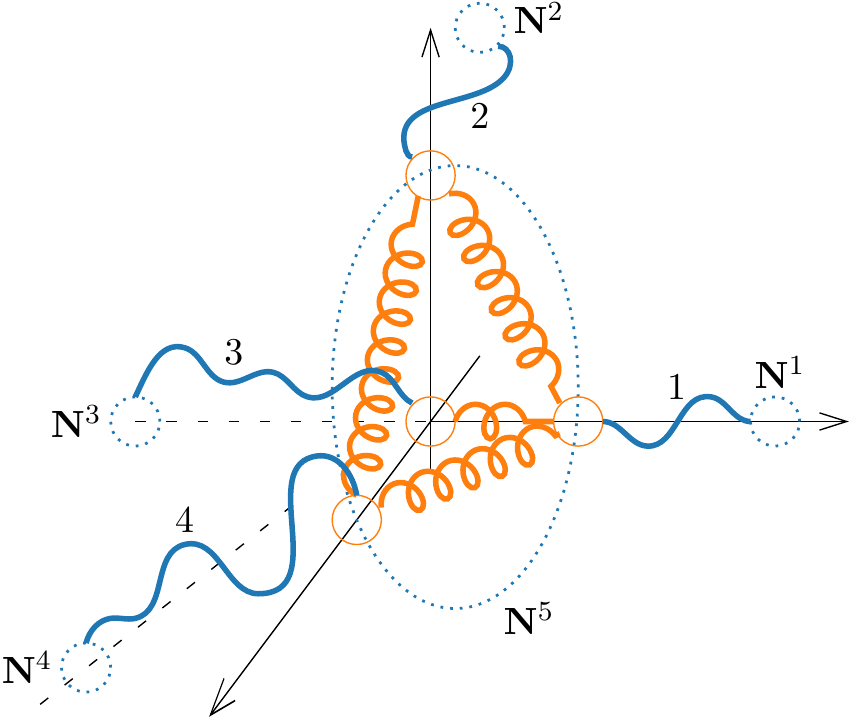}
\caption{Nonlinear strings coupled to an elastic body in $\mathbb{R}^3$. The resulting network has four simple nodes $\{\mathbf{N}_i\}_{i=1}^4$ and one multiple nodes $\mathbf{N}_5$, itself expended into a network of springs.}
\label{fig:ssm}
\end{figure}

\noindent \textbf{Different junctions at multiple nodes.} In \ref{P:ssm}, to represent a network of nonlinear strings coupled to elastic bodies in $\mathbb{R}^3$, we consider strings with masses attached to their ends, and replace each multiple node by a subnetwork of springs (see Fig. \ref{fig:ssm}). Also by means of a constructive method (thus relying on the existence and uniqueness of semi-global in time classical solutions), we then prove local exact controllability of a star-shaped network by means of controls applied at all simple nodes but one, which is clamped. It would be of interest to see how such a coupling at the multiple nodes may be modelled in the case of geometrically exact beams, and what are the resulting controllability properties. 

\medskip

\noindent \textbf{Kelvin-Voigt damping.}
One may want to account for structural damping in the beam model -- coming for instance from friction
between the particles that constitute the object --, and see the influence of this on well-posedness or stabilization results. 
As discussed in Section \ref{sec:mass_flex_materialLaw}, in \cite{Artola2021damping} the authors introduce Kelvin-Voigt damping in the IGEB equations and  obtain a system with internal damping appearing in the first six governing equations. It would be of interest to see if one can then prove well-posedness or stabilization results without any smallness assumption on the initial and boundary data, or a moderate ones depending on the damping parameters.


\medskip

\noindent \textbf{Well-posedness.}
Since we consider a very specific hyperbolic system -- the IGEB model --, one could establish a well-posedness result tailored to this model and keep track of the bounds on the initial and boundary data to obtain more quantitative information.
Indeed, here we rely either on \cite{BC2016} or \cite{LiJin2001_semiglob} which have been developed for abstract systems. Moreover, a well-posedness result devised directly for the physical system might eliminate the additional regularity assumptions made on the eigenvalues and eigenvectors of $A$ (and $A_i$ for networks); see also Remark \ref{rem:wellp_fb} \ref{remItem:extra_regularity}.

\section{Stabilization, control}

\noindent \textbf{Exponential decay.} Looking more closely at the exponential decay of the Lyapunov functional $\overline{\mathcal{L}}$, and thus of the solution, one can see that there is a competition between the largest eigenvalue of $\overline{S}$ (which is negative) and the constant $\delta$ (in Proposition \ref{prop:1b_existence_Lyap}) which notably constrains the size of the initial data ($\varepsilon$ in Definition \ref{def:stability}). This eigenvalue notably depends on how large the derivative of the weight $w$ is. 
We have seen in Chapter \ref{ch:stab} that a clamped of free end boundary condition adds a constraint on the sign of $w$, while the choice of the feedback matrix $K$ influences the value of $w$ at the other end\footnote{
However, one can see in \eqref{eq:boundary_terms_physical}, or in Proposition \ref{prop:calMn_form}, that this constraint disappears in the case of transparent boundary conditions, when $\mathbf{W}$ is defined by \eqref{eq:boldW_MCfrac}.
} and thereby its derivative over the whole interval. 

In \ref{A:SICON}, where $K$ is constrained to be of the form $K = \mathrm{diag}(\mu_1 \mathbf{I}_3, \mu_2 \mathbf{I}_3)$ and the mass and flexibility matrices are constant and diagonal, we see that the feedback parameters $\mu_1, \mu_2>0$ yielding the largest freedom for the choice of $w$ are of the form
\begin{align} \label{eq:K_closeTransp}
\mu_1 = \sqrt{\min_{1\leq i \leq 3}b_i\max_{1\leq j \leq 3} b_j }, \qquad \mu_2 = \sqrt{\min_{4\leq i \leq 6} b_i \max_{4\leq j \leq 6} b_j},
\end{align}
where $\{b_i\}_{i=1}^6$ are the diagonal entries of $\mathbf{M}^{\sfrac{1}{2}}\mathbf{C}^{- \sfrac{1}{2}}$.
Overall, given a certain setting, many factors can be taken into account to choose the ``best'' feedback matrix $K$, weight $w$, constraint $\delta$ on the initial data. One may also be interested in the impact of the length of the beam and that of the other beam parameters.
For instance, in \ref{P:CDC}, we look for quadratic Lyapunov functionals with polynomial weights that maximize some of the region of attraction estimates (i.e., the constraints on the size of the initial data $\varepsilon$).

\medskip

\noindent \textbf{Removing one feedback control.} As explained in Section \ref{sec:stab_net}, removing one control and retaining the stabilization result for the star-shaped network is not straightforward. If the boundary terms coming from the transmission condition cannot be analysed further than in Section \ref{sec:pov_diag_syst}, the difficulty encountered here might be technical only and one could look to find a different Ansatz for the Lyapunov functional (as in \ref{P:CDC} for example), or a different method altogether to deduce exponential stability.

\medskip

\noindent \textbf{Nodal profile control.} There is a lot that can be done in view of generalising the nodal profile control result. The Algorithm \ref{algo:control} in itself does not constitute a proof that the conditions 1, 2 and 3 given in Section \ref{sec:more_gene_net} are sufficient to guarantee controllability for any network that fulfills them. 
Naturally, as in the aforementioned works on nodal profile control, the conditions given to obtain controllability of nodal profiles are only \emph{sufficient} to ensure the controllability result, even for a specific network, and the search for necessary and sufficient conditions is open.

Finally, as in \cite{YWang2019partialNP}, one may also want to allow profiles to be prescribed for only some (rather than all) of the incident edges at a given charged node $n \in \mathcal{P}$, and to prescribe profiles for only part of the state (for instance, in the case of \eqref{eq:nIGEBgen}, prescribing the velocities or internal forces and moments only). This type of problem is then called \emph{partial} nodal profile controllability.

\section{Numerics}

To gain better intuition and understanding of the stabilization problem studied here, we would like to explore the influence of feedback control in various contexts through numerical simulations. Notably, we are interested in comparing the effects of different initial data and beam parameters, and that of a positive semi-definite (rather than positive definite) matrix $K$. Moreover, if the numerical simulations may be extended to networks then we are interested in testing different settings such as the star-shaped network with all simple nodes but one controlled, and more general network shapes (starting with tree-shaped networks).

On the other hand, we want not to simulate directly the GEB model, but rather the IGEB model and afterwards use the transformation $\mathcal{T}$ (defined by \eqref{eq:transfo}) to recover the position of the beam. What we present here is of course not general and does not involve profound numerical analysis; further studies would have to be realized to validate the obtained finite dimensional model. We solely give results of numerical simulations in a specific situation where the beam parameters are those which are provided in \cite[Fig. 2]{hesse2012}, more precisely the mass and flexibility matrices are as follows
\begin{align*}
\mathbf{M} &= \mathrm{diag} (1, 1, 1, 20, 10, 10), \quad \mathbf{C} = \mathrm{diag} (10^4, 10^4, 10^4, 500, 500, 500)^{-1}.
\end{align*}
In addition, we choose a specific initial datum for which computing the initial strains analytically is possible, as we will see below. We consider a single beam, clamped at $x=0$ and controlled at $x = \ell$, which is thus described by System \eqref{eq:GEBfb} or its intrinsic counterpart \eqref{eq:IGEBfb}. As for the feedback matrix $K$, we choose $K = \mathrm{diag}(\mu_1 \mathbf{I}_3, \mu_2 \mathbf{I}_3)$ where $\mu_1, \mu_2 > 0$ are defined by \eqref{eq:K_closeTransp}, 
so that $K$ is close, but not equal, to the matrix that yields transparent boundary conditions.

In this section we present the approximation methodology, and stress some links with the study realized in Section \ref{sec:1b_invert_transfo} on the inversion of the transformation $\mathcal{T}$. 
Even though, in this setting, the mass and flexibility matrices are constant and diagonal, the following presentation is given for general $\mathbf{M}, \mathbf{C} \in \mathbb{S}_{++}^6$.
Our \texttt{Matlab} code is available at \href{https://github.com/chrdz/GEB-Feedback}{\texttt{https://github.com/chrdz/GEB-Feedback}}.
We are very grateful to Marc Artola (Imperial College London) and Dani\"el Veldman (FAU Erlangen-N\"urnberg) for their guidance and advice.
 
\medskip



First, we semi-discretize in space the IGEB model by means of the finite element method.
Beforehand, it is convenient to write the governing system in the equivalent form
\begin{align*}
Q^\mathcal{P}(x) \partial_t y + \mathbf{A} \partial_x y + \mathbf{B}(x) y + \mathbf{G}(x, y)y = 0,
\end{align*}
with $Q^\mathcal{P}$ defined in \eqref{eq:def_boldE_calP} and $\mathbf{A}, \mathbf{B}(x), \mathbf{G}(x, y) \in \mathbb{R}^{12 \times 12}$ defined by
\begin{align*}
\mathbf{A} = - \begin{bmatrix}
\mathbf{0} & \mathbf{I}_6\\
\mathbf{I}_6 & \mathbf{0}
\end{bmatrix}, \quad \mathbf{B}(x) = \begin{bmatrix}
\mathbf{0} & -\mathbf{E}(x)\\
\mathbf{E}(x)^\intercal & \mathbf{0}
\end{bmatrix} , \quad \mathbf{G}(x, y) = - \mathcal{G}(y)Q^\mathcal{P}(x),
\end{align*}
where we recall that the matrix $\mathcal{G}(y) \in \mathbb{R}^{12 \times 12}$ is defined in \eqref{eq:def_calG}.
As in \cite{Artola2019mpc}, it is also convenient to introduce the linear maps $L_1, L_2$ defined below, so that $\mathbf{G}$ also reads as
\begin{align*}
\mathbf{G}( \cdot ,y ) = \begin{bmatrix}
L_1(v) \mathbf{M} & L_2(z) \mathbf{C}\\
\mathbf{0} & - L_1 (v)^\intercal \mathbf{C}
\end{bmatrix}, \quad \text{with}  \quad L_1(w) := \begin{bmatrix}
\widehat{w}_2 & \mathbf{0}\\
\widehat{w}_1 & \widehat{w}_2
\end{bmatrix}, \quad L_2(w) := \begin{bmatrix}
\mathbf{0} & \widehat{w}_1 \\
 \widehat{w}_1  & \widehat{w}_2
\end{bmatrix},
\end{align*}
where $w_1$ and $w_2$ just denote the first three and last three components of any vector $w\in \mathbb{R}^{12}$. Then, for $\mathbf{G}_\dagger$ defined by
\begin{align*}
\mathbf{G}_\dagger (\cdot,y) = \begin{bmatrix}
-L_2(\mathbf{M}v) & - L_1(\mathbf{C}z)\\
L_1(\mathbf{C}z)^\intercal & \mathbf{0}
\end{bmatrix},
\end{align*}
one has the identity $\mathbf{G}(x, y)\overline{y} = \mathbf{G}_\dagger (x, \overline{y})y$ for all $y, \overline{y} \in \mathbb{R}^{12}$ (see \cite{Artola2019mpc} and the proof of the Lemma \ref{lem:dissip_barg}).
Henceforth, we denote by $\{\widetilde{\mathbf{e}}_i\}_{i=1}^6$, $\{\overline{\mathbf{e}}_i\}_{i=1}^\Ni$ and $\{\mathbf{e}_i\}_{i=1}^\Ntot$ the standard basis of $\mathbb{R}^6$, $\mathbb{R}^\Ni$ and $\mathbb{R}^\Ntot$, respectively, where $\Ni := 12$ is just the number of governing equations of the IGEB model, and $\Ntot$ will be defined later on.

\medskip

\noindent \textbf{Weak formulation. \;}
Let $\Pi^v := \sum_{i=1}^6 \widetilde{\mathbf{e}}_i \overline{\mathbf{e}}_i^\intercal$ and $\Pi^z := \sum_{i=7}^{12} \widetilde{\mathbf{e}}_{i-6} \overline{\mathbf{e}}_i^\intercal$ be the projections on the first six and last six components of any vector in $\mathbb{R}^{12}$, respectively, and let $V$ be the functional space defined by $V := \left\{ \psi \in H^1(0, \ell; \mathbb{R}^{12}) \colon (\Pi^v\psi)(0) = 0  \right\}$. We choose the weak formulation as follows: find $y \in C^0(0, T;V)$ such that
\begin{align*}
&\frac{\mathrm{d}}{\mathrm{d}t} \left(\int_{0}^\ell  \left \langle \psi \,, Q^\mathcal{P} y \right \rangle \right) - \int_0^\ell \left \langle \frac{\mathrm{d}\psi}{\mathrm{d}x} \,, \mathbf{A}  y \right \rangle dx
+\left\langle \Pi^v\psi(\ell), K \Pi^v y(\ell, t) \right\rangle\\
&\hspace{1.25cm} -\left\langle \Pi^z\psi(\ell), \Pi^v y(\ell, t) \right\rangle + \int_0^\ell \Big( \left \langle \psi \,, \mathbf{B} y \right \rangle +  \left \langle \psi \,, \mathbf{G}(x,y)y \right \rangle \Big) dx = 0
\end{align*}
holds for all $\psi \in V$.

\begin{figure}\centering
\includegraphics[scale=0.5]{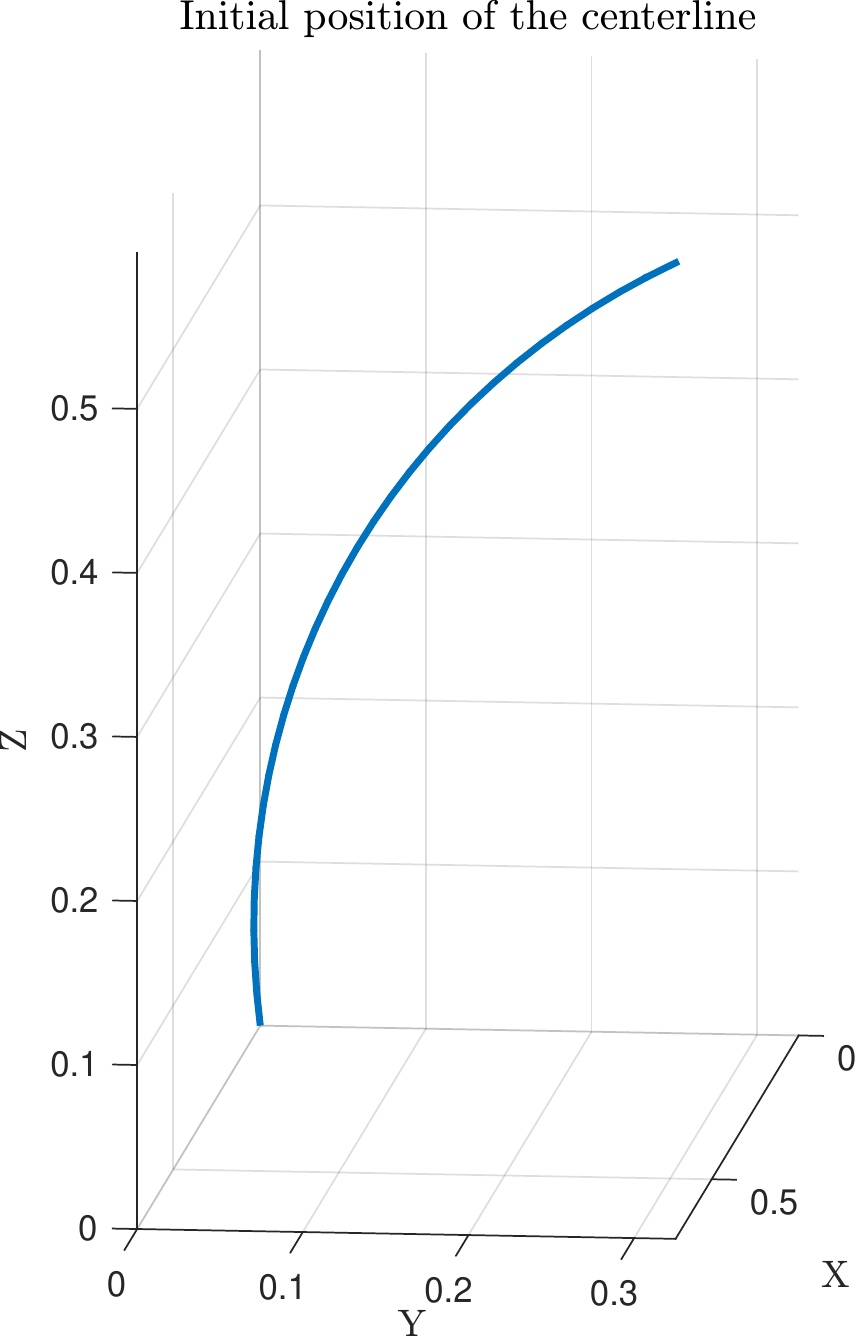}\quad
\includegraphics[scale=0.5]{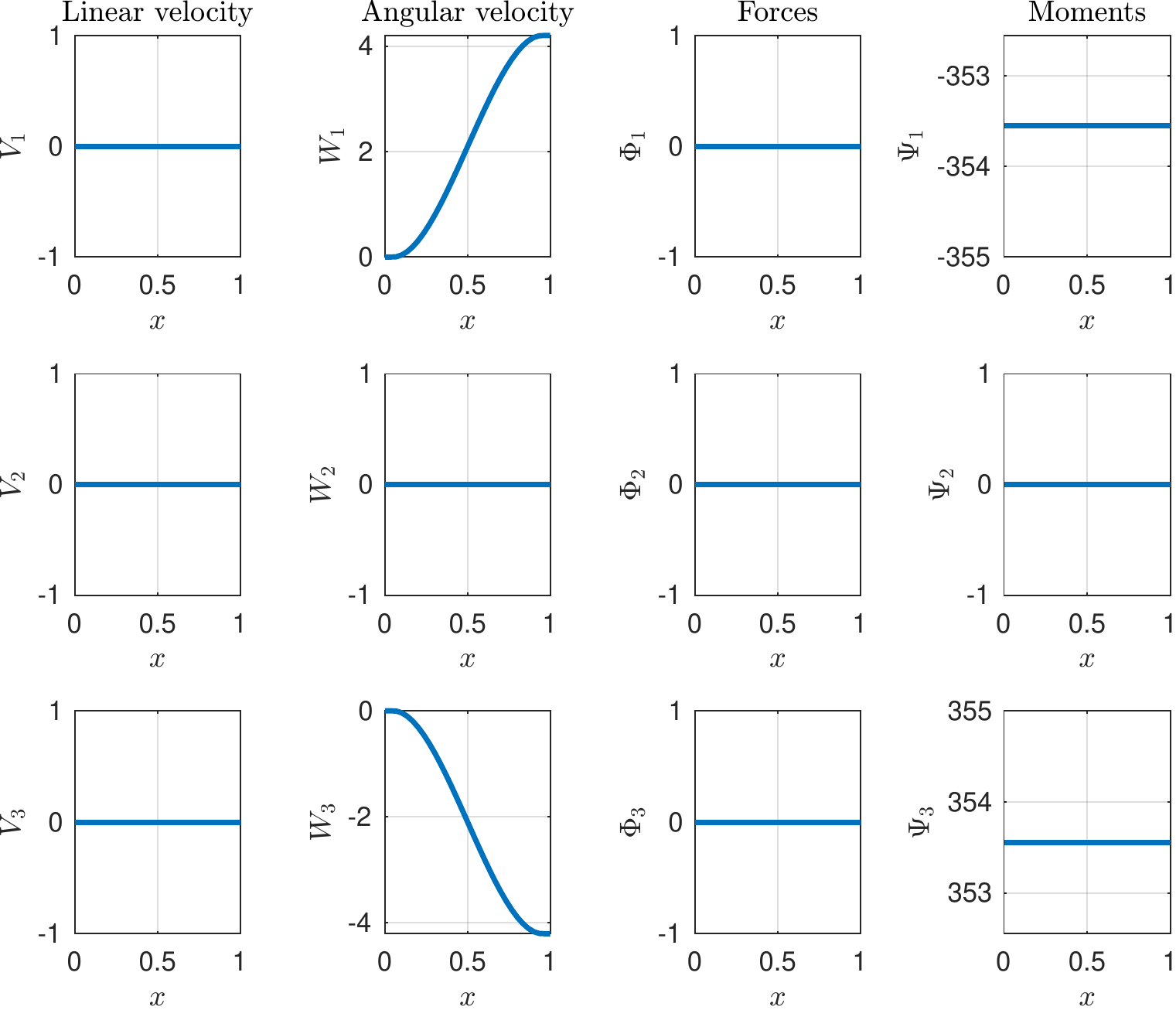}
\caption{The initial data in terms of positions and rotations (left) and in terms of velocities and internal forces and moments (right). Here, $V, W, \Phi, \Psi$ refer to the notation introduced in \eqref{eq:def_VWPhiPsi}.}
\label{fig:ini_data}
\end{figure}

\medskip

\noindent \textbf{Initial data. \;}
We choose an initial datum without shear (i.e., the cross sections are perpendicular to the centerline). We specify the initial position of the centerline $\mathbf{p}^0$, which is here parametrized by its arclength (see Figure \ref{fig:ini_data}), while the columns for $\mathbf{R}^0$ are given by the Frenet-Serret frame:
\begin{align*}
\mathbf{p}^0(x) := \frac{1}{\sqrt{2}} \begin{bmatrix}
x \\ 1-\cos(x)\\
\sin(x)
\end{bmatrix}, \quad \mathbf{R}^0(x)
:= \frac{1}{\sqrt{2}} \begin{bmatrix}
1 & 0 & - 1\\
\sin(x)&  \sqrt{2}\cos(x) & \sin(x)\\
\cos(x) & -\sqrt{2}\sin(x) & \cos(x)
\end{bmatrix}.
\end{align*}
Making use of \eqref{eq:rel_inidata}, one sees that the initial internal forces and moments $z^0 = ((z_1^0)^\intercal, (z_2^0)^\intercal)^\intercal$ take the form
\begin{align*}
z_1^0 = \mathbf{0}, \qquad z_2^0 = \frac{1}{\sqrt{2}} 
\begingroup 
\setlength\arraycolsep{2.5pt}
\renewcommand*{\arraystretch}{0.9}
\begin{bmatrix}
-1 \\ 0 \\ 1
\end{bmatrix}
\endgroup
 - \Upsilon_c.
\end{align*}
One may choose the initial velocities in such a way that the zero-order compatibility conditions of System \eqref{eq:IGEBfb} are satisfied (see Definition \ref{def:comp_cond}).
For instance, we just choose curves going from $0$ at $x=0$ to the components of the vector $- K^{-1} z^0(\ell)$ at $x=\ell$, and with zero slope at these two endpoints.
We set $\Upsilon_c := \mathbf{0}$, meaning that at rest the beam is not curved.

\medskip

\begin{figure}\centering
\includegraphics[scale=0.75]{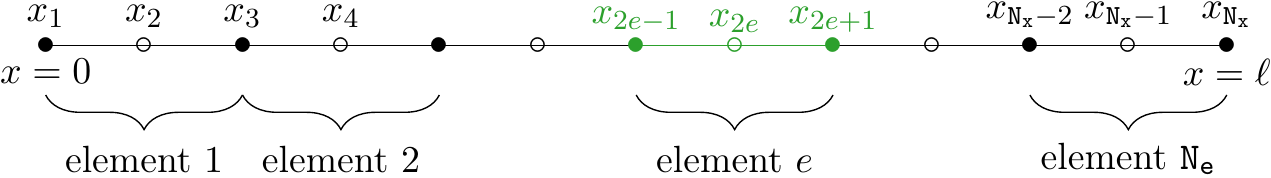}
\caption{Discretization of the spatial interval $[0, \ell]$.}
\label{fig:discretization0l}
\end{figure}

\begin{figure}\centering
\includegraphics[width=4.5cm]{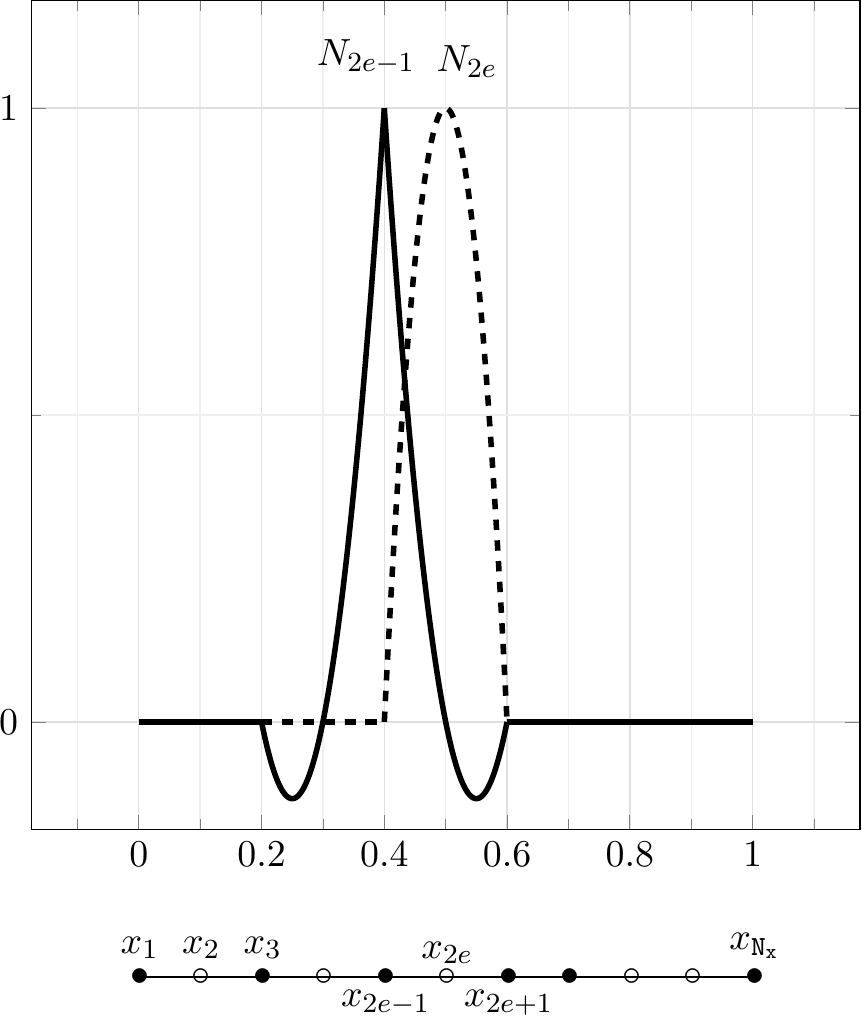}%
\quad \includegraphics[width=4.5cm]{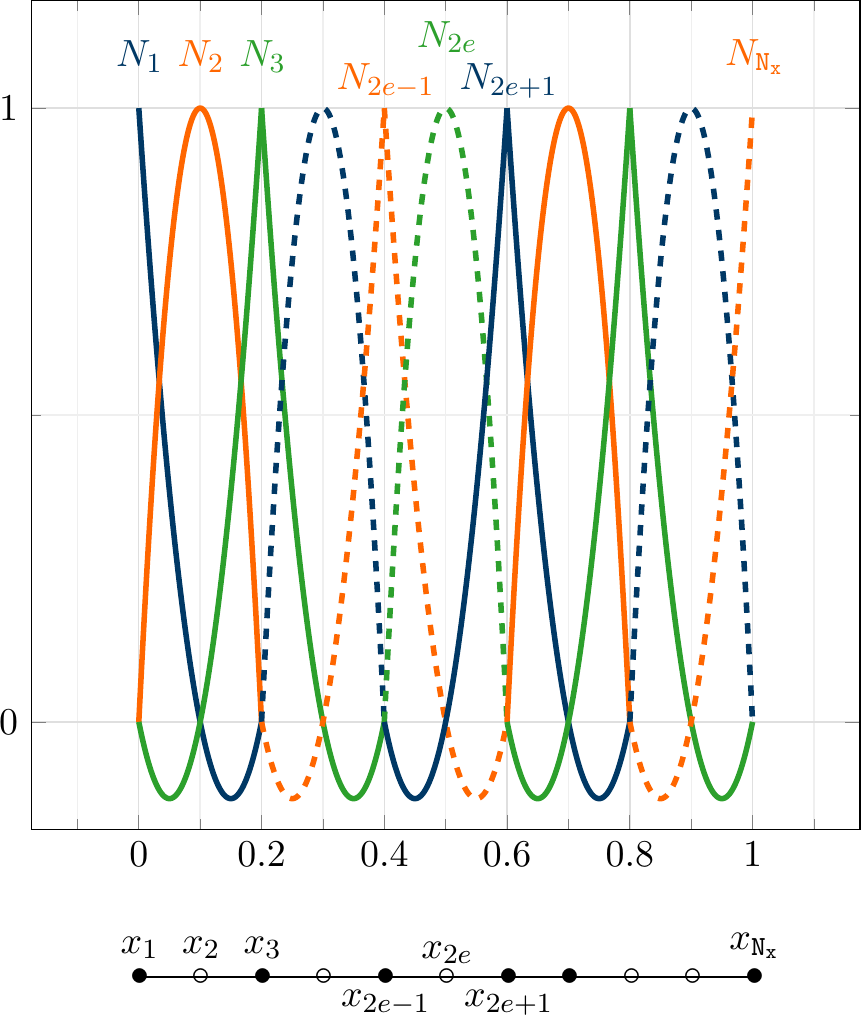}
\caption{Two ``kinds'' of shape functions (left), and all shape functions on $[0, 1]$ (right).}
\label{fig:shape_function}
\end{figure}

\noindent \textbf{Spatial interval and shape functions. \;}
As shown in Fig. \ref{fig:discretization0l}, we place $\Nx$ points $\{x_k\}_{k=1}^\Nx$ on the interval $[0, \ell]$, such that $x_1 = 0$ and $x_\Nx = \ell$. Each interval $\omega^e := [x_{2e-1}, x_{2e+1}]$ for $e \in \{1, 2, \ldots, \Ne\}$ constitutes an element, which contains the points $x_{2e-1}, x_{2e}$ and $x_{2e+1}$, and has length $\he = x_{2e+1} - x_{2e-1}$ (constant over the elements here). Note that $\Nx = 2\Ne+1$.

We semi-discretize in space $V$ by using $\mathbb{P}_2$ (quadratic) elements\footnote{$\mathbb{P}_2$ is the space of polynomials of degree two (i.e., quadratic polynomials).}, as follows:
\begin{align*}
\mathbf{V} := \big\{ \psi \in C^0([0, \ell]; \mathbb{R}^\Ni) \colon &\psi \big|_{\omega_e} \in (\mathbb{P}_2)^\Ni \ \text{for all }e\in \{1, \ldots, \Ne\}, \ \Pi^v \psi(0) = 0 \big\}.
\end{align*}
Let us introduce some maps that will be of help to link double or triple indexes with corresponding indexes for the components of the state obtained after the semi-discretization:
\begin{align}\label{eq:index_eta_mu}
\eta(i,k)&:= \Ni(k-1)+i, \quad \mu(i, e, p):= \eta(i, 2e-2 + p).
\end{align}
Without taking into account the Dirichlet boundary conditions, the total number of unknowns is $\Ntot := \Ni \Nx$.
Let $\mathbf{N}\colon [0, \ell] \rightarrow \mathbb{R}^{\Ni \times \Ntot}$ be defined by
\begin{align*}
\mathbf{N} :=  \sum_{i=1}^\Ni \sum_{k=1}^\Nx  N_k \overline{\mathbf{e}}_i \mathbf{e}_{\eta(i, k)}^\intercal,
\end{align*}
or in other words $\mathbf{N} = \begin{bmatrix}
\mathbf{I}_\Ni \otimes N_1 & \mathbf{I}_\Ni \otimes N_2 & \ldots & \mathbf{I}_\Ni \otimes N_\Nx
\end{bmatrix}$ with $\otimes$ denoting the Kronecker product. On any element $\omega^e$, only three shape functions are nonzero and are given by (see Fig. \ref{fig:shape_function} and Fig. \ref{fig:support_shape_functions})
\begin{align*}
\left[ N_{2e-1} , N_{2e}, N_{2e + 1} \right] = \widetilde{\mathbf{N}}\left(\frac{x-x_{2e-1}}{x_{2e+1} - x_{2e-1}}\right),
\end{align*}
where the reference shape function $\widetilde{\mathbf{N}} \colon [0, 1] \rightarrow \mathbb{R}^{1 \times 3}$ is defined by (see Fig. \eqref{fig:reference_shape_function})
\begin{align*}
\widetilde{\mathbf{N}}(\xi) := \Big[ \underbrace{(1-\xi)(1-2\xi)}_{=:\widetilde{N}_1(\xi)},  \underbrace{4\xi(1-\xi)}_{=:\widetilde{N}_2(\xi)},  \underbrace{\xi(2\xi-1)}_{=:\widetilde{N}_3(\xi)} \Big].
\end{align*}

\begin{figure}\centering
\includegraphics[width=3.5cm]{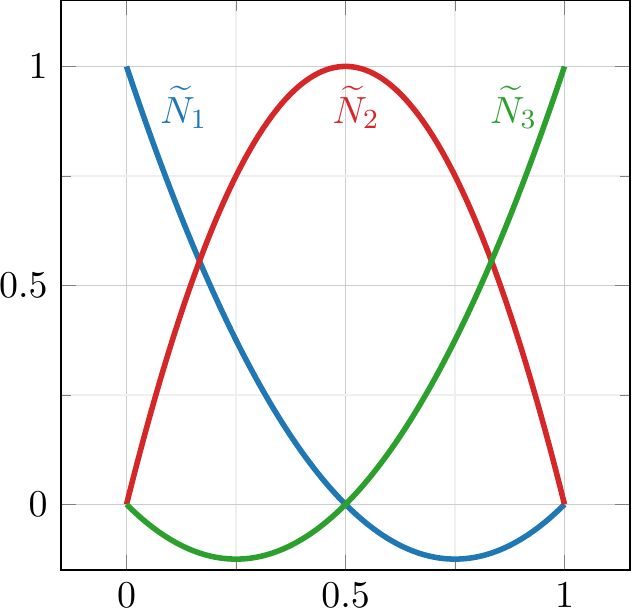}
\caption{Reference element.}
\label{fig:reference_shape_function}
\end{figure}

\medskip

\noindent \textbf{Semi-discretization of the equations. \;}
In a first instance, we do not take into account the homogeneous Dirichlet boundary condition at $x=0$.
We use the approximation
\begin{align*}
y(x, t) \approx \sum_{i=1}^\Ni \sum_{k =1}^\Nx N_k(x) \mathbf{y}_{\eta(i, k)}(t) \bar{\mathbf{e}}_i,
\end{align*}
or in other words $y(x, t) \approx \mathbf{N}(x) \mathbf{y}(t)$, and similarly we write $\psi(x) \approx \mathbf{N}(x) \bpsi(t)$, with $\mathbf{y}(t), \bpsi(t) \in \mathbb{R}^{\Ntot}$. 
Note that the $\eta(i, k)$-th component of $\mathbf{y}$ corresponds to $y_i(x_k, \cdot)$ (see \eqref{eq:index_eta_mu}).
We inject these approximations of $y$ and $\psi$ into the weak formulation to obtain the Ordinary Differential Equation (ODE)
\begin{align} \label{eq:nonlin_ODE}
\mathcal{M} \frac{\mathrm{d}}{\mathrm{d}t} \mathbf{y}(t) + \mathcal{K} \mathbf{y}(t) + \mathcal{Q}(\mathbf{y}(t)) \mathbf{y}(t)= 0,
\end{align}
for $\mathcal{M}, \mathcal{K}, \mathcal{Q}(\mathbf{y}) \in \mathbb{R}^{\Ntot \times \Ntot}$ defined by
\begin{align*}
&\qquad \quad \mathcal{M} := \int_0^\ell \mathbf{N}^\intercal Q^\mathcal{P} \mathbf{N} dx, \qquad \mathcal{Q}(\mathbf{y}) := \int_0^\ell \mathbf{N}^\intercal \mathbf{G}(x,\mathbf{N}\mathbf{y}) \mathbf{N} dx, \\
\mathcal{K} := &\underbrace{- \int_0^\ell \frac{\mathrm{d}\mathbf{N}}{\mathrm{d}x}^\intercal \mathbf{A} \mathbf{N} dx}_{=: \mathcal{K}_1} + \underbrace{\int_0^\ell \mathbf{N}^\intercal \mathbf{B} \mathbf{N}dx}_{=: \mathcal{K}_2}  + \underbrace{\left\langle \Pi^v\mathbf{N}(\ell), K \Pi^v \mathbf{N}(\ell) \right\rangle -\left\langle \Pi^z \mathbf{N}(\ell), \Pi^v \mathbf{N}(\ell) \right\rangle}_{=: \mathcal{K}_3}.
\end{align*}
Now, we want to deduce an algorithm to build theses matrices.
Let us start with the matrix $\mathcal{M}$. Replacing the integral over $(0, \ell)$ with the sum of integrals over all elements, an then using the definition of $\mathbf{N}$, we obtain the expression
\begin{align*}
\mathcal{M}
&= \sum_{e=1}^\Ne \sum_{i=1}^\Ni \sum_{j=1}^\Ni \sum_{k=1}^\Nx \sum_{m=1}^\Nx \left(\int_{\omega^e} Q^\mathcal{P}_{ij} N_k N_m dx\right) \mathbf{e}_{\eta(i, k)}\mathbf{e}_{\eta(j,m)}^\intercal.
\end{align*}
We approximate $Q^\mathcal{P}$ over the element $\omega^e$ by $Q^\mathcal{P}(x_{2e})$ so that this term may leave the integral. Then,
\begin{align*}
\mathcal{M}
&= \sum_{e=1}^\Ne \sum_{i=1}^\Ni \sum_{j=1}^\Ni Q^\mathcal{P}_{ij}(x_{2e}) \sum_{k=1}^\Nx \sum_{m=1}^\Nx  \left(\int_{\omega^e} N_k N_m dx\right) \mathbf{e}_{\eta(i, k)}\mathbf{e}_{\eta(j,m)}^\intercal.
\end{align*}
Now we may use the fact that on $\omega_e$ only the shape functions with indexes in $\{2e-1, 2e, 2e+1\}$ are nonzero (see Fig. \ref{fig:support_shape_functions}), and then apply
the change of variable to the reference element to deduce
\begin{align*}
\mathcal{M}
&= \sum_{e=1}^\Ne \sum_{i=1}^\Ni \sum_{j=1}^\Ni Q^\mathcal{P}_{ij}(x_{2e}) \he \sum_{k=1}^3 \sum_{m=1}^3  \left(\int_0^1 \widetilde{N}_k \widetilde{N}_m d\xi \right) \mathbf{e}_{\mu(i, e, k)}\mathbf{e}_{\mu(j,e, m)}^\intercal.
\end{align*}
Similar considerations hold for $\mathcal{K}_2$, and a similar procedure (only the change of variable to the reference element is different) applied to $\mathcal{K}_1$ yields that
\begin{align*}
\mathcal{K}_1
&= - \sum_{e=1}^\Ne \sum_{i=1}^\Ni \sum_{j=1}^\Ni \mathbf{A}_{ij} \sum_{k=1}^3 \sum_{m=1}^3 \left(\int_0^1 \frac{\mathrm{d}\widetilde{N}_k}{\mathrm{d}x} \widetilde{N}_m d\xi \right) \mathbf{e}_{\mu(i, e, k)}\mathbf{e}_{\mu(j,e, m)}^\intercal,
\end{align*}
These three matrices may thus be constructed as follows, where the ``element-mass and element-stiffness matrices'' $\mathcal{M}^e, \mathcal{K}^e  \in \mathbb{R}^{3\times 3}$ are given by
\begin{align*}
\mathcal{M}^e := \int_0^1 \widetilde{\mathbf{N}}^\intercal \widetilde{\mathbf{N}} d\xi = \frac{1}{30} \begin{bmatrix}
4 & 2 & -1\\
2 & 16 & 2\\
-1 & 2 & 4
\end{bmatrix}, \quad \mathcal{K}^e := \int_0^1 \frac{\mathrm{d}\widetilde{\mathbf{N}}}{\mathrm{d}\xi}^\intercal \widetilde{\mathbf{N}} d\xi = \frac{1}{6} \begin{bmatrix}
-3& -4 & 1\\
4 & 0 & -4\\
-1 & 4 & 3
\end{bmatrix}.
\end{align*}

\begin{figure}\centering
\includegraphics[width=4.5cm]{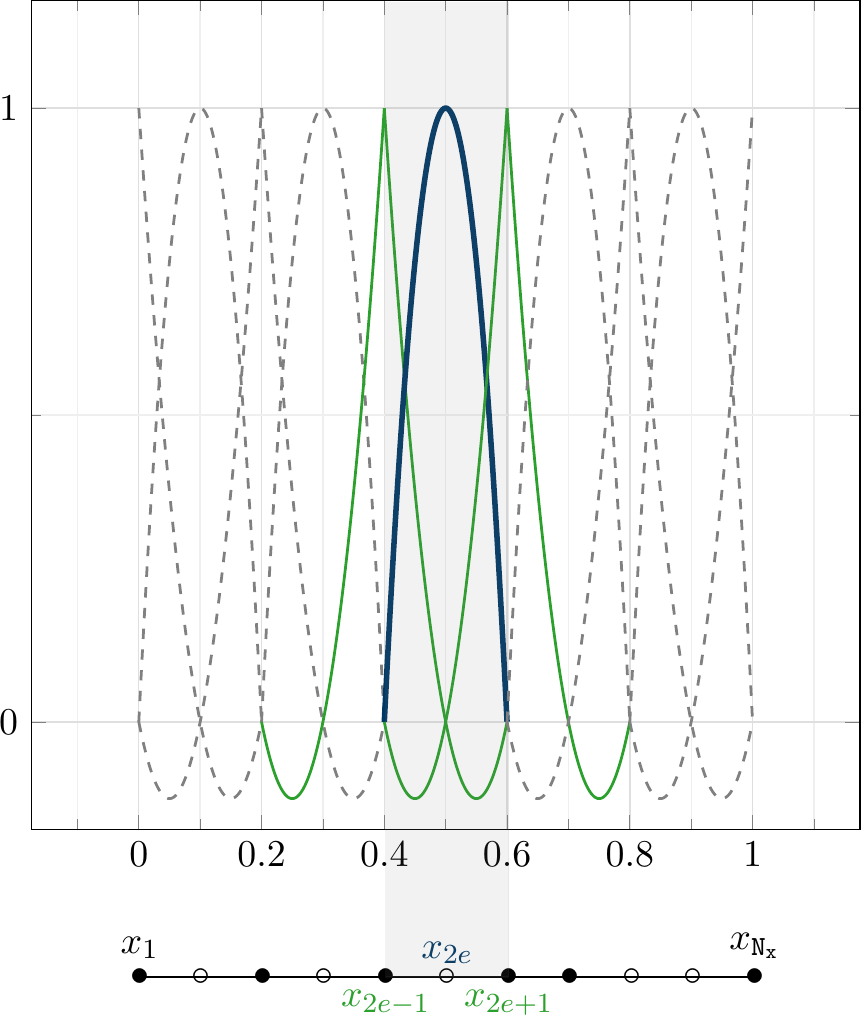}%
\quad \includegraphics[width=4.5cm]{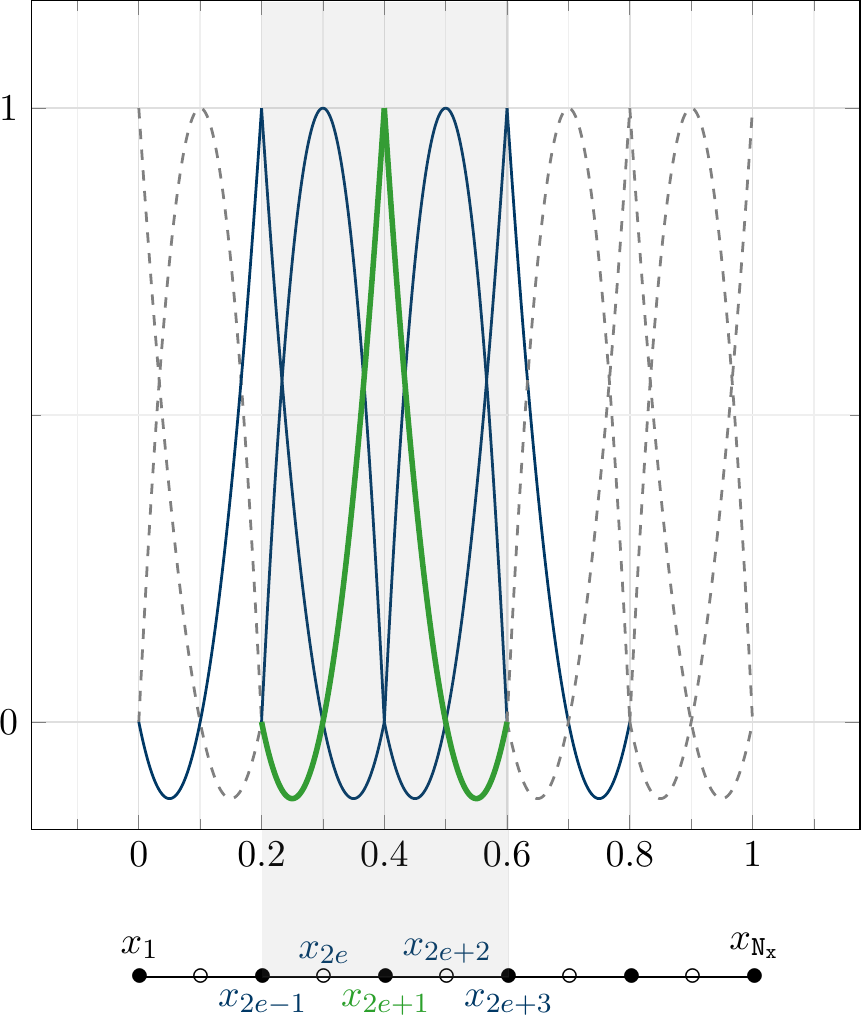}
\caption{The shape functions whose support intersects that of $N_{2e}$(left), and the shape functions whose support intersect that of $N_{2e+1}$ (right).}
\label{fig:support_shape_functions}
\end{figure}

\begin{algorithm}[H]
\DontPrintSemicolon

\ShowLn\For{$i = 1 \ldots \Ni$}{
\ShowLn\For{$j = 1 \ldots \Ni$}{
\ShowLn\For{$e = 1, \ldots, \Ne$}{
\ShowLn
$\mathtt{idxRow} = [\mu(i, e, 1), \mu(i, e, 2), \mu(i, e, 3)]$

\ShowLn
$\mathtt{idxCol} = [\mu(j, e, 1), \mu(j, e, 2), \mu(j, e, 3)]$

\ShowLn
$\mathcal{M}(\mathtt{idxRow}, \mathtt{idxCol}) = \mathcal{M}(\mathtt{idxRow}, \mathtt{idxCol})+ Q^\mathcal{P}_{ij}(x_{2e}) \he \mathcal{M}^e$

\ShowLn
$\mathcal{K}_2(\mathtt{idxRow}, \mathtt{idxCol}) = \mathcal{K}_2(\mathtt{idxRow}, \mathtt{idxCol})+ \mathbf{B}_{ij}(x_{2e}) \he \mathcal{M}^e$

\ShowLn
$\mathcal{K}_1(\mathtt{idxRow}, \mathtt{idxCol}) = \mathcal{K}_1(\mathtt{idxRow}, \mathtt{idxCol}) - \mathbf{A}_{ij} \mathcal{K}^e$
}}}
 
\caption{Building the matrices $\mathcal{M}, \mathcal{K}_1$ and $\mathcal{K}_2$.}
\end{algorithm}

For the matrix $\mathcal{K}_3$, by definition of $\Pi^v$ and $\Pi^z$ one has
\begin{align*}
\mathcal{K}_3
&= \sum_{i=1}^6 \sum_{j=1}^6 \mathbf{e}_{\eta(i, \Nx)} \widetilde{\mathbf{e}}_i^\intercal K \widetilde{\mathbf{e}}_j \mathbf{e}_{\eta(j, \Nx)}^\intercal - \sum_{i=7}^\Ni \sum_{j=1}^6 \mathbf{e}_{\eta(i, \Nx)} \widetilde{\mathbf{e}}_{i-6}^\intercal \widetilde{\mathbf{e}}_j \mathbf{e}_{\eta(j, \Nx)}^\intercal \\
&= \bigg[ \sum_{i=1}^6 \sum_{j=1}^6  K_{ij} \mathbf{e}_{\eta(i, \Nx)} \mathbf{e}_{\eta(j, \Nx)}^\intercal\bigg] - \bigg[\sum_{i=1}^6 \mathbf{e}_{\eta(i+6, \Nx)} \mathbf{e}_{\eta(i, \Nx)}^\intercal\bigg].
\end{align*}

\begin{algorithm}[H]
\DontPrintSemicolon

\ShowLn\For{$i=1 \ldots 6$}{
\ShowLn\For{$j = 1 \ldots 6$}{
\ShowLn
$\mathcal{K}_3(\eta(i, \Nx), \eta(j, \Nx)) = \mathcal{K}_3(\eta(i, \Nx), \eta(j, \Nx)) + K_{ij}$
}
\ShowLn
$\mathcal{K}_3(\eta(i+6, \Nx), \eta(i, \Nx)) = \mathcal{K}_3(\eta(i+6, \Nx), \eta(i, \Nx)) -1$
}
 
\caption{Building the matrix $\mathcal{K}_3$.}
\end{algorithm}

\noindent It remains to build the nonlinearity. First, notice that for any $i \in \{1, \ldots, \Ni\}$ one may write
\begin{align*}
\sum_{k=1}^\Nx N_k \mathbf{e}_{\eta(i, k)} = \left[\sum_{e=1}^{\Ne+1} N_{2e-1} \mathbf{e}_{\eta(i, 2e-1)}^\intercal \right] + \left[ \sum_{e=1}^\Ne N_{2e} \mathbf{e}_{\eta(i, 2e)}^\intercal \right].
\end{align*}
Consequently, 
\begin{align*}
\mathbf{N}\mathbf{y}
&= \sum_{p=1}^\Ni \overline{\mathbf{e}}_p \left(\left[\sum_{e=1}^{\Ne+1} N_{2e-1} \mathbf{y}_{\eta(i, 2e-1)} \right] + \left[ \sum_{e=1}^\Ne N_{2e} \mathbf{y}_{\eta(i, 2e)} \right]\right).
\end{align*}
Injecting this formula into the definition of $\mathcal{Q}(\mathbf{y})$, we obtain
\begin{align}
\label{eq:Qbuild-1}
\begin{aligned}
\mathcal{Q}(\mathbf{y}) = \sum_{p=1}^\Ni \bigg\{ &\bigg[
\sum_{e=1}^{\Ne+1}
\mathbf{y}_{\eta(p, 2e-1)}  \int_0^\ell N_{2e-1} \mathbf{N}^\intercal \mathbf{G}(x, \overline{\mathbf{e}}_p) \mathbf{N} dx  \bigg]\\
& + \bigg[\sum_{e=1}^\Ne \mathbf{y}_{\eta(p, 2e)}  \int_0^\ell N_{2e} \mathbf{N}^\intercal \mathbf{G}(x, \overline{\mathbf{e}}_p) \mathbf{N}dx  \bigg]\bigg\}.
\end{aligned}
\end{align}
Taking into account the support of the shape functions $N_{2e-1},N_{2e}$ (see Fig. \ref{fig:support_shape_functions}), we deduce that
\begin{align} \label{eq:Qbuild-2}
\mathcal{Q}(\mathbf{y}) = \sum_{p=1}^\Ni \sum_{e=1}^\Ne \left(   \mathbf{y}_{\eta(p, 2e-1)} P_1^{p, e} + \mathbf{y}_{\eta(p, 2e)} P_{2}^{p, e} + \mathbf{y}_{\eta(p, 2e+1)} P_3^{p, e} \right),
\end{align}
where the matrices $P_n^{p, e} \in \mathbb{R}^{\Ntot \times \Ntot}$ for $n \in \{1, 2, 3\}$, are defined by
\begin{align} \label{eq:Qbuild-3}
&P_n^{p, e} = \int_{\omega^{e}} N_{2e-2+n} \mathbf{N}^\intercal \mathbf{G}(x, \overline{\mathbf{e}}_p) \mathbf{N} dx.
\end{align}
In a similar manner to that used for the matrix $\mathcal{M}$, we obtain the equivalent expression 
\begin{align} \label{eq:Qbuild-4}
P_n^{p, e}
&=  \sum_{i = 1}^\Ni \sum_{j=1}^\Ni \mathbf{G}_{ij}(x_{2e}, \overline{\mathbf{e}}_p) \he \sum_{k=1}^3 \sum_{m=1}^3  \left( \int_0^1 \widetilde{N}_n \widetilde{N}_{k} \widetilde{N}_{m} d\xi \right) \mathbf{e}_{\mu(i, e, k)}\mathbf{e}_{\mu(j, e, m)}^\intercal.
\end{align}
Furthermore, one can see that $\mathcal{Q}(\mathbf{y}) \overline{\mathbf{y}} = \mathcal{Q}_\dagger (\overline{\mathbf{y}})\mathbf{y}$, where
\begin{align*}
\mathcal{Q}_\dagger(\mathbf{y}) &:= \int_0^\ell \mathbf{N}^\intercal \mathbf{G}_\dagger(x,\mathbf{N}\mathbf{y}) \mathbf{N} dx
\end{align*}
can be rewritten just as $\mathcal{Q}(\mathbf{y})$, replacing $\mathbf{G}_{ij}$ and $P_{n}^{p, e}$ with $\mathbf{G}_{\dagger ij}$ and $P_{\dagger n}^{p, e}$, respectively, in \eqref{eq:Qbuild-1}-\eqref{eq:Qbuild-2}-\eqref{eq:Qbuild-3}-\eqref{eq:Qbuild-4}. Then, to build the matrices $P_{n}^{p, e}$ and $P_{\dagger n}^{p, e}$, we use the following procedure where the element matrices $\mathcal{P}_n^e$ are given by
\begin{align*}
\mathcal{P}_n^e &= \int_0^1 \widetilde{N}_n \widetilde{\mathbf{N}}^\intercal \widetilde{\mathbf{N}} d\xi, \\
\mathcal{P}_1^e
= \frac{1}{420} \begin{bmatrix}
39 & 20 & -3 \\
20 & 16 & -8 \\
-3 & -8 & -3
\end{bmatrix},
\quad
\mathcal{P}_2^e
&= \frac{1}{105} \begin{bmatrix}
5 & 4 & -2 \\
4 & 48 & 4 \\
-2 & 4 & 5
\end{bmatrix},
\quad
\mathcal{P}_3^e
= \frac{1}{420} \begin{bmatrix}
-3 & -8 & -3 \\
-8 & 16 & 20 \\
-3 & 20 & 39
\end{bmatrix}.
\end{align*}

\begin{algorithm}[H]
\DontPrintSemicolon

\ShowLn\For{$p=1\ldots \Ni$}{
\ShowLn\For{$e=1\ldots \Ne$}{
\ShowLn\For{$i=1\ldots \Ni$}{
\ShowLn\For{$j=1\ldots \Ni$}{

\ShowLn $\mathtt{idxRow} = [\mu(i,e,1), \mu(i,e,2), \mu(i,e,3)]$

\ShowLn $\mathtt{idxCol} = [\mu(j,e,1), \mu(j,e,2), \mu(j,e,3)]$ 

\ShowLn\For{$n = 1, 2, 3$}{
\ShowLn $P_n^{p, e}(\mathtt{idxRow}, \mathtt{idxCol}) = P_n^{p, e}(\mathtt{idxRow}, \mathtt{idxCol}) + \mathbf{G}_{ij}(x_{2e}, \overline{\mathbf{e}}_p) \he \mathcal{P}_n^e$
 
\ShowLn $P_{\dagger n}^{p, e}(\mathtt{idxRow}, \mathtt{idxCol}) = P_{\dagger n}^{p, e}(\mathtt{idxRow}, \mathtt{idxCol}) + \mathbf{G}_{\dagger ij}(x_{2e}, \overline{\mathbf{e}}_p) \he \mathcal{P}_{n}^e$

}}}}}
 
\caption{Building the matrices ${P}_n^{p, e}$ and ${P}_{\dagger n}^{p, e}$.}
\end{algorithm}


\medskip

\noindent \textbf{Dirichlet boundary conditions. \;}
To take into account the homogeneous Dirichlet boundary conditions, we introduce
\begin{align*}
\overline{\eta}(i,k)&:= \Ni(k-1)+i - 6, \quad \Nf := \Ni \Nx - 6.
\end{align*} 
We construct the smaller matrices $\overline{\mathcal{M}}, \overline{\mathcal{K}}, \overline{P}_n^{p, e},\overline{P}_{\dagger n}^{p, e} \in \mathbb{R}^{\Nf \times \Nf}$ as follows, by removing some rows and columns of the matrices constructed before.

\begin{algorithm}[H]
\DontPrintSemicolon

\ShowLn 
$\mathtt{dof} = 7:\Ntot$

\ShowLn $\overline{\mathcal{M}} = \mathcal{M}(\mathtt{dof}, \mathtt{dof})$

\ShowLn $\overline{\mathcal{K}} = \mathcal{K}(\mathtt{dof}, \mathtt{dof})$

\ShowLn\For{$p=1\ldots \Ni$}{
\ShowLn\For{$e=1\ldots \Ne$}{
\ShowLn\For{$n = 1, 2, 3$}{
\ShowLn $\overline{P}_n^{p, e} = P_n^{p, e}(\mathtt{dof}, \mathtt{dof})$ 
 
\ShowLn $\overline{P}_{\dagger n}^{p, e} = P_{\dagger n}^{p, e}(\mathtt{dof}, \mathtt{dof})$
}}}
 
\caption{Building the matrices $\overline{\mathcal{M}}, \overline{\mathcal{K}}$ and $\overline{P}_n^{p, e}, \overline{P}_{\dagger n}^{p, e}$.}
\end{algorithm}

\noindent Furthermore, we define $\overline{\mathcal{Q}} \colon \mathbb{R}^\Nf \rightarrow \mathbb{R}^{\Nf \times \Nf}$ by
\begin{align*}
\overline{\mathcal{Q}}(\mathbf{y}) &= \sum_{p=1}^6\left[ \mathbf{y}_{\overline{\eta}(p, 2)} \overline{P}_{2}^{p, 1} + \mathbf{y}_{\overline{\eta}(p, 3)} \overline{P}_3^{p, 1}+ \sum_{e=2}^\Ne \left(   \mathbf{y}_{\overline{\eta}(p, 2e-1)} \overline{P}_1^{p, e} + \mathbf{y}_{\overline{\eta}(p, 2e)} \overline{P}_{2}^{p, e} + \mathbf{y}_{\overline{\eta}(p, 2e+1)} \overline{P}_3^{p, e} \right)\right]\\
&+ \sum_{p=7}^\Ni \sum_{e=1}^\Ne \left(\mathbf{y}_{\overline{\eta}(p, 2e-1)} \overline{P}_1^{p, e} + \mathbf{y}_{\overline{\eta}(p, 2e)} \overline{P}_{2}^{p, e} + \mathbf{y}_{\overline{\eta}(p, 2e+1)} \overline{P}_3^{p, e} \right),
\end{align*}
and $\overline{\mathcal{Q}}_\dagger \colon \mathbb{R}^\Nf \rightarrow \mathbb{R}^{\Nf \times \Nf}$ analogously (replacing $\overline{P}_n^{p, e}$ with $\overline{P}_{\dagger n}^{p, e}$).
The ODE to be solved becomes $\overline{\mathcal{M}} \frac{\mathrm{d}}{\mathrm{d}t} \mathbf{y} + \overline{\mathcal{K}} \mathbf{y} + \overline{\mathcal{Q}}(\mathbf{y}) \mathbf{y}= 0$ with a smaller unknown $\mathbf{y}$ having values in $\mathbb{R}^\Nf$. However, in what follows, we drop the ``bar'' notation for clarity. 

\medskip

\begin{figure}
\hspace{-0.8cm}\includegraphics[scale=0.485]{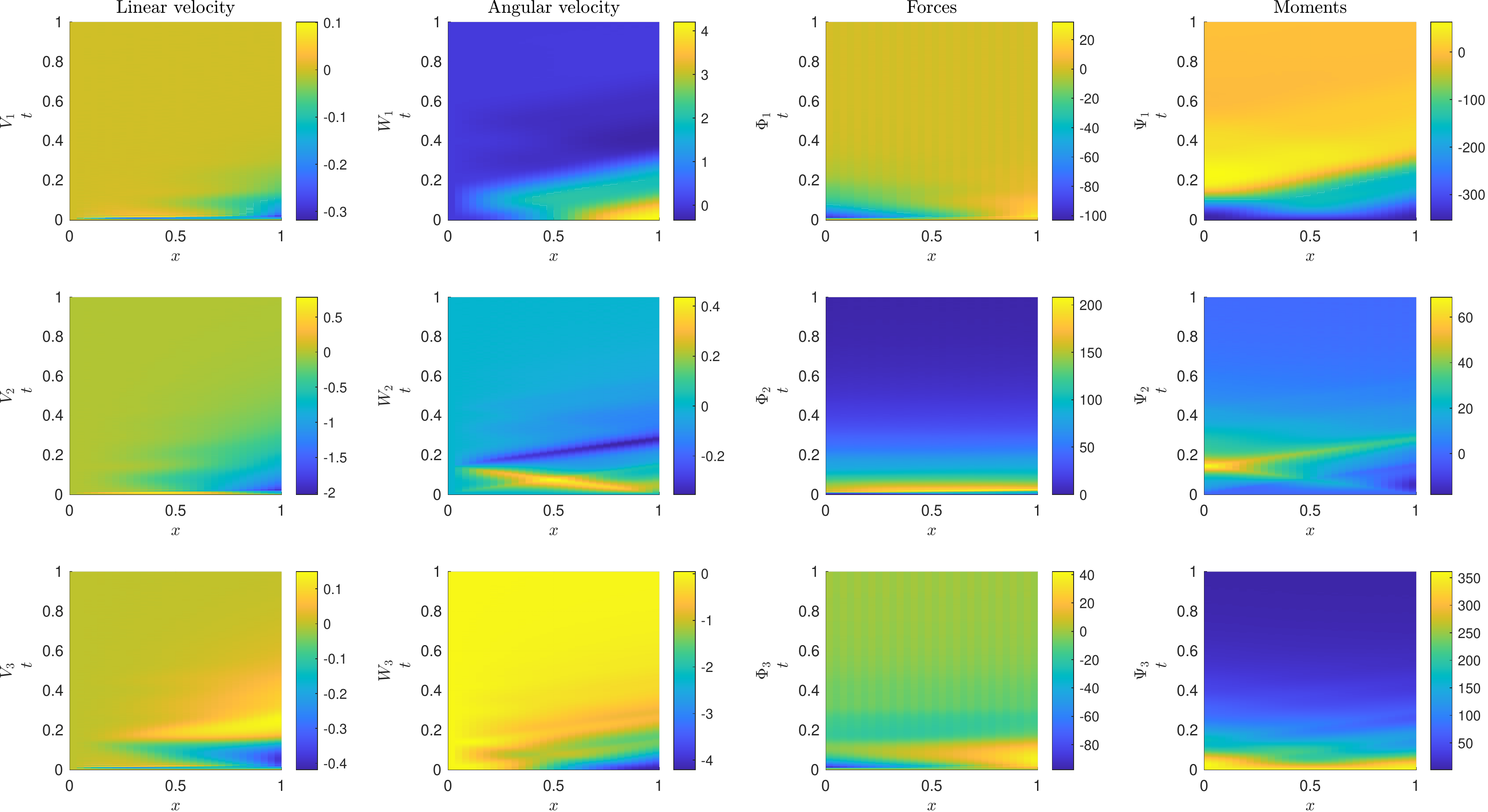}
\caption{Solution $y$ with $y^0$ fulfilling the zero-order compatibility conditions.}
\label{fig:sol_y}
\end{figure}

\noindent \textbf{Time discretization of the equations. \;}
Let the time interval be divided into $\Nt$ points $\{t_k\}_{k=1}^{\Nt}$ with $t_1 = 0$ and $t_{\Nt} = T$, and let $\htt = \frac{T}{\Nt-1}$ be the time step.
We have obtained the nonlinear ODE \eqref{eq:nonlin_ODE} and now want to discretize it in time by means of an implicit midpoint rule. The scheme reads 
\begin{align} \label{eq:implicit_midpoint_rule}
\mathcal{M}\mathbf{y}^{k+1} = \mathcal{M}\mathbf{y}^k - \htt \mathcal{K}\frac{\mathbf{y}^k+\mathbf{y}^{k+1}}{2} - \htt \mathcal{Q}\left(\frac{\mathbf{y}^k+\mathbf{y}^{k+1}}{2}\right)\frac{\mathbf{y}^k+\mathbf{y}^{k+1}}{2}.
\end{align}
Defining the function $F_k \colon \mathbb{R}^\Nf \rightarrow \mathbb{R}^\Nf$ by
\begin{align*}
F_k(\zeta) = \left(\mathcal{M}+\frac{\htt}{2} \mathcal{K} \right)\zeta - \left(\mathcal{M}-\frac{\htt}{2}\mathcal{K} \right)\mathbf{y}^k + \frac{\htt}{4} \left( \mathcal{Q}( \mathbf{y}^k)\mathbf{y}^k + \mathcal{Q}( \mathbf{y}^k )\zeta + \mathcal{Q}(\zeta)\mathbf{y}^k + \mathcal{Q}(\zeta)\zeta\right),
\end{align*}
the equation \eqref{eq:implicit_midpoint_rule} also writes as $F_k(\mathbf{y}^{k+1}) = 0$. Thus, at each time step $t_k$, given $\mathbf{y}^k$, we want to compute $\mathbf{y}^{k+1}$ in order to define $F_{k+1}$ and move to the next iteration. Let $k$ be fixed. We use the Newton--Raphson method which provides an approximation of a zero of $F_k$, i.e., $\zeta$ such that $F_k(\zeta) = 0$, by means of the scheme:
\begin{align*}
\zeta_{m+1} = \zeta_m - (\mathrm{Jac} F_k(\zeta_m))^{-1} F_k(\zeta_m).
\end{align*}
After some computations, and using the property $\mathcal{Q}(\mathbf{y}) \overline{\mathbf{y}} = \mathcal{Q}_\dagger (\overline{\mathbf{y}})\mathbf{y}$, one obtains that the Jacobian matrix of $F_k$ is equal to
\begin{align*}
\mathrm{Jac} F_k(\zeta_\circ)
&= \mathcal{M}+\frac{\htt}{2} \mathcal{K} + \frac{\htt}{4} (\mathcal{Q}(\mathbf{y}^k)+\mathcal{Q}_\dagger(\mathbf{y}^k)) + \frac{\htt}{4} (\mathcal{Q}(\zeta_\circ) +  \mathcal{Q}_\dagger (\zeta_\circ)).
\end{align*}
Then, the following algorithm yields an approximation of the solution to \eqref{eq:nonlin_ODE}. For the initial data described above, the result of this scheme is illustrated in Fig. \ref{fig:sol_y}.

\begin{algorithm}[H]
\DontPrintSemicolon

\ShowLn Choose maximal number of iterations $\mathtt{N_{iter}}$

\ShowLn\For{$k = 1 \ldots \Nt-1$}{

\ShowLn $\zeta_1 = \mathbf{y}^k$

\ShowLn\For{$m = 1 \ldots \mathtt{N_{iter}}$}{

\ShowLn $\zeta_{m+1} = \zeta_m - (\mathrm{Jac}F_k(\zeta_m))^{-1} F_k(\zeta_m)$
 
\ShowLn If converges than stop the $m$ loop.

}

\ShowLn $\mathbf{y}^{k+1} = \zeta_{m+1}$
}
 
\caption{Solve the obtained ODE for $\mathbf{y}$.}
\end{algorithm}

\medskip

\noindent \textbf{Recovering the beam's position. \;}
We have the initial data $(\mathbf{p}^0, \mathbf{R}^0) \colon [0, \ell]\rightarrow \mathbb{R}^3 \times \mathrm{SO}(3)$ for the GEB model. We have obtained an approximation of the solution $y(x, t) \approx \mathbf{N}(x)\mathbf{y}(t)$. Recall that we use the notation $y = (v^\intercal, z^\intercal)^\intercal$ as well as $v = (v_1^\intercal, v_2^\intercal)^\intercal$ and $z = (z_1^\intercal, z_2^\intercal)^\intercal$
for $v_1, v_2, z_1, z_2$ having values in $\mathbb{R}^3$.
Now, we just follow the proof of Theorem \ref{th:invert_transfo_fb} on the inversion of the transformation $\mathcal{T}$. As we mentioned before, recovering the position of the beam from velocities (or strains) is done in practice; see for instance \cite{Artola2019mpc}.

We integrate the differential equation \eqref{eq:ODE_for_q_v}-\eqref{eq:ODE_for_q_ini} whose unknown $\mathbf{q} \colon [0, \ell]\times[0, T]\rightarrow \mathbb{R}^4$ is the quaternion that parametrizes the function $\mathbf{R}\colon [0, \ell]\times [0, T] \rightarrow \mathrm{SO}(3)$ of interest to us.
Built-in functions of \texttt{Matlab} permit to go easily from quaternion to rotation matrices and the other way around.
Now, let us fix $x_\alpha$ for some $\alpha\in \{1, \ldots, \Nx\}$, and denote $\mathbf{q}_\alpha^k := \mathbf{q}(x_\alpha, t_k)$, with a similar notation for $\mathbf{R}$. We use a midpoint rule to for the time discretization
\begin{align*}
\mathbf{q}_\alpha^{k+1} = \mathbf{q}_\alpha^k + \htt \, \mathcal{U}\left( v_2\left(x_\alpha, \frac{t_k + t_{k+1}}{2}\right) \right) \frac{\mathbf{q}_\alpha^{k} + \mathbf{q}_\alpha^{k+1} }{2}.
\end{align*}
Recall that the linear map $\mathcal{U}$ is defined by
\eqref{eq:def_calU}. We approximate $v_2$ in the middle of the interval $[t_k, t_{k+1}]$ by the mean of its values at the endpoints. Then, the scheme takes the form
\begin{align*}
\mathbf{q}_\alpha^{k+1} = \left( \mathbf{I}_4 - \frac{\htt}{2} U^\mathrm{mid}_{\alpha, k} \right)^{-1}  \left( \mathbf{I}_4 + \frac{\htt}{2} U^\mathrm{mid}_{\alpha, k} \right) \mathbf{q}_\alpha^k \quad \text{with} \ \ U^\mathrm{mid}_{\alpha, k} := \mathcal{U}\left( \frac{v_2(x_\alpha, t_k) + v_2(x_\alpha, t_{k+1})}{2} \right).
\end{align*}
Each $\mathbf{q}_\alpha^k$ can be converted to the corresponding rotation matrix $\mathbf{R}_\alpha^k$, and we inject the latter in the differential equation \eqref{eq:ODE_for_p_v}-\eqref{eq:ODE_for_p_ini} whose solution is the position of the centerline $\mathbf{p} \colon [0, \ell]\times[0, T] \rightarrow \mathbb{R}^3$. One may then use the function \texttt{trapz} of \texttt{Matlab} to obtain an approximation of $\mathbf{p}$. Also for the example considered here, snapshots of the resulting centerline's position are displayed in Fig. \ref{fig:position}.

\begin{algorithm}[H]
\DontPrintSemicolon

\ShowLn\For{$\alpha  = 1 \ldots \Nx$}{

\ShowLn $\mathbf{q}_{{\alpha }}^1$ is the quaternion that parametrizes $\mathbf{R}_{{\alpha }}^1 := \mathbf{R}^0(x_{\alpha })$

\ShowLn\For{$k = 1 \ldots \Nt-1$}{

\ShowLn $U_{\alpha ,k}^\mathrm{mid} = \frac{1}{2} \mathcal{U}\left(v_2(x_\alpha, t_k) + v_2(x_\alpha, t_{k+1}) \right)$

\ShowLn $\mathbf{q}_{{\alpha }}^{k+1} = \left( \mathbf{I}_4 - \frac{\htt}{2} U^\mathrm{mid}_{{\alpha }, k} \right)^{-1}  \left( \mathbf{I}_4 + \frac{\htt}{2} U^\mathrm{mid}_{{\alpha }, k} \right) \mathbf{q}_{{\alpha }}^k$

\ShowLn Calculate $\mathbf{R}_{{\alpha }}^{k+1}$ the rotation matrix parametrized by $\mathbf{q}_{{\alpha }}^{k+1}$

\ShowLn Compute $\mathbf{p}_{\alpha }^{k+1}$ using the function \texttt{trapz}

}}
 
\caption{Recovering the position of the beam from the velocities $v$.}
\end{algorithm}

\begin{figure} \centering
\includegraphics[scale=0.5]{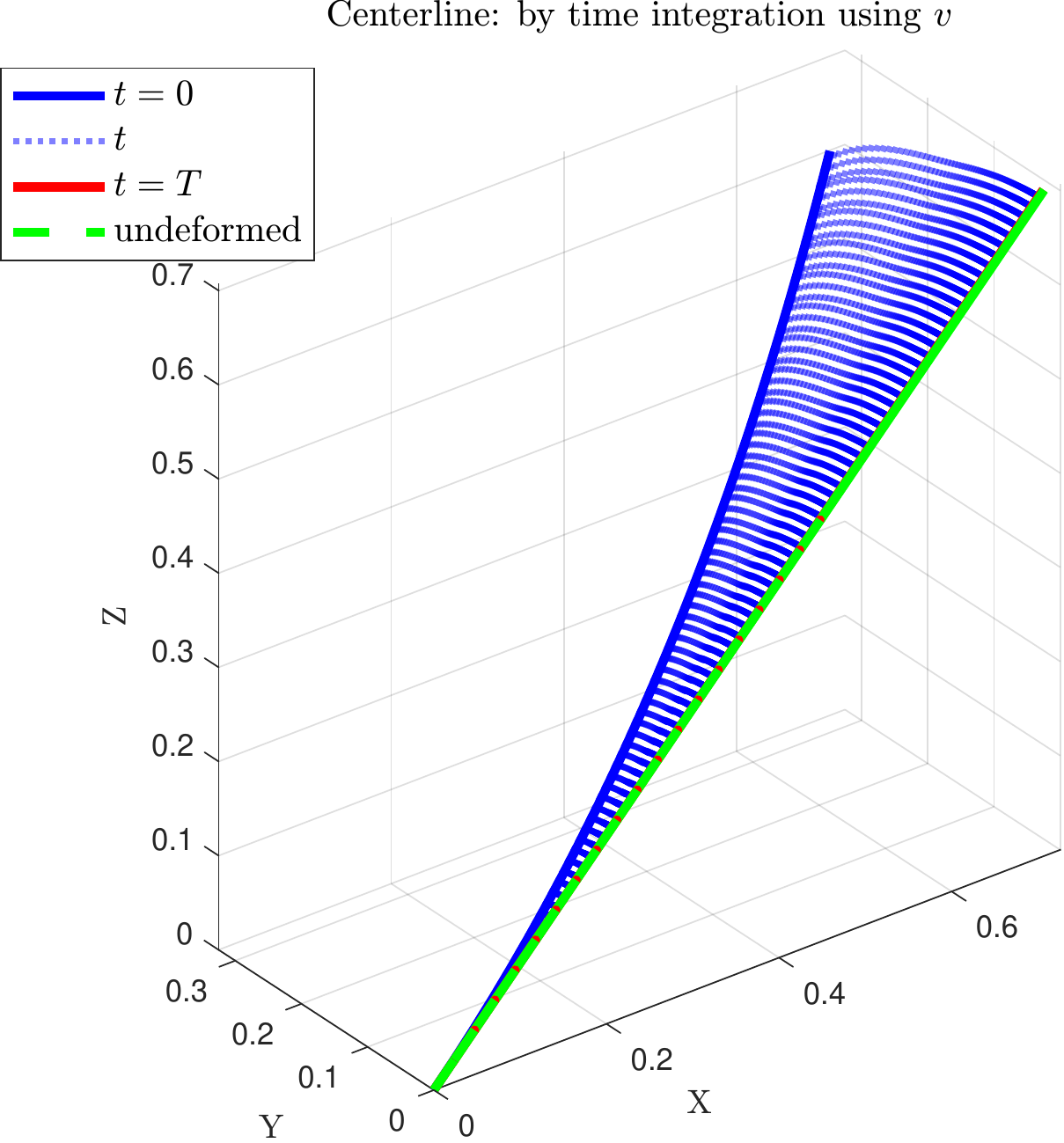}\qquad
\includegraphics[scale=0.5]{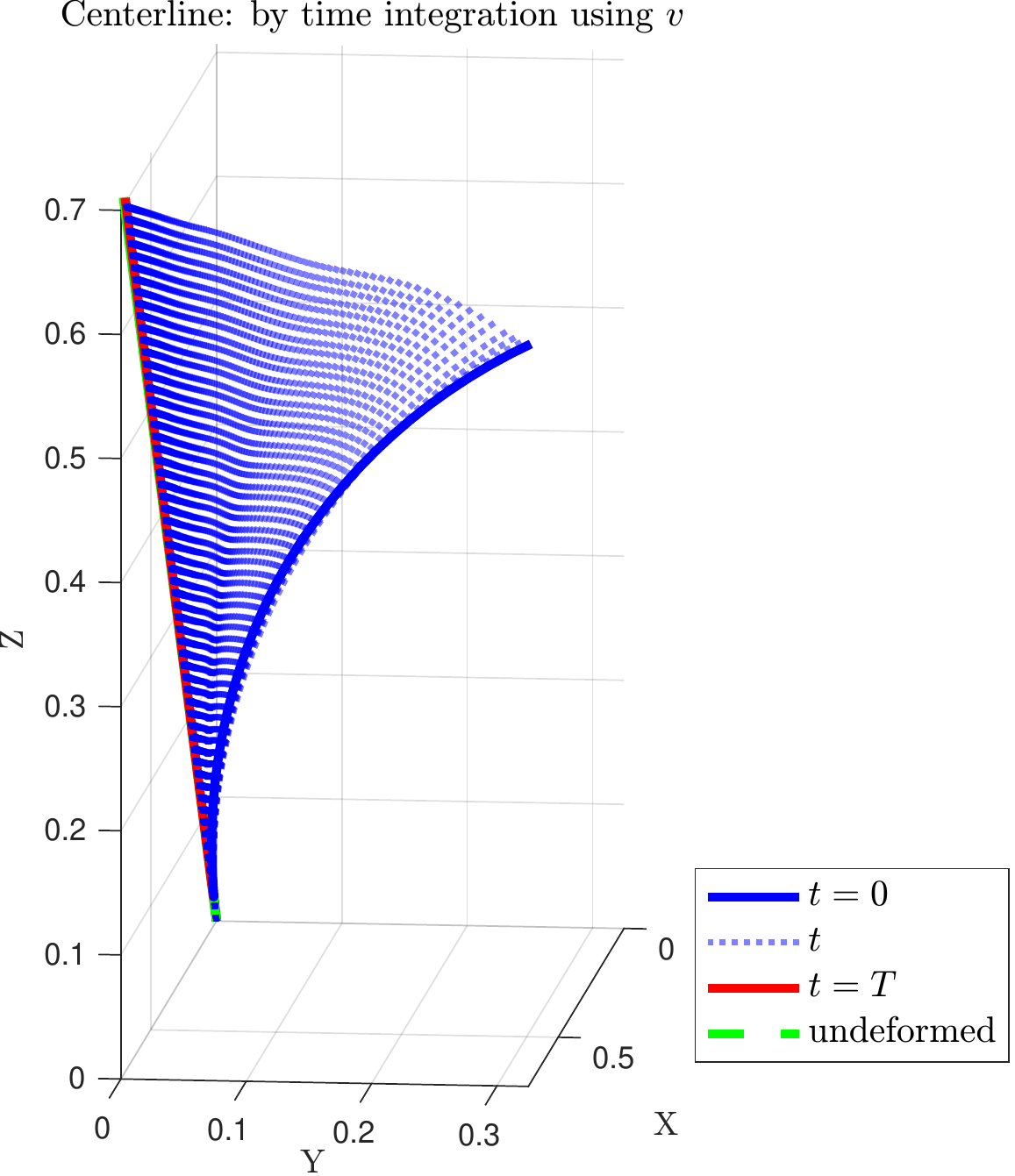}
\caption{Centerline's position through time, with $y^0$ fulfilling the zero-order compatibility conditions.}
\label{fig:position}
\end{figure}

\noindent When Dirichlet boundary data is available -- which is the case here at $x=0$ -- , one may rather use the strain components of $y$ to recover the position of the beam. It suffices to fix $t$ (rather than $x)$ and solve the differential equations  \eqref{eq:ODE_for_q_z}-\eqref{eq:ODE_for_q_ini}
and then \eqref{eq:ODE_for_p_z}-\eqref{eq:ODE_for_p_ini}, with a similar method.

\medskip

\noindent \textbf{Controlled versus free beam.} Finally, let us compare the case of a controlled beam to that of a free beam ($K = \mathbf{0}_6$). To do so with our choice of initial datum $(\mathbf{p}^0, \mathbf{R}^0)$, we do not choose the initial velocities $v^0$ in such a way that $y^0$ fulfills the zero-order compatibility conditions of the IGEB model, since we may not invert $K$ in the free beam case and since we want the same initial data for both simulations. The initial data is thus just as in Fig. \ref{fig:ini_data} except that all velocities are set to zero. We may then observe the difference between the evolution of the free beam in Fig. 
\ref{fig:no-zero-ord-free} which is not stabilized to the straight rest state, and that of the controlled beam in Fig. \ref{fig:no-zero-ord-controlled} which reaches the rest state.

\begin{figure}\centering
\begin{subfigure}{\textwidth} 
\hspace{-0.8cm}\includegraphics[scale=0.485]{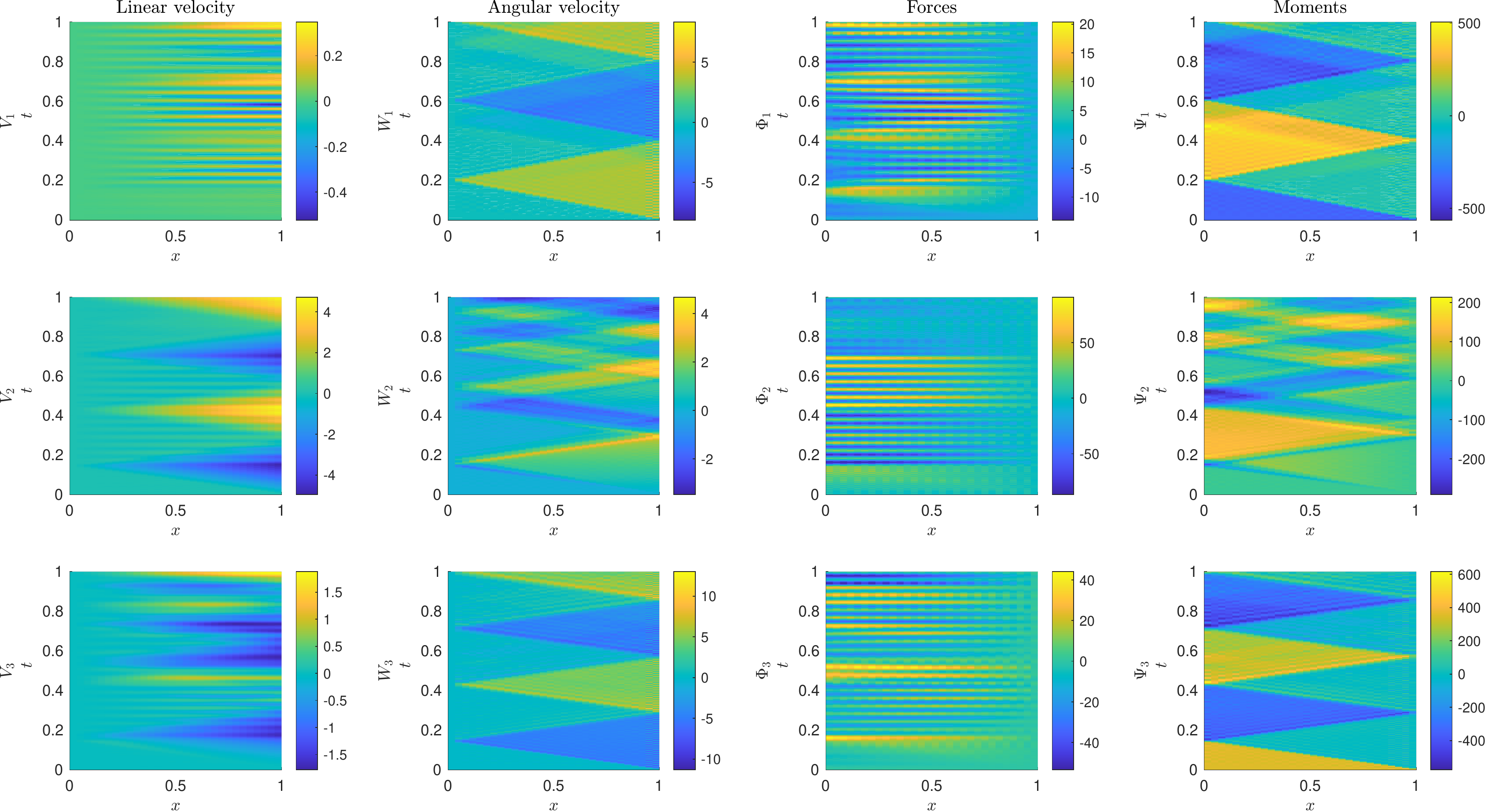}
\caption{Solution $y$.}
\end{subfigure}%

\vspace{0.5cm}

\begin{subfigure}{\textwidth}
\centering
\includegraphics[scale=0.5]{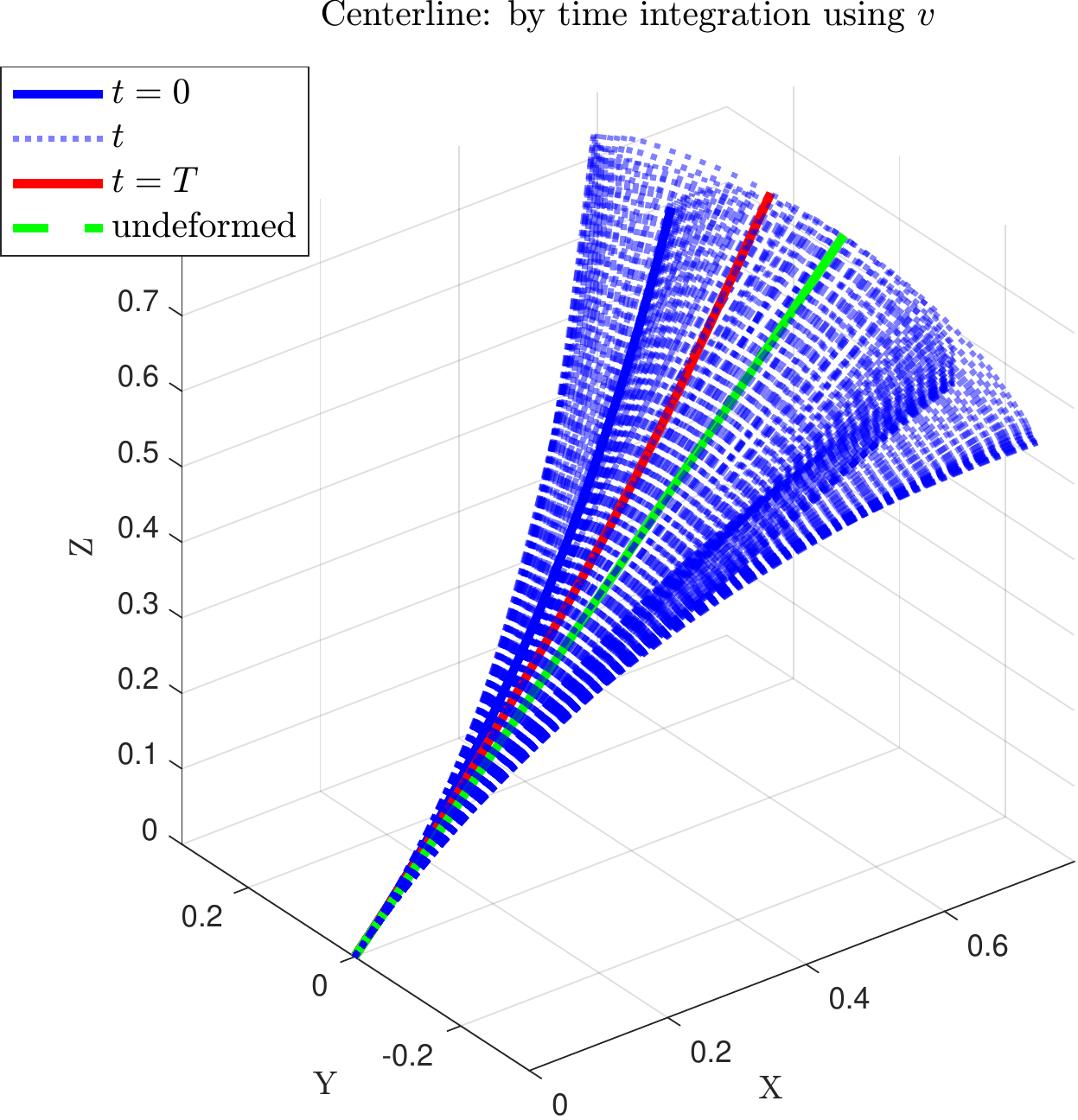}\qquad
\includegraphics[scale=0.5]{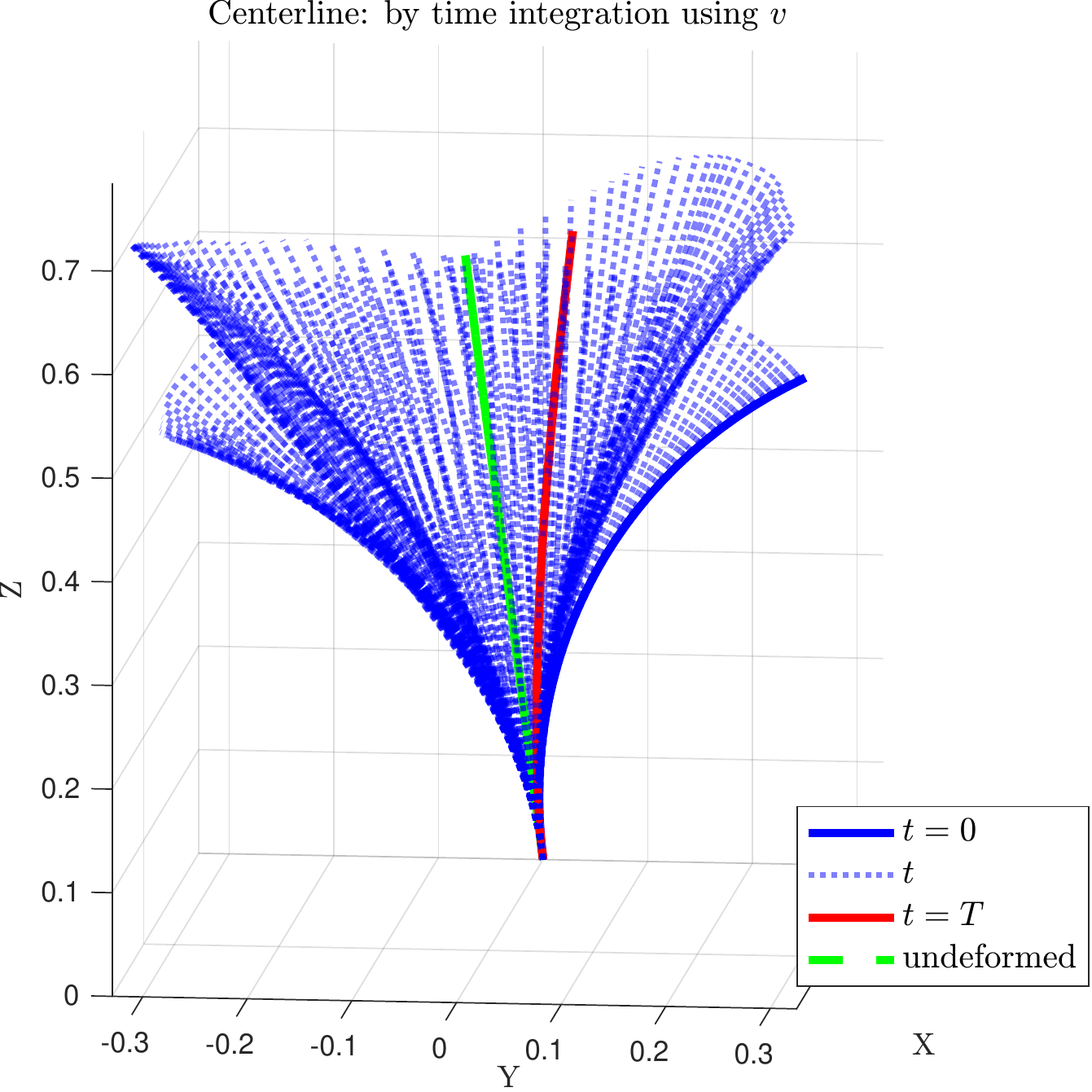}
\caption{Position of the centerline through time.}
\end{subfigure}%

\caption{Free beam with zero initial velocities.}
\label{fig:no-zero-ord-free}
\end{figure}



\begin{figure}\centering
\begin{subfigure}{\textwidth} 
\hspace{-0.8cm}\includegraphics[scale=0.485]{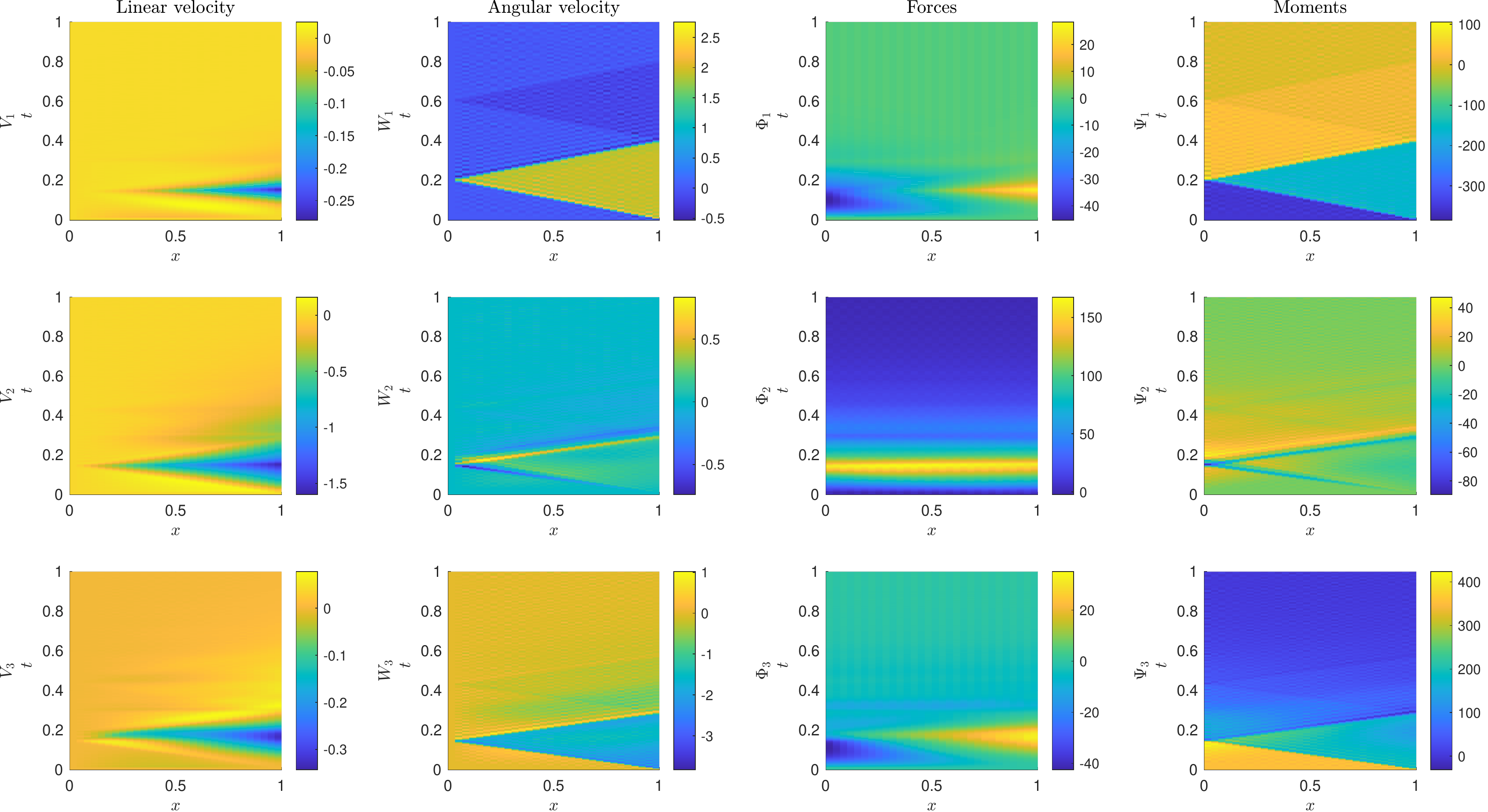}
\caption{Solution $y$.}
\end{subfigure}%

\vspace{0.5cm}

\begin{subfigure}{\textwidth}
\centering
\includegraphics[scale=0.5]{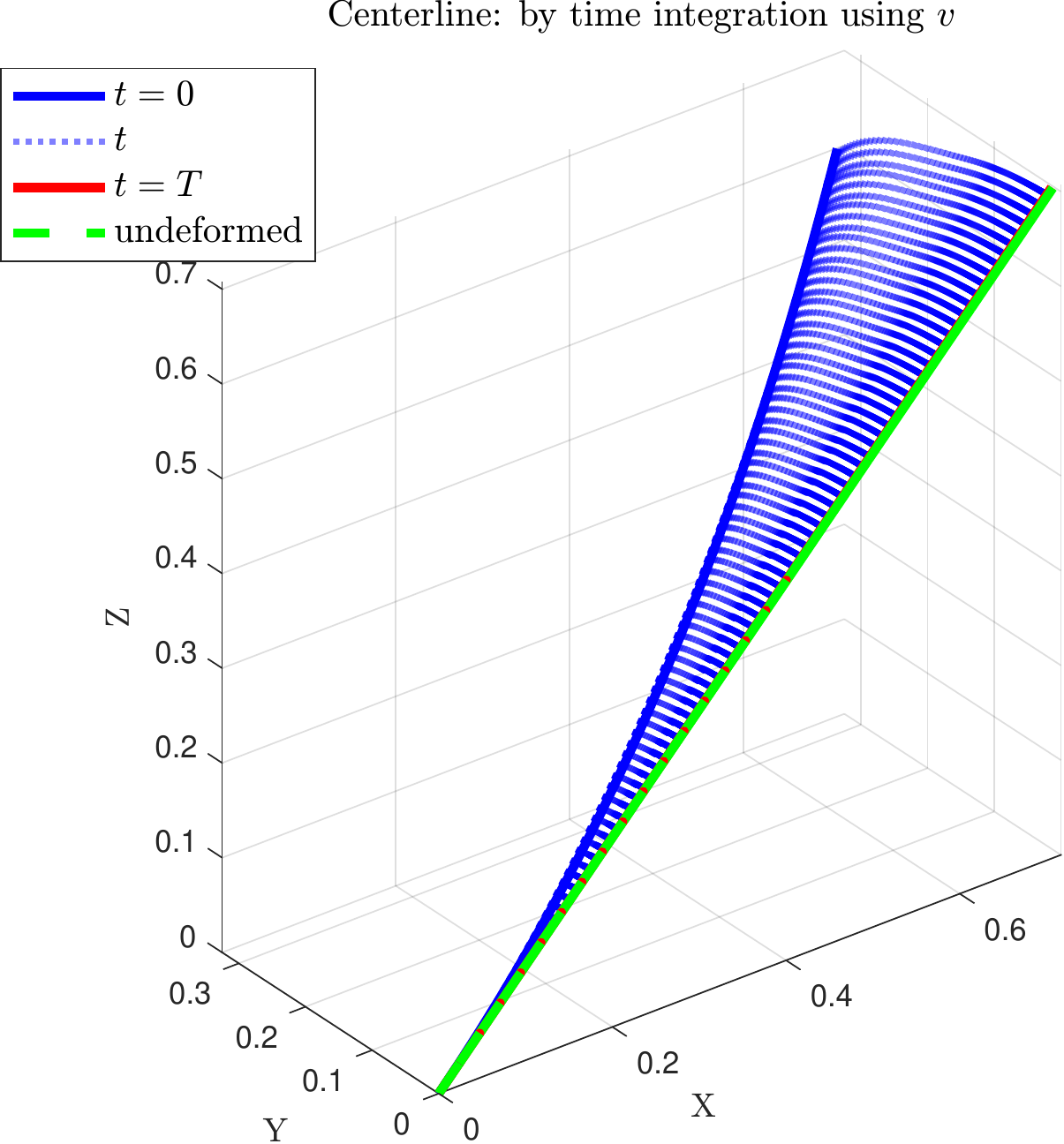}\qquad
\includegraphics[scale=0.5]{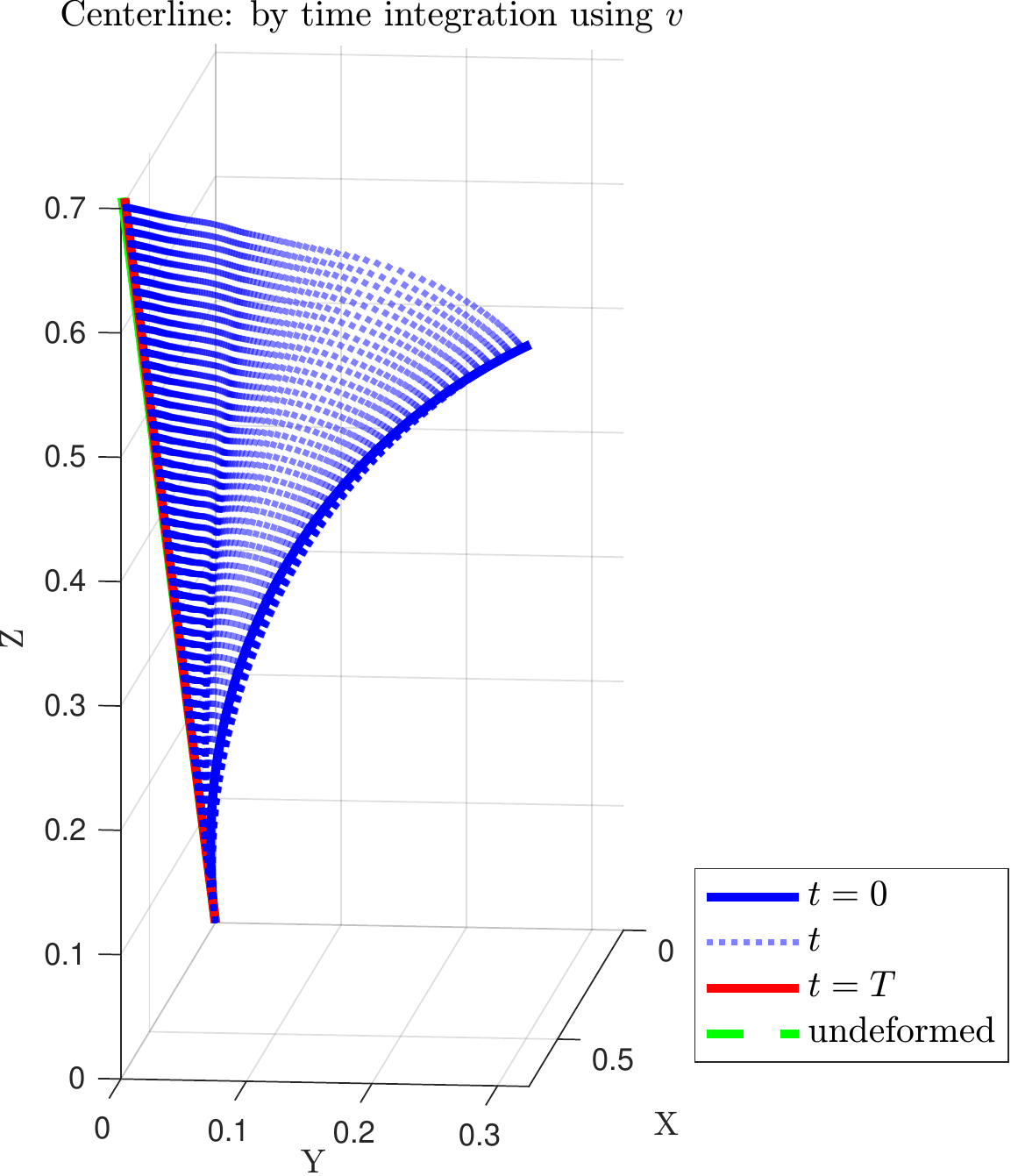}
\caption{Position of the centerline through time.}
\end{subfigure}%

\caption{Controlled beam with zero initial velocities.}
\label{fig:no-zero-ord-controlled}
\end{figure}

%


\bibliographystyle{acm}
\bibliography{mybibfile}

\part{Reprints of published articles}
\label{part:reprints}

\renewcommand*{\chapappifchapterprefix}{Paper}

\chapter{Boundary feedback stabilization for the intrinsic geometrically exact beam model}

\textbf{Charlotte Rodriguez, and G\"unter Leugering.}

\noindent In: \textit{SIAM Journal on Control and Optimization} 58 (6), pp. 3533--3558 (2020). 

\noindent \texttt{DOI}: \href{https://doi.org/10.1137/20M1340010}{\texttt{10.1137/20M1340010}}.

\chapter{Networks of geometrically exact beams: well-posedness and stabilization}

\textbf{Charlotte Rodriguez.}

\noindent In: \textit{Mathematical Control and Related Fields} (2021). 

\noindent \texttt{DOI}: \href{https://www.aimsciences.org/article/doi/10.3934/mcrf.2021002}{\texttt{10.3934/mcrf.2021002}}. Advance online publication.

\chapter{Nodal profile control for networks of geometrically exact beams}

\textbf{G\"unter Leugering, Charlotte Rodriguez, and Yue Wang.}

\noindent In: \textit{Journal de Mathématiques Pures et Appliquées} 155, pp. 111--139 (2021).

\noindent \texttt{DOI}: \href{https://doi.org/10.1016/j.matpur.2021.07.007}{\texttt{10.1016/j.matpur.2021.07.007}}. In press.

\backmatter


\end{document}